\numberwithin{equation}{section}
\newtheorem{thm}{Theorem}[section]
\newtheorem{defn}[thm]{Definition}
\newtheorem{cor}[thm]{Corollary}
\newtheorem{lem}[thm]{Lemma}
\newtheorem{claim}[thm]{Claim}
\begin{document}
	
	\title {Cohomogeneity One Expanding Ricci Solitons and the Expander Degree}
	\author{Abishek Rajan\thanks{This work was supported by the National Science Foundation under Grant No.\ DMS-2204364.}}
	% Author's name
	
	\maketitle
	
	\begin{abstract}
		\noindent ABSTRACT. We consider the space of smooth gradient expanding Ricci soliton structures on $S^1 \times \mathbb{R}^3$ and $S^2 \times \mathbb{R}^2$ which are invariant under the action of $\text{SO}(3) \times \text{SO}(2)$. In the case of each topology, there exists a $2$-parameter family of cohomogeneity one solitons asymptotic to cones over the link $S^2 \times S^1$, as constructed in \cite{NW24}, \cite{Win21}, and \cite{BDGW15}. By analyzing the resultant soliton ODEs, we reconstruct the $2$-parameter families in each case and provide an alternate proof of conicality. Analogous to \cite{BC23}, we define a notion of \textit{expander degree} for these cohomogeneity one solitons through a properness result. We then proceed to calculate this cohomogeneity one expander degree in the cases of the specific topologies.
	\end{abstract}
	
%	\begin{abstract}
%		\noindent ABSTRACT. We consider the space of smooth gradient expanding Ricci soliton structures on $S^1 \times \mathbb{R}^3$ and $S^2 \times \mathbb{R}^2$ which are invariant under the action of $\text{SO}(3) \times \text{SO}(2)$. These solitons are cohomogeneity one and the soliton equation reduces to a system of $3$ ordinary differential equations. By analyzing the resultant ODEs, we prove that a $2$-parameter family of complete solitons exists in the case of each topology and that the solitons are asymptotic to cones over the link $S^2 \times S^1$. Analogous to \cite{BC23}, we define a notion of \textit{expander degree} for these cohomogeneity one solitons. We proceed to calculate this cohomogeneity one expander degree in the cases of the specific topologies.
%	\end{abstract}
	
	\tableofcontents
	
	\section{Introduction}
	
	Ricci flow, introduced by Hamilton in \cite{Ham82}, has had a significant impact on geometry and topology. A large body of applications are in dimension $3$, perhaps most notably the Poincar\'e conjecture, in which the singularity models of Ricci flow were classified by Perelman in \cite{Per02, Per03}. Perelman developed a surgery process to continue the flow beyond each of the possible singularities in $3$-dimensions. \\
	
	It is natural to try to extend these results to $4$ and higher dimensions in hope of constructing a Ricci flow through singularities in all dimensions. However, work of Bamler in \cite{Bam20a, Bam20b} show that the structures of singularities in $4$-dimensional Ricci flow are far more complicated. In particular, the singularity models may be conical. It is hoped that these conical singularities can be resolved by expanding Ricci solitons asymptotic at infinity to the given cones. \\
	
	Thus, the question of whether there exists an expanding Ricci soliton asymptotic to a given cone naturally arises. Work of Bamler and Chen in \cite{BC23} constructs a degree theory for asymptotically conical gradient expanding Ricci solitons. The central quantity, called the \textbf{expander degree} of a given compact $4$-orbifold with boundary, is essentially a signed count of the number of gradient expanding solitons defined on the interior of the orbifold which are asymptotic to any given cone with non-negative scalar curvature. This quantity is independent of the geometry of the chosen cone. Importantly, if the expander degree of a given orbifold is not $0$, then it is possible to construct gradient expanding Ricci solitons asymptotic to any cone with positive scalar curvature. \\
	
	In this paper, we define an analogous quantity which we call the \textbf{cohomogeneity one expander degree}, denoted $\textnormal{deg}^{\textnormal{sym}}_\textnormal{exp}$. Making this definition involves a certain properness result, whose proof takes up a bulk of this work. \\
	
	Our main results are the following:
	\begin{thm}\label{thm:mainthm1}
		$\textnormal{deg}^{\textnormal{sym}}_\textnormal{exp}(S^1 \times \mathbb{D}^3) = 1$
	\end{thm}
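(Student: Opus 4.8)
The plan is to realize $\textnormal{deg}^{\textnormal{sym}}_\textnormal{exp}(S^1\times\mathbb D^3)$ as the topological degree of an \emph{asymptotic cone map} and to evaluate that degree at a convenient regular value. First I would pass from metrics to ODE data: writing an invariant metric as $g = dr^2 + \phi(r)^2 g_{S^2} + \psi(r)^2 g_{S^1}$ with radial potential $f=f(r)$, the gradient expanding soliton system becomes an autonomous first-order ODE in $(\phi,\phi',\psi,\psi',f')$ with the normalized soliton constant. Smoothly closing up the collapsing $S^2$ along the core circle forces $\phi(0)=0$, $\phi'(0)=1$, $\psi'(0)=0$, $f'(0)=0$, leaving two free shooting parameters — the circle length $\ell=\psi(0)$ and a normalization $c$ of the potential. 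This identifies the moduli space $\mathcal M$ of solitons on $S^1\times\mathbb D^3$ with (an open subset of) the $2$-parameter family from \cite{NW24, Win21, BDGW15}. Since the asymptotic cone over $S^2\times S^1$ has link metric $\alpha^2 g_{S^2}+\beta^2 g_{S^1}$, recording $(\alpha,\beta)$ defines a map $\Phi:\mathcal M\to\mathcal C$ between $2$-dimensional spaces; by the properness result established earlier $\Phi$ is proper, and $\textnormal{deg}^{\textnormal{sym}}_\textnormal{exp}(S^1\times\mathbb D^3)$ is by definition its degree.

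Because the degree is independent of the chosen regular value, I would evaluate it in the degenerate regime where the link circle is short, $\beta\to 0$. As $\beta\to 0$ the core circle collapses and, after the standard rescaling, the soliton converges to an $\mathrm{SO}(3)$-invariant gradient expanding soliton on $\mathbb R^3$ asymptotic to a cone over the round $S^2$ of radius $\alpha$. Such rotationally symmetric $3$-dimensional expanders form a single $1$-parameter family, with exactly one member for each admissible cone angle; this classical count of $1$ in the reduced problem is what I expect to propagate back to $\Phi$. Concretely, I would show that for a regular value $(\alpha,\beta)$ with $\beta$ small the fiber $\Phi^{-1}(\alpha,\beta)$ consists of a single soliton limiting to the corresponding $3$-dimensional expander, so that the unsigned count equals $1$.

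To upgrade the unsigned count to the signed degree I would compute the linearization $d\Phi$ at this preimage and verify that it is an orientation-preserving isomorphism. The linearized soliton ODE, together with the Jacobi fields generated by varying $(\ell,c)$, should be shown to be nondegenerate by a maximum-principle/monotonicity argument: increasing the core circle length $\ell$ monotonically opens the asymptotic cone in the $S^1$ direction, so that $\partial\beta/\partial\ell>0$, while the $S^2$ cone angle responds monotonically to the remaining parameter. This would force $d\Phi$ to have positive Jacobian, giving local degree $+1$. Combining the single transverse preimage with sign $+1$ yields $\textnormal{deg}^{\textnormal{sym}}_\textnormal{exp}(S^1\times\mathbb D^3)=1$.

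The main obstacle is this last step: controlling the global behavior of the shooting map well enough to guarantee that the fiber is a \emph{single} transverse point with the correct orientation. Properness rules out preimages escaping to the boundary of $\mathcal M$, but ruling out additional interior solitons asymptotic to the same cone, and pinning down the sign of the Jacobian, requires the monotonicity of the cone data $(\alpha,\beta)$ in the core parameters $(\ell,c)$ together with the nondegeneracy of the linearized operator. Establishing these monotonicity and nondegeneracy statements — rather than the dimensional-reduction limit, which is comparatively soft — is where I expect the real work to lie.
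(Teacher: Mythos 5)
Your overall framing---$\textnormal{deg}^{\textnormal{sym}}_\textnormal{exp}(S^1\times\mathbb{D}^3)$ as the topological degree of the proper map from shooting data to asymptotic cone slopes---is exactly the paper's definition, but both steps of your evaluation strategy have genuine gaps, and for the same structural reason. By the scaling symmetry (\Cref{lem:a0dep}), if $(a,b,f)$ solves the system with data $(a_0,f_0)$ then $(ca,b,f)$ solves it with data $(ca_0,f_0)$; in your notation, varying the core circle length $\ell$ rescales the $S^1$ warping factor and changes nothing else. Hence your regime $\beta\to 0$ is not degenerate at all: for fixed $\alpha$, the fibers $\Phi^{-1}(\alpha,\beta)$ for different $\beta$ are in canonical bijection (rescale $\ell$), so shrinking $\beta$ buys no simplification of the count. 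Worse, along such a collapsing sequence the transverse data $(b,f)$ is literally constant, and the collapsed limit $dr^2+b(r)^2g_{S^2}$ is \emph{not} a $3$-dimensional rotationally symmetric expander: the scale-invariant terms $a'/a$ and $a''/a$ in \cref{eq:feq,eq:aeq,eq:beq} survive the collapse, so $(b,f)$ satisfies the four-dimensional system rather than the three-dimensional expander equations, and no modified potential repairs this (with $f-\log a$ one picks up an error term $da\otimes da/a^2$). So the classical count of one $3$-dimensional expander per cone angle cannot be imported this way.

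The second gap is the transversality/sign step, which you correctly flag as the real work, but which needs more than the paper ever establishes: $F$ is only shown to be \emph{continuous} (\Cref{thm:Fcont}), never differentiable, so the linearization $d\Phi$ is not available; and uniqueness of the soliton asymptotic to a given cone (equivalently, injectivity or monotonicity of $-f_0\mapsto b'_\infty$) is never proven and is an open strengthening of the theorem. The paper's actual route avoids both issues. The scaling symmetry gives $F(a_0,-f_0)=\bigl(a_0F_0(-f_0),F_1(-f_0)\bigr)$, which is properly homotopic to the product map $(x,y)\mapsto(x,F_1(y))$ (\Cref{lem:deg_lemma}), so $\deg F=\deg F_1$; and the one-dimensional degree is read off purely from end behavior---$F_1(-f_0)\to\infty$ as $-f_0\to 0$ (\Cref{thm:ic_approach_inf}) and $F_1(-f_0)\to 0$ as $-f_0\to\infty$ (\Cref{lem:f0inf})---so $F_1$ swaps the ends of $\mathbb{R}^+$, is properly homotopic to $x\mapsto 1/x$, and has degree $-1$, i.e.\ $1$ up to sign (\Cref{thm:S1R3_degexp}). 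No regular value, fiber count, or Jacobian enters. Note also that the one monotonicity you do get for free, $\partial\beta/\partial\ell>0$, is precisely the scaling direction and carries no information about the nontrivial factor $F_1$, which is where the degree actually lives.
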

	
	\begin{thm}\label{thm:mainthm2}
		$\textnormal{deg}^{\textnormal{sym}}_\textnormal{exp}(S^2 \times \mathbb{D}^2) = 0$
	\end{thm}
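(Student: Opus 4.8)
The plan is to identify $\deg^{\textnormal{sym}}_{\textnormal{exp}}(S^2\times\mathbb{D}^2)$ with the Brouwer degree of the proper ``shooting'' map that sends a cohomogeneity one expander to its asymptotic cone, and then to evaluate that degree from the behavior of the map at the two ends of its parameter range.

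I would first reduce to the soliton ODE. Writing an $\textnormal{SO}(3)\times\textnormal{SO}(2)$-invariant metric on $S^2\times\mathbb{R}^2$ as
\[
g \;=\; dr^2 \;+\; \phi(r)^2\, g_{S^2} \;+\; \psi(r)^2\, d\theta^2, \qquad r\in[0,\infty),
\]
with $g_{S^2}$ the unit round metric, the singular orbit is the central $S^2$ at $r=0$, where the $\theta$-circle collapses. Smoothness there forces $\psi(0)=0$, $\psi'(0)=1$, $\phi'(0)=0$, so that the only free datum is the radius $a=\phi(0)>0$ of the central $S^2$, together with the expander constant; using the scaling symmetry of the expander equation to normalize the latter reduces the $2$-parameter family of the abstract to a $1$-parameter family indexed by $a$. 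By the conicality result reconstructed above, each such solution is asymptotically conical with link an invariant metric on $S^2\times S^1$, and recording its shape gives a map
\[
\rho\colon (0,\infty)\longrightarrow I, \qquad \rho(a)=\lim_{r\to\infty}\frac{\phi(r)}{\psi(r)},
\]
onto the interval $I$ of invariant cone shapes. The properness result underlying the definition of $\deg^{\textnormal{sym}}_{\textnormal{exp}}$ guarantees that $\rho$ is proper, and the scaling symmetry identifies $\deg^{\textnormal{sym}}_{\textnormal{exp}}(S^2\times\mathbb{D}^2)$ with the one-dimensional degree of $\rho$; by the cone-independence of the construction this may be read off at any regular value.

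I would then compute this degree as the signed count $\sum_{a\,:\,\rho(a)=\rho_0}\operatorname{sgn}\rho'(a)$ for generic $\rho_0$. Since the domain $(0,\infty)$ is an interval, the count is governed entirely by the limits of $\rho$ as $a\to 0^+$ and $a\to\infty$: if these lie at opposite ends of $I$ the degree is $\pm 1$, while if they lie at the same end the image of $\rho$ is a proper subinterval and a regular value $\rho_0$ beyond it has empty preimage, forcing degree $0$ (equivalently the preimages of any attained value cancel in pairs). The crux is therefore the two degenerate limits. As $a\to\infty$ I expect the rescaled solution to converge to the product of a large round $S^2$ with the flat Gaussian expander on $\mathbb{R}^2$, pinning down one endpoint of $\rho$; as $a\to 0^+$ the central $S^2$ collapses and the limiting shape must be extracted from the corresponding degeneration of the ODE. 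I would show that for $S^2\times\mathbb{D}^2$ both limits fall on the same side of $I$, whence $\deg^{\textnormal{sym}}_{\textnormal{exp}}(S^2\times\mathbb{D}^2)=0$; this is exactly where the computation departs from that of \cref{thm:mainthm1}, in which the two limits are opposite and the degree equals $1$.

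The main obstacle I anticipate is controlling the ODE rigorously in these two limits — especially as $a\to 0^+$, where the central orbit pinches and the system becomes singular — and showing that $\rho$ extends continuously to the closed parameter interval with both endpoint values on the same side of $I$. The properness result does the global work of making $\rho$ proper, but the endpoint and sign analysis, which is what actually produces the vanishing, must be carried out directly from the structure of the soliton equations.
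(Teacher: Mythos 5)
Your proposal contains a genuine gap at its very first step: the claimed reduction of the $2$-parameter family to a $1$-parameter family "using the scaling symmetry of the expander equation" does not exist for the $S^2\times\mathbb{R}^2$ topology. With the normalization $\textnormal{Ric}_g+\nabla^2 f+g=0$ fixed, a constant rescaling $g\mapsto c^2g$ is \emph{not} a symmetry (it produces an expander with a different soliton constant), so there is no group action that kills one of the two parameters $(b_0,f_0)$. The only ODE-level scaling available is $a\mapsto ca$, which leaves \cref{eq:feq,eq:aeq,eq:beq} invariant because $a$ enters only through the ratios $a'/a$, $a''/a$; this is exactly what the paper exploits in the $S^1\times\mathbb{R}^3$ case (\Cref{lem:a0dep} and \Cref{lem:deg_lemma}) to reduce $F$ to a one-variable map. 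But in the $S^2\times\mathbb{R}^2$ case that freedom is already used up by the smoothness condition $a(0)=0$, $a'(0)=1$ at the collapsing circle, and the $b$-equation contains the term $\frac{1-(b')^2}{b}$, whose $1/b$ part breaks scale invariance (the central $S^2$ has intrinsic curvature, so its radius $b_0$ is a genuine modulus). Consequently $F$ is an honest map $\mathbb{R}^+\times\mathbb{R}^+\to\mathbb{R}^+\times\mathbb{R}^+$, its degree is not computed by the degree of any ratio map $\rho(a)=\lim\phi/\psi$ on an interval, and your whole endpoint-counting scheme on $(0,\infty)$ does not apply. Moreover, even granting your framework, the decisive step — that both endpoint limits of $\rho$ "fall on the same side of $I$" — is stated as an expectation with no argument; that is where all the content would have to be.

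The paper's actual proof is different in kind and worth contrasting: it shows $F$ is \emph{not surjective}, which forces degree $0$ for a proper map. Fixing cone shapes with $b'_\infty=1$, the forced initial conditions $a(0)=0$, $a'(0)=1$ together with \cref{eq:aeq} give $a''\leqslant a$, hence $a(r)\leqslant\sinh(r)$ for every soliton in the family; on the other hand, uniform $\epsilon$-conicality at a uniformly bounded distance $r_0$ from the tip (\Cref{lem:symcone} and Part 2 of \Cref{lem:uniformly_epsilon_conical}) forces $a(r_0)\gtrsim a'_\infty S_0$. If cones $(a'_\infty,1)$ with $a'_\infty$ arbitrarily large were attained, these two bounds would contradict each other, so such cones lie outside the image of $F$. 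The geometric mechanism producing the vanishing is thus the $\sinh$ growth cap on the collapsing circle direction — not a cancellation of signed preimages detected at the ends of a one-dimensional parameter interval.
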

	
	To prove this theorem, we construct a $2$-parameter family of  gradient expanding Ricci solitons each with an isometric action of $\text{SO}(3) \times \text{SO}(2)$ over the topologies $S^1 \times \mathbb{R}^3$ and $S^2 \times \mathbb{R}^2$. We note that the same solitons were originally constructed and analyzed in \cite{BDGW15}, \cite{Win21}, and \cite{NW24} using a different coordinate system. In the case of each topology, the metric can be written as a doubly warped product
	
	\begin{equation}\label{eq:metric}
		g = dr^2 + a(r)^2g_{S^1}+b(r)^2g_{S^2}
	\end{equation} 
	
	\noindent for smooth functions $a$ and $b$ with a soliton potential function $f$ which is also invariant under the group action. The high degree of symmetry possessed by these solitons implies that the expanding soliton equation $\text{Ric}_g + \nabla ^2 {f} + g = 0$ reduces to a system of $3$ ordinary differential equations in $a,b$ and $f$. It is possible to ensure that a soliton has the required topology by setting the initial conditions to the ODEs (at $r=0$) appropriately. In both cases, one of the initial conditions is the value of $f''(0)$; as the soliton equations are degenerate at $r=0$, $f''(0)$ must be specified in order to obtain a unique solution. \\
	
	To prove \Cref{thm:mainthm1} and \Cref{thm:mainthm2}, we first reconstruct the solitons from \cite{BDGW15}, \cite{Win21}, and \cite{NW24} using the coordinate system of \cite{A17}. Along the way, we prove the following theorem, which is a special case of the aforementioned work. We wish to point out that while this theorem is already known, the estimates we use in our alternate proof will be of further use when defining and calculating the expander degree.
	
	\begin{thm}\label{thm:nwtheorem}
		Suppose $M$ is diffeomorphic to either \( S^1 \times \mathbb{R}^3 \) or \( S^2 \times \mathbb{R}^2 \). There exists a two-parameter family of complete gradient expanding Ricci solitons on $M$, each invariant under the standard action of \(\mathrm{SO}(3) \times \mathrm{SO}(2)\). Further, these solitons are asymptotic to cones over the link \( S^2 \times S^1 \).
	\end{thm}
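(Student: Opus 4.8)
The plan is to turn each soliton into a solution of the ODE system produced by substituting the doubly warped ansatz \cref{eq:metric} into $\mathrm{Ric}_g + \nabla^2 f + g = 0$ with $f = f(r)$, and then to show that for each of the two topologies the associated singular initial value problem admits a genuinely two-parameter family of solutions that are complete and asymptotically conical. First I would compute the Ricci tensor of the doubly warped product and the Hessian of the radial potential, and read off the three components of the soliton equation along $\partial_r$, along $S^1$, and along $S^2$:
\begin{align*}
-\frac{a''}{a} - 2\frac{b''}{b} + f'' + 1 &= 0, \\
-\frac{a''}{a} - 2\frac{a'b'}{ab} + \frac{a'}{a}f' + 1 &= 0, \\
\frac{1}{b^2} - \frac{b''}{b} - \frac{(b')^2}{b^2} - \frac{a'b'}{ab} + \frac{b'}{b}f' + 1 &= 0.
\end{align*}
Smoothness of $g$ and of $f$ at the cap determines the admissible initial data. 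For $S^1 \times \mathbb{R}^3$ the $S^2$ fiber must close up, forcing $b(0) = 0$, $b'(0) = 1$, $a(0) = a_0 > 0$, $a'(0) = 0$; for $S^2 \times \mathbb{R}^2$ the circle collapses instead, forcing $a(0) = 0$, $a'(0) = 1$, $b(0) = b_0 > 0$, $b'(0) = 0$. In both cases evenness of $f$ gives $f'(0) = 0$ and an additive constant in $f$ is irrelevant, but because the system degenerates at $r = 0$ the quantity $f''(0)$ is left undetermined and must be prescribed. This produces exactly the two free parameters $(a_0, f''(0))$ and $(b_0, f''(0))$, the $+g$ term having already fixed the overall scale.

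The first real step is local existence at the singular endpoint. I would pass to the regularizing coordinate system of \cite{A17}, rewriting the equations as a first-order system with an isolated singularity at $r = 0$ whose linearization has the expected eigenvalue structure, and then invoke the standard existence and smooth-dependence theorem for such singular problems to obtain, for each parameter choice, a unique smooth solution on some $[0, \epsilon)$ varying smoothly in the parameters. The fact that $f''(0)$ enters as free data is exactly the eigenvalue gap predicted by this framework. Next I would prove global existence and hence completeness. On $r > 0$ the system is regular, so it is enough to produce a priori bounds keeping $a$ and $b$ away from $0$ and $\infty$ on finite intervals. The gradient soliton identities are the main tools here: the trace equation $R + \Delta f + 4 = 0$ and the conserved quantity $R + |\nabla f|^2 + 2f = \mathrm{const}$ give monotonicity for $f'$ and uniform control of the scalar curvature, which translate into bounds on $a'/a$ and $b'/b$ and show that both warping functions eventually increase. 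Ruling out finite-$r$ collapse of either fiber then gives a solution on all of $[0, \infty)$ with $a, b > 0$, and completeness is immediate because the metric has the form $dr^2 + \cdots$.

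The crux, and the main obstacle, is \emph{conicality}: I must show that $a(r) = \alpha r + o(r)$ and $b(r) = \beta r + o(r)$ as $r \to \infty$ for positive constants $\alpha, \beta$, so that $g$ converges to the cone $dr^2 + \alpha^2 r^2 g_{S^1} + \beta^2 r^2 g_{S^2}$ over the link $S^2 \times S^1$, with $f \sim -r^2/2$ and $f'' \to -1$. The clean way to see this is to recast the flow in scale-invariant variables — normalizing the logarithmic derivatives $a'/a$ and $b'/b$ by $-f'$, which grows linearly — so that the asymptotic cone appears as a hyperbolic fixed point of a compactified autonomous system, and the task reduces to showing that every trajectory from the admissible initial data converges to it at a definite rate, equivalently that $a(r)/r \to \alpha$ and $b(r)/r \to \beta$. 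The difficulty is to exclude the competing asymptotics (collapse of a fiber, or a non-conical paraboloidal end) and to extract a convergence rate sharp enough to be reused later. This is precisely where the refined estimates advertised in the introduction are required, and where the alternate proof of conicality carried out in the coordinates of \cite{A17} does its work; these same estimates will subsequently feed the properness result underlying the definition of $\mathrm{deg}^{\mathrm{sym}}_{\mathrm{exp}}$.
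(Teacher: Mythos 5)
Your setup is the same as the paper's: the three ODEs, the boundary conditions forced by each topology, the observation that the degenerate equation at $r=0$ leaves $f''(0)$ as free data (handled via the singular-IVP framework of \cite{A17} and \cite{Buz11}), and the resulting two parameters $(a_0,f''(0))$ or $(b_0,f''(0))$. The completeness sketch is also broadly in the spirit of Sections 3--5 of the paper (soliton identities giving monotonicity of $f$, growth bounds preventing finite-$r$ blow-up, cf.\ \Cref{thm:2pf}). However, there is a genuine gap at what you yourself identify as the crux: conicality is never actually proven. You reformulate it as convergence of trajectories of a compactified autonomous system to a hyperbolic fixed point, and then state that ``the difficulty is to exclude the competing asymptotics \ldots\ this is precisely where the refined estimates \ldots\ are required,'' without supplying either the fixed-point/stability analysis or those estimates. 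In the paper this step is the bulk of Section 6 and is far from routine: it requires the two-sided linear bounds $-(r+C_1) \leqslant f'(r) \leqslant -\epsilon(r-1)$ (\Cref{thm:fpbound}), a blow-up and Cheeger--Gromov compactness argument --- using point-picking, the warped-product structure of limits, and the Munteanu--Wang splitting of two-ended steady solitons --- to obtain uniform curvature bounds (\Cref{lem:Rmbound1}), an explicit integration of the Riccati equations for $a'/a$ and $b'/b$ with integrating factor $e^{F}$ to upgrade this to quadratic curvature decay (\Cref{lem:Rmbound2}), and only then the convergence $a'(r)\to a'_\infty>0$, $b'(r)\to b'_\infty>0$ and the Gromov--Hausdorff statement. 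An alternative dynamical-systems route (essentially that of \cite{NW24}, \cite{BDGW15}) could in principle replace this, but your proposal reduces the theorem to its hardest claim and stops there.

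A second, related omission is the parameter range. The two-parameter family of \emph{asymptotically conical} solitons is parametrized by $a_0>0$ (resp.\ $b_0>0$) together with $f''(0)<0$; the sign restriction is essential, not cosmetic. By \Cref{lem:fzero} and \Cref{lem:hyperbolic_space}, at $f''(0)=0$ the solution in the $S^1\times\mathbb{R}^3$ case is a quotient of hyperbolic space, whose warping functions $a_0\cosh(r/\sqrt{3})$, $\sqrt{3}\sinh(r/\sqrt{3})$ grow exponentially, so the soliton is complete but \emph{not} asymptotic to a cone over $S^2\times S^1$; moreover $f''(0)\leqslant 0$ is necessary for bounded curvature (via $R\geqslant -4$ and $R(0)=-3f_0-4$ or $-2f_0-4$). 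Your proposal presents $(a_0,f''(0))$ with $f''(0)$ unrestricted as the parameters, and your asymptotic picture ($f'\sim -r$, $f''\to -1$) implicitly uses the strict negativity of $f$, $f'$, $f''$, which fails outside this range. Any completed proof must make this restriction explicit before the conicality analysis can even begin.
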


	In each case, the $2$ parameters are the initial conditions of the soliton equations. In the $S^1 \times \mathbb{R}^3$ case, the pair of initial conditions is $(a(0), f''(0))$, where $a(0) \equiv a_0$ is the size of the $S^1$ orbit at $r=0$ in (1.1) and $f''(0) \equiv f_0$ is as described above, while in the $S^2 \times \mathbb{R}^2$ case, the pair of initial conditions is $(b(0), f''(0))$ where $b(0) \equiv b_0$ is the size of the $S^2$ orbit at $r=0$. In both cases, as the constructed solitons are asymptotic to cones, the functions $a(r)$ and $b(r)$ are asymptotic to linear functions, whose slopes we denote $a'_\infty$ and $b'_\infty$, respectively. The corresponding cone metric is $\gamma = ds^2 + (a'_\infty s)^2 g_{S^1} + (b'_\infty s)^2 g_{S^2}$. \\

	We will show that for solitons of bounded curvature, the condition $f_0 < 0$ (along with either $a_0 > 0$ or $b_0 > 0$) is necessary and sufficient for a complete solution, in which case $a$ and $b$ are asymptotically linear. Note that as our goal is to analyze asymptotically conical solitons, we do not lose anything by assuming bounded curvature.\\
	
	Thus, in the case of either topology, we can consider the map $F: \mathbb{R}^+ \times \mathbb{R}^+ \to \mathbb{R}^+ \times \mathbb{R}^+$ which takes the initial conditions $(a_0, -f_0)$ (in the $S^1 \times \mathbb{R}^3$ case) or $(b_0, -f_0)$ (in the $S^2 \times \mathbb{R}^2$ case) to the slopes $(a'_\infty,b'_\infty)$. \\

	We further show that the asymptotic cone of the soliton varies continuously in the initial conditions, which amounts to showing that the map $F$ is continuous. Further, we show that $F$ is a proper map. This allows us to define the degree of the map $F$. \\
	
	In \cite{BC23}, an invariant called the \textbf{expander degree}, denoted $\text{deg}_\text{exp}$, was defined for a certain class of compact smooth $4$-orbifolds with boundary. The expander degree of such an orbifold roughly counts (with sign), for any fixed cone metric $\gamma$, the number of gradient expanding solitons defined on the interior of the orbifold with non-negative scalar curvature which are asymptotic to $\gamma$. \\
	
	Analogous to \cite{BC23}, in this paper, we define the \textbf{cohomogeneity one expander degree}, denoted $\text{deg}^{\text{sym}}_\text{exp}$, of the orbifolds $S^1 \times \mathbb{D}^3$ and $S^2 \times \mathbb{D}^2$, as the degree of the map $F$ in the $S^1 \times \mathbb{R}^3$ and the $S^2 \times \mathbb{R}^2$ cases, respectively. We note that we do not require the hypothesis of nonnegative scalar curvature as in \cite{BC23}, although we reiterate that our results are only valid for cohomogeneity one solitons. Similar to the general case, $\text{deg}^{\text{sym}}_\text{exp}$ represents the number (counted with sign) of cohomogeneity one gradient expanding solitions defined on the interior of the orbifold which are asymptotic to a fixed cone metric.   \\
	
	The proof of \Cref{thm:nwtheorem} is based on understanding the behaviors of the profile functions $a$ and $b$ in \Cref{eq:metric}; namely, that these functions are increasing. Additionally, the soliton potential $f$ is non-positive with non-positive first and second derivatives. Putting these inequalities together, we show that the functions $a,b$ and $f$ can be extended to the interval $[0,\infty)$, showing that the corresponding solitons are complete. We note that these solitons were constructed previously and analyzed as particular cases of a more general method in \cite{NW24, Win21, BDGW15}. However, the methods we use are more amenable to proving certain curvature estimates and proving a properness result used to define the cohomogeneity one expander degree.  \\
	
	Then, we understand how the curvature of the soliton decays at infinity. We show that $|\text{Rm}_g| \leqslant {C}/{r^2}$, where $C$ is a constant that depends continuously on the initial conditions for either topology. From this, we show that the slopes $a'(r)$ and $b'(r)$ respectively converge to finite positive limiting values $a'_\infty$ and $b'_\infty$ as $r \to \infty$, indicating that the soliton metric $g$ is asymptotic to the cone metric $\gamma = ds^2 + (a'_\infty s)^2 g_{S^1} + (b'_\infty s)^2 g_{S^2}$. We then extend this result to show that the asymptotic cone of an expanding soliton varies continuously as the soliton varies; this translates into the fact that the slopes $a'_\infty$ and $b'_\infty$ are continuous functions of the initial conditions. \\
	
	Having done the above, we verify that in each case, the respective map $F$ is proper using tools from \cite{BC23} and proceed to calculate the cohomogeneity one expander degree individually in both cases. \\
	
	In Sections 2,3 and 4, we derive the soliton equations and prove the monotonicity of the functions $a,b$ and $f$. In Section $5$, we show that the solitons in each case are complete and form a $2$-parameter family. In Section $6$ and $7$, we show that the solitons are asymptotically conical and that the slopes vary continuously in the parameters. Then, we define the map $F$ and show that it is continuous. Additionally, we quantify how close an asymptotic cone metric is to the corresponding expanding soliton in terms of the geometry of the cone. This relies on a technical non-existence result of certain two-ended expanding solitons. In Section $8$, we describe how the asymptotic cones vary as the initial conditions approach their extreme values. Finally, in Section $9$, we prove that $F$ is proper using the results of Section 8. Then, we define the cohomogeneity one expander degree and prove \Cref{thm:mainthm1} and \Cref{thm:mainthm2}. \\
	
	In Appendix A, we derive the soliton equations under the assumption of the given symmetries. In Appendix B, we explain why the soliton equations have a local solution. In Appendix C, we explain the relationship between Cheeger-Gromov convergence of warped product metrics and convergence of the respective warping functions.
	
	\medskip
	
	\noindent \textbf{Acknowledgements:} The author would like to thank his PhD advisor, Prof. Richard Bamler, for suggesting the problem and for many useful discussions and several pieces of useful advice. We also thank Eric Chen for helpful discussions.
	
	\section{Soliton Equations and Boundary Conditions}
	
	Our goal is to understand gradient expanding Ricci solitons $(M,g,f)$ in $4$ dimensions with an effective isometric action of $\text{SO}(3) \times \text{SO}(2)$. Thus, we look for metrics of the form
	
	\begin{center}
		$g = dr^2 + a(r)^2g_{S^1}+b(r)^2g_{S^2}$
	\end{center}
	
	\noindent with the normalization
	
	\begin{center}
		$\displaystyle \text{Ric}_g + \nabla^2 f + g=0$
	\end{center}
	
	\noindent where $g_{S^1}=d \theta^2$ is the standard metric on the circle $S^1$ and $g_{S^2}$ is the round metric on the sphere $S^2$. We note that both expanding and steady gradient symmetric Ricci solitons on various topologies have been studied in the past, such as in \cite{BDGW15} and \cite{BDW15}, as well as \cite{NW24} in which large families of expanding solitons were constructed, generalizing the $S^2 \times \mathbb{R}^2$ and $S^1 \times\mathbb{R}^3$ solitons constructed in this paper. Using a setup similar to that of \cite{A17}, in Appendix A, we explain how the expanding soliton condition is equivalent to the following equations for $r>0$
	
	\begin{equation}\label{eq:feq}
		\displaystyle f'' = \frac{a''}{a}+2\frac{b''}{b}-1
	\end{equation}
	
	\begin{equation}\label{eq:aeq}
		\displaystyle a'' = -2\frac{a'b'}{b}+a'f'+a
	\end{equation}
	
	\begin{equation}\label{eq:beq}
		\displaystyle b'' = \frac{1-(b')^2}{b}-\frac{a'b'}{a}+b'f'+b
	\end{equation}
	
	\noindent where the smooth functions $f,a,b : [0,\infty) \to \mathbb{R}$ depend only on the coordinate $r$. Considering the soliton equations in this coordinate system as opposed to methods in the aforementioned papers simplifies the (analytic) proofs of completeness and allows us to prove (geometric) curvature estimates in Section 6. These estimates not only aid in proving that each soliton is asymptotically conical, but also gives us a notion of ``uniform $\epsilon$-conicality" (defined in Section 7), which allows us to define the cohomogeneity one expander degree later. 
	
	\medskip
	
	\noindent As $g$ must be a smooth metric at $r=0$, the boundary conditions must be chosen appropriately. It can be seen that some of the boundary conditions are determined by the topology of $M$. We will be particularly interested in the cases of $M$ being diffeomorphic to $S^1 \times \mathbb{R}^3$ and $S^2 \times \mathbb{R}^2$.
	
	\medskip
	
	\begin{lem}
		Suppose $(M, g)$ is a smooth Riemannian manifold with isometric effective action of $\textnormal{SO}(3) \times \textnormal{SO}(2)$, where $g$ is as defined earlier in this section.
		
		\medskip
		
		\noindent For \( M \) to be diffeomorphic to \( S^1 \times \mathbb{R}^3 \), it is necessary and sufficient that 
		\begin{align*}
			a(0) &> 0  & a^{\text{odd}}(0) &= 0 \\
			b^{\text{even}}(0) &= 0       & b'(0) &= 1
		\end{align*}
		
		\noindent Thus, the boundary conditions in this case are
		\begin{align}
			a(0) &= a_0, & a'(0) &= 0 \label{S1R3a} \\
			b(0) &= 0,   & b'(0) &= 1 \label{S1R3b}
		\end{align}
		
		\medskip
		
		\noindent For \( M \) to be diffeomorphic to \( S^2 \times \mathbb{R}^2 \), it is necessary and sufficient that 
		\begin{align*}
			a^{\text{even}}(0) &= 0       & a'(0) &= 1 \\
			b(0) &> 0  & b^{\text{odd}}(0) &= 0
		\end{align*}
		
		\noindent Thus, the boundary conditions in this case are
		\begin{align}
			a(0) &= 0, & a'(0) &= 1 \label{S2R2a} \\
			b(0) &= b_0,   & b'(0) &= 0 \label{S2R2b}
		\end{align}
		
		\noindent where in each case, the corresponding parameter ($a_0$ or $b_0$) is positive. Additionally, every cohomogeneity one gradient expanding Ricci soliton invariant under the standard $\textnormal{SO}(2) \times \textnormal{SO}(3)$ action over either of these topologies arises in this way.
	\end{lem}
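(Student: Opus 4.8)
The plan is to treat this as a standard smoothness analysis for a cohomogeneity one metric whose principal orbit is $S^2 \times S^1$, degenerating at $r=0$. First I would record the orbit/topology dichotomy. Since $(M,g)$ is a smooth boundaryless manifold, $\{r=0\}$ cannot be a principal orbit: if both $a(0)>0$ and $b(0)>0$, then $\{r=0\}$ is a principal orbit $S^2\times S^1$ and $M$ acquires a boundary, contradicting $M\cong S^1\times\mathbb{R}^3$ or $S^2\times\mathbb{R}^2$. Hence at least one warping function vanishes at $0$. If $b(0)=0$ the round sphere collapses and the normal slice is the cone over $S^2$, i.e.\ a $3$-disk, while the $S^1$ survives, producing the local model $S^1\times\mathbb{R}^3$; if $a(0)=0$ the circle collapses, the normal slice is the cone over $S^1$, a $2$-disk, and the $S^2$ survives, giving $S^2\times\mathbb{R}^2$. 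The two factors cannot collapse at the same $r$, as that would make the singular orbit a point and exclude the stated product topologies. Thus the ambient topology dictates exactly which of $a(0),b(0)$ vanishes, giving $a(0)=a_0>0,\ b(0)=0$ in the first case and $a(0)=0,\ b(0)=b_0>0$ in the second.

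The heart of the argument is the smooth-extension criterion across the collapsing orbit, which I would invoke factor by factor. In the $S^1\times\mathbb{R}^3$ case a tubular neighborhood of the singular circle is $S^1\times D^3$; on the $\mathbb{R}^3$ slice the induced metric is $dr^2+b(r)^2 g_{S^2}$, and the classical criterion for a rotationally symmetric metric to extend smoothly over the origin of $\mathbb{R}^3$ is that $b$ extend to a smooth odd function of $r$ with $b'(0)=1$, which in the notation of the statement is precisely $b^{\text{even}}(0)=0$ together with $b'(0)=1$. The spectator warping function $a$ must descend to a smooth function on $\mathbb{R}^3$, and a function of $r=|x|$ alone is smooth at the origin of $\mathbb{R}^3$ if and only if it extends to a smooth even function of $r$, i.e.\ $a^{\text{odd}}(0)=0$, with $a(0)=a_0$ then unconstrained beyond positivity. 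The $S^2\times\mathbb{R}^2$ case is the mirror image: on the $\mathbb{R}^2$ slice the metric $dr^2+a(r)^2\,d\theta^2$ is smooth at the axis iff $a$ is odd with $a'(0)=1$ (that is, $a^{\text{even}}(0)=0$, $a'(0)=1$), while the spectator $b$, a radial function on $\mathbb{R}^2$, is smooth iff it is even, giving $b^{\text{odd}}(0)=0$. I would appeal to the standard treatment of these conditions (the cohomogeneity one smoothness conditions of Eschenburg--Wang and Verdiani--Ziller) rather than rederive them, while pointing out how each parity/slope condition arises from the isotropy representation of the collapsing factor.

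Extracting low-order data from these parity conditions then yields the stated ODE initial values: $a^{\text{odd}}(0)=0$ gives $a'(0)=0$ and $b^{\text{even}}(0)=0$ gives $b(0)=0$ in the $S^1\times\mathbb{R}^3$ case, while $a^{\text{even}}(0)=0$ gives $a(0)=0$ and $b^{\text{odd}}(0)=0$ gives $b'(0)=0$ in the $S^2\times\mathbb{R}^2$ case. For sufficiency I would run the construction backwards: given $a,b$ satisfying the parity and slope conditions, the filled-in warped product is a smooth manifold carrying the standard isometric $\mathrm{SO}(3)\times\mathrm{SO}(2)$ action and diffeomorphic to the asserted space, since effectiveness of the standard action together with the slope-$1$ condition pins down the isotropy so that no further orbifold or lens-type quotient can occur. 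Finally, the closing assertion that every such soliton arises this way follows because any $\mathrm{SO}(3)\times\mathrm{SO}(2)$-invariant cohomogeneity one metric is, after reparametrizing by arclength $r$ from the singular orbit, of the doubly warped form in \Cref{eq:metric}, so the preceding analysis applies verbatim.

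I expect the main obstacle to be the careful justification of the smooth-extension criterion, namely verifying that a radial warping function yields a smooth metric on the filled-in normal disk exactly when it is odd with unit slope, and that a spectator warping function is smooth exactly when it is even. This is where one must connect the abstract smoothness of $g$ at the singular orbit to the concrete Taylor-coefficient conditions, and it is the step that genuinely requires the isotropy analysis rather than formal manipulation.
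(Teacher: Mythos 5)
Your proposal is correct and follows essentially the same route as the paper: the paper's entire proof is a one-line citation to Proposition 1 of the ``Doubly Warped Products'' section in Chapter 1 of \cite{Pet}, which is precisely the parity-and-unit-slope smoothness criterion you invoke (via the classical radial-function facts and the Eschenburg--Wang/Verdiani--Ziller conditions). Your additional bookkeeping---determining from the topology which factor collapses, and checking the collapsing and spectator warping functions separately---is exactly the content of that cited proposition, so there is no substantive difference in approach.
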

	
	\begin{proof}
		The lemma follows from Proposition 1 in the section ``Doubly Warped Products" in Chapter 1 of \cite{Pet}, where it is proven how the boundary conditions above ensure that the topology is as required and that the metric is smooth.
	\end{proof}

	\noindent Next, we impose boundary conditions on $f$ as in \cite{A17}; since the soliton potential is determined only up to a constant, we can choose $f(0)=0$. Additionally, we must have $\lim_{r \to 0}f'(r)=0$ for \cref{eq:aeq,eq:beq} to be satisfied in the limit $r \to 0$, giving the boundary condition $f'(0)=0$.  %Using (2.1)-(2.3), it is easy to see in the $S^1 \times \mathbb{R}^3$ case that (2.4)-(2.5) imply that the conditions on the odd derivatives of $a$ and even derivatives of $b$ hold. Similar reasoning holds for the $S^2 \times \mathbb{R}^2$ case.
	
	\medskip

	\noindent In both cases, putting together the boundary conditions on $a,b$ with those on $f$, \Cref{eq:feq} is degenerate at $r=0$, and a solution can be specified uniquely by imposing a value of $f''(0) \equiv f_0$. In Appendix B, using methods from \cite{A17} and \cite{Buz11}, we show the degeneracy of the equations at $r=0$ and how the boundary conditions above along with a value of $f_0$ ensure the existence of a unique local solution to 
	\cref{eq:feq,eq:aeq,eq:beq}
	
	\section{Soliton Identities}
	
	In this section, we collect some well-known soliton identities which we will combine with equations \cref{eq:feq,eq:aeq,eq:beq} in later sections. Importantly, we show that the scalar curvature at $r=0$ is determined by $f''(0) \equiv f_0$.
	
	\medskip
	
	\noindent Suppose $(M,g, \nabla f)$ is a cohomogeneity one gradient expanding soliton as considered in Section 2. The following identities will be useful in the analysis of the soliton equations.
	
	\begin{equation}\label{eq:trace}
		R+\Delta f + 4 = 0
	\end{equation}
	
	\begin{equation}\label{eq:bianchi}
		R+|\nabla f|^2 + 2f = \text{constant} = R(0)
	\end{equation}
	
	\noindent where $\Delta \equiv \Delta_M$ is the Laplacian of $(M,g)$. \cref{eq:trace} follows from taking the trace of the soliton equation while \cref{eq:bianchi} is an application of the second contracted Bianchi identity and the initial conditions $f(0)=f'(0)=0$. 
	
	\begin{lem}\label{lem:feq} The soliton potential $f$ satisfies $\Delta f - |\nabla f|^2-2f=3f_0$ in the $S^1 \times \mathbb{R}^3$ case and $\Delta f - |\nabla f|^2-2f=2f_0$ in the $S^2 \times \mathbb{R}^2$ case.\end{lem}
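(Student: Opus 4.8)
The plan is to combine the two soliton identities \cref{eq:trace,eq:bianchi} algebraically into a single constant, and then pin down that constant by evaluating at $r=0$. Rewriting \cref{eq:trace} as $\Delta f = -R - 4$ and \cref{eq:bianchi} as $|\nabla f|^2 + 2f = R(0) - R$, and subtracting, I obtain the pointwise identity
\[
\Delta f - |\nabla f|^2 - 2f = (-R-4) - (R(0) - R) = -4 - R(0),
\]
so the left-hand side is \emph{constant} on all of $M$, independent of $r$. It therefore suffices to evaluate it at the single point $r=0$. Since the boundary conditions force $f(0)=0$ and $f'(0)=0$ (so $|\nabla f|^2(0)=0$), the value of the constant is exactly $\Delta f(0)$. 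The whole problem thus reduces to computing $\Delta f(0)$ in terms of $f_0 = f''(0)$; equivalently, via the display above, to showing that the scalar curvature $R(0)$ is determined by $f_0$.

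To compute $\Delta f(0)$ I would use the Laplacian of a radial function on the doubly warped product \cref{eq:metric}, namely $\Delta f = f'' + \left(\tfrac{a'}{a} + 2\tfrac{b'}{b}\right)f'$, valid for $r>0$, and take the limit $r\to 0$; by smoothness of $f$ and $g$ on $M$ this limit equals $\Delta f(0)$. The term $f''$ contributes $f_0$, and near the origin I expand $f'(r) = f_0 r + O(r^2)$. In the $S^1 \times \mathbb{R}^3$ case the smoothness/parity boundary conditions ($a^{\textnormal{odd}}(0)=0$ with $a(0)=a_0>0$, and $b^{\textnormal{even}}(0)=0$ with $b'(0)=1$) give $a'/a = O(r)$ and $b(r) = r + O(r^3)$, so
\[
\frac{a'}{a}\,f' \to 0, \qquad \frac{b'}{b}\,f' = \frac{1+O(r^2)}{r+O(r^3)}\bigl(f_0 r + O(r^2)\bigr) \to f_0,
\]
whence $\Delta f(0) = f_0 + 2f_0 = 3f_0$. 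In the $S^2 \times \mathbb{R}^2$ case the roles of $a$ and $b$ are swapped: $a(r) = r + O(r^3)$ yields $(a'/a)f' \to f_0$ while $b'/b = O(r)$ gives $2(b'/b)f' \to 0$, so $\Delta f(0) = f_0 + f_0 = 2f_0$. Feeding these back into the first paragraph gives $\Delta f - |\nabla f|^2 - 2f \equiv 3f_0$ and $\equiv 2f_0$ respectively, which is the claim.

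The hard part will be the limiting evaluation of the warped-product Laplacian at the coordinate singularity $r=0$: the factor $b'/b$ (respectively $a'/a$) blows up while $f'$ vanishes, so $(b'/b)f'$ (resp. $(a'/a)f'$) is a genuine $\infty\cdot 0$ indeterminate form. Resolving it requires the precise leading-order behavior of the profile functions guaranteed by the smoothness/parity boundary conditions, combined with the Taylor expansion $f'(r)=f_0 r + O(r^2)$ coming from $f'(0)=0$. Everything else is a short algebraic manipulation of the two identities, so the single subtle estimate above is the crux, and it is exactly where the difference between the two topologies (one blowing-up warping factor rather than two) produces the coefficients $3$ and $2$.
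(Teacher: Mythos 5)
Your proposal is correct and follows essentially the same route as the paper: both combine the trace and Bianchi identities \cref{eq:trace,eq:bianchi} into the constant $-4-R(0)$, and both pin down that constant by evaluating the warped-product Laplacian term $\left(\tfrac{a'}{a}+2\tfrac{b'}{b}\right)f'$ in the limit $r\to 0$ using the boundary/parity conditions. The only cosmetic difference is that you resolve the $\infty\cdot 0$ indeterminacy by Taylor-expanding $f'$, $a$, and $b$ and phrase the result as a computation of $\Delta f(0)$, whereas the paper applies L'H\^opital's rule and phrases it as a computation of $R(0)$ — via the trace identity these are the same calculation.
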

	\begin{proof}
		Combining \cref{eq:trace} and \cref{eq:bianchi} gives $\Delta f - |\nabla f|^2+4-2f=-R(0)$. Using \Cref{eq:Laplacian} from Appendix A, rewrite \cref{eq:trace} as
		\begin{equation*}
			R=-4-f''-\left(\frac{a'f'}{a}+2\frac{b'f'}{b}\right)
		\end{equation*}
		
		\noindent In the $S^1 \times \mathbb{R}^3$ case, the boundary conditions \cref{S1R3a,S1R3b} imply that $\frac{a'f'}{a}(0)=0$ while $2\frac{b'f'}{b} \to 2f''(0)$ as $r \to 0$ by L'H\^opital's rule. In the $S^2 \times \mathbb{R}^2$ case,  \cref{S2R2a,S2R2b} imply that $\frac{a'f'}{a}(0)=f''(0)$ while $2\frac{b'f'}{b} \to 0$ as $r \to 0$.  This shows that
		
		\begin{equation}\label{eq:R0}
			R(0) = -3f_0-4 \textnormal{ for } S^1 \times \mathbb{R}^3 \hskip 2cm R(0) = -2f_0-4 \textnormal{ for } S^2 \times \mathbb{R}^2
		\end{equation}
		
		\noindent which gives us the result. 
	\end{proof}
	
	\begin{lem}\label{f_0_bounded} In the case of either topology, for a complete solution of bounded curvature to \cref{eq:feq,eq:aeq,eq:beq} with the appropriate boundary conditions, we must have $f_0 \leqslant 0$.\end{lem}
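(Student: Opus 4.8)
The plan is to reduce the claim to the classical scalar-curvature lower bound $R \ge -4$ (i.e.\ $R \ge n\lambda$ with $n=4$, $\lambda=-1$) for complete expanding gradient Ricci solitons, and then simply read off the value of $R$ at the origin. Indeed, by \cref{eq:R0} we have $R(0) = -3f_0 - 4$ in the $S^1\times\mathbb R^3$ case and $R(0) = -2f_0 - 4$ in the $S^2\times\mathbb R^2$ case, so in either topology the sign condition $f_0 \le 0$ is \emph{exactly} equivalent to $R(0) \ge -4$. Hence it suffices to prove $R(0)\ge -4$, and I will in fact establish the global bound $R \ge -4$ on all of $M$.

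For the global bound, first I recall the standard soliton identity for the scalar curvature. Differentiating \cref{eq:bianchi} and using the soliton equation (equivalently, the contracted second Bianchi identity in the form $\nabla R = 2\,\mathrm{Ric}(\nabla f)$), one obtains the well-known drift equation
\[
  \Delta_f R := \Delta R - \langle \nabla f, \nabla R\rangle = -2R - 2|\mathrm{Ric}|^2 ,
\]
whose correctness is readily checked on the Einstein expander $\mathrm{Ric}=\lambda g$. Combining this with the elementary inequality $|\mathrm{Ric}|^2 \ge \tfrac14 R^2$ gives
\[
  \Delta_f R \;\le\; -2R - \tfrac12 R^2 \;=\; -\tfrac12\,R\,(R+4).
\]

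The conclusion then follows from a minimum principle applied to $R$. Since the curvature is bounded, $R$ is bounded below, so its infimum $R_{\min}$ is finite. If $R_{\min}$ is attained at a point $p$ — in particular at the origin $r=0$, where $\nabla R$ vanishes by smoothness — then $\nabla R(p)=0$ and $\Delta R(p)\ge 0$, whence $\Delta_f R(p)\ge 0$ and the displayed inequality forces $R(p)\big(R(p)+4\big)\le 0$, i.e.\ $R_{\min}\ge -4$. In the cohomogeneity one setting this is transparent: $R$ is a bounded function of the single variable $r$ with $\Delta_f R = R'' + \big(\tfrac{a'}{a}+2\tfrac{b'}{b}-f'\big)R'$, so at any finite interior minimizer the first- and second-derivative tests yield the same conclusion. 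Evaluating at $r=0$ gives $R(0)\ge -4$, hence $f_0\le 0$ by \cref{eq:R0}.

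The main obstacle is the case in which $R$ does not attain its infimum but only approaches it as $r\to\infty$. Here I would invoke an Omori--Yau type maximum principle for the drift Laplacian $\Delta_f$, whose applicability is precisely what the bounded-curvature hypothesis buys us: along a minimizing sequence $r_k\to\infty$ one has $R(r_k)\to R_{\min}$, $R'(r_k)\to 0$, and $\liminf_k R''(r_k)\ge 0$, so passing to the limit again gives $R_{\min}(R_{\min}+4)\le 0$ and thus $R_{\min}\ge -4$. Controlling the derivatives of $R$ along such a sequence — using $|\nabla R|\le 2|\mathrm{Ric}|\,|\nabla f|$ together with bounded curvature and the structure of the reduced ODE — is the one technical point requiring care; everything else is a direct consequence of the soliton identities \cref{eq:trace,eq:bianchi} and \Cref{lem:feq} already recorded.
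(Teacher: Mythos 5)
Your overall reduction is exactly the paper's: the paper's entire proof consists of invoking the lower bound $R \geqslant -4$ for complete expanding solitons of bounded curvature (citing Theorem 2.3 of \cite{BC23}, a maximum-principle result) and combining it with \cref{eq:R0}. Your drift identity $\Delta_f R = -2R - 2|\mathrm{Ric}|^2$, the Cauchy--Schwarz step $|\mathrm{Ric}|^2 \geqslant \tfrac14 R^2$, and the interior-minimum argument are all correct, so your argument is complete in the case where $\inf_M R$ is attained.

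The genuine gap is the non-attained case, and it is not the routine technicality your last paragraph suggests. The drift coefficient in $\Delta_f R = R'' + \bigl(\tfrac{a'}{a}+2\tfrac{b'}{b}-f'\bigr)R'$ is unbounded: from \cref{eq:bianchi} and bounded $R$ one gets $(f')^2 \leqslant C - 2f$, hence $|f'| \leqslant r + C$, and this linear growth is sharp. Consequently a sequence with $R(r_k)\to\inf R$, $R'(r_k)\to 0$ and $\liminf_k R''(r_k)\geqslant 0$ does \emph{not} yield $\liminf_k \Delta_f R(r_k)\geqslant 0$: the term $-f'(r_k)R'(r_k)$ can remain bounded away from zero (or diverge) unless one has the stronger decay $r_k|R'(r_k)|\to 0$. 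Your proposed remedy, $|\nabla R|\leqslant 2|\mathrm{Ric}|\,|\nabla f|$, points the wrong way --- it is an \emph{upper} bound that itself grows linearly, so it cannot make the drift term small along the sequence. There is also a circularity hazard in ``the structure of the reduced ODE'': the estimates of Sections 4 and 6 (monotonicity of $f$, $\tfrac{a'}{a},\tfrac{b'}{b}\leqslant C$, $-f'\leqslant r+C_1$) are proved under the assumption $f_0\leqslant 0$ resp.\ $f_0<0$, which is precisely what this lemma is meant to establish. The gap is fillable, but requires work you have not done: for instance, minimize $R+\epsilon\log(1+r)$, which produces points $r_\epsilon\to\infty$ with $R''(r_\epsilon)\geqslant 0$ and $(1+r_\epsilon)|R'(r_\epsilon)|=\epsilon$, and pair this with bounds derived from bounded curvature \emph{alone}: $|f'|\leqslant r+C$ as above, and $\tfrac{a'}{a}+2\tfrac{b'}{b}\leqslant C$ on $[1,\infty)$ via the Riccati inequality $H'+\tfrac13 H^2 \leqslant \tfrac{a''}{a}+2\tfrac{b''}{b}\leqslant C$ (using only \Cref{lem:abmon}, whose proof does not use the sign of $f_0$). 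Alternatively, do what the paper does: quote the known bound, which is proved in \cite{BC23} by running the parabolic maximum principle on the self-similar Ricci flow, where bounded curvature is exactly the hypothesis needed for the complete noncompact maximum principle.
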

	\begin{proof}
		Under the assumption of bounded curvature, maximum principle methods (as in, for example, Theorem 2.3 of \cite{BC23}; note the difference in normalizations) allow us to conclude that expanding solitons with the chosen normalization satisfy $R \geqslant -4$. Combining this inequality with \cref{eq:R0} gives us the result in each case.
	\end{proof}
	
	\noindent Given a complete solution to \cref{eq:feq,eq:aeq,eq:beq}, the corresponding metric $g$ as in \Cref{eq:metric} is complete gradient expanding Ricci soliton metric.
	
	\medskip
	
	\noindent A useful quantity is the ratio $P:=\frac{b}{a}$. By calculating using the soliton equations, we see the following:
	
	\begin{equation}\label{eq:Pgrowth}
		\displaystyle P'' = \left(f'-\frac{b'}{b}-2\frac{a'}{a} \right)P'+\frac{1}{b^2}P
	\end{equation}

	\section{Monotonicity Properties}
	
	\noindent From now on, we will assume that $f''(0)$ is non-positive as described in the previous section. In this section, we will deduce the appropriate monotonicity properties of $a,b$ and $f$. We will observe, similar to \cite{A17} (which considered steady solitons as opposed to expanders) that $a$ and $b$ are monotonically increasing and that $f, f', f'' \leqslant 0$. Note that the results in this section do not assume completeness. Denote by $I \subseteq [0,\infty)$ the maximal interval of existence of the solutions to \cref{eq:feq,eq:aeq,eq:beq}; we know that $I$ contains $0$.
	
	\begin{lem}\label{lem:abmon}The functions $a',b'$ are positive on $I-\{0\}$\end{lem}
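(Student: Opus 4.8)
The plan is to argue in two stages: first I would show that both $a'$ and $b'$ are positive on a small right-neighborhood $(0,\epsilon)$ of the origin, and then I would run a ``first-exit'' continuity argument to propagate this positivity across all of $I-\{0\}$. The key observation making the second stage work is that each of \cref{eq:aeq,eq:beq} contains a curvature term ($a$, respectively $\tfrac{1}{b}+b$) of a definite sign, while the potentially troublesome terms involving $f'$ are always multiplied by a first derivative of $a$ or $b$.

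For the first stage, note that in each topology exactly one of the two derivatives starts out strictly positive and the other starts at $0$. In the $S^1\times\mathbb{R}^3$ case, \cref{S1R3b} gives $b'(0)=1>0$, so $b'>0$ near $0$ by continuity, while $a'(0)=0$; in the $S^2\times\mathbb{R}^2$ case the roles are reversed. To handle the derivative that vanishes at $0$ I would compute its second derivative there directly from the soliton equations, resolving the $0/0$ terms at the axis. For instance, in the $S^1$ case, evaluating \cref{eq:aeq} as $r\to 0$ requires the term $-2a'b'/b$; using $a'(r)=a''(0)\,r+o(r)$ and $b(r)=r+o(r)$ this term tends to $-2a''(0)$, so the equation yields $a''(0)=-2a''(0)+a_0$, i.e.\ $a''(0)=a_0/3>0$, whence $a'(r)>0$ for small $r>0$. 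The analogous computation from \cref{eq:beq} in the $S^2$ case gives $b''(0)=\tfrac12\!\left(b_0+\tfrac{1}{b_0}\right)>0$. Thus both $a'$ and $b'$ are positive on some $(0,\epsilon)$.

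For the second stage, suppose for contradiction that the open set $\{r>0: a'(r)>0 \text{ and } b'(r)>0\}$ does not exhaust $I-\{0\}$, and let $r_0$ be the infimum of the times at which $a'$ or $b'$ first fails to be positive. Then both $a',b'>0$ on $(0,r_0)$ and, by continuity and minimality, at least one of $a'(r_0),b'(r_0)$ equals $0$ while the other is $\geqslant 0$. Monotonicity on $(0,r_0)$ together with the boundary conditions forces $a(r_0)>0$ and $b(r_0)>0$. Now I would close the argument by evaluating the appropriate equation at $r_0$: if $a'(r_0)=0$, then the first two terms of \cref{eq:aeq} vanish and $a''(r_0)=a(r_0)>0$; if $b'(r_0)=0$, then \cref{eq:beq} collapses to $b''(r_0)=\tfrac{1}{b(r_0)}+b(r_0)>0$. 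In either case the vanishing derivative is approached from strictly positive values on the left, forcing the corresponding second derivative to be $\leqslant 0$ at $r_0$, which contradicts the strict positivity just obtained.

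I expect the main subtlety to lie in the first stage, namely correctly resolving the indeterminate $0/0$ terms in \cref{eq:aeq,eq:beq} at the axis where $b$ (resp.\ $a$) vanishes, in order to extract the sign of the initial acceleration; this relies on the smoothness of the solution established in Appendix~B. The second stage is comparatively robust, and I would emphasize that because the terms carrying $f'$ are each multiplied by the vanishing derivative, the soliton potential plays no role in this argument. Consequently the monotonicity of $a$ and $b$ holds independently of the sign hypothesis $f_0\leqslant 0$ imposed at the start of the section.
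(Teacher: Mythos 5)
Your proof is correct and follows essentially the same route as the paper's: positivity of $a'$ and $b'$ on a small interval $(0,\epsilon)$ via the boundary conditions together with a Taylor/L'H\^opital evaluation of the degenerate terms at the axis, followed by a first-zero argument exploiting that in \cref{eq:aeq,eq:beq} the terms carrying $f'$ are multiplied by the vanishing derivative while the remaining terms ($a$, respectively $\tfrac{1}{b}+b$) are strictly positive at such a point. The only noteworthy differences are that you track the first failure of positivity of $a'$ and $b'$ jointly (which handles the coupling between the two equations a bit more carefully than the paper's separate treatment of each function), and that your value $b''(0)=\tfrac{1}{2}\left(b_0+\tfrac{1}{b_0}\right)$ in the $S^2\times\mathbb{R}^2$ case is the correct one (it agrees with the paper's Appendix B), whereas the proof in the paper states $b_0/2$ --- a harmless slip, since only the sign is used.
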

	
	\begin{proof}
		First, we look at the $S^1 \times \mathbb{R}^3$ case:
		
		\medskip
		
		\noindent Using L'H\^{o}pital's rule on \cref{eq:aeq}, we see that $a''(0)=\frac{a_0}{3}$. As $a'(0)=0$, we see that $a>0$ and $a'>0$ on a small interval of the form $(0,\epsilon)$. Consider the first $r_0 > 0$ (if it exists) with $a'(r_0)=0$; then \cref{eq:aeq} becomes $a'' = a>0$, implying $a'>0$ for a small distance beyond $r_0$.
		
		\medskip
		
		\noindent As $b'(0)=1$, we see that $b>0$ and $b'>0$ on a small interval of the form $(0,\epsilon)$. Consider the first $r_0 > 0$ (if it exists) with $b'(r_0)=0$, then \cref{eq:beq} becomes $b'' = \frac{1}{b}+b>0$, implying $b'>0$ for a small distance beyond $r_0$.
		
		\medskip
		
		\noindent Now, we look at the $S^2 \times \mathbb{R}^2$ case:
		
		\medskip
		
		\noindent As $a'(0)=1$, we see that $a >0$ on a small interval of the form $(0, \epsilon)$. As in the previous case, consider the first $r_0 > 0$ (if it exists) with $a'(r_0)=0$, then \cref{eq:aeq} becomes $a'' = a>0$, implying $a'>0$ for a small distance beyond $r_0$.
		
		\medskip
		
		\noindent Using L'H\^{o}pital's rule on \cref{eq:beq}, we see that $b''(0)=\frac{b_0}{2}$. As $b'(0)=0$, we see that $b>0$ and $b'>0$ on a small interval of the form $(0,\epsilon)$. Consider the first $r_0 > 0$ (if it exists) with $b'(r_0)=0$, then \cref{eq:beq} becomes $b'' = \frac{1}{b} + b>0$, implying $b'>0$ for a small distance beyond $r_0$.
	\end{proof}
	
	\noindent To understand the behavior of $f$, we consider the cases $f''(0) < 0$ and $f''(0)=0$ separately as follows.
	
	\begin{lem}\label{lem:fmon} If $f_0<0$, the functions $f,f',f''$ are negative on $I-\{0\}$ and hence $f$ and $f'$ are monotonically decreasing. \end{lem}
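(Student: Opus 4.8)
The plan is to extract an autonomous second-order relation for $f$ from the integrated identity of \Cref{lem:feq} and then run three nested first-hitting-time arguments. Writing $Q := \frac{a'}{a} + 2\frac{b'}{b}$ and recalling from Appendix A that $\Delta f = f'' + Qf'$ while $|\nabla f|^2 = (f')^2$, the identity of \Cref{lem:feq} rearranges to
\begin{equation*}
f'' = (f')^2 + 2f + kf_0 - Qf',
\end{equation*}
where $k=3$ in the $S^1 \times \mathbb{R}^3$ case and $k=2$ in the $S^2 \times \mathbb{R}^2$ case. Since \Cref{lem:abmon} together with the boundary conditions forces $a,b>0$ on $I-\{0\}$, this expression is well defined there. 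As a base case, $f''(0)=f_0<0$ gives $f''<0$ on some interval $(0,\epsilon)$ by continuity, and since $f(0)=f'(0)=0$ this yields first $f'<0$ and then $f<0$ on $(0,\epsilon)$.

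To globalize $f'<0$, I would argue by contradiction. Let $r_0>0$ be the first zero of $f'$; then $f'<0$ on $(0,r_0)$, hence $f(r_0)<0$, and for $f'$ to return to $0$ from below we must have $f''(r_0)\geq 0$. But substituting $f'(r_0)=0$ into the displayed relation collapses the $Qf'$ term and leaves $f''(r_0)=2f(r_0)+kf_0<0$, a contradiction. Thus $f'<0$ on all of $I-\{0\}$, which immediately gives $f<0$ there as well.

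The last and most delicate step is $f''<0$. I would take the first zero $r_1>0$ of $f''$ and differentiate the relation to obtain
\begin{equation*}
f''' = 2f'f'' + 2f' - Q'f' - Qf''.
\end{equation*}
At $r_1$ every term carrying a factor of $f''$ vanishes, so $f'''(r_1) = f'(r_1)\bigl(2-Q'(r_1)\bigr)$. The crux is to pin down the sign of $2-Q'(r_1)$: computing $Q' = \frac{a''}{a} + 2\frac{b''}{b} - \bigl(\frac{a'}{a}\bigr)^2 - 2\bigl(\frac{b'}{b}\bigr)^2$ and feeding $\frac{a''}{a}+2\frac{b''}{b}=f''+1=1$ back in from \cref{eq:feq} (using $f''(r_1)=0$) gives $2-Q'(r_1) = 1 + \bigl(\frac{a'}{a}\bigr)^2 + 2\bigl(\frac{b'}{b}\bigr)^2 > 0$. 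Since $f'(r_1)<0$ by the previous step, this forces $f'''(r_1)<0$, contradicting the $f'''(r_1)\geq 0$ required for $f''$ to reach $0$ from below. Hence $f''<0$ on $I-\{0\}$, and the monotonicity of $f$ and $f'$ follows at once from $f'<0$ and $f''<0$. The main obstacle is precisely this third step: unlike the signs of $f$ and $f'$, which drop out directly from the collapsed relation at a critical point, controlling $f''$ requires differentiating once more and reinjecting the defining equation \cref{eq:feq} to evaluate $Q'$, so the key is recognizing that this substitution renders $2-Q'$ manifestly positive.
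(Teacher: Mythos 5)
Your proposal is correct and follows essentially the same route as the paper: the same rearranged identity $f'' = (f')^2 + 2f + kf_0 - Qf'$ from \Cref{lem:feq}, the same first-zero argument giving $f''(r_0) = 2f(r_0)+kf_0 < 0$ at a putative zero of $f'$, and the same differentiation combined with the substitution $\frac{a''}{a}+2\frac{b''}{b} = f''+1$ from \cref{eq:feq} to show $f'''(r_1) = \bigl(1+\bigl(\frac{a'}{a}\bigr)^2+2\bigl(\frac{b'}{b}\bigr)^2\bigr)f'(r_1) < 0$ at a putative zero of $f''$. The only cosmetic difference is that you evaluate $Q'$ directly at the critical point rather than substituting into the general expression for $f'''$ first, which changes nothing.
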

	
	\begin{proof}
		By ~\Cref{lem:feq}, we get 
		\begin{equation}\label{eq:flaplacian}
			\displaystyle f''+\left(\frac{a'}{a}+2\frac{b'}{b} \right)f'-(f')^2-2f=3f''(0)
		\end{equation}
		
		\noindent in the $S^1 \times \mathbb{R}^3$ case (the other case is almost exactly the same, with $3f''(0)$ replaced by $2f''(0)$). As $f''(0) <0$ and $f'(0)=0$, we have $f,f' <0$ on a small interval of the form $(0, \epsilon)$. If there is a point where $f'=0$, let $r_0$ be the first such point. Then, we must have $f(r_0)<0$. Then, at $r_0$, \cref{eq:flaplacian} simplifies to
		
		\begin{equation*}
			f''(r_0) = 3f''(0)+2f(r_0)<0
		\end{equation*}
		
		\noindent This shows that $f$ is monotonically decreasing and $f' <0$ for $r>0$.
		
		\medskip
		
		\noindent As $f_0 <0$, we know that $f''<0$ on an interval of the form $(0, \epsilon)$. Differentiating \cref{eq:flaplacian}, we get
		
		\begin{equation*}
			\displaystyle f'''=2f'+2f'f''-\left(\frac{a'}{a}+2\frac{b'}{b} \right)'f'-\left(\frac{a'}{a}+2\frac{b'}{b} \right)f''
		\end{equation*}
		
		\noindent Applying equations \cref{eq:feq,eq:aeq,eq:beq}, we see that 
		
		\begin{equation*}
			\displaystyle \left(\frac{a'}{a}+2\frac{b'}{b} \right)'= f''+1-\left(\left(\frac{a'}{a}\right)^2+2\left(\frac{b'}{b}\right)^2 \right)
		\end{equation*}
		
		\noindent Thus, we see that 
		
		\begin{equation*}
			\displaystyle f'''=f'+f'f''+\left(\left(\frac{a'}{a}\right)^2+2\left(\frac{b'}{b}\right)^2 \right)f'-\left(\frac{a'}{a}+2\frac{b'}{b} \right)f''
		\end{equation*}
		
		\noindent This implies that at a point where $f''=0$, we have
		
		\begin{equation}
			\displaystyle f'''=\left(1+\left(\frac{a'}{a}\right)^2+2\left(\frac{b'}{b}\right)^2 \right)f' < 0
		\end{equation}
		
		\noindent which shows that $f''<0$.
		
	\end{proof}
	
	\begin{lem}\label{lem:fzero} If $f_0=0$ then $f \equiv 0$. \end{lem}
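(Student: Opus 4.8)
The plan is to read the lemma as a uniqueness statement for the Bianchi-type identity of \Cref{lem:feq}, specialized to $f_0=0$. In that case the identity reads $\Delta f-|\nabla f|^2-2f=0$, which in the radial variable is exactly \eqref{eq:flaplacian} with vanishing right-hand side, namely $f''+\bigl(\tfrac{a'}{a}+2\tfrac{b'}{b}\bigr)f'-(f')^2-2f=0$, and this holds verbatim in both topologies (the constant on the right is $3f_0$ or $2f_0$, both $0$). Since $f\equiv 0$ manifestly solves this equation with the data $f(0)=f'(0)=0$, the content of the lemma is that this is the only such solution. The single obstacle is that the coefficient $h:=\tfrac{a'}{a}+2\tfrac{b'}{b}$ blows up like $k/r$ at the degenerate endpoint $r=0$ (with $k=2$ on $S^1\times\mathbb{R}^3$ and $k=1$ on $S^2\times\mathbb{R}^2$), so one cannot invoke ODE uniqueness directly there; taming this singular point is the crux of the argument.

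To handle the singularity I would rewrite the equation in divergence form. Setting $\mu:=ab^2$, one has $h=(\log\mu)'$, so that $\Delta f=\mu^{-1}(\mu f')'$ and the equation becomes $(\mu f')'=\mu\bigl((f')^2+2f\bigr)$. Integrating from $0$, and using that $\mu(0)=0$ (as $a$ or $b$ vanishes at the core orbit) together with $f'(0)=0$ to annihilate the boundary term, yields the integral identity $\mu(r)f'(r)=\int_0^r \mu\bigl((f')^2+2f\bigr)\,ds$. Because $a,b$ are positive and increasing by \Cref{lem:abmon}, the weight $\mu$ is nondecreasing, and hence $\mu(r)^{-1}\int_0^r\mu\,ds\le r$. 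This is the key inequality that converts the $1/r$ singularity into a harmless factor of $r$.

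From here a short Grönwall argument closes the proof on a small interval $[0,r_1]$. Put $F(r):=\sup_{[0,r]}|f|$ and $G(r):=\sup_{[0,r]}|f'|$; choosing $r_1$ so small that $|f'|\le 1$ on $[0,r_1]$ (possible since $f'(0)=0$) gives $(f')^2\le|f'|$, and the integral identity together with the weight bound yields $G(r)\le\bigl(G(r)+2F(r)\bigr)r$, while $f(0)=0$ gives $F(r)\le rG(r)$. Combining these forces $G(r)\bigl(1-r-2r^2\bigr)\le 0$, so $F\equiv G\equiv 0$ for $r$ small, i.e. $f\equiv 0$ on $[0,r_1]$. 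Finally, for $r>0$ the system \eqref{eq:aeq}, \eqref{eq:beq} together with the second-order equation for $f$ has real-analytic right-hand side (the denominators $a,b$ are nonvanishing), so the solution is real-analytic on the interior of $I$; since $f$ vanishes on the open subinterval $(0,r_1)$, the identity theorem for real-analytic functions propagates this to all of $I$, giving $f\equiv 0$. The decisive step is the passage to divergence form, which is precisely what makes the Grönwall estimate available at the degenerate endpoint; everything after that is routine.
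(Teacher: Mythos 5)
Your proof is correct, but it takes a genuinely different route from the paper. The paper first invokes continuous dependence on the parameter $f_0$ (Appendix B) to pass the sign conclusions of \Cref{lem:fmon} to the limiting case, obtaining $f,f',f''\leqslant 0$ when $f_0=0$; it then argues by contradiction, rewriting \cref{eq:flaplacianzero} as $f''=-\bigl(\tfrac{a'}{a}+2\tfrac{b'}{b}+f'+2\tfrac{f}{f'}\bigr)f'$ and observing that near $r=0$ the parenthetical factor is positive (since $\tfrac{a'}{a}+2\tfrac{b'}{b}\to\infty$, $f'\to 0$, and $|f/f'|\leqslant r$), which would force $f''>0$ at some point, contradicting $f''\leqslant 0$. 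You instead treat the lemma as a direct uniqueness statement at the degenerate endpoint: the divergence-form rewriting $(\mu f')'=\mu\bigl((f')^2+2f\bigr)$ with $\mu=ab^2$, the vanishing of the boundary term at $r=0$, and the monotonicity of $\mu$ (from \Cref{lem:abmon}) convert the $1/r$ singularity into the harmless factor $r$ in your Gr\"onwall estimate, after which ODE uniqueness (or analyticity) propagates the vanishing. What your approach buys is self-containedness: you never need the sign information $f,f',f''\leqslant 0$, and hence never need the continuity-in-parameters result from Appendix B or the slightly delicate limit argument that transfers \Cref{lem:fmon} to $f_0=0$ (nor the paper's unproved assertion that non-vanishing of $f$ near $0$ yields an interval where both $f<0$ and $f'<0$). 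What the paper's approach buys is brevity given the machinery already in place, since the monotonicity and parameter-continuity results are used elsewhere anyway. Both proofs use \Cref{lem:abmon}, yours for the monotonicity of the weight $\mu$, and both conclude with the same standard-ODE-uniqueness propagation away from $r=0$.
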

	
	\begin{proof}
		Since the solution to equations \cref{eq:feq,eq:aeq,eq:beq} depends continuously on the parameter $f''(0)$ (by the results of Appendix B), we know by \Cref{lem:fmon} that $f,f',f'' \leqslant 0$ when $f''(0)=0$. Equation \cref{eq:flaplacian} becomes 
		
		\begin{equation}\label{eq:flaplacianzero}
			\displaystyle f''=-\left(\frac{a'}{a}+2\frac{b'}{b} \right)f'+(f')^2+2f
		\end{equation}
		
		\noindent Using \cref{eq:flaplacianzero}, by standard theory of ordinary differential equations, it is sufficient to show that $f$ is identically zero in a neighborhood of $0$ to conclude that $f\equiv0$ on $\mathbb{R}$. Suppose by way of contradiction that this is not the case -- then, there is an interval $(0,\epsilon)$ on which $f,f'<0$. Then, we can rewrite \cref{eq:flaplacianzero} as 
		
		\begin{equation}\label{eq:flaplacianzerostep}
			\displaystyle f''=-\left(\left(\frac{a'}{a}+2\frac{b'}{b}\right) +f'+2\frac{f}{f'} \right)f'
		\end{equation}
		
		\noindent We notice that $f'$ $ \to 0$ as $r \to 0$ and that $\left(\frac{a'}{a}+2\frac{b'}{b}\right) \to \infty$ as $r \to 0$ (since $b(0)=0$ and $b'(0)=1$ in the $S^1 \times \mathbb{R}^3$ case, and $a(0)=0$ and $a'(0)=1$ in the $S^2 \times \mathbb{R}^2$ case). Additionally,
		
		\begin{center}
			$\displaystyle \left| \frac{f(r)}{f'(r)} \right| = \left| \frac{\int_{0}^{r}f'(s)ds}{f'(r)} \right| \leqslant \left| \frac{rf'(r)}{f'(r)} \right| =r$
		\end{center}

		\noindent where the inequality follows since $|f'(s)| \leqslant |f'(r)|$ for $s<r$, as $f'' \leqslant 0$. Thus, $\frac{f'}{f} \to 0$ as $r \to 0$.
		
		\medskip
		
		\noindent  This implies that the quantity in the parentheses in \cref{eq:flaplacianzerostep} is positive at a point in $(0,\epsilon)$; this is a contradiction, as this would imply that $f''>0$ at that point.
	\end{proof}	
	
	\noindent We have one more monotonicity result, for the quantity $P=\frac{b}{a}$.
	
	\begin{lem}\label{lem:Pgrowth}
		Suppose there exists an $r_0 > 0$ with $P'(r_0) \geqslant 0$. Then, $P'(r) \geqslant 0$ for all $r \geqslant r_0$.
	\end{lem}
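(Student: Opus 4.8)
The plan is to treat \cref{eq:Pgrowth} as a linear second-order equation for $P$ and to extract the sign information for $P'$ through a barrier (maximum-principle) argument rather than by solving anything. First I would record the two facts that drive everything: on $I-\{0\}$ we have $a,b>0$, which follows from the boundary conditions of the relevant topology together with \Cref{lem:abmon} (giving $a',b'>0$, so that the orbits opening up at $r=0$ stay positive for $r>0$). Consequently $P=b/a>0$ and the zeroth-order coefficient $1/b^2$ in \cref{eq:Pgrowth} is strictly positive. The crucial observation is then purely algebraic: at any point $r$ where $P'(r)=0$, the first-order term in \cref{eq:Pgrowth} is annihilated and
\[
P''(r) = \frac{1}{b(r)^2}\,P(r) > 0.
\]
Thus $P'$ can never cross zero from above, which is exactly the obstruction one needs.

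Second, I would make this precise with a \emph{first-violation} argument. Assuming for contradiction that $P'(r)<0$ for some $r>r_0$, set $r_* = \inf\{\, r>r_0 : P'(r)<0 \,\}$. Since $P'(r_0)\geq 0$, continuity forces $P'\geq 0$ on $[r_0,r_*)$ and $P'(r_*)\leq 0$, so $P'(r_*)=0$; moreover $r_*$ does not itself lie in the set, so there is a sequence $r_n \downarrow r_*$ with $P'(r_n)<0$. The difference quotients $\frac{P'(r_n)-P'(r_*)}{r_n-r_*}$ are then negative, and since $P$ is twice differentiable their limit is $P''(r_*)$, forcing $P''(r_*)\leq 0$. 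This contradicts the displayed inequality, so no such $r$ exists and $P'\geq 0$ on $[r_0,\infty)\cap I$.

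I expect the only delicate point to be the bookkeeping at the infimum $r_*$ — confirming that $P'(r_*)=0$ rather than strictly negative, and that the violating points accumulate at $r_*$ from the right — together with the boundary case $P'(r_0)=0$, where one first notes $P''(r_0)>0$ to push $P'$ strictly positive just beyond $r_0$ before invoking the argument above. Everything else is immediate from the sign of the zeroth-order coefficient in \cref{eq:Pgrowth}; in particular, no quantitative control on the sign-indefinite first-order coefficient $f'-\tfrac{b'}{b}-2\tfrac{a'}{a}$ is required, precisely because that term vanishes wherever $P'$ does.
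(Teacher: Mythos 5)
Your proposal is correct and takes essentially the same approach as the paper: both arguments hinge on the observation that at any zero of $P'$, equation \cref{eq:Pgrowth} forces $P'' = P/b^2 > 0$ (with $P>0$ coming from \Cref{lem:abmon}), which prevents $P'$ from ever crossing below zero. Your first-violation/infimum bookkeeping is simply a more careful write-up of the paper's terser "first point $r_1>r_0$ with $P'(r_1)=0$" argument.
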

	
	\begin{proof}
		Suppose $r_0$ exists and $r_1 >r_0$ is the first point beyond $r_0$ with $P'(r_1)=0$. Then, by equation \Cref{eq:Pgrowth} we see that $P''(r_1) = \frac{P}{b^2} >0$, implying that $P'$ remains nonnegative. As a consequence, if $P$ is ever increasing, it remains increasing.
	\end{proof}
	
	\section{Completeness of Solitons}
	
	The main results of this section are the following theorem and its corollary which show that there exists a $2$-parameter family of complete cohomogeneity one gradient expanding Ricci solitons in the case of each topology. Note that our assumption of the initial condition $f''(0) \equiv f_0$ being negative is sufficient for a complete solution (and necessary for bounded curvature, as explained in Section 3).

	\begin{thm}\label{thm:2pf}For either set of boundary conditions \cref{S1R3a,S1R3b} or \cref{S2R2a,S2R2b}, if $f_0 < 0$ and $a_0, b_0>0$, there exists a unique complete solution $f,a,b : [0,\infty) \to \mathbb{R}$ to the soliton equations \cref{eq:feq,eq:aeq,eq:beq}. \end{thm}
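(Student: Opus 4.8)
The plan is to argue by contradiction against the maximality of the interval of existence. By the local existence theory of Appendix B together with \Cref{lem:abmon} and \Cref{lem:fmon}, on the maximal interval $I=[0,T)$ we already know that $a,b$ are positive and increasing for $r>0$ and that $f,f',f''<0$. Suppose $T<\infty$. Writing the system as a first-order ODE in $(a,a',b,b',f,f')$, one checks from \cref{eq:feq,eq:aeq,eq:beq} that the associated vector field is smooth on the open set $\{a>0,\ b>0\}$. By the standard escape lemma it therefore suffices to show that, after fixing any small $\epsilon>0$, the six quantities $a,a',b,b',f,f'$ remain bounded on $[\epsilon,T)$ while $a,b$ stay bounded below by the positive constants $a(\epsilon),b(\epsilon)$; this will contradict the finiteness of $T$ and force $T=\infty$.

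To bound $a$, I would discard the two manifestly non-positive terms $-2a'b'/b$ and $a'f'$ in \cref{eq:aeq} (here $f'<0$ by \Cref{lem:fmon} and $a',b'>0$ by \Cref{lem:abmon}) to obtain $a''\le a$. Since $a'\ge 0$, the quantity $(a')^2-a^2$ has derivative $2a'(a''-a)\le 0$, so it is non-increasing; hence $(a')^2\le a^2+C$ for a constant $C$ determined by \cref{S1R3a} or \cref{S2R2a}. This differential inequality integrates to at most exponential growth of $a$, so both $a$ and $a'$ are bounded on $[\epsilon,T)$. The bound for $b$ is analogous: dropping the non-positive terms $-a'b'/a$ and $b'f'$ in \cref{eq:beq} and using $(1-(b')^2)/b\le 1/b\le 1/b(\epsilon)=:K$ gives $b''\le b+K$, after which the non-increasing quantity $(b')^2-(b+K)^2$ yields $(b')^2\le (b+K)^2+C'$ and hence exponential-growth bounds on $b$ and $b'$.

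The crux is controlling $f$ and $f'$, for which the defining equation \cref{eq:feq} is awkward; instead I would use the soliton identity of \Cref{lem:feq} in the form \cref{eq:flaplacian}, namely
\begin{equation*}
	f'' = -\left(\frac{a'}{a}+2\frac{b'}{b}\right)f' + (f')^2 + 2f + 3f_0
\end{equation*}
(with $3f_0$ replaced by $2f_0$ in the $S^2\times\mathbb{R}^2$ case). Since $f''<0$ and the term $-\left(a'/a+2b'/b\right)f'$ is non-negative (as $f'<0$), this forces $(f')^2+2f+3f_0<0$, i.e.\ $(f')^2\le -2f+3|f_0|$. Setting $v=-f>0$, this reads $v'=-f'\le\sqrt{2v+3|f_0|}$, which integrates to give at most quadratic growth of $v$; thus $f$ is bounded below on $[\epsilon,T)$, and then $(f')^2\le -2f+3|f_0|$ bounds $f'$ as well.

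Having bounded all six quantities and kept $a,b$ away from zero on $[\epsilon,T)$, the solution stays in a compact subset of the domain of smoothness of the vector field, contradicting $T<\infty$; hence $I=[0,\infty)$ and the solution is complete. Uniqueness follows from the local uniqueness established in Appendix B together with standard ODE uniqueness away from $r=0$. I expect the main obstacle to be precisely the $f$-estimate of the previous paragraph: unlike the $a,b$ bounds, it cannot be read off the defining equations directly and genuinely requires the contracted Bianchi identity together with the sign $f''<0$ supplied by the hypothesis $f_0<0$.
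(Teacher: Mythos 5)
Your proposal is correct and takes essentially the same route as the paper: the same monotonicity inputs give $a'' \leqslant a$ and $b'' \leqslant b + K$ (integrated via the same monotone quantities $(a')^2 - a^2$, etc., to exponential bounds), and the same Bianchi-derived identity \cref{eq:flaplacian} combined with $f'', f' < 0$ yields $(f')^2 \leqslant -(2f + 3f_0)$ and hence at most quadratic growth of $|f|$, exactly as in \Cref{lem:abextend} and \Cref{lem:fextend}. The only difference is presentational: you make the extension step explicit as an escape-lemma contradiction on the maximal interval, where the paper invokes "standard ODE theory."
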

	
	\noindent We note that \Cref{thm:2pf} is a special case of more general theorems in \cite{NW24}, \cite{Win21}, and \cite{BDGW15}. For the sake of completeness, we provide an alternate proof in this special case. We also remark that we use some of the estimates in the proof from this section in future sections.
	
	\begin{cor}\label{cor:2pf}
		Let \( M \) be a smooth manifold diffeomorphic to \( S^1 \times \mathbb{R}^3 \) or \( S^2 \times \mathbb{R}^2 \). Then there exists a two-parameter family of complete gradient expanding Ricci solitons on \( M \) which are cohomogeneity one and invariant under the standard action of \(\mathrm{SO}(3) \times \mathrm{SO}(2)\).
	\end{cor}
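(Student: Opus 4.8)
The plan is to read off the corollary as the geometric content of \Cref{thm:2pf}, whose analytic heart---completeness of the profile functions on $[0,\infty)$---is the genuinely substantive input. First I would fix a topology, say $M \cong S^1 \times \mathbb{R}^3$, and recall from the boundary-conditions lemma of Section 2 that every cohomogeneity one $\mathrm{SO}(3)\times\mathrm{SO}(2)$-invariant expanding soliton on $M$ is a doubly warped product \cref{eq:metric} whose profile functions satisfy \cref{S1R3a,S1R3b}, and conversely that any such profile yields a smooth metric on $M$. After also imposing $f(0)=f'(0)=0$, the only remaining freedom is the pair $(a_0,f_0)=(a(0),f''(0))$, the second entry being the datum needed to resolve the degeneracy of \cref{eq:feq} at $r=0$ (Appendix B).

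Next I would apply \Cref{thm:2pf}: for every $(a_0,f_0)$ with $a_0>0$ and $f_0<0$ there is a unique solution $(f,a,b)$ of \cref{eq:feq,eq:aeq,eq:beq} defined on all of $[0,\infty)$, and by \Cref{lem:abmon} the warping functions are positive for $r>0$. The associated metric $g=dr^2+a^2 g_{S^1}+b^2 g_{S^2}$ is then a smooth metric on $M$, and since $r$ is arclength along the geodesics orthogonal to the principal orbits while the orbits are compact, the sublevel sets $\{r\leqslant R\}$ are compact and exhaust $M$; completeness thus reduces to $r$ ranging over the full half-line $[0,\infty)$, which is exactly the conclusion of \Cref{thm:2pf}. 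By Appendix A the ODE system is equivalent to $\mathrm{Ric}_g+\nabla^2 f+g=0$, so $(M,g,f)$ is a complete gradient expanding Ricci soliton. The case $M\cong S^2\times\mathbb{R}^2$ is identical after replacing \cref{S1R3a,S1R3b} by \cref{S2R2a,S2R2b} and $(a_0,f_0)$ by $(b_0,f_0)$.

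Finally I would verify that the construction genuinely produces a \emph{two}-parameter family rather than a degenerate one. The parameters $(a_0,f_0)$ sweep out the open set $\mathbb{R}^+\times\mathbb{R}^-$, and distinct values give distinct solitons: $a_0$ is recovered as the size of the singular orbit at $r=0$, while $f_0$ pins down the scalar curvature at the core orbit through \cref{eq:R0}, so the pointwise geometry at $r=0$ already separates the parameters. This exhibits the claimed family. The only step beyond invoking \Cref{thm:2pf} that requires genuine care is the passage from completeness of the ODE solution on $[0,\infty)$ to geodesic completeness of $(M,g)$, where one must use that $r$ is an arclength coordinate and the orbits are compact so that no finite-length path escapes to infinity; everything else is bookkeeping, since the hard analysis has already been carried out in \Cref{thm:2pf} and the monotonicity lemmas of Section 4.
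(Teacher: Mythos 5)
Your proposal is correct and follows essentially the same route as the paper: the paper's proof simply invokes \Cref{thm:2pf} and identifies the parameters as $(a_0,f_0)$ or $(b_0,f_0)$, exactly as you do. Your additional elaborations --- the reduction of geodesic completeness to the ODE solution existing on all of $[0,\infty)$ via compactness of the sublevel sets $\{r \leqslant R\}$, and the separation of parameters through the singular-orbit size and \cref{eq:R0} --- are details the paper treats as immediate, and they are sound.
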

	
	\begin{proof}
		Completeness follows immediately from the theorem. In the $S^1 \times \mathbb{R}^3$ case, the parameters are $a_0$ and $f_0$, while in the $S^2 \times \mathbb{R}^2$ case, the parameters are $b_0$ and $f_0$.
	\end{proof}
	
	\begin{proof}[Proof of \Cref{thm:2pf}]We know by Appendix B that \cref{eq:feq,eq:aeq,eq:beq}  have a local solution. Then, we exhibit growth bounds on $a$ and $b$ that allow us to extend them indefinitely. Then, we repeat the process for $f$.
		
		\begin{lem}\label{lem:abextend}	Suppose $(M,g,\nabla f)$ is a cohomogeneity one gradient expanding Ricci soliton diffeomorphic to \( S^1 \times \mathbb{R}^3 \) or \( S^2 \times \mathbb{R}^2 \). Under the hypothesis of \Cref{thm:2pf}, $a,b$ and $a',b'$ remain bounded on the maximal interval of existence, which is $[0, \infty)$. \end{lem}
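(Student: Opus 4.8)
The plan is to rule out finite-time blow-up: assuming the maximal interval of existence is $[0,T)$ with $T<\infty$, I will show that $a,b,a',b'$ (and in fact $f,f'$ as well) stay bounded on $[0,T)$ while $a,b$ stay bounded away from $0$, so that standard ODE continuation extends the solution past $T$, contradicting maximality and forcing $T=\infty$. The key observation is that the monotonicity already established in \Cref{lem:abmon} ($a',b'>0$) and \Cref{lem:fmon} ($f,f',f''<0$) lets me discard all the manifestly negative terms on the right-hand sides of \eqref{eq:aeq} and \eqref{eq:beq}, producing clean one-sided differential inequalities that no longer involve $f'$.

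Concretely, since $a',b',b>0$ and $f'<0$, every term in \eqref{eq:aeq} except $a$ is negative, so $a''\le a$. Setting $G:=a+a'>0$ gives $G'=a'+a''\le a+a'=G$, whence $G(r)\le G(0)e^{r}$ by Gr\"onwall; this bounds both $a$ and $a'$ on $[0,T)$. For $b$, I bound $\frac{1-(b')^2}{b}\le\frac1b$ and drop the negative terms $-\frac{a'b'}{a}$ and $b'f'$ in \eqref{eq:beq} to obtain $b''\le b+\frac1b$. Since $b$ is positive and increasing on $(0,T)$, for any fixed $\epsilon\in(0,T)$ we have $b\ge b(\epsilon)>0$ on $[\epsilon,T)$, so $b''\le b+c^{-1}$ with $c:=b(\epsilon)$; the same Gr\"onwall argument applied to $H:=b+b'$ bounds $b,b'$ on $[\epsilon,T)$, and continuity handles the compact piece $[0,\epsilon]$. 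The apparent singularity of $\frac1b$ at $r=0$ in the $S^1\times\mathbb R^3$ case is irrelevant here, since the estimate is only run away from $0$.

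It remains to bound $f,f'$, which is where the coupling of the system must be addressed. Feeding $f''<0$ into \eqref{eq:flaplacian} and discarding the negative term $\left(\frac{a'}{a}+2\frac{b'}{b}\right)f'$ yields $(f')^2< -2f-3f_0=2|f|+3|f_0|$ (with $3f_0$ replaced by $2f_0$ in the $S^2\times\mathbb R^2$ case). Writing $u:=-f=|f|$, which is nonnegative and increasing, this reads $u'=|f'|\le\sqrt{2u+3|f_0|}$, so $\frac{d}{dr}\sqrt{2u+3|f_0|}\le1$ and hence $|f|$ grows at most quadratically on $[0,T)$; the displayed inequality then bounds $f'$ as well. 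With $a,a',b,b',f,f'$ all bounded and $a,b$ bounded below, the right-hand sides of \eqref{eq:feq}, \eqref{eq:aeq}, \eqref{eq:beq} are bounded, so $a'',b'',f''$ are bounded too, and the solution extends past $T$ — the desired contradiction.

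The main obstacle is precisely this coupling: naively, bounding $a,b$ requires controlling $a'',b''$, which through \eqref{eq:aeq}--\eqref{eq:beq} depend on $f'$, while $f''$ depends back on $a'',b''$ via \eqref{eq:feq}. The crucial point that breaks the circularity is that the sign information from \Cref{lem:abmon} and \Cref{lem:fmon} gives every coupling term a definite, favorable sign, so each can simply be dropped — this decouples the $a,b$ estimates from $f$ entirely. The only genuinely coupled estimate that remains is the scalar growth bound for $f$, which is handled self-containedly by the quadratic-growth differential inequality above.
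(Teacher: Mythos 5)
Your proposal is correct and follows essentially the same route as the paper: both proofs use the monotonicity from \Cref{lem:abmon} and \Cref{lem:fmon} to discard the negative coupling terms in \cref{eq:aeq,eq:beq}, arrive at $a''\leqslant a$ and $b''\leqslant b+1/b$, deduce exponential-type bounds, and conclude by standard ODE continuation. The only differences are cosmetic: you run Gr\"onwall on $G=a+a'$ where the paper integrates $a''<a$ against $a'$ to get $(a')^2\leqslant a^2-a(0)^2+(a'(0))^2$, and you fold in the quadratic bound on $f$ (which the paper defers to \Cref{lem:fextend}) so that the continuation argument for the coupled system is fully closed within one proof.
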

		\begin{proof}
			By the existence of the local solution, the maximal interval of existence of the solutions to \cref{eq:feq,eq:aeq,eq:beq} contains an interval of the form $(0,2\epsilon)$ for some $\epsilon>0$ and by Lemmas\autoref{lem:abmon} and\autoref{lem:fmon}, we know that $a',b'>0$ and $f'<0$. Thus, $a$ and $b$ are positive on  $(0,2\epsilon]$. Then, we can rewrite \cref{eq:aeq} and \cref{eq:beq} as
			
			\begin{equation}\label{eq:agrowth}
				a'' < a
			\end{equation}
			
			\begin{equation}\label{eq:bgrowth}
				b'' < \frac{1}{b}+b
			\end{equation}
			
			\noindent Multiplying \cref{eq:agrowth} by $a'$ on both sides and integrating gives us 
			
			\begin{equation}\label{eq:apgrowth}
				(a')^2 \leqslant a^2 - a(0)^2 +(a'(0))^2
			\end{equation}
			
			\noindent In the $S^1 \times \mathbb{R}^3$ case, using \cref{S1R3a} and \cref{S1R3b}, \Cref{eq:apgrowth} implies that $a' \leqslant a$ on $[\epsilon,\infty)$. Integrating this shows that $a$ is bounded by an exponential function.
			
			\medskip
			
			\noindent In the $S^2 \times \mathbb{R}^2$ case, using \cref{S2R2a} and \cref{S2R2b}, \Cref{eq:apgrowth} implies that $(a')^2 \leqslant a^2+1$ on $[\epsilon,\infty)$. If $a$ is globally bounded by $1$, then we are done. If not, we have $a(r_0) > 1$ for some $r_0 > \epsilon$; thus, by the monotonicity of $a$, we have $1 \leqslant a^2$ on $[r_0, \infty)$, giving us the bound $(a')^2 \leqslant 2a^2$. Integrating this shows that $a$ is bounded by an exponential function.
			
			\medskip
			
			\noindent For $b$, as $b'>0$, we know that for $r>r_0$, $b>b(\epsilon)=C$ for some constant $C>0$ by \Cref{lem:abmon}. This allows us to rewrite \Cref{eq:bgrowth} as
			
			\begin{equation}\label{eq:bpgrowth}
				b'' < C+b
			\end{equation}
			
			\noindent on the interval $[\epsilon, \infty)$. Now, in both cases, similar analysis shows that $b$ is bounded by an exponential function whose value and derivative at $r=\epsilon$ match those of $b$. Thus, $a$ and $b$ do not blow up at finite $r$ and these functions can be extended to $[0,\infty)$ by standard ODE theory.
		\end{proof}
		
		\begin{lem}\label{lem:fextend}Under the hypothesis of \Cref{thm:2pf}, $f$ and $f'$ remain bounded on $[0,\infty)$\end{lem}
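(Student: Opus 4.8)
The plan is to show that $f'$ satisfies a \emph{linear} first-order differential equation whose coefficients stay bounded on every finite interval, so that neither $f'$ nor $f$ can escape to infinity in finite time; combined with \Cref{lem:abextend} this forces the maximal interval of existence of the full system to be $[0,\infty)$. The key point is that one should \emph{not} work with \cref{eq:flaplacian}: that relation expresses $f''$ in terms of $(f')^2$, and the quadratic term is exactly what could drive finite-time blow-up. Instead I would eliminate $a''$ and $b''$ by substituting \cref{eq:aeq,eq:beq} directly into \cref{eq:feq}.

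Carrying out this substitution, the terms $a''/a$ and $b''/b$ each contribute a factor of $f'$ \emph{linearly}, and after cancellation one obtains
\begin{equation*}
	f'' = \left(\frac{a'}{a} + 2\frac{b'}{b}\right)f' + \left(2 - 4\frac{a'b'}{ab} + 2\frac{1-(b')^2}{b^2}\right).
\end{equation*}
Writing this as $f'' = \Phi\,f' + \Psi$, I claim both coefficients are bounded on any interval $[\epsilon, T]$ with $T < \infty$. Indeed, by \Cref{lem:abextend} the functions $a,b,a',b'$ are bounded above there; by \Cref{lem:abmon} together with the boundary conditions, $a$ and $b$ are positive and increasing, hence bounded below away from zero on $[\epsilon,T]$. (Near $r=0$ one of the ratios $a'/a$ or $b'/b$ genuinely diverges in the collapsing direction, but that region is covered by the smooth local solution of Appendix B, so it suffices to estimate on $[\epsilon,T]$.) Consequently $\Phi$ and $\Psi$ are bounded, say $|\Phi|,|\Psi|\leqslant K$.

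Given this, an integrating-factor (Gronwall) estimate finishes the argument: since $\Phi > 0$, the representation $f'(r) = e^{\int_\epsilon^r \Phi}\big(f'(\epsilon) + \int_\epsilon^r \Psi\, e^{-\int_\epsilon^s \Phi}\,ds\big)$ bounds $|f'|$ by a finite constant on $[\epsilon, T]$, and then $|f(r)| \leqslant |f(\epsilon)| + \int_\epsilon^r |f'|$ bounds $f$ itself. Thus if the maximal interval of existence were $[0,T)$ with $T<\infty$, all of $f,f',a,a',b,b'$ would remain finite (and $a,b$ bounded below) up to $T$, so by standard ODE theory the solution would extend past $T$, contradicting maximality. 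Hence the solution, and in particular $f$ and $f'$, exist on all of $[0,\infty)$.

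The only real obstacle is the one already flagged: avoiding the Riccati form. Once the linear equation $f'' = \Phi f' + \Psi$ is in hand the estimate is routine, so the substance of the proof is the algebraic observation that eliminating $a''$ and $b''$ via \cref{eq:aeq,eq:beq} cancels the nonlinearity in $f'$ and leaves coefficients controlled entirely by the already-established bounds on $a,b$ from \Cref{lem:abextend}. The secondary point requiring care is the divergence of $\Phi$ at $r=0$, which is harmless precisely because the estimate is only needed away from the degenerate endpoint.
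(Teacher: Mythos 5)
Your proof is correct, but it takes a genuinely different route from the paper's. The paper does exactly what you decided to avoid: it works with the Riccati-type relation \cref{eq:flaplacian}, and the nonlinearity you flag as dangerous is in fact harmless there, because the monotonicity results of Section 4 ($f,f',f''\leqslant 0$ and $a',b'>0$) give the quadratic term a favorable sign. From \cref{eq:flaplacian} one reads off $(f')^2 < -(3f_0+2f)$, hence $f' > -\sqrt{-(3f_0+2f)}$, and comparison with the explicitly solvable ODE $\tilde f' = -\sqrt{-(3f_0+2\tilde f)}$, $\tilde f(0)=0$, gives the global quadratic lower bound $f(r)\geqslant \tilde f(r) = -\tfrac{r}{2}\bigl(r+2\sqrt{-3f_0}\bigr)$; feeding this back into the first-order inequality yields the linear bound $f'(r)\geqslant -(r+\sqrt{-3f_0})$. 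Your route — substituting \cref{eq:aeq,eq:beq} into \cref{eq:feq} to obtain the linear equation $f''=\Phi f'+\Psi$ (your algebra checks out), bounding $\Phi,\Psi$ on $[\epsilon,T]$ via \Cref{lem:abmon} and \Cref{lem:abextend} (no circularity here: the proof of \Cref{lem:abextend} uses only the sign information from Section 4, not this lemma), and closing with Gronwall plus the standard escape-from-compact-sets extension criterion — is sound, and in some ways more robust, since it never uses the signs of $f$, $f'$, or $f''$ directly. What it gives up is quantitative strength: your bound on $|f'|$ over $[\epsilon,T]$ is at best exponential in $T$ (the coefficient $\Phi$ is itself only exponentially controlled at this stage), whereas the paper's comparison argument yields a linear-in-$r$ bound with the explicit constant $\sqrt{-3f_0}$ depending continuously on the initial data. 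This is not cosmetic for the paper: \Cref{lem:fpbound} and \Cref{thm:fpbound} cite precisely the inequality $f' > -\sqrt{-(3f_0+2f)}$ from the paper's proof of this lemma, and the resulting linear growth bound on $|\nabla f|$, uniform in the initial conditions, is what drives the curvature-decay estimates and the properness argument later. So your proof establishes the lemma as stated, but the paper's proof is engineered to produce an estimate that is reused downstream.
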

		
		\begin{proof}
			We prove this in the $S^1 \times \mathbb{R}^3$ case; the $S^2 \times \mathbb{R}^2$ case is identical, except for changing the $3f''(0)$ term to $2f''(0)$. Applying Lemmas\autoref{lem:abmon} and\autoref{lem:fmon} to equation \cref{eq:flaplacian}, we see that 
			
			\begin{align*}
				0<&(f')^2 \\
				=&-(3f''(0)+2f)+\left(\frac{a'}{a}+2\frac{b'}{b} \right)f'+f'' \\
				<&-(3f''(0)+2f)
			\end{align*}
			
			\noindent This yields the inequality $f' > - \sqrt{-(3f''(0)+2f)}$. Solving the inequality shows that $|f|$ is bounded by a quadratic function. 
			
			\medskip
			
			\noindent More explicitly, suppose that $\tilde{f}$ satisfies the ODE corresponding to the differential inequality with $\tilde{f}(0)=f(0)$. Then, $\tilde{f}(r) = -\frac{r}{2}(2\sqrt{-3f''(0)}+r)$, and $\tilde{f}$ is a lower bound for $f$.  Hence, by standard ODE theory as usual, $f$ can be extended smoothly to $[0,\infty)$.	
		\end{proof}
		
		\noindent Uniqueness follows from the uniqueness of the local solution in Appendix B and standard ODE theory. This concludes the proof of \Cref{thm:2pf}.
	\end{proof}
	
	\section{Asymptotics}
	
	\noindent Suppose $a,b,f$ satisfy the soliton equations \cref{eq:feq,eq:aeq,eq:beq}. The main result of this section is that the complete expanding Ricci solitons constructed in Section 5 are asymptotic to cones over the link $S^2 \times S^1$. Several technical lemmas analyzing the soliton equations will be needed before we conclude the result. We continue to assume that $f''(0) \equiv f_0 < 0$ to ensure that our solitons are complete.
	
	\medskip

	\noindent In this section, the proofs will be carried for the soliton equations in the $S^1 \times \mathbb{R}^3$ case; the other case is nearly identical, with the difference being the term $2f''(0)$ appearing instead of $3f''(0)$.
	
	\medskip
	
	\noindent As we expect the expanding solitons to be asymptotic to cones, the quantities $\frac{a'}{a}$ and $\frac{b'}{b}$ should decay like $\frac{1}{r}$ to $0$. We first show that these quantities are bounded by constants in a region near infinity. 
	
	\begin{lem}\label{lem:absimplebound} Suppose $a,b,f$ satisfy equations \cref{eq:feq,eq:aeq,eq:beq}, with boundary conditions either \cref{S1R3a,S1R3b} or \cref{S2R2a,S2R2b}, depending on the topology. Then, there exists a $C > 0$ such that $a'<Ca$ and $b'<Cb$ on $[1,\infty)$, where $C$ depends continuously on the initial conditions (so $C \equiv C(a_0, f_0)$ in the $S^1 \times \mathbb{R}^3$ case and $C \equiv C(b_0, f_0)$ in the $S^2 \times \mathbb{R}^2$ case) \end{lem}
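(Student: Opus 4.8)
The plan is to control the logarithmic derivatives $u := a'/a$ and $v := b'/b$ directly, exploiting the sign information already established: $a', b' > 0$ from \Cref{lem:abmon} and $f' < 0$ from \Cref{lem:fmon}. I work on $[1,\infty)$ rather than including the degenerate endpoint $r=0$ precisely because there $a$ and $b$ are increasing and strictly positive, so that $u(1), v(1)$ are finite and $b(r) \geq b(1) > 0$ on the whole interval; at $r=0$ the quantities $u$ or $v$ may blow up.

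First I would differentiate $u$ and substitute \cref{eq:aeq} to obtain
\begin{equation*}
u' = \frac{a''}{a} - u^2 = 1 - u^2 - 2uv + u f'.
\end{equation*}
Since $u, v > 0$ and $f' < 0$, the terms $-2uv$ and $uf'$ are negative, so $u$ obeys the Riccati-type inequality $u' < 1 - u^2$. The key observation is that the level $u = 1$ acts as a barrier: wherever $u \geq 1$ one has $u' < 0$, so $u$ can never rise above $M_a := \max(u(1), 1)$. Indeed, if $u(r_*) > M_a$ for some $r_* > 1$, letting $r_0$ be the last point of $[1, r_*)$ with $u(r_0) = M_a$ forces $u > M_a \geq 1$ on $(r_0, r_*]$, hence $u' < 0$ there, contradicting the increase past $M_a$. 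Thus $u \leq M_a$ on $[1,\infty)$.

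The argument for $v$ is identical in spirit. Differentiating and substituting \cref{eq:beq} gives
\begin{equation*}
v' = 1 + \frac{1}{b^2} - 2v^2 - uv + v f'.
\end{equation*}
Using $b \geq b(1) > 0$ on $[1,\infty)$ together with $u, v > 0$ and $f' < 0$, this reduces to $v' < K - 2v^2$ with $K := 1 + b(1)^{-2}$. The same barrier reasoning, now with threshold $\sqrt{K/2}$, yields $v \leq M_b := \max\bigl(v(1), \sqrt{K/2}\bigr)$ on $[1,\infty)$. Taking any $C$ strictly larger than $\max(M_a, M_b)$ then gives the desired strict inequalities $a' < Ca$ and $b' < Cb$ on $[1,\infty)$ simultaneously.

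The continuity of $C$ in the initial conditions follows because $a(1), a'(1), b(1), b'(1)$ depend continuously on the initial data, by the continuous dependence of solutions on parameters from Appendix B combined with \Cref{thm:2pf}; then $M_a$, $M_b$, and hence $C$, are assembled from these quantities by continuous operations. I expect the only genuinely delicate point to be this last step, since the soliton equations are degenerate at $r=0$; but because we evaluate only at the regular point $r=1$, the continuous dependence transfers without difficulty. The differential inequalities themselves are routine once the Riccati structure of $u$ and $v$ is recognized.
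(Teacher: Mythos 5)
Your proposal is correct and follows essentially the same route as the paper: both derive a Riccati-type inequality for the logarithmic derivatives on $[1,\infty)$ (using $a',b'>0$, $f'<0$, and the lower bound $b \geqslant b(1)$), conclude a constant upper bound, and obtain continuity of $C$ from the continuous dependence of $a(1),a'(1),b(1),b'(1)$ on the initial data. The only cosmetic difference is that you close the argument with a barrier (maximum-principle) comparison at the level $\max(u(1),\text{threshold})$, whereas the paper compares against the explicit solution of the equality case $u'+u^2=C'$; both yield the same bound.
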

	\begin{proof}
		We will carry out the proof for $b$; the proof for $a$ is almost identical to \Cref{eq:apgrowth}. By \Cref{eq:bgrowth}, we know that
		
		\begin{equation*}
			\displaystyle \frac{b''}{b} < \frac{1}{b^2}+1
		\end{equation*}
		
		\noindent which implies that
		
		\begin{align*}
			\left( \frac{b'}{b}\right)' &=  \frac{b''}{b} -\left(\frac{b'}{b}\right)^2 \\
			&< \frac{1}{b^2}+1-\left(\frac{b'}{b}\right)^2 \\ 
			&\leqslant \frac{1}{b(1)^2}+1-\left(\frac{b'}{b}\right)^2
		\end{align*}
		
		\noindent on the interval $[1,\infty)$ (where the last step follows since $b > b(1)$, as $b$ is increasing), giving us the inequality
		
		\begin{equation}
			\displaystyle \left( \frac{b'}{b}\right)'  < C'-\left(\frac{b'}{b}\right)^2
		\end{equation}
		
		\noindent for the constant $C' = \frac{1}{b(1)^2}+1$. Thus, the quantity $\frac{b'}{b}$ satisfies the inequality $u' +u^2<C'$. Thus, $\frac{b'}{b}$ is bounded by the solution to the initial value problem
		
		\begin{center}
			$u'(x)+u(x)^2=C' $
			
			\smallskip
			
			$u(1)=\frac{b'}{b}(1)$
		\end{center}
		
		\noindent This IVP can be solved exactly and the solution $u$ is asymptotic to $\sqrt{C'}$. If $u(1)<\sqrt{C'}$, then $u$ increases and becomes asymptotic to $\sqrt{C'}$, and if $u(1)>\sqrt{C'}$, then $u$ decreases to $\sqrt{C'}$. As $\frac{b'(r)}{b(r)}\leqslant u(r)$ on $[1,\infty)$, any $C$ greater than $\text{max}(\sqrt{C'},\frac{b'}{b}(1))$ makes the lemma true. Clearly, such a $C$ can be chosen continuously in the initial conditions, since $b(1)$ and $b'(1)$ vary smoothly in the initial conditions as in the hypothesis of the lemma.
	\end{proof}

	\noindent The next lemma is an improvement of \Cref{lem:fmon}. It shows that $f''$ is bounded from above by a negative constant.
	
	\begin{lem}\label{lem:fppbound} Suppose $a,b,f$ satisfy equations \cref{eq:feq,eq:aeq,eq:beq}, with boundary conditions either \cref{S1R3a,S1R3b} or \cref{S2R2a,S2R2b}, depending on the topology. Then, there exists $\epsilon > 0$, depending continuously on the initial conditions (so $\epsilon \equiv \epsilon(a_0, f_0)$ or $\epsilon \equiv \epsilon(b_0, f_0)$ depending on the topology), such that for $r>1$, we have
		\begin{center}
			$f'' \leqslant  -\epsilon$
		\end{center}
	\end{lem}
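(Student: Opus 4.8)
The plan is to run a maximum-principle (barrier) argument on $f''$ itself, using the third-order equation
\begin{equation*}
f''' = f'\left(1 + f'' + \left(\frac{a'}{a}\right)^2 + 2\left(\frac{b'}{b}\right)^2\right) - \left(\frac{a'}{a} + 2\frac{b'}{b}\right)f''
\end{equation*}
derived in the proof of \Cref{lem:fmon}. The heuristic is that $f'' \to -1$ as $r \to \infty$, so the only thing to rule out is that $f''$ drifts back up toward $0$. I would use two inputs valid on $[1,\infty)$: by \Cref{lem:absimplebound}, $\frac{a'}{a} + 2\frac{b'}{b} < 3C$; and by \Cref{lem:fmon}, $f'$ is negative and decreasing, so $f'(r) \leqslant f'(1) =: -m_1 < 0$ for $r \geqslant 1$.

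The key step is a sign computation for $f'''$ wherever $f''$ is near $0$. Setting $f'' = -\delta$ with $\delta > 0$ small, the factor $1 + f'' + (a'/a)^2 + 2(b'/b)^2 \geqslant 1 - \delta > 0$, so (since $f' \leqslant -m_1$) the first term is at most $-m_1(1-\delta)$, while the second term equals $(a'/a + 2b'/b)\delta \leqslant 3C\delta$. Hence $f''' \leqslant -m_1 + (m_1 + 3C)\delta$, which is $\leqslant -m_1/2 < 0$ as soon as $\delta \leqslant \delta_0 := \frac{m_1}{2(m_1 + 3C)}$. In other words, whenever $f'' \in [-\delta_0, 0)$ it is strictly decreasing.

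This makes $-\delta_0$ an impassable upper barrier: an intermediate-value argument shows $f''(r) \leqslant \max(f''(1), -\delta_0)$ for all $r \geqslant 1$, because on any sub-interval where $f'' \in [-\delta_0,0)$ it can only decrease (so it is bounded by its left-endpoint value, ultimately $f''(1)$), and once $f''$ falls below $-\delta_0$ it can never climb back through that level. Since $f''(1) < 0$ by \Cref{lem:fmon}, setting $\epsilon := \min(|f''(1)|, \delta_0) > 0$ gives $f'' \leqslant -\epsilon$ on $(1,\infty)$. Both $|f''(1)|$ and $\delta_0 = \frac{m_1}{2(m_1+3C)}$ depend continuously on the initial conditions — via continuous dependence of the ODE solution (Appendix B) and continuity of $C$ from \Cref{lem:absimplebound} — so $\epsilon$ does too.

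The main obstacle is the barrier step, i.e.\ upgrading the pointwise inequality ``$f''' < 0$ when $f'' \in [-\delta_0,0)$'' into the global bound while correctly handling the region just past $r=1$, where $f''$ may still lie above $-\delta_0$; this is resolved by the monotonicity observation (no upward escape is possible). The only remaining care is to confirm that the extracted constant $\epsilon$ varies continuously with the parameters, which reduces to the continuous dependence of $f'(1)$, $f''(1)$, and $C$.
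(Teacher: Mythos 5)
Your proposal is correct and follows essentially the same route as the paper's proof: both use the third-order equation for $f'''$ from \Cref{lem:fmon}, the bound $\frac{a'}{a}+2\frac{b'}{b}<3C$ from \Cref{lem:absimplebound}, the monotonicity $f'(r)\leqslant f'(1)<0$, and a barrier argument showing $f'''<0$ wherever $f''$ sits at the threshold level, so that $f''\leqslant -\epsilon$ is a preserved condition. The only cosmetic difference is that the paper shrinks $\epsilon$ at the outset so that $f''(1)<-\epsilon$, whereas you take $\epsilon=\min(|f''(1)|,\delta_0)$ and handle the band $[-\delta_0,0)$ by monotonicity — the same mechanism.
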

	
	\begin{proof}
		
		Recall from the proof of \Cref{lem:fmon} that we have
		
		\begin{equation*}
			\displaystyle f'''=f'+f'f''+\left(\left(\frac{a'}{a}\right)^2+2\left(\frac{b'}{b}\right)^2 \right)f'-\left(\frac{a'}{a}+2\frac{b'}{b} \right)f''
		\end{equation*}
		
		\noindent For $r>1$, we see that
		
		\begin{equation*}
			\displaystyle f'''(r)<f'(r)(1+f''(r))-3Cf''(r)
		\end{equation*}
		
		\noindent by the monotonicity properties and \Cref{lem:absimplebound}. Now, choose $0 < \epsilon < 1$ so that the quantity $f'(1)(1-\epsilon)+3C\epsilon$ is negative. This is possible as this quantity is equal to $f'(1)$ (which is negative) for $\epsilon =0$, so there must exist a positive $\epsilon$ satisfying the condition. Further adjust $\epsilon$ if needed so that $f''(1) < -\epsilon$. Then, for $r$ in a small interval $[1,1+\delta)$, we have $f'' < -\epsilon$.
		
		\medskip
		
		\noindent Then, we see that if there exists a point $r>1$ with $f''(r)=-\epsilon$, then
		\begin{align*}
			f'''(r)&<f'(r)(1+f''(r))-3Cf''(r) \\
			&=  f'(r)(1-\epsilon)+3C\epsilon \\
			&< f'(1)(1-\epsilon)+3C\epsilon \\ 
			&<0
		\end{align*}
		
		\noindent where the third line follows by the monotonicity of $f'$ and the last line by the choice of $\epsilon$. Thus, $f'''(r) <0$ at any point where $f''(r) = -\epsilon$, implying that $f'' < -\epsilon$ is a preserved condition and thus holds on $[1,\infty)$, implying the statement of the lemma. 
		
		\medskip
		
		\noindent Note that the choice of $\epsilon$ depends on $C$ from \Cref{lem:absimplebound} and $f'(1)$ and $f''(1)$, which together depend continuously on the initial conditions $f''(0)$ and $a_0$ or $b_0$.	
	\end{proof}

	\noindent Now, we prove a lower bound on $f'$. Note that the geometric meaning of this bound is that $|\nabla f|$ has at most linear growth.
	
	\begin{lem}\label{lem:fpbound}
		The inequality $f'(r) > -(r+C_1)$ holds on $[1,\infty)$, where $C_1 \in \mathbb{R}$ is a real constant depending continuously on the value of $f_0$. In fact, we can choose $C_1 = \sqrt{-3f_0}$
	\end{lem}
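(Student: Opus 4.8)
The plan is to obtain this bound with essentially no new analysis by recombining two estimates that were already produced inside the proof of \Cref{lem:fextend}. There, the monotonicity facts $f' < 0$, $f'' < 0$ (from \Cref{lem:fmon}) together with $\frac{a'}{a}, \frac{b'}{b} > 0$ (from \Cref{lem:abmon}) were used to discard the nonpositive terms $\left(\frac{a'}{a} + 2\frac{b'}{b}\right)f' + f''$ from \Cref{eq:flaplacian}, yielding the strict pointwise inequality $(f')^2 < -(3f_0 + 2f)$ for $r > 0$. Taking square roots gives $f'(r) > -\sqrt{-(3f_0 + 2f(r))}$, so it suffices to bound $-(3f_0 + 2f)$ from above by $(r + C_1)^2$.

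For the upper bound I would invoke the quadratic lower bound on $f$ established in the same proof, namely $f(r) \geq \tilde{f}(r) = -\frac{r}{2}\left(2\sqrt{-3f_0} + r\right)$, obtained by comparison with the solution $\tilde{f}$ of the ODE $\tilde{f}' = -\sqrt{-(3f_0 + 2\tilde{f})}$ with $\tilde{f}(0) = 0$. The key algebraic observation is that this comparison function is built precisely so that $-(3f_0 + 2\tilde{f})$ is a perfect square: a direct computation gives $-(3f_0 + 2\tilde{f}(r)) = (r + \sqrt{-3f_0})^2$. Since $f \geq \tilde{f}$ forces $-(3f_0 + 2f) \leq -(3f_0 + 2\tilde{f})$, we obtain $\sqrt{-(3f_0 + 2f(r))} \leq r + \sqrt{-3f_0}$ on $[0,\infty)$.

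Chaining the two inequalities then closes the argument: for $r \geq 1$,
\[
f'(r) > -\sqrt{-(3f_0 + 2f(r))} \geq -\left(r + \sqrt{-3f_0}\right),
\]
so the claim holds with $C_1 = \sqrt{-3f_0}$, whose continuous dependence on $f_0$ is immediate from this explicit formula. The $S^2 \times \mathbb{R}^2$ case is identical after replacing $3f_0$ by $2f_0$, and since $\sqrt{-2f_0} \leq \sqrt{-3f_0}$ the stated value $C_1 = \sqrt{-3f_0}$ is in fact valid for both topologies. I do not expect any genuine obstacle here beyond correctly tracking the direction of each inequality; the substantive work was already carried out in \Cref{lem:fextend}, and the only truly new ingredient is the elementary identity that realizes $-(3f_0 + 2\tilde{f})$ as a perfect square.
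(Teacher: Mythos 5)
Your proposal is correct and follows essentially the same route as the paper: both start from the inequality $f' > -\sqrt{-(3f_0+2f)}$ extracted in \Cref{lem:fextend}, compare $f$ with the explicit solution $\tilde{f}(r) = -\frac{r}{2}\left(2\sqrt{-3f_0}+r\right)$ of the associated ODE, and substitute to get $f' > -\left(r+\sqrt{-3f_0}\right)$. Your version is if anything slightly cleaner, since you reuse the comparison anchored at $r=0$ already proved in \Cref{lem:fextend} (and make the perfect-square identity explicit) rather than re-running the comparison from $r=1$ as the paper does.
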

	
	\begin{proof}
		We prove this in the $S^1 \times \mathbb{R}^3$ case; the other case is nearly identical. By the proof of \Cref{lem:fextend}, we have the inequality $f'(r) > -\sqrt{-(3f''(0)+2f(r))}$. Consider the solution $\tilde{f}$ to the corresponding ODE $\tilde{f}'(r) = -\sqrt{-(3f''(0)+2\tilde{f}(r))}$ with $f(1)=\tilde{f}(1) < 0$. Then, we have $f \geqslant \tilde{f}$ on $[1,\infty)$. This leads to the chain of inequalities
		
		\begin{align*}
			f'(r) &> -\sqrt{-(3f''(0)+2f(r))} \\
			&> -\sqrt{-(3f''(0)+2\tilde{f}(r))} \\
			&= \tilde{f}'(r)
		\end{align*}
		
		\noindent The ODE for $\tilde{f}$ can be solved explicitly as in \Cref{lem:fextend}, with
		
		\begin{center}
			$\tilde{f}(r) = -\dfrac{r}{2}\left(2\sqrt{-3f''(0)}+r \right) \hskip 2cm \tilde{f}'(r) = -(r+\sqrt{-3f''(0)})$ 
		\end{center}

		\noindent and by substituting $\tilde{f}(r)$ into the inequality above, we see that $f'(r) >-(r+C_1)$ for some real constant $C_1$, as required.
	\end{proof}
	
	\noindent We can put the previous two lemmas together to control the growth rate of $f$ in the following way:
	
	\begin{thm}\label{thm:fpbound}
		Suppose $a,b,f$ satisfy equations \cref{eq:feq,eq:aeq,eq:beq}, with boundary conditions \cref{S1R3a,S1R3b} or \cref{S2R2a,S2R2b}. Then, the soliton vector field $f'(r)$ satisfies bounds of the following form:
		
		\begin{equation*}
			-(r+C_1) \leqslant f'(r) \leqslant -\epsilon (r-1)
		\end{equation*}
		
		\noindent where $C_1$ is a constant depending on $f_0$ and $\epsilon \equiv \epsilon(a_0, f_0)$ or $\epsilon \equiv \epsilon(b_0, f_0)$ is a positive constant depending continuously on the initial conditions.
	\end{thm}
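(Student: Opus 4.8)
The plan is to obtain the theorem by assembling the two preceding lemmas, each of which already supplies exactly one of the two bounds. The lower bound $-(r+C_1) \leqslant f'(r)$ is precisely the content of \Cref{lem:fpbound}, with $C_1 = \sqrt{-3f_0}$ in the $S^1 \times \mathbb{R}^3$ case (and the analogous $\sqrt{-2f_0}$ in the $S^2 \times \mathbb{R}^2$ case). Since that lemma already records the continuous dependence of $C_1$ on $f_0$, nothing further is required for this half of the estimate.

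For the upper bound, I would integrate the second-derivative estimate furnished by \Cref{lem:fppbound}, which gives $f''(s) \leqslant -\epsilon$ for all $s > 1$. For any $r > 1$ the fundamental theorem of calculus then yields
\begin{equation*}
    f'(r) = f'(1) + \int_1^r f''(s)\,ds \leqslant f'(1) - \epsilon(r-1).
\end{equation*}
By \Cref{lem:fmon} we have $f'(1) < 0$ strictly, so the term $f'(1)$ may be dropped to obtain the clean bound $f'(r) \leqslant -\epsilon(r-1)$. The continuous dependence of $\epsilon$ on the initial conditions is inherited directly from \Cref{lem:fppbound}.

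Because both inequalities are immediate consequences of results already established, there is no genuine analytic obstacle here; the statement is essentially a repackaging of \Cref{lem:fpbound} and \Cref{lem:fppbound} into a single two-sided estimate. The only point meriting attention is the strict negativity $f'(1) < 0$, needed to discard the constant term in the integrated inequality, which follows from the strict negativity of $f'$ on $I - \{0\}$ proved in \Cref{lem:fmon}. Geometrically, this theorem pins down the growth of the soliton vector field $|\nabla f|$ as being exactly linear up to constants, which is the behavior one expects for an asymptotically conical expander and which will feed into the curvature decay and conicality arguments of the subsequent section.
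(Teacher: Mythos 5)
Your proposal is correct and follows exactly the paper's own argument: the lower bound is quoted from \Cref{lem:fpbound}, and the upper bound is obtained by integrating the estimate $f'' \leqslant -\epsilon$ of \Cref{lem:fppbound} over $[1,r]$ and discarding the term $f'(1)$, which is negative by \Cref{lem:fmon}. The continuity statements for $C_1$ and $\epsilon$ are likewise inherited from those lemmas, just as in the paper.
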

	
	\begin{proof}
		The lower bound is \Cref{lem:fpbound}, while the upper bound follows by integrating the bound in \Cref{lem:fppbound} on $[1,r]$ and using the fact that $f'(1) < 0$ by \Cref{lem:fmon}.
	\end{proof}
	
	\medskip
	
	\noindent We will use Theorem 6.4 and further bounds on $f'(r)$ to provide growth bounds on $a$ and $b$. For this, we will need the following ODE comparison result:
	
	\begin{lem}\label{lem:ODE}
		Suppose there exist functions \( v_1 \), \( v_2 \) $: \mathbb{R}^+ \to \mathbb{R}$  satisfying the differential inequalities
		\[
		v_1''(r) \leqslant -(r+C)v_1'(r) + v_1(r), \quad \quad v_2''(r) \geqslant -(r+C)v_2'(r) + v_2(r)
		\]
		for \( r \geq r_0 \), with initial conditions \( v_1(r_0) = v_2(r_0) = \alpha \) and \( v_1'(r_0) = v_2'(r_0) = \alpha' \) and $C$ is a positive real number. Then the following hold:
		\begin{enumerate}
			\item \( v_1(r) \leqslant c_1(r - r_0) + \alpha \) for all \( r \in [r_0, \infty) \), for some positive constant \( c_1 \) which can be chosen uniformly in the constants \( C \), \( \alpha \), and \( \alpha' \). In addition, $v_1' \leqslant c_1$ on this interval.
			\item \( v_2(r) \geqslant C_1(r - r_0) + \alpha \) for all \( r \in [r_0, \infty) \), for some positive constant \( C_1 \) which can be chosen uniformly in the constants \( C \), \( \alpha \), and \( \alpha' \). In addition, $v_2' \geqslant C_1$ on this interval.
		\end{enumerate}
	\end{lem}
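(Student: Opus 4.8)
The plan is to prove both statements by the scalar maximum principle, comparing $v_1$ and $v_2$ against explicit linear barriers and running a first-exit-time argument; the two parts are mirror images of one another. For part (1) I treat $v_1$ as a subsolution of the operator $L[v] = v'' + (r+C)v' - v$ and bound it from above by a line, while for part (2) I treat $v_2$ as a supersolution and bound it from below by a line. The mechanism is the strong damping term $-(r+C)v'$: whenever $v_1'$ tries to grow past a fixed slope, the inequality $v_1'' \leq -(r+C)v_1' + v_1$ forces $v_1''$ to be negative, and symmetrically for $v_2$. This is exactly what pins the slopes to constants and yields the linear bounds.

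First I would record the barrier computation. For a linear test function $\ell(r) = m(r-r_0) + \alpha$ one has $\ell'' + (r+C)\ell' - \ell = m(C+r_0) - \alpha$ identically in $r$, so $\ell$ is a subsolution for $L$ exactly when $m \leq \tfrac{\alpha}{C+r_0}$ and a supersolution exactly when $m \geq \tfrac{\alpha}{C+r_0}$. This singles out the threshold slope $\tfrac{\alpha}{C+r_0}$. Accordingly I would set $c_1 = \max\!\big(\alpha', \tfrac{\alpha}{C+r_0}\big) + 1$ in part (1) and $C_1 = \tfrac12\min\!\big(\alpha', \tfrac{\alpha}{C+r_0}\big)$ in part (2); here $C_1 > 0$ uses $\alpha, \alpha' > 0$, which holds in the intended application, where $v_2$ is one of the warping functions $a, b$ and $v_2, v_2' > 0$ by \Cref{lem:abmon}.

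Next I would run the bootstrap. For part (1) I claim that $v_1' \leq c_1$ and $v_1(r) \leq c_1(r-r_0)+\alpha$ both hold on $[r_0,\infty)$. Both hold at $r_0$ (the second with equality), and as long as the slope bound holds, integrating it from $r_0$ reproduces the function bound; hence the function bound can never fail first. If the slope bound fails first, at the first exit $R$ we have $v_1'(R) = c_1$ with $v_1''(R) \geq 0$, yet the differential inequality together with the still-valid function bound $v_1(R) \leq c_1(R-r_0)+\alpha$ gives $v_1''(R) \leq -(R+C)c_1 + v_1(R) \leq \alpha - c_1(C+r_0) < 0$, a contradiction. The identical argument with inequalities reversed proves part (2): at a first exit with $v_2'(R) = C_1$ one needs $v_2''(R) \leq 0$, whereas the supersolution inequality and $v_2(R) \geq C_1(R-r_0)+\alpha$ force $v_2''(R) \geq \alpha - C_1(C+r_0) > 0$. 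This simultaneously delivers the slope bounds $v_1' \leq c_1$, $v_2' \geq C_1$ and the linear bounds on $v_1, v_2$.

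The main obstacle is the asserted uniformity of $c_1$ and $C_1$ in $C, \alpha, \alpha'$. The barrier computation shows the threshold slope $\tfrac{\alpha}{C+r_0}$ is essentially sharp—a linear function of slope $\tfrac{\alpha}{C+r_0}$ is itself a subsolution—so there is no bound genuinely independent of $\alpha$. I would therefore argue that uniformity holds in the regime of interest rather than absolutely: when $v_1, v_2$ are $a, b$ evaluated at a fixed base point $r_0$, the value $\alpha$ grows linearly in $r_0$ while $C$ is comparable to $r_0$ (it arises from the lower bound $-(r+C_1) \leq f'$ of \Cref{thm:fpbound}), so $\tfrac{\alpha}{C+r_0}$ stays bounded, and $\alpha'$ is controlled by \Cref{lem:absimplebound}. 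The constants can then be drawn from a fixed compact range and depend continuously on the soliton's initial data. Making this dependence precise—so that the slope bounds feed cleanly into the later continuity and properness arguments for the map $F$—is the one delicate point; the maximum-principle core is otherwise routine.
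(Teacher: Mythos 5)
Your proof is correct and is essentially the paper's own argument: the same linear barrier $w(r) = m(r-r_0)+\alpha$ with threshold slope $\alpha/(C+r_0)$, the same choices $c_1 = \max\{\alpha/(C+r_0), \alpha'\}+1$ and $C_1 = \tfrac{1}{2}\min\{\alpha/(C+r_0), \alpha'\}$, and the same first-touching-point argument for the derivative (your version even makes the strict sign $v_1''(R) \leqslant \alpha - c_1(C+r_0) < 0$ explicit, which the paper's computation also yields). The ``uniformity'' obstacle in your last paragraph is not a gap: the paper's claim that $c_1, C_1$ ``can be chosen uniformly in $C$, $\alpha$, $\alpha'$'' means only that they are explicit, continuous functions of these constants and $r_0$ (hence bounded when the constants range over compact sets, which is how the lemma is invoked later), and this is precisely your own resolution.
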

	
	\begin{proof}

		\noindent 1. For any $c_1 > 0$, consider the function $w_1 : [r_0, \infty) \to \mathbb{R}$ given by $w_1(r) = c_1(r-r_0)+\alpha$. The value of $c_1$ will be chosen below. Then, we see that
		\begin{align*}
			(v_1 - w_1)'' &= v_1'' \\
			&\leqslant -(r + C)v_1' + v_1 \\
			&\leqslant -(r + C)v_1' + v_1 + c_1(C + r_0) - \alpha \\
			&= -(r + C)(v_1 - w_1)' + (v_1 - w_1)
		\end{align*}

		\noindent where the last inequality is true for $c_1 \geqslant {\alpha}/(C+r_0)$. Additionally, note that $\alpha = v_1(r_0) = w_1(r_0)$, and that $(v_1-w_1)'(r_0) = \alpha'-c_1$. 
		
		\medskip
		
		\noindent Now, choose $c_1 > \text{max}\{{\alpha}/(C+r_0), \alpha'\}$. Then, on $[r_0, \infty)$ we have the inequality
		
		\begin{center}
			$(v_1-w_1)'' \leqslant -(r+C)(v_1-w_1)'+(v_1-w_1)$
		\end{center}
		
		\noindent with $(v_1-w_1)(r_0) = 0$ and $(v_1-w_1)'(r_0) < 0$. Thus, $v_1 \leqslant w_1$ and $v'_1 \leqslant w'_1 = c_1$ on an interval of the form $[r_0, r_0 + \epsilon]$ for some $\epsilon>0$. Let $r^* > r_0$ be the first point with $v'_1 (r^*) = w'_1(r^*)$, if any such points exist; then, by the computation above, we have $(v_1 - w_1)''(r^*) \leqslant 0$. Thus, we see that $v_1 \leqslant w_1$ and $v_1' \leqslant w_1'$ on $[r_0, \infty)$. Clearly, the value of $c_1$ can be chosen uniformly in $\alpha, \alpha'$ and $C$; for example, we may choose 
		$c_1 = \text{max}\{{\alpha}/(C+r_0), \alpha'\} + 1$
		
		\medskip
		
		\noindent 2. For any $C_1 > 0$, consider the function $w_2 : [r_0, \infty) \to \mathbb{R}$ given by $w_2(r) = C_1(r-r_0)+\alpha$. As in Part 1, the value of $C_1$ will be chosen below. Then, we see that
		\begin{align*}
			(v_2 - w_2)'' &= v_2'' \\
			&\geqslant -(r + C)v_2' + v_2 \\
			&\geqslant -(r + C)v_2' + v_2 + C_1(C + r_0) - \alpha \\
			&= -(r + C)(v_2 - w_2)' + (v_2 - w_2)
		\end{align*}

		\noindent where the last inequality is true for $C_1 \leqslant {\alpha}/(C+r_0)$. Additionally, note that $\alpha = v_2(r_0) = w_2(r_0)$, and that $(v_2-w_2)'(r_0) = \alpha' - C_1$. 
		
		\medskip
		
		\noindent Now, choose $C_1 < \text{min}\{{\alpha}/(C+r_0), \alpha'\}$. Then, on $[r_0, \infty)$ we have the inequality
		
		\begin{center}
			$(v_2-w_2)'' \geqslant -(r+C)(v_2-w_2)'+(v_2-w_2)$
		\end{center}
		
		\noindent with $(v_2-w_2)(r_0) = 0$ and $(v_2-w_2)'(r_0) > 0$. Thus, as before, we see that $v_2 \geqslant w_2$ and $v_2' \geqslant w_2'$ on $[r_0, \infty)$. Clearly, the value of $C_1$ can be chosen uniformly in $\alpha, \alpha'$ and $C$; for example, set $C_1 = \displaystyle \frac{\text{min}\{{\alpha}/(C+r_0), \alpha'\}}{2}$
	\end{proof}
	
	\begin{lem}\label{lem:ablinbelow}
		Consider \cref{eq:feq,eq:aeq,eq:beq}, with initial conditions \cref{S1R3a,S1R3b} or \cref{S2R2a,S2R2b}. Then, on $[1, \infty)$, we have bounds of the form $a(r), b(r) \geqslant c(r-1)$, and the constant $c \equiv c(a_0,f_0)$ or $c \equiv c(b_0,f_0)$ can be chosen uniformly in the initial conditions.
	\end{lem}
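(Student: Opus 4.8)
The plan is to show that both $a$ and $b$ satisfy a differential inequality of the form $v'' \geqslant -(r+\tilde{C})v' + v$ on $[1,\infty)$, so that the desired linear lower bounds follow immediately from part 2 of \Cref{lem:ODE} applied with $r_0 = 1$. The necessary ingredients are already in place: the monotonicity $a',b'>0$ from \Cref{lem:abmon}, the bounds $a'/a < C$ and $b'/b < C$ from \Cref{lem:absimplebound}, and the lower bound $f'(r) \geqslant -(r+C_1)$ from \Cref{thm:fpbound}. The key observation is that every negative term on the right-hand side of \cref{eq:aeq,eq:beq} is a negative multiple of $a'$ or $b'$, and can therefore be absorbed into a single term of the form $-(r+\tilde{C})v'$.

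For $a$, I would start from \cref{eq:aeq} and estimate its first two terms from below. Since $a',b'>0$ and $b'/b < C$, we have $-2\frac{a'b'}{b} = -2a'\cdot\frac{b'}{b} > -2Ca'$, and since $a'>0$ and $f' \geqslant -(r+C_1)$, we have $a'f' \geqslant -(r+C_1)a'$. Adding these and retaining the $+a$ term gives
\begin{equation*}
a'' > -(r + C_1 + 2C)\,a' + a
\end{equation*}
on $[1,\infty)$, which is exactly the hypothesis on $v_2$ in \Cref{lem:ODE} with $\tilde{C} = C_1 + 2C$.

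For $b$, the only new feature is the curvature term $\frac{1-(b')^2}{b}$ in \cref{eq:beq}. I would split it as $\frac{1}{b} - \frac{(b')^2}{b}$; the first piece is positive and may be discarded, while $\frac{(b')^2}{b} = b'\cdot\frac{b'}{b} < Cb'$ by \Cref{lem:absimplebound}, so $\frac{1-(b')^2}{b} > -Cb'$. The term $-\frac{a'b'}{a} = -b'\cdot\frac{a'}{a} > -Cb'$ is handled identically, and $b'f' \geqslant -(r+C_1)b'$ as before. Collecting terms yields
\begin{equation*}
b'' > -(r + C_1 + 2C)\,b' + b
\end{equation*}
on $[1,\infty)$, again of the required form.

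Applying part 2 of \Cref{lem:ODE} with $r_0 = 1$, $\alpha = a(1)$ (resp.\ $b(1)$) and $\alpha' = a'(1)$ (resp.\ $b'(1)$) then produces a positive constant so that $a(r) \geqslant c_a(r-1) + a(1)$ and $b(r) \geqslant c_b(r-1) + b(1)$; since $a(1),b(1)>0$ by \Cref{lem:abmon}, dropping the positive constant terms and setting $c = \min\{c_a, c_b\}$ gives the claim. The point requiring the most care is the asserted uniformity of $c$ in the initial conditions, since the differential inequalities themselves are routine. Here I would track the explicit dependence: $C$ comes from \Cref{lem:absimplebound} and $C_1 = \sqrt{-3f_0}$ (resp.\ $\sqrt{-2f_0}$) from \Cref{lem:fpbound}, both continuous in the initial data, while $a(1), a'(1), b(1), b'(1)$ depend continuously on the initial conditions by continuous dependence of solutions to \cref{eq:feq,eq:aeq,eq:beq}. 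Since the conclusion of \Cref{lem:ODE} supplies its output constant via the explicit formula $\tfrac12\min\{\alpha/(\tilde{C}+r_0), \alpha'\}$, substituting these continuous quantities shows that $c$ may likewise be chosen continuously, and hence uniformly on compact sets, in the initial conditions.
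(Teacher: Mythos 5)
Your proof is correct and follows essentially the same route as the paper's: the same differential inequality $a'' \geqslant a - (r + C_1 + 2C)\,a'$ extracted from \cref{eq:aeq} via \Cref{lem:abmon}, \Cref{lem:absimplebound} and \Cref{lem:fpbound}, followed by the lower-bound part of \Cref{lem:ODE} at $r_0 = 1$ and the same continuity argument for uniformity of the constant over compact sets of initial conditions. You even supply a detail the paper omits (the explicit splitting of the curvature term $\frac{1-(b')^2}{b}$ in the $b$ case, which the paper dismisses as ``similar''), and you correctly invoke Part 2 of \Cref{lem:ODE}, whereas the paper's text cites Part 1 -- evidently a typo, since Part 1 gives upper bounds.
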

	
	\begin{proof}
		We will prove the result for $a$ in the $S^1 \times \mathbb{R}^3$ case; the proofs for the other case are similar. Suppose we have an initial condition $(a_0, f_0)$ and a compact set $F$ containing it such that $F$ avoids the boundary of the space of initial conditions (that is, both coordinates are nonzero for any element of $F$).
		
		\medskip
		
		\noindent \textbf{Step 1:} Consider the soliton equation \cref{eq:aeq} rewritten as $a'' = a+a'(-2\frac{b'}{b}+f')$. Using Lemmas\autoref{lem:absimplebound} and\autoref{lem:fpbound}, that  $\frac{b'}{b}<C$ and $f' > - (r+C_1)$, and that $a'>0$ by \Cref{lem:abmon}, we can extract the following inequality on $[1, \infty)$:
		
		\begin{center}
			$ a''(r) \geqslant a(r) - (r+\bar{C})a'(r)$
		\end{center}
		
		\medskip
		
		\noindent  where $\bar{C}:=C_1+2C$ is a positive constant, and $a$ has initial conditions $a(1)$ and $a'(1)$. 
		
		\medskip 
		
		\noindent \textbf{Step 2:} By continuous dependence in the initial conditions of solutions to \cref{eq:feq,eq:aeq,eq:beq}, we notice that the values $a(1)$ and $a'(1)$ are bounded from above and below (with nonzero lower bounds) by our choice of compact set $F$. Additionally, $\bar{C}$ can be chosen uniformly over $F$ as well by the uniformity of $C_1$ and $C$. Then, by Part 1 of Lemma 6.5, we have a bound $a(r) \geqslant c_1(r-1) + a(1)$ for some $c_1 >0$ which can be chosen uniformly over $F$ and the constant $\bar{C}$. Thus, all $a$ with initial conditions in $K$ are bounded from below by a linear function. The proof for $b$ is similar, following from equation \cref{eq:beq}.
	\end{proof}

	\medskip
	
	\noindent Now that we understand how $f'$ behaves for large $r$, we can calculate the asymptotics of $a$ and $b$. We begin with lemmas describing the boundedness of the curvature and the rate of decay of the curvature near infinity. Before we do this, we prove a point-picking lemma that we will use multiple times in this paper. The lemma and proof are taken from \cite{RFLN}.
	
	\begin{lem}[Point Picking]\label{lem:pp}
		Let $M$ be a complete manifold (with or without boundary) with $f: M \to (0,\infty)$ continuous, $x \in M$, and $d>0$. Then, there is a $y \in B(x, 2d f(x)^{-1/2})$ such that $f(y) \geqslant f(x)$ and $f \leqslant 4f(y)$ on $B(y, d f(y)^{-1/2})$
	\end{lem}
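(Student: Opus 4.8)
The plan is to prove this by the standard iterative point-selection argument: I would build a sequence of points at which $f$ grows geometrically, show that the sequence cannot escape a fixed ball around $x$, and then conclude by compactness that the iteration must terminate at a point satisfying the claim. Concretely, I set $y_0 = x$ and run the following dichotomy. Given $y_k$, either $f \leqslant 4 f(y_k)$ already holds on all of $B(y_k, d f(y_k)^{-1/2})$, in which case $y_k$ satisfies the conclusion and I stop; or there is a point $y_{k+1} \in B(y_k, d f(y_k)^{-1/2})$ with $f(y_{k+1}) > 4 f(y_k)$, in which case I continue. Along any such sequence, induction gives $f(y_k) > 4^k f(x)$, so in particular $f(y_k) \geqslant f(x)$ at every stage.

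Next I would control the distances to keep the points in the required ball. Since $y_{k+1} \in B(y_k, d f(y_k)^{-1/2})$ and $f(y_k) > 4^k f(x)$, the step length obeys $d(y_k, y_{k+1}) < d f(y_k)^{-1/2} < d f(x)^{-1/2} 2^{-k}$. Summing this geometric series gives $d(x, y_k) < d f(x)^{-1/2} \sum_{j \geqslant 0} 2^{-j} = 2 d f(x)^{-1/2}$, so every $y_k$ lies in $B(x, 2 d f(x)^{-1/2})$, exactly the radius demanded by the statement.

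Finally I would rule out the non-terminating case. If the iteration never stopped, it would produce infinitely many points $y_k$ inside the fixed closed ball $\overline{B}(x, 2 d f(x)^{-1/2})$ with $f(y_k) \to \infty$. But on a compact set the continuous function $f$ is bounded, a contradiction; hence the iteration terminates at some finite $k$, and $y := y_k$ then satisfies $y \in B(x, 2 d f(x)^{-1/2})$, $f(y) \geqslant f(x)$, and $f \leqslant 4 f(y)$ on $B(y, d f(y)^{-1/2})$, as required.

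The one step needing genuine care is the compactness of this closed ball, which is what drives the contradiction. For a complete manifold without boundary it is immediate from Hopf--Rinow, but in the with-boundary case I would need to invoke the appropriate completeness hypothesis guaranteeing that closed bounded sets are compact (equivalently, that the bounded sequence $y_k$ admits a convergent subsequence along which the continuous $f$ stays finite). Everything else is a routine geometric-series estimate combined with the growth dichotomy, so this compactness input is the only substantive obstacle.
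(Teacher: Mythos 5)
Your proposal is correct and is essentially identical to the paper's own proof: the same iterative dichotomy producing points with $f(y_{k+1}) > 4f(y_k)$, the same geometric-series estimate forcing all iterates into $B(x, 2d\,f(x)^{-1/2})$, and the same compactness-of-the-closed-ball argument to force termination. The paper even glosses over the with-boundary compactness point you flag, simply asserting the closure of the ball is compact, so your version is if anything slightly more careful.
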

	
	\begin{proof}
		Set $y_0=x$. If $y=y_0$ satisfies the required conditions, the proof concludes here. Else there exists $y_1 \in B(y_0, d/\sqrt{f(y_0)})$ such that $f(y_1) > 4 f(y_0)$. If $y_1$ satisfies the required conditions, the proof concludes here. Otherwise, repeat this process to produce a sequence $\{y_j\}$. By repeatedly applying the triangle inequality, we obtain
		\[
		d(y_j, x) = d(y_j, y_0) \leqslant \frac{d}{\sqrt{f(y_0)}} \left(1 + \frac{1}{2} + \cdots + \frac{1}{2^{j-1}} \right) < \frac{2d}{\sqrt{f(x)}}
		\]
		
		\noindent Since the closure of $B(x, 2d f(x)^{-1/2})$ is compact, we get an upper bound on $f$ on this ball, so this process has to terminate. Thus, there exists a sufficiently large $j \in \mathbb{N}$ so that $y=y_j$ satisfies the required conditions.
	\end{proof}
	
	\noindent In the following lemma, we use geometric methods to prove estimates on the curvature. In using geometric convergence methods, we use the fact that since our metrics are warped products, the limit metrics are also warped products. The precise statement is proven in Appendix C.
	
	\begin{lem}\label{lem:Rmbound1}
		Consider a complete cohomogeneity one gradient expanding Ricci soliton $(M,g,\nabla f)$ over either $S^1 \times \mathbb{R}^3$ or $S^2 \times \mathbb{R}^2$, as in the setup of this paper. Then, there exists a constant $C>0$ such that $|\textnormal{Rm}|_g \leqslant C$. Moreover, $C$ can be chosen uniformly in the initial conditions (so $C \equiv C(a_0, f_0)$ or $C \equiv C(b_0, f_0)$, depending on the topology)
	\end{lem}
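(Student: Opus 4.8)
The plan is to prove the bound by contradiction using the point-picking lemma (Lemma~\ref{lem:pp}) combined with a geometric blow-up argument. Suppose no such uniform constant $C$ existed. Then either on a single soliton the curvature blows up along a sequence of points going to infinity, or there is a sequence of solitons with initial conditions in some compact set $F$ (avoiding the boundary of the parameter space) along which $\sup_M |\mathrm{Rm}|$ diverges. In either case, I would produce a sequence of pointed manifolds $(M_i, g_i, p_i)$ with $Q_i := |\mathrm{Rm}_{g_i}|(p_i) \to \infty$, and apply the point-picking lemma with $f = |\mathrm{Rm}|^{1/2}$ (or a suitable power) to replace each $p_i$ by a nearby point of almost-maximal curvature, so that after rescaling by $Q_i$ the curvature is bounded by roughly $4$ on metric balls whose radii tend to infinity.

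The key steps, in order, are as follows. First I would set up the rescaled metrics $\tilde g_i = Q_i\, g_i$ based at the point-picked centers; by the point-picking lemma the rescaled curvature is controlled on balls of radius $\to\infty$. Second, I would verify that the rescaled solitons satisfy uniform local geometry bounds needed for Cheeger--Gromov compactness: bounded curvature on large balls gives, together with the (rescaled) expanding soliton equation, bounds on all derivatives of curvature via Shi-type estimates, and a non-collapsing lower injectivity-radius bound. The non-collapsing is where the cohomogeneity-one structure and the earlier monotonicity/linear-growth estimates (Lemmas~\ref{lem:abmon}, \ref{lem:ablinbelow}) enter: since $a,b \geq c(r-1)$ grow at least linearly, the warped-product fibers do not collapse at the scale of the curvature, so the rescaled balls are non-collapsed. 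Third, I would extract a pointed Cheeger--Gromov limit $(M_\infty, g_\infty, p_\infty)$. Because all the metrics are doubly warped products, Appendix~C guarantees that the limit is again a warped product, and the rescaling drives the soliton constant to zero, so $g_\infty$ is a complete \emph{steady} or \emph{Ricci-flat} (shrinking-term-free) gradient soliton, i.e.\ $\mathrm{Ric}_{g_\infty} + \nabla^2 f_\infty = 0$, with $|\mathrm{Rm}_{g_\infty}|(p_\infty) = 1$ by construction. Fourth, I would derive a contradiction by showing this limit must be flat: using the warped-product structure together with the monotonicity signs ($f' \leq 0$, $a',b' > 0$) inherited in the limit, and the growth bounds on $f'$ from Theorem~\ref{thm:fpbound} which rescale to $\nabla f_\infty$ bounded (in fact the rescaled potential gradient stays controlled), the limiting soliton has special structure forcing $\mathrm{Rm}_{g_\infty} \equiv 0$, contradicting $|\mathrm{Rm}_{g_\infty}|(p_\infty) = 1$.

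I expect the main obstacle to be the fourth step: ruling out a nonflat limit. The blow-up limit is a complete steady gradient Ricci soliton of cohomogeneity one, and one must use the precise asymptotic information — that $\tfrac{a'}{a}, \tfrac{b'}{b}$ are bounded (Lemma~\ref{lem:absimplebound}) and the two-sided linear control on $f'$ (Theorem~\ref{thm:fpbound}) — to argue that after rescaling the potential term either vanishes or the geometry splits off a line, in each case forcing flatness. Establishing non-collapsing rigorously at the blow-up scale is the second potential difficulty, but this follows fairly directly from the linear lower bounds $a,b \geq c(r-1)$ since these prevent the $S^1$ and $S^2$ fibers from shrinking faster than the ambient curvature scale. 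The uniformity of $C$ over the compact parameter set $F$ then comes for free from the contradiction setup, since the diverging sequence was allowed to have varying initial conditions in $F$ from the outset, and all the input estimates (Lemmas~\ref{lem:absimplebound}, \ref{lem:ablinbelow}, Theorem~\ref{thm:fpbound}) were already shown to hold with constants depending continuously on the initial data.
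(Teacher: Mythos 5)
Your overall strategy — contradiction, point-picking, rescaling by $Q_i$, Cheeger--Gromov limit via Shi's estimates and the warped-product non-collapsing, with uniformity over a compact set of initial conditions built into the contradiction — is exactly the paper's framework. However, there are two genuine gaps in the way you pass to and then rule out the limit. First, your claim that the rescaled potential gradient ``stays controlled'' is unjustified and may simply be false: by \Cref{thm:fpbound}, $|\nabla f_i|_{\tilde g_i}(p_i) = |f_i'(r_i)|/\sqrt{Q_i} \sim r_i/\sqrt{Q_i}$, and since both $r_i \to \infty$ and $Q_i \to \infty$ with no a priori relation between them, this quantity can diverge. When it does, $f_i$ does not converge locally and the limit is \emph{not} a gradient steady soliton, so your third step collapses. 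The paper handles this by a case split: if $|\nabla f|_{\tilde g_i}(p_i)$ is unbounded, one renormalizes $\tilde f_i := (f_i - f_i(p_i))/|\nabla f|_{\tilde g_i}(p_i)$, so that $\nabla^2 \tilde f_i \to 0$ and the limit carries a nontrivial \emph{parallel} gradient field; the resulting splitting, combined with the doubly warped product structure from Appendix C (the fibers unwrap to $\mathbb{R}$ and $\mathbb{R}^2$ since $a_i(r_i), b_i(r_i) \to \infty$), forces the limit warping functions to be constant, hence the limit is Euclidean, contradicting $|\mathrm{Rm}|(p_\infty) \neq 0$.

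Second, in the case where the gradient is bounded and the limit genuinely is a steady gradient soliton, flatness does not follow from monotonicity and the bounds on $a'/a$, $b'/b$ ``forcing special structure'' — nonflat steady solitons (cigar, Bryant) exist and are perfectly compatible with such qualitative properties. The paper's essential ingredient, which your proposal omits, is topological: because $r_i \to \infty$, the limit extends indefinitely in both radial directions, so after quotienting the unwrapped $\mathbb{R}$- and $\mathbb{R}^2$-fibers by $\mathbb{Z}$ and $\mathbb{Z}^2$ to make orbits compact, one obtains a steady soliton with \emph{two ends}; the splitting theorem of Munteanu--Wang \cite{MW11} then forces it to split as $\mathbb{R} \times N^3$ with $N$ compact Ricci-flat, hence flat, giving the contradiction. (A smaller omission: in the varying-solitons case you should also first argue, as the paper does via continuous dependence of ODE solutions on initial data over the compact set $F$, that the blow-up points must satisfy $r_i \to \infty$; this is what makes both the fiber-unwrapping and the two-ends argument available.) Without the gradient dichotomy and the two-ends/Munteanu--Wang input, the proposal does not close.
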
 
	
	\begin{proof}
		We will carry out the proof in the $S^1 \times \mathbb{R}^3$ case; the other case is almost identical. Suppose that the statement of the lemma is not true. Then, there exists a sequence of solitons $(M,g_i, \nabla f_i)$ with initial conditions $(a^i_0, f^i_0)$ lying in a compact set $F$ of $\mathbb{R}^+ \times \mathbb{R}^-$ and points $p_i \in (M,g_i)$ with $r(p_i) = r_i$ and $|\text{Rm}|_{g_i}(p_i) := Q_i \to \infty$. Additionally, for any sequence $\{D_i\} \to \infty$, by the previous lemma, we can assume that the $p_i$ are chosen so that $|\text{Rm}|_{g_i} \leqslant 4Q_i$ on $B_{g_i}(p_i, D_i/ \sqrt{Q_i} )$.
		
		\medskip
		
		\noindent First, we claim that $\{r_i\}$ must be unbounded. Since the initial conditions $(a^i_0, f^i_0)$ are bounded for every soliton in the sequence, we know that $a_i$ and $b_i$ and their derivatives remain uniformly bounded on any interval of the form $[0,R]$ for any $R>0$, by the smooth dependence of the solutions of ODEs on initial conditions. As the sectional curvatures of $(M,g_i)$ are smooth functions of $a_i$ and $b_i$ and their derivatives, we see that they must also remain bounded on points whose distance from the singular orbit lies in $[0,R]$. This implies that since $Q_i \to \infty$, the sequence $r_i$ must be unbounded.
		
		\medskip
		
		\noindent Now, rescale $g_i$ by $Q_i$ to get $\tilde{g}_i = Q_i g_i$, and consider the pointed sequence of manifolds $(M,\tilde{g}_i, \nabla f_i, p_i)$. The rescaled manifolds satisfy the equation
		
		\begin{equation*}
			\text{Ric}_{\tilde{g}_i} + \nabla^2 f_i + \frac{1}{Q_i}\tilde{g}_i = 0
		\end{equation*}
		
		\noindent where we now have the bound $|\text{Rm}|_{\tilde{g}_i} \leqslant 4$ on $B_{\tilde{g}_i}(p_i, D_i )$. Now, since $D_i \to \infty$, for any $D>0$, we have the bound $|\text{Rm}|_{\tilde{g}_i} \leqslant C(D)$ on $B_{\tilde{g}_i}(p_i, D )$ (for all $i$) for some constant $C(D)$ depending on $D$.
		
		\medskip

		\noindent By the equation above, this implies that $|\nabla^2 f_i|_{\tilde{g}_i} \leqslant C(D)$ on $B_{\tilde{g}_i}(p_i, D)$. We also have volume lower bounds of small $r$-balls at $p_i$, since the functions $a_i$ and $b_i$ are increasing. By Shi's estimates applied to the associated Ricci flows, we also have bounds on $B_{\tilde{g}_i}(p_i, D)$ on derivatives of the curvature of the form $|\nabla^k \text{Rm}|_{\tilde{g}_i} \leqslant C_k(D)$ for $k \geqslant 1$ and for all $i$; these bounds provide bounds on higher derivatives of $f_i$ as well.
		
		\medskip
		
		\noindent Now, first we consider the case where (up to a subsequence) $|\nabla f|_{\tilde{g}_i}(p_i)$ becomes unbounded. Set 
		
		\begin{center}
			$\displaystyle \tilde{f}_i := \frac{f_i-f_i(p_i)}{|\nabla f|_{\tilde{g}_i}(p_i)}$.
		\end{center}

		\medskip
		
		\noindent Then, $| \nabla^2 \tilde{f}_i |_{\tilde{g}_i}$ converges to $0$, so we have smooth pointed Cheeger-Gromov convergence of a subsequence of $(M,g_i, \nabla \tilde{f}_i, p_i)$ to a smooth non-flat Riemannian manifold $(M_\infty, g_\infty, p_\infty)$ with a smooth function $f_\infty$ satisfying $\nabla^2 f_\infty=0$ and $|\nabla f_\infty|(p_\infty) = 1$. Thus, $\nabla f_\infty$ is a parallel vector field, which implies the splitting of $(M_\infty, g_\infty)$. In addition, since $a_i(r_i)$ and $b_i(r_i)$ tend to infinity at least linearly in $r_i$ by \Cref{lem:ablinbelow}, and $Q_i >1$ for large $i$, we see that $M_\infty$ is diffeomorphic to $\mathbb{R}^4$ and carries a doubly warped product metric $g_\infty = dr^2 + a_\infty(r)^2g_{\mathbb{R}}+b_\infty(r)^2g_{\mathbb{R}^2}$ over $\mathbb{R} \times \mathbb{R}^2$, by \Cref{lem:CG} in Appendix C. As $\nabla^2 f_\infty = 0$, we see that the functions $a_\infty$ and $b_\infty$ are constant, implying that the limit is isometric to Euclidean space. However, $|\text{Rm}|(p_\infty)$ is nonzero, which is a contradiction.
		
		\medskip
		
		\noindent Now, consider the case where $|\nabla f|_{\tilde{g}_i}(p_i)$ remains bounded. Then, we have smooth pointed Cheeger-Gromov convergence of a subsequence of $(M,g_i, \nabla \tilde{f}_i, p_i)$ to a smooth non-flat steady Ricci soliton $(M_\infty, g_\infty, \nabla f_\infty, p_\infty)$. As before, we see that $M_\infty$ is diffeomorphic to $\mathbb{R}^4$ and carries a doubly warped product metric over $\mathbb{R} \times \mathbb{R}^2$. Taking the quotient by $\mathbb{Z}$ and $\mathbb{Z}^2$ so that the orbits are compact, we have a steady Ricci soliton with $2$ ends (since $r_i \to \infty$), which, according to the results of \cite{MW11} must split as the product of $\mathbb{R}$ with a compact Ricci-flat $3$-manifold $N$, which must be flat. This implies that $M_\infty$ is flat, which is a contradiction.
		
		\medskip
		
		\noindent	Thus, such a sequence $\{r_i\}$ cannot exist, so the curvatures of the solitons must be uniformly bounded in terms of the initial conditions. 	\end{proof}
	
	\medskip
	
	\noindent Now, we improve the curvature bound from the previous lemma to quadratic curvature decay.

	\begin{lem}\label{lem:Rmbound2}
		Suppose $(M,g, \nabla f)$ is a complete cohomogeneity one gradient expanding Ricci soliton over $S^1 \times \mathbb{R}^3$ or $S^2 \times \mathbb{R}^2$, as considered in the setup of this paper. Then, the curvature satisfies the following bound for some $C>0$. Moreover, $C$ can be chosen uniformly in the initial conditions (so $C \equiv C(a_0, f_0)$ or $C \equiv C(b_0, f_0)$, depending on the topology)
		
		\begin{equation}
			|\textnormal{Rm}|_g (r) \displaystyle \leqslant \frac{C}{r^2}
		\end{equation}
		
	\end{lem}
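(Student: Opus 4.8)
The plan is to prove quadratic curvature decay by a contradiction/blow-up argument that parallels the proof of \Cref{lem:Rmbound1}, but now exploiting the \emph{location} of the bad points via the lower bounds on $a,b$ from \Cref{lem:ablinbelow}. Concretely, suppose the bound $|\textnormal{Rm}|_g(r) \leqslant C/r^2$ fails uniformly. Then there is a sequence of solitons $(M,g_i,\nabla f_i)$ with initial conditions in a fixed compact set $F \subset \mathbb{R}^+ \times \mathbb{R}^-$, together with points $p_i$ at radius $r_i$ such that $r_i^2\,|\textnormal{Rm}|_{g_i}(p_i) =: Q_i \to \infty$. Since $|\textnormal{Rm}|_{g_i}$ is uniformly bounded by \Cref{lem:Rmbound1}, we must have $r_i \to \infty$. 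Applying the point-picking \Cref{lem:pp} with $f = |\textnormal{Rm}|_{g_i}$ (and a sequence $D_i \to \infty$), I may assume that $|\textnormal{Rm}|_{g_i} \leqslant 4|\textnormal{Rm}|_{g_i}(p_i)$ on a ball of $g_i$-radius $D_i / \sqrt{|\textnormal{Rm}|_{g_i}(p_i)}$ around $p_i$.

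**Next I would** rescale by the curvature scale $\lambda_i := |\textnormal{Rm}|_{g_i}(p_i) \geqslant Q_i / r_i^2$, setting $\tilde g_i := \lambda_i g_i$, so that $|\textnormal{Rm}|_{\tilde g_i}(p_i) = 1$ and $|\textnormal{Rm}|_{\tilde g_i} \leqslant 4$ on $B_{\tilde g_i}(p_i, D_i)$. The rescaled solitons satisfy $\textnormal{Ric}_{\tilde g_i} + \nabla^2 f_i + \lambda_i^{-1} \tilde g_i = 0$, and the crucial point is that the expander term $\lambda_i^{-1}$ now decays: since $\lambda_i r_i^2 = Q_i \to \infty$ and $r_i \to \infty$, we get $\lambda_i^{-1} \to 0$, so in the limit the expander equation degenerates to the \emph{steady} soliton equation. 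As in \Cref{lem:Rmbound1}, I would split into the cases where $|\nabla f_i|_{\tilde g_i}(p_i)$ is unbounded versus bounded along a subsequence, extract Shi-type higher-derivative bounds via the associated Ricci flows, secure the volume non-collapsing lower bound from the (now large) warping functions $a_i(r_i), b_i(r_i) \gtrsim r_i \to \infty$, and pass to a smooth pointed Cheeger--Gromov limit $(M_\infty, g_\infty, \nabla f_\infty, p_\infty)$, which by \Cref{lem:CG} is a doubly warped product over $\mathbb{R} \times \mathbb{R}^2$. In either case the limit is a two-ended steady soliton (two ends because $r_i \to \infty$ pushes the singular orbit to infinity), so by \cite{MW11} it must split off a line and reduce to a flat product, contradicting $|\textnormal{Rm}|_{g_\infty}(p_\infty) = 1$.

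\textbf{The main obstacle} I anticipate is verifying that the limit genuinely has two ends and that the $\lambda_i^{-1} \to 0$ degeneration is legitimate, i.e. that the curvature scale $\lambda_i$ really does dominate the expander scale. This requires knowing $\lambda_i r_i^2 \to \infty$, which is exactly the negation of the desired estimate, so the argument is self-consistent; but I must also check that after rescaling the ball $B_{\tilde g_i}(p_i, D_i)$ does not see the singular orbit, which follows because the singular orbit sits at $r = 0$ while $p_i$ sits at $g_i$-distance $r_i \to \infty$, and rescaling by $\lambda_i \geqslant Q_i/r_i^2$ sends that distance to $\sqrt{\lambda_i}\, r_i \geqslant \sqrt{Q_i} \to \infty$. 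A secondary technical point is ensuring all constants (Shi bounds, volume lower bounds, the constant $C$) remain uniform over the compact family $F$ of initial conditions; this uniformity is inherited from \Cref{lem:Rmbound1}, \Cref{lem:ablinbelow}, and \Cref{lem:absimplebound}, all of whose constants were already arranged to depend continuously on the initial data.
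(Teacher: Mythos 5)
Your blow-up scheme has a genuine gap at its central step: the claim that the expander term $\lambda_i^{-1}$ decays after rescaling by the curvature scale $\lambda_i := |\mathrm{Rm}|_{g_i}(p_i)$. The implication ``$\lambda_i r_i^2 \to \infty$ and $r_i \to \infty$ imply $\lambda_i^{-1} \to 0$'' is a non sequitur (take $\lambda_i = r_i^{-1}$), and in fact the opposite is forced: by \Cref{lem:Rmbound1}, which you invoke yourself, $\lambda_i \leqslant C$ uniformly over the compact set $F$ of initial conditions, so $\lambda_i^{-1} \geqslant 1/C > 0$. Consequently the rescaled equation $\mathrm{Ric}_{\tilde g_i} + \nabla^2 f_i + \lambda_i^{-1}\tilde g_i = 0$ never becomes steady in the limit: either $\lambda_i$ stays bounded away from $0$, in which case the limit equation is still an \emph{expanding} soliton equation, or $\lambda_i \to 0$, in which case $\lambda_i^{-1} \to \infty$ and no limit of the equation exists at all without renormalizing $f_i$. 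Either way the appeal to \cite{MW11} (splitting of two-ended \emph{steady} solitons) collapses. Worse, the substitute you would need --- non-existence of two-ended expanders of the limiting type --- is false: since $a_i(r_i), b_i(r_i) \to \infty$, your limit is a warped product over $\mathbb{R} \times \mathbb{R}^2$ (flat fibers), and hyperbolic cusps $dr^2 + e^{2r/\sqrt{3}}g_{T^3}$ on $\mathbb{R} \times T^3$ are two-ended Einstein (hence expanding soliton, $f \equiv$ const) metrics with monotone warping functions, so even the paper's later \Cref{lem:no2ends} (proved only for $\mathbb{R} \times S^2 \times S^1$, where the $1/b^2$ curvature term is essential) could not be borrowed here.

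The argument can be repaired, but by a different mechanism, namely the gradient-normalization used in the \emph{first} case of the paper's proof of \Cref{lem:Rmbound1} --- and here the case split trivializes: since $\lambda_i \leqslant C$ and $|f_i'(r_i)| \geqslant \epsilon(r_i-1)$ by \Cref{thm:fpbound}, one always has $|\nabla f_i|_{\tilde g_i}(p_i) = \lambda_i^{-1/2}|f_i'(r_i)| \to \infty$, so the ``bounded gradient'' case never occurs. Setting $\tilde f_i := (f_i - f_i(p_i))/|\nabla f_i|_{\tilde g_i}(p_i)$, the soliton equation gives $|\nabla^2 \tilde f_i|_{\tilde g_i} \leqslant \left(|\mathrm{Ric}|_{\tilde g_i} + \lambda_i^{-1}\right)/|\nabla f_i|_{\tilde g_i}(p_i)$, and the dangerous term is $\lambda_i^{-1/2}/|f_i'(r_i)| \leqslant C' Q_i^{-1/2} \to 0$; this is precisely where the contradiction hypothesis $Q_i = r_i^2 \lambda_i \to \infty$ must enter, and your write-up never performs this estimate. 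The limit then carries a nontrivial parallel vector field, splits, and by \Cref{lem:CG} is a flat warped product, contradicting $|\mathrm{Rm}|_{\tilde g_i}(p_i) = 1$. For comparison, the paper proves \Cref{lem:Rmbound2} by an entirely different, non-geometric route: it integrates the first-order equations $A' + AF' = 1$ and $B' + BF' = 1 + b^{-2}$ for $A = a'/a$, $B = b'/b$ with integrating factor $e^F$, and shows via L'H\^opital that $A, B = 1/F'(r) + O(F'(r)^{-3})$ with $F'$ of linear growth, from which every sectional curvature is $O(r^{-2})$ with constants manifestly uniform in the initial conditions.
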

	
	\begin{proof}
		
		\noindent Suppose $(M, g, \nabla f)$ is an expanding soliton as in the hypothesis in the case of either topology. Consider the quantities $A : = a'/a$ and $B: = b'/b$. Then, we can rewrite \cref{eq:aeq} and \cref{eq:beq} as
		
		\begin{equation*}
			A' + A^2 = - 2AB + Af' + 1
		\end{equation*}
		
		\begin{equation*}
			B' + B^2 = -B^2 - AB + Bf' + 1 + \frac{1}{b^2}
		\end{equation*}
		
		\noindent We rewrite the equations as 
		
		\begin{equation*}
			A' + A(A+2B-f')=1
		\end{equation*}
		
		\begin{equation*}
			B' + B(A+2B-f')=1+ \frac{1}{b^2}
		\end{equation*}
		
		\noindent Define $F : [1, \infty) \to \mathbb{R}$ to be the function satisfying $F'(r) = (A+2B)(r) - f'(r)$ and $F(1) = 0$. Note that $F'(r) = O(r)$ by \Cref{thm:fpbound} and \Cref{lem:absimplebound}. Using the definitions of $A$ and $B$ and soliton equation \cref{eq:feq}, we see that $F'' = A' + 2B' - f'' = 1 - (\frac{a'}{a})^2 - 2(\frac{b'}{b})^2$, which is bounded uniformly in the initial conditions on $[1,\infty)$ by the result of \Cref{lem:absimplebound}. Then, we have the equations
		
		\[
		A' + AF' = 1 \quad\quad B' + BF' = 1 + \frac{1}{b^2}
		\]
		
		\noindent which we can solve to get
		
		\begin{equation*}
			A(r) = e^{-F(r)}\int_{1}^{r} e^{F(u)} du + C_Ae^{-F(r)}
		\end{equation*}
		
		\begin{equation*}
			B(r) = e^{-F(r)}\int_{1}^{r} e^{F(u)} du + e^{-F(r)}\int_{1}^{r} e^{F(u)}\frac{1}{b^2(u)}du + C_Be^{-F(r)}
		\end{equation*}
		
		\noindent Note that $C_A$ and $C_B$ vary continuously in the initial conditions, since $F(1)$, $A(1)$ and $B(1)$ vary continuously as well. Then, using the upper bound on $f'(r)$ from \Cref{thm:fpbound} and the bounds on $A$ and $B$ from \Cref{lem:absimplebound}, we see that the last term in both equations decays faster than $e^{-\epsilon r^2/2}$, for some $\epsilon > 0$ uniform in the initial conditions.
		
		\medskip
		
		\noindent \textbf{Claim: } $\displaystyle A(r) = \frac{1}{F'(r)} + O\left(\frac{1}{F'(r)^3}\right)$ and $\displaystyle B(r) = \frac{1}{F'(r)} + O\left(\frac{1}{F'(r)^3}\right)$
		
		\smallskip
		
		\noindent \textit{Proof of Claim:} We first prove the claim for $A$. By the sentence preceding the claim, the term $C_Ae^{-F(r)}$ decays faster than any polynomial in $F'(r)^{-1}$, so it is enough to analyze the integral term. First, consider the quantity
		\[
		\frac{
			\displaystyle \int_0^r e^{F(u)}\, du
		}{
			\displaystyle \frac{e^{F(r)}}{F'(r)}
		}.
		\]
		
		\noindent Applying L’Hôpital’s rule, we get
		\begin{align*}
			L &= \lim_{r \to \infty} \frac{\int_0^r e^{F(u)}\, du}{\frac{e^{F(r)}}{F'(r)}} =  \lim_{r \to \infty} \frac{e^{F(r)}}{e^{F(r)} \left(1 - \frac{F''(r)}{(F'(r))^2}\right)} 
			= \lim_{r \to \infty} \frac{1}{1 - \frac{F''(r)}{(F'(r))^2}} =1.
		\end{align*}
		
		\noindent where the last step follows since $F''$ is bounded and $F'(r) = O(r)$. This implies the asymptotic equivalence
		\[
		e^{-F(r)} \int_0^r e^{F(u)}\, du = \frac{1}{F'(r)} +  o\left( \frac{1}{F'(r)} \right) \quad \text{as } r \to \infty.
		\]
		
		\bigskip 
		
		\noindent Now, we consider the limit
		\[
		L = \limsup_{r \to \infty}
		\frac{
			\displaystyle e^{-F(r)} \int_0^r e^{F(u)}\, du - \frac{1}{F'(r)}
		}{
			\displaystyle \frac{1}{(F'(r))^3}
		}.
		\]
		
		\noindent Multiplying the numerator and denominator by \( e^{F(r)} \), this becomes:
		\[
		L = \limsup_{r \to \infty}
		\frac{
			\displaystyle \int_0^r e^{F(u)}\, du - \frac{e^{F(r)}}{F'(r)}
		}{
			\displaystyle \frac{e^{F(r)}}{(F'(r))^3}
		}.
		\]
		
		\noindent Applying L’Hôpital’s Rule (note that this becomes an inequality for the lim sup), we have 
		\[
		L \leqslant \limsup_{r \to \infty}
		\frac{e^{F(r)} \cdot \frac{F''(r)}{(F'(r))^2}}{e^{F(r)} \left( \frac{1}{(F'(r))^2} - \frac{3 F''(r)}{(F'(r))^4} \right)}
		=
		\limsup_{r \to \infty}
		\frac{F''(r)}{1 - \frac{3 F''(r)}{(F'(r))^2}}.
		\]
		
		\noindent Since \( F'(r) \to \infty \) and \( F''(r) \) is bounded, this is \( O(1) \). Thus,
		\[
		e^{-F(r)} \int_0^r e^{F(u)}\, du = \frac{1}{F'(r)} + O\left( \frac{1}{(F'(r))^3} \right) \quad \text{as } r \to \infty.
		\]
		
		\noindent

		\bigskip
		
		\noindent For $B$, we see that the first term is identical to that of $A$, providing the leading order term $\frac{1}{F'(r)}$. In the second term, define $\tilde{F} := F - 2\text{ln}(b)$. Then, we can rewrite this term as
		
		\begin{equation*}
			e^{-F(r)}\int_{1}^{r} e^{F(u)}\frac{1}{b^2(u)}du  = \frac{1}{b^2(r)} \left( e^{-\tilde{F}(r)} \int_{1}^{r} e^{\tilde{F}(u)}du \right)
		\end{equation*}
		
		\noindent Now, the term in parentheses above can be shown by a similar argument to decay with leading term $ \displaystyle \frac{1}{\tilde{F}'(r)}$ (Note that $\tilde{F}''  = 1 - (\frac{a'}{a})^2 - 2\frac{b''}{b}$. This is bounded uniformly in the initial conditions on $[1,\infty)$ by Lemmas\autoref{lem:absimplebound} and\autoref{lem:Rmbound1} since $-b''/b$ is a sectional curvature, allowing the argument for the first term for $B$ to be applied to the second term as well). Thus, the second term for $B$ is $\displaystyle O\left(\frac{1}{b(r)^2\tilde{F}'(r)}\right)$. Since $b(r)$ is known to grow at least linearly by \Cref{lem:ablinbelow}, we can combine the decay rate of all $3$ terms for $B$ to see that the leading term is $1/F'(r)$ and that all other terms decay at least as fast as $r^{-3}$, proving the claim for $B$ as well. $\blacksquare$
		
		\medskip
		
		\noindent Using the claim and the fact that $F'(r)$ is of linear growth (uniform in the initial conditions), we see that
		
		\begin{equation*}
			\frac{a''}{a}(r) = A'(r) + A(r)^2 = 1 - A(r)F'(r) + A(r)^2 = O\left(\frac{1}{F'(r)^2}\right) + O\left(\frac{1}{r^2}\right)= O\left(\frac{1}{r^2}\right)
		\end{equation*}
		
		\begin{equation*}
			\frac{b''}{b}(r) = B'(r) + B(r)^2 = 1 +\frac{1}{b^2(r)} - B(r)F'(r) + B(r)^2 = O\left(\frac{1}{F'(r)^2}\right) + O\left(\frac{1}{r^2}\right) = O\left(\frac{1}{r^2}\right)
		\end{equation*}
		
		\begin{equation*}
			\frac{a'b'}{ab}(r) = AB = O\left(\frac{1}{F'(r)^2}\right) = O\left(\frac{1}{r^2}\right)
		\end{equation*}
		
		\begin{equation*}
			\frac{1-(b')^2}{b^2}(r) = \frac{1}{b^2} - B^2 = O\left(\frac{1}{r^2}\right) + O\left(\frac{1}{F'(r)^2}\right) = O\left(\frac{1}{r^2}\right)
		\end{equation*}
		
		\noindent Thus, we have shown that all sectional curvatures (refer Appendix A) decay as $r^{-2}$. 
	\end{proof}

	\noindent Now that we know that the curvature tensor decays as $r^{-2}$, we can make a more precise statement about the asymptotics of $f$.
	
	\begin{lem}\label{lem:fpplusr}
		Suppose $(a,b,f)$ satisfy equations \cref{eq:feq,eq:aeq,eq:beq}, with boundary conditions either \cref{S1R3a,S1R3b} or \cref{S2R2a,S2R2b}. Then, the quantity $f'(r)+r$ has a finite limit at infinity, denoted $K$. In addition, we have the following inequality for $r>0$:

		\begin{center}
			$\displaystyle 0 < |f'(r)+r-K| < \frac{C}{r}$
		\end{center}
		
		\noindent  Moreover, $C \equiv C(a_0, f_0)$ or $C \equiv C(b_0, f_0)$ can be chosen uniformly in the initial conditions and $K$ is continuous in the initial conditions.
	\end{lem}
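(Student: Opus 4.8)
The plan is to extract the asymptotics of $f'+r$ directly from the soliton equation \Cref{eq:feq}. Writing $A:=a'/a$ and $B:=b'/b$ as in the proof of \Cref{lem:Rmbound2}, \Cref{eq:feq} gives $(f'(r)+r)' = f''(r)+1 = \frac{a''}{a}+2\frac{b''}{b}$. The right-hand side is, up to sign, a sum of sectional curvatures, so by the quadratic curvature decay of \Cref{lem:Rmbound2} we have $|(f'(r)+r)'|\leqslant C/r^2$ on $[1,\infty)$ with $C$ uniform in the initial conditions. Since $r^{-2}$ is integrable near infinity, $f'(r)+r$ is Cauchy as $r\to\infty$ and converges to a finite limit $K$. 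Integrating the same bound from $r$ to infinity yields $|f'(r)+r-K| = \big|\int_r^\infty (f''(s)+1)\,ds\big| \leqslant \int_r^\infty \frac{C}{s^2}\,ds = \frac{C}{r}$, which is the upper bound in the statement, with $C$ inherited uniformly from \Cref{lem:Rmbound2}.

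For continuity of $K$ in the initial conditions I would write $K = \big(f'(1)+1\big) + \int_1^\infty (f''(s)+1)\,ds$. The boundary term depends continuously (indeed smoothly) on the initial data by smooth dependence of ODE solutions on initial conditions, while the integrand is continuous in the initial conditions and is dominated by the fixed integrable function $C/s^2$ over any compact family of initial data; dominated convergence then gives continuity of the integral, hence of $K$. Equivalently, the uniform bound $|f'(r)+r-K|\leqslant C/r$ says that $r\mapsto f'(r)+r$ converges to $K$ uniformly over compact families of initial data, and since each such function is continuous in the initial conditions, so is the limit $K$.

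The delicate point, and the one I expect to be the main obstacle, is the strict lower bound $0<|f'(r)+r-K|$, i.e.\ that $\psi(r):=f'(r)+r-K$ never vanishes. Since $\psi\to 0$ and $\psi'=f''+1$, it suffices to show $h:=f''+1$ has a definite sign for large $r$ and to identify the side from which $f'+r$ approaches $K$. Differentiating the formula for $f'''$ from the proof of \Cref{lem:fmon} and using $F'=A+2B-f'$ from the proof of \Cref{lem:Rmbound2}, one checks that $h$ satisfies the linear ODE $h'+F'h=\rho$, with forcing $\rho=(A^2+2B^2)f'+(A+2B)$. The integrating-factor representation $h(r)=e^{-F(r)}\big(h(1)+\int_1^r e^{F(u)}\rho(u)\,du\big)$ shows that once $\rho$ is eventually of a fixed sign, so is $h$ for large $r$ (because $e^{F}$ grows super-exponentially), forcing $\psi$ to be eventually strictly monotone and to approach $K$ strictly from one side.

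The genuine difficulty lies in pinning down the sign of $\rho$. Substituting the leading asymptotics $A,B=\frac{1}{F'}+O(F'^{-3})$ from the Claim in \Cref{lem:Rmbound2}, together with $f'=A+2B-F'$ and $F'=O(r)$ from \Cref{thm:fpbound} and \Cref{lem:absimplebound}, one finds that the two $O(1/r)$ contributions to $\rho$ cancel exactly, leaving a term of order $F'^{-3}$ whose sign is governed by the sub-leading coefficients in the expansions of $A$ and $B$. Thus determining the sign of $\rho$ requires carrying these expansions one order further than in \Cref{lem:Rmbound2}; this is where the real work is. Once the sign of $\rho$ is settled, the strict inequality on $[1,\infty)$ follows from the eventual monotonicity above, and the remaining compact interval is handled by a direct continuity and compactness argument, ruling out an interior zero of $\psi$.
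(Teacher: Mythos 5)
Your main argument --- rewriting \Cref{eq:feq} as $(f'+r)' = f''+1 = \frac{a''}{a}+2\frac{b''}{b}$, invoking the uniform quadratic curvature decay of \Cref{lem:Rmbound2}, and integrating from $r$ to $\infty$ --- is exactly the paper's proof, and your treatment of the continuity of $K$ (uniform convergence of $f'+r$ over compact families of initial data, or dominated convergence) is just a more explicit version of the paper's one-line appeal to the uniformity of $C$. This part is correct and complete.

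The divergence is in the strict lower bound $0 < |f'(r)+r-K|$. You are right that the integration argument only yields $0 \leqslant |f'(r)+r-K| \leqslant C/r$; in fact the paper's own proof does no better --- it derives the non-strict inequalities and then simply asserts the result, so the strictness is glossed over there as well (and it is never used later: the subsequent applications, e.g.\ in \Cref{lem:abbounds} and \Cref{lem:limcont}, only need the upper bound $C/r$). Your proposed repair via the ODE $h'+F'h=\rho$ for $h=f''+1$, with $\rho=(A^2+2B^2)f'+(A+2B)$, is correctly derived from the $f'''$ identity in the proof of \Cref{lem:fmon}, but, as you acknowledge, it is not a proof: the sign of $\rho$ is left undetermined (the $O(1/r)$ contributions cancel, so it hinges on sub-leading expansions not computed in \Cref{lem:Rmbound2}), and even granting a definite eventual sign of $h$, that only excludes zeros of $\psi=f'+r-K$ for large $r$; ``continuity and compactness'' cannot rule out an interior zero of $\psi$ on the remaining compact interval, since a continuous function may perfectly well vanish there. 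So the strict positivity remains unproven in your write-up --- but this is a defect shared by (indeed inherited from) the paper's statement, and your proposal establishes everything the paper's own proof actually establishes.
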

	
	\begin{proof}
		Equation \cref{eq:feq} can be written as
		
		\begin{center}
			$\displaystyle f''+1=\frac{a''}{a}+2\frac{b''}{b}$
		\end{center}
		
		\noindent As $-\frac{a''}{a}$ and $-\frac{b''}{b}$ are components of the curvature tensor (components $\text{Rm}_{1221}$ and $\text{Rm}_{1331}$ respectively; refer Appendix A for details), they decay as $r^{-2}$. Thus, for some constant $C>0$ (which can be chosen uniformly in the initial condtions) by \Cref{lem:Rmbound2}, for $r>0$, we have
		
		\begin{center}
			$\displaystyle 0 \leqslant |f''+1| \leqslant \frac{C}{r^2}$
		\end{center}
		
		\noindent As $f'(r)+r$ has derivative equal to $f''(r)+1$, and since $f''(r)+1$ decays like $r^{-2}$, we see that $f'(r)+r$ has a finite limit as $r \to \infty$, denoted by $K$. The uniformity of $C$ allows us to see that $K$ is continuous in the initial conditions.
		
		\medskip
		
		\noindent Fix $r,s > 0$ and integrate the inequality above to get
		
		\begin{center}
			$\displaystyle 0 \leqslant \left| \int_{r}^{s}(f''(t)+1) dt \right| \leqslant \int_{r}^{s}|f''(t)+1|dt \leqslant \frac{C}{r}-\frac{C}{s}$
		\end{center}
		
		\noindent We thus see that 
		
		\begin{center}
			$\displaystyle 0 \leqslant |f'(r)+r-f'(s)-s| \leqslant \frac{C}{r}-\frac{C}{s}$
		\end{center}
		
		\noindent  Taking the limit in the above inequality as $s \to \infty$ gives the result. 
	\end{proof}
	
	\noindent Now, we can understand the asymptotic behavior of $a$ and $b$. 
	
	\begin{lem}\label{lem:coneterms}
		Suppose $a,b,f$ satisfy equations \cref{eq:feq,eq:aeq,eq:beq} with initial conditions either \cref{S1R3a,S1R3b} or \cref{S2R2a,S2R2b}, depending on the topology. Then we have the following inequalities for $r>1$, which can be chosen uniformly in the initial conditions:
		
		\begin{center}
			$\displaystyle 0 \leqslant \left|1+\frac{a'f'}{a} \right| \leqslant \frac{C}{r^2}$
			
			\medskip
			
			$\displaystyle 0 \leqslant \left|1+\frac{b'f'}{b}\right| \leqslant \frac{C}{r^2}$
		\end{center}
		
		\noindent where $C \equiv C(a_0, f_0)$ or $C \equiv C(b_0, f_0)$ can be chosen uniformly in the initial conditions.
	\end{lem}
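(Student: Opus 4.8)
The plan is to read both inequalities directly off the soliton equations \cref{eq:aeq} and \cref{eq:beq}, once they are divided by $a$ and $b$ respectively, thereby reducing everything to the quadratic curvature decay already established in \Cref{lem:Rmbound2}. The key observation is that the quantities $1 + \frac{a'f'}{a}$ and $1 + \frac{b'f'}{b}$ are, up to signs, sums of precisely the sectional-curvature expressions that were shown to be $O(r^{-2})$ in the proof of \Cref{lem:Rmbound2}; so essentially no new analytic work is required.

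First I would divide \cref{eq:aeq} by $a$ and rearrange to obtain the algebraic identity
\begin{equation*}
	1 + \frac{a'f'}{a} = \frac{a''}{a} + 2\frac{a'b'}{ab}.
\end{equation*}
Both terms on the right-hand side were shown in \Cref{lem:Rmbound2} to be $O(r^{-2})$ with constants uniform in the initial conditions (they are exactly $\frac{a''}{a}$ and $\frac{a'b'}{ab}$). Applying the triangle inequality to the sum immediately yields $\left|1 + \frac{a'f'}{a}\right| \leqslant \frac{C}{r^2}$ on $[1,\infty)$ for a constant $C$ depending uniformly on the initial conditions; the lower bound $0 \leqslant \left|1 + \frac{a'f'}{a}\right|$ is automatic.

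For the second inequality I would divide \cref{eq:beq} by $b$ and rearrange to get
\begin{equation*}
	1 + \frac{b'f'}{b} = \frac{b''}{b} - \frac{1 - (b')^2}{b^2} + \frac{a'b'}{ab}.
\end{equation*}
Each of the three terms on the right, namely $\frac{b''}{b}$, $\frac{1-(b')^2}{b^2}$, and $\frac{a'b'}{ab}$, again appears in \Cref{lem:Rmbound2} with a uniform $O(r^{-2})$ bound, so summing the three bounds gives the claimed estimate for $b$.

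The main point is that there is no genuine obstacle remaining: the heavy lifting, which is the uniform quadratic curvature decay, was already carried out in \Cref{lem:Rmbound2}. The only thing to verify is the bookkeeping, namely that each summand on the right of the two identities is one of the curvature expressions estimated there and that the combined constant stays uniform in the initial conditions; both are immediate from the statement of that lemma. If instead one wanted a derivation not routing through \Cref{lem:Rmbound2}, the alternative would be to use $f' = -F' + A + 2B$ together with the Claim $A, B = \frac{1}{F'} + O\!\left(\frac{1}{F'^{3}}\right)$ to compute $Af' = -1 + O\!\left(\frac{1}{F'^{2}}\right)$ and $Bf' = -1 + O\!\left(\frac{1}{F'^{2}}\right)$ directly, but this is strictly more work and merely reproves what \Cref{lem:Rmbound2} already provides.
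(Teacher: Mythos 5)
Your proposal is correct and follows essentially the same route as the paper: both divide \cref{eq:aeq} and \cref{eq:beq} by $a$ and $b$ to exhibit $1+\frac{a'f'}{a}$ and $1+\frac{b'f'}{b}$ as (negatives of) Ricci curvature components, and then invoke the uniform quadratic curvature decay of \Cref{lem:Rmbound2}. The identities you write and the uniformity bookkeeping match the paper's argument exactly.
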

	
	\begin{proof}
		From \cref{eq:aeq}, we see that
		
		\begin{center}
			$\displaystyle 1+\frac{a'f'}{a}=\frac{a''}{a}+\frac{2a'b'}{ab}$
		\end{center}
		
		\noindent The terms on the right hand side comprise a component of the Ricci curvature (refer Appendix A for details), so the RHS must decay as $r^{-2}$, proving the first half of the lemma. The result for $b$ follows analogously using \cref{eq:beq}.
	\end{proof}
	
	\noindent Now, we will show that $a$ and $b$ are bounded above and below by linear functions.
	
	\begin{lem}\label{lem:abbounds}
		Consider \cref{eq:feq,eq:aeq,eq:beq}, with initial conditions \cref{S1R3a,S1R3b} (in the $S^1 \times \mathbb{R}^3$ case) or \cref{S2R2a,S2R2b} (in the $S^2 \times \mathbb{R}^2$ case). Then, there exists constants $\alpha_1, \alpha_2, \beta_1, \beta_2>0$ and $C,C'D,D' \in \mathbb{R}$ so that we have $\alpha_1 (r+C) < a(r) < \alpha_2 (r+C')$ and $\beta_1 (r+D') < b(r) < \beta_2 (r+D')$. Additionally, $\alpha_1, \alpha_2, \beta_1, \beta_2$ depend continuously on the initial conditions.
	\end{lem}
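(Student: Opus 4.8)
The plan is to obtain the lower bounds directly from \Cref{lem:ablinbelow} and to prove the upper bounds by a logarithmic-derivative argument driven by the sharp asymptotics of $f'$. Since \Cref{lem:ablinbelow} already gives $a(r), b(r) \geqslant c(r-1)$ on $[1,\infty)$ with $c$ uniform in the initial conditions, the lower bounds $\alpha_1(r+C) < a(r)$ and $\beta_1(r+D') < b(r)$ follow immediately (taking $\alpha_1 = \beta_1 = c$ and $C = D' = -1$, and shrinking $c$ slightly to make the inequalities strict). The real content of the lemma is therefore the linear upper bound, which I will establish for $a$; the argument for $b$ is identical.

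First I would rewrite \Cref{lem:coneterms} in terms of the logarithmic derivative of $a$. Since $f' < 0$ on $(0,\infty)$ by \Cref{lem:fmon}, writing $|f'| = -f'$ and dividing the estimate $\left|1 + a'f'/a\right| \leqslant C/r^2$ by $|f'|$ yields, on $[1,\infty)$,
\begin{equation*}
\frac{1 - C/r^2}{|f'(r)|} \;\leqslant\; \frac{a'(r)}{a(r)} \;\leqslant\; \frac{1 + C/r^2}{|f'(r)|}.
\end{equation*}
The key input is then the precise asymptotics of $|f'|$: by \Cref{lem:fpplusr} we have $f'(r) = -r + K + \eta(r)$ with $|\eta(r)| < C/r$, so $|f'(r)| = r - K - \eta(r)$. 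Expanding $\tfrac{1}{|f'|} = \tfrac{1}{r}\bigl(1 - (K+\eta)/r\bigr)^{-1}$ for large $r$ gives the uniform estimate $\tfrac{a'}{a} \leqslant \tfrac{1}{r} + \tfrac{M}{r^2}$ on $[r_1,\infty)$ for a suitable threshold $r_1$ and constant $M$, both controlled by the uniform constants of \Cref{lem:coneterms} and \Cref{lem:fpplusr} together with the (continuous) limit $K$.

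Integrating this estimate on $[r_1,r]$ gives
\begin{equation*}
\ln\frac{a(r)}{a(r_1)} \;=\; \int_{r_1}^{r}\frac{a'}{a}\,dt \;\leqslant\; \ln\frac{r}{r_1} + M\!\left(\frac{1}{r_1} - \frac{1}{r}\right),
\end{equation*}
so that $a(r) \leqslant \bigl(a(r_1)\,e^{M/r_1}/r_1\bigr)\, r$, a linear upper bound for $r \geqslant r_1$. On the compact interval $[1,r_1]$ the function $a$ is bounded, so after enlarging the coefficient the bound $a(r) < \alpha_2(r + C')$ holds on all of $[1,\infty)$. The same computation applied to $b$ (using the $b$-version of \Cref{lem:coneterms}) gives $b(r) < \beta_2(r + D')$. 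Uniformity is tracked throughout: the constants $C$ in \Cref{lem:coneterms} and \Cref{lem:fpplusr}, the limit $K$, the threshold $r_1$, and the values $a(r_1), a'(r_1)$ (via continuous dependence on initial conditions and \Cref{lem:Rmbound2}) all vary continuously over compact families of initial conditions, so $\alpha_2, \beta_2$ do as well.

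I expect the main obstacle to be pinning down the leading coefficient in the bound on $a'/a$. The cruder bound $|f'| \geqslant \epsilon(r-1)$ from \Cref{thm:fpbound} is not enough: it only yields $a(r) \lesssim (r-1)^{1/\epsilon}$, a polynomial of the wrong degree. The linear upper bound genuinely requires the sharp asymptotic $f' \sim -r$ from \Cref{lem:fpplusr}, which forces the coefficient of $1/r$ in the estimate for $a'/a$ to be exactly $1$; this is precisely what makes the integral produce $\ln r + O(1)$ rather than a larger multiple of $\ln r$. Ensuring that the error term is genuinely $O(1/r^2)$ (hence integrable) and uniform over the initial-condition family is the delicate bookkeeping at the heart of the argument.
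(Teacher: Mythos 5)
Your proof is correct, and there is no circularity in your choice of ingredients: both \Cref{lem:coneterms} and \Cref{lem:fpplusr} precede \Cref{lem:abbounds} in the paper and their proofs do not rely on it. However, your route to the upper bound is genuinely different from the paper's. The paper stays at the level of the soliton ODE: it drops the $-2a'b'/b$ term in \cref{eq:aeq} using monotonicity, bounds $f' \leqslant -r + C$ via \Cref{lem:fpplusr}, and then invokes Part 1 of the ODE comparison result \Cref{lem:ODE} (the same lemma already used for the lower bound in \Cref{lem:ablinbelow}) to conclude $a(r) \leqslant C_1(r - r_0) + a(r_0)$. You instead divide the Ricci-decay estimate of \Cref{lem:coneterms} by $|f'|$ and integrate the logarithmic derivative, which requires the full two-sided asymptotics $f' = -r + K + O(1/r)$. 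Your approach leans on heavier machinery — \Cref{lem:coneterms} ultimately rests on the quadratic curvature decay of \Cref{lem:Rmbound2}, the hardest estimate in the section — but in exchange it yields sharper information, namely $a'/a = 1/r + O(1/r^2)$, which is essentially a precursor of the asymptotic linearity statement the paper proves next in \Cref{lem:limcont} (and indeed \Cref{lem:limcont} is where the paper deploys the combination of \Cref{lem:coneterms}, \Cref{lem:fpplusr}, and \Cref{lem:abbounds} that you use here). The paper's argument is more economical for the lemma as stated, needing only a one-sided bound on $f'$ and an already-available comparison lemma; your argument is a legitimate, slightly stronger alternative, and your closing remark correctly identifies why the cruder bound $f' \leqslant -\epsilon(r-1)$ from \Cref{thm:fpbound} cannot suffice for either approach.
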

	
	\begin{proof}
		We will carry out the proof for $a$ in the $S^1 \times \mathbb{R}^3$ case; similar arguments hold for $b$ (the $\frac{1}{b}$ term can be bounded above by a constant $C_b$, since $b$ is increasing; consider the quantity $b+C_b$). Note that \Cref{lem:ablinbelow} provides lower bounds; we show the upper bound. Suppose we have an initial condition inside a compact set of the form $(a_0, f_0) \in F \subset \mathbb{R}^+ \times \mathbb{R}^-$.
		
		\medskip
		
		\noindent \textbf{Step 1:} Using Lemmas\autoref{lem:abmon}, \autoref{lem:fmon}, and\autoref{lem:fpplusr} in \cref{eq:aeq}, for a large $r_0>0$, we can write the following inequality on the interval $[r_0,\infty)$:
		
		\begin{equation*}
			a''  \leqslant a'(-r+C)+a
		\end{equation*}
		
		\medskip
		
		\noindent  where $C$ depends on the constant $K$ from \Cref{lem:fpplusr}, and $a$ has initial conditions $a(r_0)$ and $a'(r_0)$. 
		
		\medskip 
		
		\noindent \textbf{Step 2:} We notice that the values $a(r_0)$ and $a'(r_0)$ are bounded from above and below (with nonzero lower bounds) by our choice of compact set $F$. Additionally, $C$ can be chosen uniformly over $F$ as well. Then, by Part 1 of \Cref{lem:ODE}, we have a bound $a(r) \leqslant C_1(r-r_0) + a(1)$ for some $C_1 >0$ which can be chosen uniformly over $F$ and the constant $C$. Thus, all $a$ with initial conditions in $K$ are bounded from below on $[r_0, \infty)$ by a linear function. The proof for $b$ is similar, following from equation \cref{eq:beq}.
	\end{proof}

	\begin{comment}
		
		Using the inequalities $a'>0$ and $0<\frac{b'}{b}<C$ (which hold on $[r_0,\infty)$ for large $r_0$), we get the following inequalities from (2.2)
		
		\begin{center}
			$a'(f'-2C)+a \leqslant a'' \leqslant a'f'+a$
		\end{center}
		
		\noindent Using Lemma 6.8, for large $r_0$, we have the inequalities
		
		\begin{center}
			$-a'(r+C_1)+a \leqslant a'' \leqslant -a'(r+C_2)+a$
		\end{center}
		
		\noindent where $0<C_2<C_1$. Consider the ODEs corresponding to the differential inequalities above, where the initial conditions are the same as $a$ at $r_0$:
		
		\begin{center}
			$-a_1'(r+C_1)+a_1 = a_1''$
			
			\smallskip
			
			$a_2'' = -a_2'(r+C_2)+a_2$
		\end{center}
		
		\noindent It is straightforward to check that $a_1 \leqslant a \leqslant a_2$. Further, these ODEs can be solved explicitly and it can be checked that the solutions are in fact asymptotically linear.
	\end{comment}

	\noindent Now, we can show that $a$ and $b$ are asymptotically linear.
	
	\begin{lem}\label{lem:limcont}
		\begin{enumerate}
			\item With $K$ as in \Cref{lem:fpplusr}, the quantities $\frac{a(r)}{r-K}$ and $\frac{b(r)}{r-K}$ tend to finite limits as $r \to \infty$. Moreover, the quantities $a'(r)$ and $b'(r)$ tend to the same limits, respectively denoted $a'_\infty$ and $b'_\infty$.
			\item The quantities $a'_\infty$ and $b'_\infty$ are continuous in the initial conditions.
		\end{enumerate}
	\end{lem}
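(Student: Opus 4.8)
The plan is to read off the logarithmic derivatives $a'/a$ and $b'/b$ from the cone-term estimates of \Cref{lem:coneterms}, feed in the sharp asymptotics of $f'$ from \Cref{lem:fpplusr}, and integrate. I would treat $a$ first, the argument for $b$ being identical via the second estimate of \Cref{lem:coneterms}. Rewriting $\bigl|1+\tfrac{a'f'}{a}\bigr|\leqslant C/r^2$ gives $\tfrac{a'f'}{a}=-1+O(1/r^2)$, and substituting the expansion $f'(r)=-(r-K)+O(1/r)$ from \Cref{lem:fpplusr} and expanding the reciprocal yields
\[
\frac{a'}{a} = \frac{1}{r-K} + O\!\left(\frac{1}{r^3}\right),
\]
where the error constant is uniform in the initial conditions, since the constants in \Cref{lem:coneterms} and \Cref{lem:fpplusr} and the limit $K$ are all uniform or continuous over compact families. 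Integrating $\tfrac{d}{dr}\ln a=\tfrac{a'}{a}$ from a fixed large $r_0$ (chosen bigger than $\sup|K|$ over the relevant compact set, so that $r-K>0$ on the domain of integration) gives
\[
\frac{a(r)}{r-K} = \frac{a(r_0)}{r_0-K}\,\exp\!\left(\int_{r_0}^{r} E(t)\,dt\right),
\qquad E(t):=\frac{a'(t)}{a(t)}-\frac{1}{t-K}=O\!\left(\frac{1}{t^3}\right).
\]
Since $\int_{r_0}^\infty E(t)\,dt$ converges absolutely, $\tfrac{a(r)}{r-K}$ tends to a finite positive limit, which I call $a'_\infty$.

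To finish Part 1, I would show $a'(r)$ has the same limit by writing $a'(r)=\tfrac{a'}{a}\cdot a=\bigl(\tfrac{1}{r-K}+O(1/r^3)\bigr)a(r)$ and invoking the linear upper bound $a(r)=O(r)$ from \Cref{lem:abbounds}, so the error term is $O(1/r^2)\to0$ and $a'(r)=\tfrac{a(r)}{r-K}+o(1)\to a'_\infty$. The same computation applies verbatim to $b$, producing $b'_\infty$. For Part 2, continuity follows from the explicit formula $a'_\infty=\tfrac{a(r_0)}{r_0-K}\exp\bigl(\int_{r_0}^\infty E(t)\,dt\bigr)$: the prefactor $\tfrac{a(r_0)}{r_0-K}$ depends continuously on the initial conditions (smooth ODE dependence for $a(r_0)$ and continuity of $K$ by \Cref{lem:fpplusr}), and the integral is continuous because its integrand is continuous in the initial conditions for each fixed $t$ while the uniform bound $|E(t)|\leqslant C/t^3$ forces the tail $\int_R^\infty E$ to be small uniformly over a compact set. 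Equivalently, the convergence $\tfrac{a(r)}{r-K}\to a'_\infty$ is uniform over compact families, and a uniform limit of the continuous maps $(a_0,f_0)\mapsto \tfrac{a(r)}{r-K}$ is continuous.

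The main obstacle is Part 2 rather than Part 1: I must verify that \emph{every} error estimate entering the computation of $a'/a$ holds with a constant uniform over compact families of initial conditions, and that $K$ varies continuously. These uniformities are precisely what was built into \Cref{lem:coneterms}, \Cref{lem:fpplusr}, and \Cref{lem:abbounds}, so the task is to assemble them into a clean uniform-convergence statement rather than to prove any genuinely new estimate. The only delicate bookkeeping is ensuring $r-K$ stays positive on the integration domain, which is harmless once $r_0$ is taken large relative to the (bounded) values of $K$ over the chosen compact set.
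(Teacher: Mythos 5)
Your proposal is correct and follows essentially the same route as the paper: both combine \Cref{lem:coneterms} and \Cref{lem:fpplusr} (with their uniform constants and the continuity of $K$) to show that $\frac{a(r)}{r-K}$ has derivative decaying like $r^{-3}$, integrate to obtain the finite limit, identify it with $\lim_{r\to\infty}a'(r)$ using the linear bounds of \Cref{lem:abbounds}, and deduce continuity of $a'_\infty$, $b'_\infty$ from uniform convergence over compact sets of initial conditions. The only organizational difference is that you package the estimate multiplicatively, via the logarithmic derivative $\frac{a'}{a}-\frac{1}{r-K}=O(r^{-3})$ and an explicit exponential formula (which makes positivity of the limit immediate), whereas the paper estimates $\left|a'(r)-\frac{a(r)}{r-K}\right|\leqslant C/r^2$ directly by adding and subtracting $\frac{a(r)}{f'(r)}$ and then reads this as a derivative bound on $\frac{a(r)}{r-K}$.
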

	
	\begin{proof}
		First, we prove the first statement. We will carry out the proof for $a$; the proof for $b$ is analogous. We consider the quantity $\displaystyle \left|\frac{a(r)}{r-K}-a'(r)\right|$ for $r \geqslant \text{max}(0,K)$
		
		\begin{align*}
			\displaystyle 	\left|\frac{a(r)}{r-K}-a'(r)\right|&=\left|\frac{a(r)}{r-K}+\frac{a(r)}{f'(r)}-\frac{a(r)}{f'(r)}-a'(r)\right| \\
			&\leqslant \left|\frac{a(r)}{r-K}+\frac{a(r)}{f'(r)}\right|+\left|\frac{a(r)}{f'(r)}+a'(r)\right|\\
			&= a(r)\left|\frac{f'(r)+r-K}{f'(r)(r-K)}\right|+\left|\frac{a(r)+a'(r)f'(r)}{f'(r)}\right|\\
		\end{align*}
		\noindent By Lemmas\autoref{lem:fpplusr}, \autoref{lem:coneterms}, and\autoref{lem:abbounds} we see that the first term on the RHS is bounded by $\frac{C}{r^2}$ for some constant $C>0$, while the second term is also bounded by $\frac{C}{r^2}$, both for $r>\text{max}(0,K)+1$.
		
		\medskip
		
		\noindent Thus, we have for $r>\text{max}(0,K)+1$
		
		\begin{equation}
			\left| \frac{a(r)}{r-K}-a'(r) \right| < \frac{C}{r^2}.
		\end{equation}
		
		\noindent This can be rewritten as
		
		\begin{equation}\label{eq:apestimate}
			\displaystyle \left|(r-K) \frac{d}{dr}\left(\frac{a(r)}{r-K} \right) \right| < \frac{C}{r^2}
		\end{equation}
		
		\noindent This shows that the derivative of $\frac{a(r)}{r-K}$ decays as $r^{-3}$. Thus, we see that $\frac{a(r)}{r-K}$ and thus $\frac{a(r)}{r}$ tends to a finite limit as $r \to \infty$. By \Cref{lem:ablinbelow}, this limit is at least $c>0$, so it must be positive. 
		
		\medskip 
		
		\noindent Now, we see that $a'$ and $b'$ reach finite limits as $r \to \infty$. We denote these limits by $a'_\infty$ and $b'_\infty$. This concludes the proof of (1).
		
		\bigskip \noindent Now, for (2), observe that the constant $C$ in equation \Cref{eq:apestimate} depends uniformly on the initial conditions, as it only depends on similarly behaved constants from Lemmas\autoref{lem:fpplusr}, \autoref{lem:coneterms}, and\autoref{lem:abbounds}. Thus, the function $\frac{a(r)}{r-K}$ depends continuously on the initial conditions and its derivative is bounded by $C/r^3$ for a uniform constant $C$ (on an interval of the form $[r_0, \infty)$, by the continuity of $K$). Thus, $\lim_{r \to \infty} \frac{a(r)}{r-K}$ is continuous in the initial conditions. Since we know from (1) that this constant is equal to $a'_\infty$, we see that the slope $a'_\infty$ is continuous in the initial conditions. The analogous argument shows the continuity of $b'_\infty$ as well.
	\end{proof}

	\noindent Geometrically, this suggests that the gradient expanding solitons over $S^1 \times \mathbb{R}^3$ or $S^2 \times \mathbb{R}^2$ are asymptotically conical. We formalize this as follows:
	
	\medskip
	
	\noindent 
	
	\begin{thm}\label{thm:GHC}
		In either of the $S^1 \times \mathbb{R}^3$ and $S^2 \times \mathbb{R}^2$ cases, fix a cohomogeneity one gradient expanding soliton $(M, g, \nabla f)$ and a point $p$ with $r(p)=0$. Consider any sequence $\lambda_i \to 0$. Then, we have Gromov-Hausdorff convergence of  $(M,\lambda^2_i g, p)$ to a cone over the link $S^2 \times S^1$.
	\end{thm}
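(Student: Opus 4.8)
The plan is to pass to the radial coordinate natural for the rescaled metric and reduce the statement to the convergence of warping functions already established. Writing $g = dr^2 + a(r)^2 g_{S^1} + b(r)^2 g_{S^2}$ and substituting $s = \lambda_i r$, the rescaled metric becomes $\lambda_i^2 g = ds^2 + a_i(s)^2 g_{S^1} + b_i(s)^2 g_{S^2}$, where $a_i(s) := \lambda_i\, a(s/\lambda_i)$ and $b_i(s) := \lambda_i\, b(s/\lambda_i)$. Thus $(M, \lambda_i^2 g)$ is again a doubly warped product over the same link $S^2 \times S^1$, with warping functions $a_i, b_i$ and with the basepoint $p$ sitting at $s = 0$.

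The first step is to show that $a_i(s) \to a'_\infty s$ and $b_i(s) \to b'_\infty s$ uniformly on compact subsets of $(0, \infty)$. This follows directly from \Cref{lem:limcont}: for $s$ ranging over a compact interval $[\delta, \Delta] \subset (0,\infty)$, the value $r = s/\lambda_i \to \infty$ uniformly, so $a_i(s)/s = a(s/\lambda_i)/(s/\lambda_i) \to a'_\infty$, and likewise for $b$. On any region $\{\delta \le s \le \Delta\}$ away from the apex I would then invoke \Cref{lem:CG} of Appendix C to upgrade this uniform convergence of warping functions to smooth Cheeger--Gromov convergence of the rescaled metrics toward the cone metric $\gamma = ds^2 + (a'_\infty s)^2 g_{S^1} + (b'_\infty s)^2 g_{S^2}$, which is smooth on $(0,\infty) \times (S^2 \times S^1)$.

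The second step handles the apex, where the convergence of warping functions necessarily fails (near $r = 0$ the soliton has $a \to a_0 > 0$ and $b \to 0$ in the $S^1 \times \mathbb{R}^3$ case, while both cone warping functions vanish at $s = 0$). The key observation is that after rescaling the entire core collapses to the basepoint. Concretely, the set $\{s \le \epsilon\} = \{r \le \epsilon/\lambda_i\}$ has, in the metric $\lambda_i^2 g$, radial extent $\epsilon$ and maximal orbit size $\lambda_i \max_{r \le \epsilon/\lambda_i}(a(r), b(r))$; by the linear upper bound in \Cref{lem:abbounds} this is $\lesssim \epsilon$, so the region has diameter $O(\epsilon)$ uniformly in $i$. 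The corresponding set $\{s \le \epsilon\}$ in the cone likewise has diameter $O(\epsilon)$. Hence the non-conical core contributes only an $O(\epsilon)$ error and does not obstruct Gromov--Hausdorff convergence.

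Finally, I would assemble these into pointed Gromov--Hausdorff convergence on each ball $B(p, D)$: given $D$ and a tolerance, I would choose $\epsilon$ so small that both cores $\{s \le \epsilon\}$ have diameter below the tolerance, and on the annular region $\{\epsilon \le s \le D\}$ use the uniform convergence of $a_i, b_i$ to get uniform convergence of lengths of curves, hence of the distance functions, yielding explicit $\epsilon$-approximate isometries to the matching ball in the cone. Since $\gamma$ is exactly the metric cone over the link $(S^2 \times S^1,\, (a'_\infty)^2 g_{S^1} + (b'_\infty)^2 g_{S^2})$, this gives the claim. The main obstacle I anticipate is precisely the apex analysis together with the passage from warping-function convergence to genuine convergence of distance functions: away from the apex this is essentially the content of Appendix C, but because the limit is a \emph{singular} metric cone, smooth Cheeger--Gromov convergence cannot hold globally, and one must argue directly that the collapsing core matches the cone point and that distances routed through it converge.
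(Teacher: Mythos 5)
Your proposal is correct and shares the paper's core strategy — rescale, identify the rescaled warping functions $a_i, b_i$, and reduce Gromov--Hausdorff convergence to convergence of these functions to the linear cone profiles — but it diverges in how the apex is handled, and this is worth comparing. The paper uses the quantitative estimate behind \Cref{lem:limcont} (the derivative of $a(r)/(r-K)$ decays like $r^{-3}$) together with \Cref{lem:abbounds} to obtain a \emph{global} bound $|a(r) - a'_\infty r| \leqslant C$ on all of $[0,\infty)$, which after rescaling gives uniform convergence $\sup_{s \geqslant 0} |a_i(s) - a'_\infty s| \leqslant C\lambda_i \to 0$ on the entire half-line, apex included; no separate core analysis is then needed. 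You instead use only the qualitative limit $a(r)/r \to a'_\infty$ to get locally uniform convergence on $(0,\infty)$, and then dispose of the region $\{s \leqslant \epsilon\}$ by the collapsing-core diameter argument (which correctly uses the linear upper bound of \Cref{lem:abbounds}). Your route is more elementary in its inputs and, notably, more honest about the final step: the paper dismisses the passage from uniform metric-tensor convergence to GH convergence as "easy to see," whereas you explicitly flag that smooth Cheeger--Gromov convergence cannot hold at the singular apex and that curves routed through the core must be controlled — a genuine issue that your $O(\epsilon)$-diameter bound resolves. One small caveat: your invocation of \Cref{lem:CG} on the annular region is unnecessary (and that lemma is not really formulated for this situation); the direct curve-length comparison you describe in the assembly step is what actually does the work, so the argument stands without it.
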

	
	\begin{proof}
		Consider the sequence $\nu_i := \frac{1}{\lambda_i}$. Then, the rescaled metric $g_i := \lambda^2_i g$ is a warped product given by
		
		\begin{equation*}
			g_i = dr^2 + a_i(r)^2g_{S^1} + b_i(r)^2g_{S^2}
		\end{equation*}
		
		\noindent where $a_i(r) =  \frac{a(\nu_i r)}{\nu_i}$ and $b_i(r) =  \frac{b(\nu_i r)}{\nu_i}$. By (6.4), since we know that since $\lim_{r \to \infty} \frac{a(r)}{r-K} = a'_\infty$ and that the derivative of $\frac{a(r)}{r-K}$ decays as $r^{-3}$, we have the inequality for $r$ greater than some large $r_0$ which is greater than $K$:
		
		\begin{center}
			$\left| \dfrac{a(r)}{r-K}-a'_\infty \right| \leqslant \dfrac{C}{r^2}$
		\end{center}
		
		\noindent We also have
		
		\begin{center}
			$\left| \dfrac{a(r)}{r}-a'_\infty \right| \leqslant \left|\dfrac{a(r)}{r} - \dfrac{a(r)}{r-K}\right|  + \left| \dfrac{a(r)}{r-K}-a'_\infty \right| \leqslant \dfrac{C}{r} $
		\end{center}
		
		\noindent by using \Cref{lem:abbounds} on the first term. Thus, we have 
		
		\begin{center}
			$|a(r)-a'_\infty r| \leqslant C$
		\end{center}
		
		\noindent for $r>r_0$. We can immediately extend this bound to the interval $[0,\infty)$ (for a different $C$) by compactness of $[0,r_0]$. From this, we have
		
		\begin{center}
			$\left| \dfrac{a(\nu_i r)}{\nu_i}-a'_\infty r \right| \leqslant \dfrac{C}{\nu_i}$
		\end{center}
		
		\noindent which implies that the functions $a_i(r)$ converge uniformly to $a'_\infty r$. A similar result holds for $b_i$. From this, it is easy to see that since the metrics on the rescaled solitons converge uniformly to the metric of the asymptotic cone, we have the required Gromov-Hausdorff convergence.
	\end{proof}

	\noindent Based on the results of this discussion, we make the following definition.
	
	\begin{defn}\label{def:cone}
		A cohomogeneity one gradient expanding soliton $(M,g, \nabla f)$ as considered in this paper is called \textbf{asymptotically conical} if it satisfies the following conditions:
		
		\begin{enumerate}
			\item $|\textnormal{Rm}|_g(r) \leqslant \frac{C}{r^2}$ for some $C>0$.
			\item $|\nabla f|(r) \leqslant r+C$ for some $C \in \mathbb{R}$.
			\item There exists a sequence $\lambda_i \to 0$ so that $(M, \lambda^2_i g, p)$ Gromov-Hausdorff converges to a cone metric $\gamma$ with link $(S^2 \times S^1, h)$, where the metric $h$ admits an isometric action of $\textnormal{SO}(3) \times \textnormal{SO}(2)$.
		\end{enumerate}
	\end{defn}
	
	\noindent From Lemmas\autoref{lem:Rmbound2}, \autoref{lem:fpplusr} and \Cref{thm:GHC}, we see that in the case of each topology, the solitons in the $2$-parameter family from \Cref{thm:2pf} are asymptotically conical as in \Cref{def:cone}. We note that these solitons were shown to be asymptotically conical in previous work of \cite{NW24}, \cite{Win21}, and \cite{BDGW15}. We have provided an alternate proof (using our slightly different definition of asymptotically conical) both for the sake of completeness of our work and additionally to use certain estimates from this section in future sections.

	\section{Relating Expanding Solitons to Asymptotic Cones}
	
	\noindent As we know that our solitons $(M,g)$ are asymptotic to cones $(\mathbb{R}^+ \times S^2 \times S^1, \gamma)$, it will be important to understand how close $g$ is to $\gamma$ as well as the value of the distance from the singular orbit of $M$ at which this happens. In this section, we show that the assignment of the asymptotic cone to a soliton is a continuous map by using results from the previous section, and also provide the setup to show that this assignment is a proper map. To do this, we introduce a notion called uniform $\epsilon$-conicality and show that the closeness of an expanding soliton considered in this paper to its asymptotic cone is determined by the geometry of the cone. This is the main result of this section, used to prove the properness result of Section 9.
	
	\medskip
	
	\noindent First, we make use of the following map, constructed in \cite{BC23} which allows us to embed into a soliton its asymptotic cone.
	
	\begin{lem}\label{lem:embedcone}
		Consider a cohomogeneity one gradient expanding Ricci soliton $(M,g,\nabla f)$ asymptotic to a cone $\gamma = ds^2 + s^2h$ over a link $(S^2 \times S^1, h)$ as in \Cref{def:cone}. Then, there exists a smooth embedding $\iota: \mathbb{R}_+ \times S^2 \times S^1 \to M$ satisfying the following
		
		\begin{enumerate}
			\item $M - \iota((s, \infty) \times S^2 \times S^1)$ is compact for all $s \geqslant 0$.
			\item $\iota^*(\nabla f)  = -s\partial_s$, where $s$ is the coordinate on $\mathbb{R}_+$ and $\partial_s$ is the corresponding vector field.
			\item The pullback metric $\iota^*g$ is smooth
		\end{enumerate}
		
		\noindent Further, suppose that we have the following bounds, where the constants $\alpha$ and $A_i$ are positive, and $m \geqslant 0$
		
		\begin{equation}\label{eq:conebounds}
			\displaystyle \textnormal{inj}_\gamma \geqslant \alpha s \hspace{3cm} |\nabla^{m,\gamma} \textnormal{Rm}_\gamma| \leqslant \dfrac{A_m}{s^{2+m}}
		\end{equation}
		
		\noindent Then, there exists $S_0 \equiv S_0(\alpha,A_0)$ such that we have the following quantitative asymptotics of $\iota^*g$ to $\gamma$ on $(S_0, \infty) \times S^2 \times S^1$
		
		\begin{equation}\label{eq:conedecay}
			\displaystyle |\nabla^{m,\gamma}(\iota^*g-\gamma)| \leqslant \dfrac{C_m(\alpha, A_0, \dots, A_m)}{s^{2+m}}
		\end{equation}
		
		\noindent The map $\iota$ is unique in the following sense: suppose that $\iota'$ is another such smooth map satisfying conditions (1)-(3). Then, we must have $\iota' = \iota \circ (\textnormal{Id}_\mathbb{R^+}, \psi)$, where $\psi:(S^2 \times S^1, h) \to (S^2 \times S^1, h)$ is an isometry.
	\end{lem}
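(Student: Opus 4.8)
The plan is to construct $\iota$ explicitly by the radial reparametrization that straightens $\nabla f$, then to read off the quantitative estimates from the Section 6 asymptotics, and finally to obtain uniqueness from the rigidity of the dilation flow; the overall scheme follows \cite{BC23}, but the warped-product structure makes every step concrete.

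\emph{Construction.} Since $\nabla f = f'(r)\partial_r$ and $f' < 0$ on $(0,\infty)$ by \Cref{lem:fmon}, I define a radial coordinate $s = s(r)$ by solving the separable ODE $s'(r)/s(r) = -1/f'(r)$, normalized so that $s(r)/(r-K) \to 1$ as $r \to \infty$. Because $f'(r) = -(r-K) + O(1/r)$ near infinity by \Cref{lem:fpplusr} and $f'(r) \sim f_0 r$ near $0$, the integral $\int -1/f'(r)\,dr$ diverges at both ends, so $s$ is a smooth increasing diffeomorphism of $(0,\infty)$ onto $(0,\infty)$. Setting $\iota(s,\omega) = (r(s),\omega)$, where $r(s)$ is the inverse function and $\omega$ parametrizes the principal orbit $S^2\times S^1$, the chain rule gives $\iota_*(-s\partial_s) = -s\,r'(s)\,\partial_r = f'(r(s))\,\partial_r$, which is condition (2). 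Condition (1) holds since $\iota((s,\infty)\times S^2\times S^1) = \{r > r(s)\}$ has compact complement $\{r \leq r(s)\}$, and condition (3) is immediate because $r(s)$ is smooth and $g$ is a smooth warped product.

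\emph{Quantitative asymptotics.} I would then compare $\iota^*g = r'(s)^2\,ds^2 + a(r(s))^2 g_{S^1} + b(r(s))^2 g_{S^2}$ with $\gamma = ds^2 + (a'_\infty s)^2 g_{S^1} + (b'_\infty s)^2 g_{S^2}$. Integrating the defining ODE with $f'(r) = -(r-K)+O(1/r)$ yields $s(r) = (r-K)(1+O(1/r^2))$, hence $r(s) = s + K + O(1/s)$ and $r'(s) = 1 + O(1/s^2)$; together with $a(r) = a'_\infty(r-K)+O(1/r)$ and $b(r)=b'_\infty(r-K)+O(1/r)$ from \Cref{lem:limcont}, this gives $a(r(s)) = a'_\infty s + O(1/s)$ and likewise for $b$. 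Thus each coefficient of $\iota^*g$ differs from that of $\gamma$ by $O(1)$; since the $\gamma$-norm of an angular term $\phi\,g_{S^1}$ carries a factor $s^{-2}$, this produces the $m=0$ estimate $|\iota^*g - \gamma|_\gamma \leq C_0/s^2$, matching \eqref{eq:conedecay}. For $m \geq 1$ I would differentiate $\iota^*g - \gamma$ with the cone connection: the link-direction derivatives are governed by the cone curvature and injectivity bounds $\alpha, A_0,\dots,A_m$ of \eqref{eq:conebounds}, while the radial derivatives reduce to $s$-derivatives of $r'(s)$, $a(r(s))$ and $b(r(s))$, each improving the decay by one power of $s$ via the ODEs together with the higher-order (Shi-type) bounds underlying \Cref{lem:Rmbound2}. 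This gives $|\nabla^{m,\gamma}(\iota^*g-\gamma)|_\gamma \leq C_m/s^{2+m}$ on $(S_0,\infty)\times S^2\times S^1$, with $S_0 = S_0(\alpha,A_0)$ and $C_m = C_m(\alpha,A_0,\dots,A_m)$. I expect this bookkeeping---propagating the decay to all orders while keeping the constants depending only on the listed cone data---to be the main difficulty, precisely because it is what makes the estimate usable in the properness argument of Section 9.

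\emph{Uniqueness.} Given a second map $\iota'$ satisfying (1)--(3), set $\Phi := \iota^{-1}\circ\iota'$, a diffeomorphism of $\mathbb{R}_+\times S^2\times S^1$. Both maps push $-s\partial_s$ to $\nabla f$, so $\Phi_*(-s\partial_s) = -s\partial_s$; writing $\Phi(s,\omega) = (\sigma(s,\omega),\psi(s,\omega))$ and imposing this identity forces $\partial_s\psi = 0$ and $s\,\partial_s\sigma = \sigma$, i.e. $\psi = \psi(\omega)$ and $\sigma = c(\omega)s$. Since $\iota'^*g = \Phi^*(\iota^*g)$ and $\iota^*g$ are both asymptotic to the same cone $\gamma = ds^2 + s^2 h$, the map $\Phi$ must be an asymptotic isometry of $\gamma$, which forces $c \equiv 1$ and $\psi^*h = h$. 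Hence $\iota' = \iota\circ(\mathrm{Id}_{\mathbb{R}^+},\psi)$ with $\psi$ an isometry of the link, as claimed.
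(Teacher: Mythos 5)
The paper does not actually prove this lemma: it is quoted as Lemma 2.9 of \cite{BC23}, and the only thing the paper checks is that the cohomogeneity one solitons satisfy its hypotheses (the discussion following \Cref{def:cone}). Your proposal attempts a self-contained proof instead, and the construction and uniqueness parts are essentially sound: solving $s'/s=-1/f'$ does produce the embedding, conditions (1)--(3) follow as you say, and the computation forcing $\Phi(s,\omega)=(c(\omega)s,\psi(\omega))$ from $\Phi_*(-s\partial_s)=-s\partial_s$ is correct. One caveat: uniqueness up to a link isometry is \emph{false} if $\iota'$ is only assumed to satisfy (1)--(3) as literally stated --- precomposing $\iota$ with $(\mathrm{Id},\phi)$ for an arbitrary diffeomorphism $\phi$ of $S^2\times S^1$ again satisfies (1)--(3) --- so your appeal to ``$\iota'^*g$ asymptotic to the same cone $\gamma$'' is an additional normalization beyond (1)--(3); it is the right reading of the statement, but you should flag that you are using it.

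The genuine gap is in the quantitative estimates. The entire point of \cref{eq:conedecay} is that $S_0$ and $C_m$ depend \emph{only} on the cone data $(\alpha,A_0,\dots,A_m)$, uniformly over all solitons asymptotic to cones satisfying \cref{eq:conebounds}. Your derivation reads the decay rates off the Section~6 asymptotics (\Cref{lem:fpplusr}, \Cref{lem:limcont}, \Cref{lem:Rmbound2}), but every constant there depends on the initial conditions $(a_0,f_0)$ or $(b_0,f_0)$ of the particular soliton, not on its cone. What your argument actually yields is $|\nabla^{m,\gamma}(\iota^*g-\gamma)|\leqslant C_m(\text{soliton})/s^{2+m}$, which is strictly weaker; the closing assertion ``with $S_0=S_0(\alpha,A_0)$ and $C_m=C_m(\alpha,A_0,\dots,A_m)$'' is stated, not proved. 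This is not cosmetic bookkeeping: the cone-data-only dependence is precisely what \Cref{lem:symcone} and \Cref{lem:uniformly_epsilon_conical} need in order to extract one fixed $S_0$ for an entire sequence of solitons whose initial conditions are not yet known to be controlled --- controlling those initial conditions is the \emph{conclusion} of the properness argument (\Cref{thm:Fproper}), so feeding soliton-dependent constants into this lemma would make Section~9 circular. Upgrading per-soliton decay to decay uniform in the cone geometry is the hard analytic content of Lemma 2.9 of \cite{BC23}, and your proposal does not supply a substitute for that step.
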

	
	\noindent The conclusion of the lemma is essentially unchanged from Lemma 2.9 of \cite{BC23}. The hypotheses that the solitons in the cohomogeneity one $2$-parameter families from \Cref{thm:2pf} are asymptotically conical was verified in the discussion following \Cref{def:cone}.
	
	\medskip
	
	\noindent In our warped product setting, it is clear that $\alpha = \text{inj}_{S^2 \times S^1}h = \pi \text{min}\{a'_\infty, b'_\infty\}$. Since $(M,g)$ is cohomogeneity one, we know that for any $p \in M$, that $\nabla f(p) \equiv f'(r) {\partial_r}|_{p}$ depends only on the coordinate $r$ and not on the coordinates of $S^2 \times S^1$. By Part 2 of \Cref{lem:embedcone}, since the trajectories of $\iota$ are the integral curves of this vector field, the coordinates on $S^2 \times S^1$ are unchanged along $\iota$.
	
	\medskip
	
	\noindent Thus, we may write $\iota(s,z) = (d(s),z)$, for some smooth function $d: \mathbb{R}^+ \to \mathbb{R^+}$ (here, we precompose $\iota$ with the required isometry of $S^2 \times S^1$ if necessary to ensure that the map $\iota$ leaves the coordinate on every $S^2 \times S^1$ orbit unchanged).
	
	\medskip
	
	\noindent This lemma allows us to quantify the closeness between the soliton metric and the asymptotic cone via the following definition.
	
	\begin{defn}\label{def:epsilonconical}
		Fix $\epsilon >0$ and consider a cohomogeneity one gradient expanding soliton $(M,g,\nabla f)$ with topology $S^1 \times \mathbb{R}^3$ or $S^2 \times \mathbb{R}^2$ asymptotic to the cone $(\mathbb{R}^+ \times S^2 \times S^1, \gamma = ds^2 + s^2h)$. Suppose that there exists $S_0 >0$ such that on $[S_0,\infty) \times S^2 \times S^1$, we have the following bound for all $m \leqslant 10$:
		
		\begin{equation}\label{eq:cone_soliton}
			\displaystyle |\nabla^{\gamma,m}(\iota^*g-\gamma)| \leqslant \epsilon
		\end{equation}
		
		\noindent Then, $(M,g, \nabla f)$ is said to be $\epsilon$-conical at distance $d(S_0)$ from the tip $r=0$, where $d: \mathbb{R^+} \to \mathbb{R^+}$ is the map defined as above with $\iota(s,z) = (d(s),z)$.
	\end{defn}
	
	\noindent Using \Cref{def:epsilonconical}, we will show that curvature and injectivity radius bounds on the links of the asymptotic cone $\gamma$ of a soliton $(M, g)$ are sufficient to control the distance from the singular orbit at which the solitons are $\epsilon$-conical.
	
	\begin{lem}\label{lem:symcone}
		Suppose $(M,g_i, \nabla f_i)$ is a collection of cohomogeneity one gradient expanding Ricci solitons asymptotic to cone metrics $\gamma_i$ with links $(S^2 \times S^1, h_i)$, where the link metrics are $h_i = (a'_{\infty,i})^2g_{S^1}+(b'_{\infty,i})^2g_{S^2}$. Suppose we have the following bound for all $i$:
		
		\[
		\min \{ a'_{\infty,i}, b'_{\infty,i} \} \geqslant c
		\]
		
		\noindent Then, for any fixed $\epsilon >0$, there is a fixed constant $S_0 \equiv S_0(\epsilon, c) >0$ so that each $(M,g_i, \nabla f_i)$ is $\epsilon$-conical at $d_i(S_0)$. Additionally, we have the following bounds:
		\begin{align}
			a'_{\infty,i}(1-\epsilon)^{1/2}S_0 &\leqslant a_i(d_i(S_0)) \leqslant a'_{\infty,i}(1+\epsilon)^{1/2}S_0 \label{eq:a_estimate} \\
			b'_{\infty,i}(1-\epsilon)^{1/2}S_0 &\leqslant b_i(d_i(S_0)) \leqslant b'_{\infty,i}(1+\epsilon)^{1/2}S_0 \label{eq:b_estimate}
		\end{align}
		
	\end{lem}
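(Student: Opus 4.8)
The plan is to derive the entire statement from the quantitative embedding of \Cref{lem:embedcone}. The key point is that the cone invariants $\alpha$ and $A_m$ controlling the decay estimate \eqref{eq:conedecay} can be bounded \emph{purely in terms of} the uniform lower bound $c$, so that \eqref{eq:conedecay} becomes uniform over the whole family $\{(M,g_i,\nabla f_i)\}$. Once I have uniform $\epsilon$-conicality at $d_i(S_0)$, the warping bounds \eqref{eq:a_estimate} and \eqref{eq:b_estimate} will drop out by reading off the individual warped-product components of the $m=0$ estimate.

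First I would bound the geometry of each cone $\gamma_i = ds^2 + s^2 h_i$ uniformly in $c$. For the injectivity radius this is immediate from the discussion following \Cref{lem:embedcone}: $\alpha_i = \pi\min\{a'_{\infty,i}, b'_{\infty,i}\} \geqslant \pi c$, so the $\alpha_i$ are bounded below. For the curvature I would exploit that the link $(S^2\times S^1, h_i)$ is a Riemannian product of a flat circle with a round $2$-sphere of radius $b'_{\infty,i}$, hence \emph{locally symmetric}: its curvature tensor is parallel, so $\nabla^{h_i}\textnormal{Rm}_{h_i}=0$, and its only nonzero sectional curvature is $1/(b'_{\infty,i})^2 \leqslant 1/c^2$. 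For the metric cone over such a link, $\textnormal{Rm}_{\gamma_i}$ and all of its covariant derivatives are algebraic expressions in the (parallel) link curvature together with the radial structure, decaying like $s^{-(2+m)}$ with coefficients polynomial in $\|\textnormal{Rm}_{h_i}\|_{h_i}$. Since $\|\textnormal{Rm}_{h_i}\|_{h_i} \leqslant C/c^2$, every curvature constant $A_{m,i}$ in \eqref{eq:conebounds} is bounded by some $A_m(c)$ depending only on $m$ and $c$.

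Next I would feed these uniform bounds into \Cref{lem:embedcone}. Because a larger injectivity radius and smaller curvature can only improve the asymptotic analysis, I expect the constant $C_m(\alpha, A_0,\dots,A_m)$ to be non-increasing in $\alpha$ and non-decreasing in each $A_j$; hence it is dominated over the family by its value at $\alpha=\pi c$, $A_j = A_j(c)$, giving a uniform $C_m(c)$, and likewise the threshold $S_0(\alpha_i, A_{0,i})$ is bounded by some $\bar S_0(c)$. Then \eqref{eq:conedecay} reads $|\nabla^{m,\gamma_i}(\iota_i^*g_i - \gamma_i)| \leqslant C_m(c)/s^{2+m}$ on $(\bar S_0(c), \infty)\times S^2\times S^1$ for every $i$, and choosing
\[
S_0 = \max\Bigl\{\, \bar S_0(c),\ \max_{0\leqslant m \leqslant 10} \bigl(C_m(c)/\epsilon\bigr)^{1/(2+m)} \,\Bigr\}
\]
forces the right-hand side to be at most $\epsilon$ for all $m \leqslant 10$ at once. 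As only finitely many orders are involved, $S_0$ depends only on $\epsilon$ and $c$, so \eqref{eq:cone_soliton} holds and each $(M,g_i,\nabla f_i)$ is $\epsilon$-conical at $d_i(S_0)$ in the sense of \Cref{def:epsilonconical}. I expect this monotone-dependence step to be the main obstacle: one must check that the constants coming from the construction of \cite{BC23} really are controlled by a lower bound on $\alpha$ and upper bounds on the $A_j$ alone, with nothing degenerating as $\alpha\to\infty$ (only a lower bound is assumed), and that the locally symmetric structure of the link genuinely collapses all the higher $A_m$ to functions of $c$.

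Finally I would extract the warping bounds. Since the solitons are cohomogeneity one, the embedding has the form $\iota_i(s,z)=(d_i(s),z)$, so both $\iota_i^*g_i = (d_i'(s))^2\,ds^2 + a_i(d_i(s))^2 g_{S^1} + b_i(d_i(s))^2 g_{S^2}$ and $\gamma_i = ds^2 + s^2(a'_{\infty,i})^2 g_{S^1} + s^2(b'_{\infty,i})^2 g_{S^2}$ are diagonal warped products over the same $(s,z)$ coordinates, and so is their difference. In a $\gamma_i$-orthonormal frame, each diagonal component of $\iota_i^*g_i - \gamma_i$ is bounded by the full norm, so the $m=0$ case of \eqref{eq:cone_soliton} gives
\[
\left|\frac{a_i(d_i(s))^2}{s^2(a'_{\infty,i})^2} - 1\right| \leqslant \epsilon
\]
together with the analogous inequality for $b_i$. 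Evaluating at $s=S_0$ and taking square roots yields exactly \eqref{eq:a_estimate} and \eqref{eq:b_estimate}, completing the argument.
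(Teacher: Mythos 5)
Your proposal is correct and follows essentially the same route as the paper: uniform lower bounds on the link injectivity radius and uniform curvature-decay bounds on the cones (both coming from $\min\{a'_{\infty,i},b'_{\infty,i}\}\geqslant c$), then \Cref{lem:embedcone} applied uniformly to choose $S_0(\epsilon,c)$, and finally the $m=0$ estimate evaluated on vectors tangent to the $S^1$ and $S^2$ directions to extract \cref{eq:a_estimate,eq:b_estimate}. The ``monotone-dependence'' obstacle you flag is not actually an issue: the constants $S_0(\alpha,A_0)$ and $C_m(\alpha,A_0,\dots,A_m)$ in \Cref{lem:embedcone} are functions of the hypothesized bounds rather than of the individual cone geometries, so one simply applies the lemma to every $i$ with the common values $\alpha=\pi c$ and $A_m=A_m(c)$, exactly as the paper does.
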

	
	\begin{proof}
		\noindent For a given expander $(M, g, \nabla f)$ asymptotic to the cone $\gamma$ with asymptotic slopes $a'_\infty, b'_\infty$, the nonzero sectional curvatures (calculated using the coordinate system in Appendix A) of $\gamma$ are 
		
		\begin{center}
			$\displaystyle \text{Rm}_{\gamma,2332} = \text{Rm}_{\gamma, 2442} = -\frac{1}{s^2}  \hskip 1cm \text{Rm}_{\gamma, 3443} = \frac{1-(b'_\infty)^2}{b'^2_\infty}\frac{1}{s^2}$ 
		\end{center}
		
		\noindent The bound in the hypothesis implies that $\text{inj}(h_i)$ is bounded below, or that $\text{inj}_{\gamma_i}(s) \geqslant \alpha s$ for some constant $\alpha > 0$. Additionally, since $b'_{\infty,i}$ is bounded from below, we can differentiate the curvature terms to see that we have the required bounds on $|\nabla^{m,\gamma} \textnormal{Rm}_\gamma|$ in the second part of \cref{eq:conebounds}. Thus, by \Cref{lem:embedcone}, there exists $R_0 \equiv R_0(c)$ such that the following bound holds for all $i$ and $m \leqslant 10$ on $(R_0, \infty) \times S^2 \times S^1$:
		
		\begin{equation*}
			\displaystyle |\nabla^{m,\gamma_i}(\iota_i^*g_i-\gamma_i)| \leqslant \dfrac{C_m(\alpha, A_0, \dots, A_m)}{s^{2+m}}
		\end{equation*}
		
		\noindent Now, choose $S_0 > R_0$ so that the bound on the right hand side (for each $m \leqslant 10$) is less than $\epsilon$ for $s \geqslant S_0$. This proves the first statement of the lemma.
		
		\medskip
		
		\noindent For the second statement, for $m=0$, we have the following inequality on $(S_0, \infty) \times S^2 \times S^1$ for all $i$:
		\begin{equation*}
			(1-\epsilon) \gamma_i \leqslant \iota_i^* g_i \leqslant (1+\epsilon)\gamma_i
		\end{equation*}
		
		\noindent By plugging in the appropriate unit vectors on $(\mathbb{R} \times S^1 \times S^2, \gamma_i)$ based at $S_0$ tangent to the $S^1$ and $S^2$ directions into the above inequalities, we obtain inequalities \Cref{eq:a_estimate} and \Cref{eq:b_estimate}, respectively.
	\end{proof}

	\noindent With an additional diameter assumption on the links $h_i$, we will further show that the sequence $d_i(S_0)$ is bounded. In the remainder of this section, we will show this by contradiction; more specifically, assuming that \{$d_i(S_0)\}$ is unbounded, we take a geometric limit to produce a certain expanding soliton with two ends and identify a contradiction. For this, we will need some technical lemmas that allow us to take this limit and to show that two-ended expanding solitons satisfying certain conditions do not exist. The first lemma below shows that curvature bounds on a soliton provide lower bounds on the size of its $S^2$-orbit. %Refer Appendix E for details on the warped product structure of the limiting metrics in Cheeger-Gromov convergence. 
	
	\begin{lem}\label{lem:bbound}
		Suppose $(M,g)$ is a cohomogeneity one gradient expanding Ricci soliton as considered in this paper. If $(M,g)$ satisfies the bound $|\text{Rm}|_g \leqslant C$ for some constant $C>0$ on a ball $B_g(p,D)$ (with $D>1$) where $r(p)=r^*$, then we have the following bound:
		
		\[
		b(r^*) \geqslant \min\left\{ \frac{1}{2}, \frac{1}{2C^{1/2}} \right\}
		\]
	\end{lem}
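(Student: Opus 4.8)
The plan is to read the relevant sectional curvature off the warped-product structure, restrict the curvature bound to the radial geodesic through $p$, and then argue by contradiction: a small value of $b(r^*)$ would force $b'$ to be close to $1$ over a radial interval of length exceeding $1$, making $b(r^*)$ larger than the assumed threshold.

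First I would recall from Appendix A that, in the orthonormal frame adapted to \Cref{eq:metric}, the sectional curvature of the two-plane tangent to the $S^2$ factor is $\mathrm{Rm}_{3443} = \frac{1-(b')^2}{b^2}$; this is the same quantity whose conical value is computed in the proof of \Cref{lem:symcone}. Hence the hypothesis $|\mathrm{Rm}|_g \leqslant C$ on $B_g(p,D)$ gives, at every point of that ball,
\[
\left| \frac{1-(b')^2}{b^2} \right| \leqslant C .
\]
Since $r$ is arclength from the singular orbit, the radial geodesic emanating from $p$ satisfies $|r(q)-r^*| = d(p,q)$ along it, so the whole radial segment with $|r-r^*| < D$ lies inside $B_g(p,D)$; in particular the displayed bound holds for all such $r$.

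Next I would argue by contradiction. Suppose $b(r^*) < m := \min\{\tfrac12,\ \tfrac{1}{2C^{1/2}}\}$. By \Cref{lem:abmon} the function $b$ is increasing, so $b(r) \leqslant b(r^*) < m \leqslant \tfrac{1}{2C^{1/2}}$ for every $r \leqslant r^*$, whence $C\,b(r)^2 < \tfrac14$ on this range. Rearranging the curvature bound as $(b')^2 = 1 - b^2\cdot\frac{1-(b')^2}{b^2} \geqslant 1 - C b^2 > \tfrac34$, and using $b'>0$, gives $b' > \tfrac12$ on the radial segment running inward from $p$ toward the singular orbit (this segment is contained in $B_g(p,D)$ by the previous paragraph). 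Integrating $b' > \tfrac12$ over the inward segment $[\max(0,r^*-D),\,r^*]$, whose length is $\min(D,r^*)$, and using $b \geqslant 0$, yields $b(r^*) > \tfrac12\min(D,r^*)$. Since $D>1$, whenever $r^* \geqslant 1$ this gives $b(r^*) > \tfrac12 \geqslant m$, contradicting $b(r^*) < m$ and proving the claim.

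The step I expect to be the main obstacle is matching the \emph{geodesic} radius $D$ of the ball with the \emph{radial} interval on which $b$ is controlled: the curvature estimate is available only inside $B_g(p,D)$, while the lower bound $b'>\tfrac12$ is available only where $b$ stays below the threshold $m$. Monotonicity of $b$ from \Cref{lem:abmon} settles the second constraint by keeping $b$ small in the inward direction, and $D>1$ supplies the length needed for the integration. The delicate regime is $r^* < 1$ (near the tip), where the inward segment is shorter than $1$ and the clean contradiction above degrades; since the lemma is applied to points running off to infinity (so $r^*$ is large) in the two-ended-soliton argument of this section, one is free to work in the regime $r^*\geqslant 1$, and I would flag the near-tip case as the only point requiring separate attention.
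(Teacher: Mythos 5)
Your proof is correct and follows essentially the same route as the paper's: both read off the sectional curvature $\mathrm{Rm}_{3443} = \frac{1-(b')^2}{b^2}$ from the warped-product frame, suppose $b(r^*)$ is below the threshold, use the monotonicity of $b$ from \Cref{lem:abmon} to propagate smallness inward, deduce a positive lower bound on $b'$ along the inward radial segment, and integrate to reach a contradiction. The near-tip regime $r^* < 1$ that you flag is a genuine weak point which the paper's proof silently glosses over (its claim that ``$b$ becomes negative inside the ball'' needs the inward segment to exist, and in the $S^1 \times \mathbb{R}^3$ case, where $b(0)=0$ and $b'(0)=1$, the stated bound actually fails for small $r^*$); as you correctly observe, the lemma is only ever invoked at points far from the singular orbit or on two-ended solitons, where this caveat is immaterial.
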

	
	\begin{proof}
		\noindent
		The curvature bound on the ball provides the following inequality on Rm$_{3443}$ (refer Appendix A for the calculation of the sectional curvatures) 
		\[
		|\text{Rm}_{3443}| = \left| \frac{1 - (b')^2}{b^2} \right| \leqslant C
		\]
		From this, we deduce the lower bound in the statement in the lemma. If the bound does not hold, the inequality \( (b')^2 \geqslant (1 - C b^2) \) implies that \( b'(r^*) \geqslant \sqrt{3}/2 \). Using the monotonicity of \( b \) from \Cref{lem:abmon}, \( b'(r) \geqslant \sqrt{3}/2 \) for all \( r \leqslant r^* \), but this implies that \( b \) becomes negative (in finite distance from \( r^* \)) inside \( B_g(p, D) \).
	\end{proof}
	
	\noindent Next, we will show that certain kinds of cohomogeneity one gradient expanding solitons with $2$ ends cannot arise as limits of solitons in the $2$-parameter families we consider in this paper. First, we consider certain classes of cohomogeneity one Einstein metrics.
	
	\begin{lem}\label{lem:einstein}
		Suppose $(M,g=dr^2+a(r)^2g_{S^1}+b(r)^2g_{S^2})$ is a cohomogeneity one Einstein manifold satisfying $\text{\normalfont Ric}_g + g = 0$ on $S^1 \times \mathbb{R}^3$ or $S^2 \times \mathbb{R}^2$. Then, we have the equalities $b'=Ca$ and $b''=Ca'$ for some constant $C>0$.
	\end{lem}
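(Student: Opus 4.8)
The plan is to use the fact that the Einstein condition $\mathrm{Ric}_g + g = 0$ is exactly the gradient soliton equation with constant potential, so that \cref{eq:feq,eq:aeq,eq:beq} hold with $f' \equiv f'' \equiv 0$. In particular the $rr$-equation \cref{eq:feq} and the $S^1$-equation \cref{eq:aeq} become
\[
\frac{a''}{a} + 2\frac{b''}{b} = 1, \qquad a'' = -2\frac{a'b'}{b} + a.
\]

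First I would substitute $\tfrac{a''}{a} = 1 - 2\tfrac{a'b'}{ab}$, obtained from the second equation, into the first. The two $\tfrac{a''}{a}$ terms and the two $1$'s cancel, leaving $2\tfrac{b''}{b} = 2\tfrac{a'b'}{ab}$, that is,
\[
a\,b'' = a'\,b'.
\]
This single identity is the whole point: it says that $\frac{d}{dr}\!\left(\frac{b'}{a}\right) = \frac{a b'' - a' b'}{a^2}$ vanishes wherever $a \neq 0$, so $b'/a$ is locally constant on the open set $\{a \neq 0\}$, and then $b'' = C a'$ follows immediately by differentiating $b' = Ca$.

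It remains to upgrade this to a single global constant with a definite sign, and this is the only step requiring care. By \Cref{lem:abmon} — which applies here, since the Einstein case is precisely the $f_0 = 0$ situation, and whose proof in any case only uses that the $f'$-terms drop out at the critical points of $a'$ and $b'$ — the functions $a$ and $b$ are increasing. Hence $a > 0$ on all of $[0,\infty)$ in the $S^1 \times \mathbb{R}^3$ case (where $a(0) = a_0 > 0$), and $a > 0$ on the interior $(0,\infty)$ in the $S^2 \times \mathbb{R}^2$ case (where $a(0) = 0$ but $a'(0) = 1$). On this connected interval the quotient $b'/a$ is therefore a single constant $C$, positive because both $a > 0$ and $b' > 0$ there. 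The remaining bookkeeping is at the singular orbit: in the $S^2 \times \mathbb{R}^2$ case both $a$ and $b'$ vanish at $r = 0$, so I would extend $b' = Ca$ across $r = 0$ by continuity and recover $C = \lim_{r \to 0} b'/a = b''(0)/a'(0)$ by L'Hôpital, consistent with \cref{S2R2a,S2R2b}. Keeping $\{a \neq 0\}$ connected — so that $b'/a$ is one constant rather than possibly several on different components — is exactly what the monotonicity of \Cref{lem:abmon} supplies, and is the main point beyond the one-line elimination.
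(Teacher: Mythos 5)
Your proof is correct and takes essentially the same route as the paper: with $f\equiv 0$, eliminate $\tfrac{a''}{a}$ between \cref{eq:feq} and \cref{eq:aeq} to get $\tfrac{b''}{b}=\tfrac{a'b'}{ab}$, hence $\left(\tfrac{b'}{a}\right)'=0$, so $b'/a$ is a constant $C$ and $b''=Ca'$ follows by differentiation. Your added care about the sign of $C$ (via the monotonicity of \Cref{lem:abmon}) and about connectedness of $\{a\neq 0\}$ and the singular orbit in the $S^2\times\mathbb{R}^2$ case actually goes a bit beyond the paper's proof, which asserts constancy of $b'/a$ without addressing these points.
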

	
	\begin{proof}
		Setting $f \equiv 0$ in \cref{eq:feq,eq:aeq,eq:beq}, we see that the Einstein equations are
		
		\begin{equation*}
			\displaystyle \frac{a''}{a} = - \frac{2a'b'}{ab}+1
		\end{equation*}
		
		\begin{equation*}
			\displaystyle \frac{b''}{b} =   \frac{1-(b')^2}{b^2}-\frac{a'b'}{ab}+1
		\end{equation*}
		
		\begin{equation*}
			\displaystyle \frac{a''}{a} +  \frac{2b''}{b}-1=0
		\end{equation*}
		
		\noindent From the first and third equations, we see that
		
		\begin{center}
			$\displaystyle \frac{b''}{b} = \frac{a'b'}{ab}$
		\end{center}
		
		\noindent Then, the function $\frac{b'}{a}$ satisfies
		
		\begin{center}
			$\displaystyle \left(\frac{b'}{a}\right)' = \frac{b''}{a}-\frac{a'b'}{a^2} = \frac{b}{a}\left(\frac{b''}{b}-\frac{a'b'}{ab}\right)=0$
		\end{center}
		
		\noindent Thus, $\frac{b'}{a}$ must be a constant, implying that conclusion of the lemma.
	\end{proof}
	
	\begin{comment}
		
		\noindent \textbf{Remark:} The same lemma can be proven for metrics of the form $g=dr^2+a(r)^2g_{S^1}+b(r)^2g_{\mathbb{R}^2}$, since the first and third equations are the same for these metrics.
		
		\medskip
		
	\end{comment}
	
	\noindent Now, we rule out certain Einstein metrics on $\mathbb{R} \times S^2 \times S^1$. The key intuition underlying this proof is that the second equation (analogous to \cref{eq:aeq}) for Einstein metrics cannot be satisfied near $-\infty$ since the curvature of the $S^2$ would become too large.
	
	\begin{lem}\label{lem:noeinstein}
		There are no cohomogeneity one Einstein metrics of the form $\text{\normalfont Ric}_g+g=0$ on the space $\mathbb{R} \times S^2 \times S^1$ arising as Cheeger-Gromov limits of doubly warped product expanding solitons $(M,g_i = dr^2 + a_i(r)^2g_{S^1}+b_i(r)^2g_{S^2})$, where $a_i$ and $b_i$ are monotonically increasing functions.
	\end{lem}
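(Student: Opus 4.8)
The plan is to show that the hypothetical Einstein limit, being itself a doubly warped product $dr^2 + a^2 g_{S^1} + b^2 g_{S^2}$ on $\mathbb{R}\times S^2\times S^1$, inherits enough structure from the sequence to force a contradiction at the $r\to-\infty$ end. First I would record the properties of the limit: by \Cref{lem:CG} of Appendix C the warping functions of the $g_i$ converge (locally smoothly) to $a,b$, so since each $a_i,b_i$ is increasing, the limits $a,b$ are non-decreasing; moreover $a,b>0$ on all of $\mathbb{R}$ because the limit is a smooth metric on the fixed topology $\mathbb{R}\times S^2\times S^1$, which has no singular orbit, so $r$ ranges over all of $\mathbb{R}$.

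Next I would reduce the Einstein system to a single equation for $b$. The derivation in the proof of \Cref{lem:einstein} uses only the first and third Einstein equations, which are identical on $\mathbb{R}\times S^2\times S^1$; hence it applies verbatim to give $b'=Ca$ for a constant $C\geqslant 0$. The case $C=0$ would make $b$ constant, which is incompatible with the second Einstein equation (it would read $0 = 1/b^2 + 1$), so $C>0$ and $b$ is strictly increasing. Substituting $b'=Ca$, $b''=Ca'$ into the second Einstein equation collapses the mixed term via $\frac{a'b'}{ab}=\frac{b''}{b}$, and I obtain the autonomous equation $2bb'' = 1-(b')^2+b^2$, i.e.
\[
b'' = \frac{1-(b')^2}{2b} + \frac{b}{2}.
\]

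I would then analyze the behavior as $r\to-\infty$. Since $a$ is non-decreasing and positive, $a(r)\downarrow a_{-\infty}\geqslant 0$ and $b'(r)=Ca(r)\downarrow Ca_{-\infty}$. If $a_{-\infty}>0$ then $b'$ is bounded below by a positive constant, forcing $b\to-\infty$ and contradicting $b>0$; hence $a_{-\infty}=0$, so $b'\to 0$ and $b\downarrow b_{-\infty}\geqslant 0$ as $r\to-\infty$. Feeding these limits into the displayed equation gives $b''(r)\to \frac{1}{2b_{-\infty}}+\frac{b_{-\infty}}{2}>0$ when $b_{-\infty}>0$, and $b''\to +\infty$ when $b_{-\infty}=0$; in either case $\liminf_{r\to-\infty} b''>0$. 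Thus $b''\geqslant \delta>0$ on some ray $(-\infty,r_1]$, and integrating yields $b'(r)\leqslant b'(r_1)-\delta(r_1-r)\to-\infty$ as $r\to-\infty$, contradicting $b'>0$. This is exactly the heuristic that the $S^2$-curvature term $\tfrac{1-(b')^2}{b^2}$ forces $b''$ to stay positively bounded away from zero near the shrinking end.

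The main obstacle I anticipate is not the final sign computation but justifying that the limit is genuinely a monotone doubly warped product on which this ODE analysis is legitimate: I need the convergence of warping functions from Appendix C to guarantee that $a,b$ are defined and non-decreasing on all of $\mathbb{R}$ and satisfy the Einstein equations classically, and I need the $r\to-\infty$ limiting values of $a$, $b'$, $b$ to exist (which follows from monotonicity) so that the displayed equation can be evaluated in the limit. Once that structural input is in place, the contradiction via $\liminf b''>0$ versus $b'>0$ is immediate.
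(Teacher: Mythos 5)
Your proof is correct, and its skeleton matches the paper's: both invoke \Cref{lem:einstein} (whose derivation uses only the first and third Einstein equations and hence applies on $\mathbb{R} \times S^2 \times S^1$) to get $b' = Ca$, both use the monotonicity inherited from the approximating solitons to extract limits of $a$, $b$, $b'$ as $r \to -\infty$ with $b' \to 0$, and both land the contradiction by evaluating the second Einstein equation at the $r \to -\infty$ end. The difference is only in the endgame. The paper keeps $a'$ in play: it rewrites the second equation as $b^2 + 1 - (b')^2 = 2Ca'b$ in \Cref{eq:beinstein2}, argues from \Cref{eq:beinstein} and $a \geqslant 0$ that $a'$ must tend to $0$, and then observes that the right side tends to $0$ while the left side tends to a nonzero value. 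You instead eliminate $a'$ altogether via $b'' = Ca'$, obtaining the autonomous identity $2bb'' = 1 - (b')^2 + b^2$, and derive the contradiction by integrating $b'' \geqslant \delta > 0$ on a ray $(-\infty, r_1]$ to force $b'$ negative. These are the same identity read in two ways, but your version has two minor advantages in rigor: you explicitly rule out $C = 0$ (the positivity of $C$ in \Cref{lem:einstein} as stated relies on the boundary conditions of the one-ended topologies, so it does not transfer for free to the two-ended space), and your $\liminf b'' > 0$ argument treats the degenerate possibility $\lim_{r \to -\infty} b = 0$ uniformly, whereas in that case the paper's assertion that $a'$ ``approaches a limit'' must be read as allowing the limit $+\infty$.
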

	
	\begin{proof}
		Suppose that such an Einstein metric exists. Then, the Einstein equations would have a solution $(a,b)$ on the interval $(-\infty, \infty)$. As we are assuming that such an Einstein metric is a limit of doubly warped product expanding solitons, the monotonicity properties for such solitons carry over to give us the inequalities $a',b' \geqslant 0$.
		
		\medskip
		
		\noindent Thus, we see that both $a$ and $b$ approach finite limits as $r$ tends to $-\infty$ by monotonicity.  From \Cref{lem:einstein}, we see that $b'=Ca$, so $b'$ approaches a limit as $r$ tends to $-\infty$. As $b$ itself approaches a finite limit, $b'$ must tend to $0$ as $r$ tends to $-\infty$. Then, we can rewrite the second Einstein equation as 
		
		\begin{equation}\label{eq:beinstein}
			\displaystyle 1+ \frac{1-(b')^2}{b^2} = 2C\frac{a'}{b}
		\end{equation}
		
		\noindent which implies that
		
		\begin{equation}\label{eq:beinstein2}
			\displaystyle b^2+ {1-(b')^2} = 2C{a'b}
		\end{equation}
		
		\noindent From \Cref{eq:beinstein}, we see that $a'$ approaches a limit as $r$ tends to $-\infty$, and this limit must be $0$ as $a \geqslant 0$ everywhere. This leads to a contradiction, as the right hand side of \Cref{eq:beinstein2} approaches $0$ as $r$ tends to $-\infty$, while the left hand side approaches a nonzero value. Thus, there are no such Einstein metrics with two ends arising as such limits.
	\end{proof}
	
	\noindent Now, we show that two-ended gradient expanding solitons satisfying the monotonicity properties (established for one-ended solitons in Section 4) cannot exist. Note that we do not rule out two-ended expanding solitons in general; in fact, in \cite{Ram12}, the existence of a $3$-dimensional gradient expanding soliton on $\mathbb{R} \times S^1 \times S^1$ is established. In our situation, the topology and monotonicity properties will be key to the proof.
	
	\begin{lem}\label{lem:no2ends}
		There does not exist a cohomogeneity one gradient expanding Ricci soliton on $\mathbb{R} \times S^2 \times S^1$ with the monotonicity properties $a', b' \geqslant 0$ and $f', f'' \leqslant 0$.
	\end{lem}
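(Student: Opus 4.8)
The plan is to assume such a two-ended soliton exists and extract a contradiction at the end $r \to -\infty$, where the monotonicity $a',b' \geq 0$ forces $a$ and $b$ to decrease. First I would record the consequences of monotonicity on this end: since $a,b$ are positive and increasing, they decrease to finite infima $a_{-\infty}, b_{-\infty} \geq 0$ as $r \to -\infty$, so $\int_{-\infty}^{0} a'\,dr$ and $\int_{-\infty}^{0} b'\,dr$ are finite; since $f'$ is non-increasing and $\leq 0$, it increases to a finite limit $f'_{-\infty} \in [f'(0),0]$, so $f'$ is bounded on $(-\infty,0]$; and $f \geq f(0)$ there. In particular $\liminf_{r\to-\infty} b' = 0$, so there is a sequence $r_k \to -\infty$ with $b'(r_k) \to 0$.

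The key technical device is a pair of a priori first-derivative bounds coming from the fact that the solution exists on all of $\mathbb{R}$. Writing $A := a'/a \geq 0$ and dividing \cref{eq:aeq} gives $A' = 1 - A\!\left(A + 2\tfrac{b'}{b} - f'\right)$; since $A \geq 0$ and $2\tfrac{b'}{b} - f' \geq 0$, this yields $A' \leq 1 - A^2$. Read backwards in $s = -r$ this becomes $\tfrac{dA}{ds} \geq A^2 - 1$, so if $A$ ever exceeded a fixed threshold it would blow up at a finite $s$, i.e. at a finite $r$ where $a$ is still positive — impossible; hence $a'/a$ is bounded on $(-\infty,0]$. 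The same backward-blow-up principle applies to $b'$ itself, since \cref{eq:beq} reads $b'' = -\tfrac{(b')^2}{b} + b'\!\left(-\tfrac{a'}{a}+f'\right) + \tfrac1b + b$, a Riccati inequality with negative quadratic leading term; reading it backwards shows $b'$ cannot exceed a threshold depending only on $b(0)$ without blowing up at finite distance, so $b'$ is bounded on $(-\infty,0]$ as well.

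I would then split on the value of $b_{-\infty}$. If $b_{-\infty} > 0$, every term of \cref{eq:beq} is bounded on $(-\infty,0]$ (using $b \geq b_{-\infty}$, the bound on $a'/a$, and the bounds on $b'$ and $f'$), so $b''$ is bounded and $b'$ is uniformly continuous; a nonnegative uniformly continuous function with convergent integral must tend to $0$, so $b' \to 0$. Passing to the limit in \cref{eq:beq} (the term $\tfrac{a'b'}{a} = \tfrac{a'}{a}b' \to 0$ since $a'/a$ is bounded) gives $b'' \to \tfrac{1}{b_{-\infty}} + b_{-\infty} > 0$; hence $b'' \geq \delta > 0$ for all sufficiently negative $r$, which forces $b'(r) \to -\infty$ as $r \to -\infty$, contradicting $b' \geq 0$.

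The main obstacle is the collapsing case $b_{-\infty} = 0$, which I would rule out using the soliton identities. Monotonicity makes $\Delta f = f'' + \left(\tfrac{a'}{a} + 2\tfrac{b'}{b}\right)f' \leq 0$ (see \Cref{eq:Laplacian}), so \cref{eq:trace} gives $R \geq -4$; and \cref{eq:bianchi} (that $R + |\nabla f|^2 + 2f$ is constant), together with $f \geq f(0)$ and $|\nabla f|^2 = (f')^2 \geq 0$, gives $R \leq C_0 - 2f(0)$, so $R$ is bounded on $(-\infty,0]$. On the other hand, eliminating $a''$ and $b''$ from $R = 2\!\left(-\tfrac{a''}{a} - 2\tfrac{b''}{b} - 2\tfrac{a'b'}{ab} + \tfrac{1-(b')^2}{b^2}\right)$ via \cref{eq:aeq,eq:beq} expresses the $S^2$-curvature $\tfrac{1-(b')^2}{b^2}$ in terms of $R$ and the quantities $\tfrac{a'b'}{ab}$, $\tfrac{a'}{a}f'$, $\tfrac{b'}{b}f'$. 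Evaluated along $r_k$, the left side $\tfrac{1-b'(r_k)^2}{b(r_k)^2} \to +\infty$, while — since $a'/a$ and $f'$ are bounded and $b'(r_k)\,b(r_k) \to 0$ — every term on the right is $o\!\left(b(r_k)^{-2}\right)$, a contradiction. This handles $b_{-\infty}=0$ and completes the argument. I note that in the intended application the two-ended soliton arises as a Cheeger–Gromov limit and so has bounded curvature, whereupon \Cref{lem:bbound} already forces $b_{-\infty} > 0$ and only the first case is needed; the difficulty in the general statement is precisely excluding the curvature-concentrating collapse of the $S^2$-orbit without such an a priori bound.
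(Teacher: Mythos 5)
Your proof is correct, but it proceeds by a genuinely different route than the paper's. The paper's argument is geometric: it first proves (via point-picking, rescaling, Shi's estimates, and Cheeger--Gromov limits) that curvature is bounded along the end $r \to -\infty$, invoking the Munteanu--Wang splitting theorem \cite{MW11} for two-ended steady solitons to rule out curvature blow-up; it then takes a Cheeger--Gromov limit along that end, uses the two-sided bound on $f$ together with monotonicity to conclude $f_\infty$ is constant, and rules out the resulting two-ended Einstein manifold by the separate \Cref{lem:noeinstein}. Your argument stays entirely at the ODE level: the backward Riccati comparison for $a'/a$ (from \cref{eq:aeq}) and for $b'$ (from \cref{eq:beq}) gives a priori bounds on $(-\infty,0]$ using only global existence, and the dichotomy on $b_{-\infty}$ is then settled directly --- the non-collapsed case by Barbalat's lemma and the limiting positivity of $b''$, and the collapsed case by combining the trace and Bianchi identities \cref{eq:trace,eq:bianchi} (which bound $R$ on the end) with the algebraic identity $\frac{1-(b')^2}{b^2} = 2\frac{a'b'}{ab} - \frac{a'}{a}f' - 2\frac{b'}{b}f' - 3 - \frac{R}{2}$, whose left side blows up like $b(r_k)^{-2}$ along the sequence with $b'(r_k)\to 0$ while the right side is $o\bigl(b(r_k)^{-2}\bigr)$. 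What your approach buys is self-containedness: no compactness theory, no splitting theorem, and no separate Einstein non-existence lemma (indeed your non-collapsed case is morally the same contradiction as \Cref{lem:noeinstein}, but applied directly to the soliton rather than to an Einstein limit); moreover you handle the collapsing of the $S^2$-orbit without any a priori curvature bound, which is exactly the step the paper spends the most machinery on. What the paper's route buys is reuse of tools (point-picking, Cheeger--Gromov limits of warped products, \Cref{lem:bbound}) already needed elsewhere, and a structure parallel to the proof of \Cref{lem:uniformly_epsilon_conical} where the lemma is applied; your closing observation --- that in that application the two-ended soliton is a limit with bounded curvature, so \Cref{lem:bbound} forces $b_{-\infty}>0$ and only your first case is needed --- is accurate.
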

	
	\begin{proof}

		\noindent Suppose $(M, g, \nabla f)$ is a cohomogeneity one gradient expanding Ricci soliton on $\mathbb{R} \times S^2 \times S^1$ with the given monotonicity properties. Recalling soliton identities \cref{eq:trace} and \cref{eq:bianchi}, we have the following identities for some constant \( C \in \mathbb{R} \):
		\[
		\begin{aligned}
			R + \Delta f + 4 &= 0, \\
			R + |\nabla f|^2 + 2f &= C.
		\end{aligned}
		\]
		\noindent Combining these identities, we see that
		\[
		f = \frac{1}{2}(\Delta f - |\nabla f|^2 + C).
		\]
		\noindent Using the monotonicity properties, we have \( \Delta f = f'' + f'\left(\frac{a'}{a} + \frac{2b'}{b}\right) \leqslant 0 \), so \( f \) is bounded from above.
		
		\medskip
		
		\noindent Now, we make the following claim:
		
		\medskip
		
		\noindent \textbf{Claim:} $(M,g)$ must have bounded sectional curvature on the end where $r \to -\infty$. 
		
		\medskip
		
		\noindent \textit{Proof of Claim:} Suppose this is not the case; then we can find a sequence of points $p_i \in (M,g)$ where $r(p_i) = r_i \to -\infty$ and $|\text{Rm}|_{g}(p_i) \to \infty$. For any $D_i>0$, using \Cref{lem:pp}, we can find a sequence of points $q_i \in B_{g}(p_i,{2D_i}/\sqrt{|\text{Rm}|_{g}(p_i)})$ where 
		
		\begin{center}
			$\displaystyle r(q_i) \leqslant r(p_i)+\frac{d(p_i,q_i)}{\sqrt{|\text{Rm}|_{g}(p_i)}} \leqslant r_i + \frac{2D_i}{\sqrt{|\text{Rm}|_{g}(p_i)}}$
		\end{center}
		
		\noindent along with the bounds $|\text{Rm}|_{g}(q_i) := Q_i \to \infty$ and $|\text{Rm}_{g}| \leqslant 4Q_i$ on $B_{g}(q_i, D_i/\sqrt{Q_i})$. Now choose a sequence $D_i \to \infty$ such that $r(q_i) \to -\infty$. Rescale to get $\tilde{g}_i = Q_i g$, where the new inequality is $|\text{Rm}_{\tilde{g_i}}| \leqslant 4$ on $B_{\tilde{g_i}}(q_i, D_i)$. Then, for any $D>0$, we have $D < D_i$ for large $i$, so $|\text{Rm}|_{\tilde{g_i}} \leqslant C(D)$ on $B_{\tilde{g}_i}(q_i, D)$ for a constant $C(D)$ depending on $D$.
		
		\medskip
		
		\noindent By the monotonicity of $f'$, since $r(q_i)$ is bounded above, we see that $|\nabla f|(q_i)$ is bounded. By Shi's estimates for the associated Ricci flows, we can derive bounds on the derivatives of the curvature on $B_{\tilde{g}_i}(q_i, D)$ as well as higher derivatives of $f$ using the soliton equation. By \Cref{lem:bbound}, we have a uniform lower bound on $b(r(q_i))$. By multiplying $a$ by a constant factor (note that this does not affect the soliton equations), we can assume that $a(r(q_i))$ is bounded from above and below.

		\medskip

		\noindent Thus, by the previous paragraphs, we have the required volume and curvature bounds to take (up to a subsequence) a Cheeger-Gromov limit of $(M, g_i, \nabla \tilde{f}_i, q_i)$ to get a warped product metric $(M_\infty, g_\infty, \nabla f_\infty, q_\infty)$, which satisfies the steady soliton equation. If the size of the $S^2$ orbit remains bounded, $M_\infty$ has topology $\mathbb{R} \times S^2 \times S^1$ and thus has two ends. If it becomes unbounded, then $M_\infty$ has topology $\mathbb{R} \times \mathbb{R}^2 \times S^1$, but $M_\infty/\mathbb{Z}^2 \equiv \mathbb{R} \times \mathbb{S}^1 \times \mathbb{S}^1 \times \mathbb{S}^1$ has two ends.
		
		\medskip
		
		\noindent Thus, we have a steady soliton with two ends, which must split as the product of $\mathbb{R}$ and a compact $3$-dimensional Ricci-flat manifold $N$ by \cite{MW11}. As $N$ must be flat, we see that $(M_\infty, g_\infty)$ is isometric to a quotient of Euclidean space, but this contradicts $|\text{Rm}_{g_\infty}(q_\infty)| \neq 0$. Thus, the claim must be true. \qed
		
		\medskip
		
		\noindent Now, take a sequence of points $p_i$ with $r(p_i) = r_i \to -\infty$ along this end. By the claim and its proof, we have lower volume bounds of small $r$-balls at $p_i$ and curvature bounds on $M$ (which imply bounds on the derivative of the curvature by Shi's estimates) along this end. Thus, we can take (up to a subsequence) a Cheeger-Gromov limit of $(M, g, p_i, \nabla f)$ to get a cohomogeneity one gradient expanding soliton $(M_\infty, g_\infty, p_\infty, \nabla f_\infty)$ with topology $\mathbb{R} \times S^2 \times S^1$ (by the monotonicity of $a$ and $b$, and by rescaling $a$ by a constant if necessary, the orbits stay bounded in diameter).
		
		\medskip
		
		\noindent By the monotonicity of $f_\infty$, since we chose a sequence $r_i \to -\infty$, we see that $f_\infty$ must be bounded from below. From the beginning of the proof, we also know that $f_\infty$ is bounded from above. This implies that $f'_\infty =f''_\infty=0$, and that $f_\infty$ is thus constant. This means that $(M_\infty, g_\infty, p_\infty, f_\infty)$ is an Einstein manifold on $\mathbb{R}\times S^2 \times S^1$ with two ends, which we have ruled out by \Cref{lem:noeinstein}. Thus, there are no such two ended expanding solitons.
	\end{proof}
	
	\noindent Using the technical lemmas above, we can prove the following improved version of \Cref{lem:symcone}.
	
	\begin{lem}\label{lem:uniformly_epsilon_conical}
		Fix an $\epsilon \geqslant 0$ and suppose that $(M,g_i, \nabla f_i)$ is a sequence of gradient expanding Ricci solitons respectively asymptotic to cones over links $(S^2 \times S^1,h_i=(a'_{\infty, i})^2g_{S^1}+(b'_{\infty,i})^2g_{S^2})$. Suppose we have the following bounds for all $i$:
		
		\begin{equation*}
			\min \{ a'_{\infty,i}, b'_{\infty,i} \} \geqslant c_1 \quad \quad b'_{\infty,i} \leqslant c_2
		\end{equation*}
		
		\noindent for some constants $c_1, c_2 > 0$. Then, we have the following:
		
		\begin{enumerate}
			\item The scalar curvature satisfies $R_{g_i} \leqslant C(c_1)$ on $M$ for some constant $C(c_1) > 0$.
			\item There exists $r_0>0$ so that $(M,g_i, \nabla f_i)$ is uniformly $\epsilon$-conical at a distance $r_i \leqslant r_0$ from the tip.
		\end{enumerate}
	\end{lem}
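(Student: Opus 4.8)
The plan is to treat the two conclusions separately, with the second being the substantive one.

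For (1) I would start from the soliton identities of Section 3. Combining the trace identity \Cref{eq:trace} with the conserved quantity \Cref{eq:bianchi} and using $|\nabla f|^2=(f')^2$ gives the pointwise formula $R = R(0)-2f-(f')^2$, where $R(0)=-3f_0-4$ (resp. $-2f_0-4$) by \Cref{eq:R0}. The decay estimates of \Cref{lem:Rmbound2} and \Cref{lem:fpplusr} already show $R\to 0$ at infinity, and in the conical region \Cref{lem:symcone} controls $\textnormal{Rm}_{g_i}$ (hence $R$) by the curvature of the link, whose scalar curvature is $2/(b'_{\infty})^2\leqslant 2/c_1^2$; since the onset scale is $S_0=S_0(\epsilon,c_1)$, this yields $R\leqslant C(c_1)$ on $\{r\geqslant d_i(S_0)\}$. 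It remains to bound $R$ on the core $\{r< d_i(S_0)\}$ uniformly in the possibly-degenerating initial conditions, and here I would argue by contradiction: if $\sup_M R_{g_i}\to\infty$, pick maximizers, rescale by the curvature, and run the blow-up argument of \Cref{lem:Rmbound1} (point-picking via \Cref{lem:pp}, Shi's estimates, and the splitting/rigidity of the resulting steady or Einstein limit) to force a flat limit, contradicting nonvanishing curvature; the lower slope bound $c_1$ is what prevents the limiting link from degenerating.

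For (2), which is the heart of the lemma, I would first invoke \Cref{lem:symcone}: since $\min\{a'_{\infty,i},b'_{\infty,i}\}\geqslant c_1$, there is a fixed $S_0=S_0(\epsilon,c_1)$ at which each $(M,g_i)$ is $\epsilon$-conical, together with the estimates \Cref{eq:a_estimate} and \Cref{eq:b_estimate}. The upper bound $b'_{\infty,i}\leqslant c_2$ then gives $b_i(d_i(S_0))\leqslant c_2(1+\epsilon)^{1/2}S_0=:B_0$, uniformly in $i$. Suppose for contradiction that $r_i:=d_i(S_0)\to\infty$ along a subsequence. By monotonicity of $b_i$ (\Cref{lem:abmon}) we have $b_i\leqslant B_0$ on all of $[0,r_i]$, so there are points $p_i$ with $r(p_i)\to\infty$ at which the $S^2$-orbit has bounded size. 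Using the scalar bound from (1) together with \Cref{lem:bbound} to bound $b_i$ from below, and point-picking (\Cref{lem:pp}) plus Shi's estimates to control the full curvature near $p_i$ (ruling out curvature concentration exactly as in \Cref{lem:Rmbound1} and the Claim inside the proof of \Cref{lem:no2ends}), I would extract a pointed Cheeger--Gromov limit with no rescaling, so that the limit remains an expander. Because $b_i$ stays between two positive constants the $S^2$ factor persists (this is precisely the role of $b'_{\infty,i}\leqslant c_2$, ruling out decompactification to $\mathbb{R}^2$), rescaling $a_i$ by constants leaves the soliton equations unchanged and keeps the $S^1$ factor bounded, and $r_i\to\infty$ produces a second end. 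The limit is therefore a cohomogeneity one gradient expanding soliton on $\mathbb{R}\times S^2\times S^1$ inheriting the monotonicities $a',b'\geqslant 0$ and $f',f''\leqslant 0$ from the one-ended solitons of Section 4, contradicting \Cref{lem:no2ends}. Hence $d_i(S_0)\leqslant r_0$ for a uniform $r_0$, which is the claimed bounded distance, and the stated bounds \Cref{eq:a_estimate}, \Cref{eq:b_estimate} are inherited from \Cref{lem:symcone}.

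The main obstacle is the curvature control needed to extract the two-ended limit in (2): because the initial conditions may run to the boundary of parameter space along the sequence, the earlier estimates that were only uniform over compact sets of initial conditions do not apply directly, and one must rebuild bounded curvature near $p_i$ from the scalar bound of (1), the orbit-size bound of \Cref{lem:bbound}, and a point-picking/blow-up argument. Establishing (1) on the core region has exactly the same flavor and is the other delicate point.
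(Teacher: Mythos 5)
Your overall scheme for (2) matches the paper's, but both halves of your proposal lean on a mechanism that cannot do the required work, and the missing ingredient is the same in both places: the maximum-principle estimates of \cite{BC23}. For (1), the paper does not argue on the core by blow-up at all; it cites the first part of the proof of Proposition 4.32 of \cite{BC23}, where the link bounds coming from $c_1$ (injectivity radius and curvature of $h_i$) are converted, via the maximum principle, into an upper bound on $R_{g_i}+|\nabla f_i|^2$ on all of $M$, uniformly in $i$ and with no reference to the initial conditions. Your blow-up substitute fails precisely on the dangerous sequences, namely those with $f^i_0\to-\infty$, for which $R_i(0)=-3f^i_0-4\to\infty$, i.e.\ curvature concentrating at the tip. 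For such sequences the key steps of \Cref{lem:Rmbound1} are unavailable: that proof uses compactness of the set of initial conditions both to force the blow-up points to escape to infinity and to invoke the linear growth bound of \Cref{lem:ablinbelow}. A blow-up at the tip produces a complete, \emph{one-ended}, cohomogeneity one gradient steady soliton in the limit; with one end the splitting theorem of \cite{MW11} gives nothing, and non-flat steady solitons of exactly this doubly warped product type exist (cf.\ \cite{A17}, \cite{BDW15}, whose ansatz Section 4 mirrors), so there is no rigidity contradiction to be had. Your parenthetical that ``the lower slope bound $c_1$ prevents the limiting link from degenerating'' has no force here: a pointed Cheeger--Gromov limit at the tip forgets the asymptotic cone entirely. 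Converting control of the link at infinity into control at the tip is an intrinsically global step, and that is exactly what the cited maximum principle provides.

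For (2), your contradiction scheme (fixed $S_0$ from \Cref{lem:symcone}, assume $d_i(S_0)\to\infty$, build a two-ended limit, contradict \Cref{lem:no2ends}) is the paper's, and your preliminary ``rule out curvature concentration'' step corresponds to the paper's case $Q_i\to\infty$, which it settles by rescaling and noting that $R_{\tilde g_i}=Q_i^{-1}R_{g_i}\to 0$ by Part (1), so the limit is Ricci-flat, splits, and is flat, contradicting non-vanishing curvature at the basepoint; that portion of your plan is fine in spirit. The gap is in the unrescaled limit: to say ``the limit remains an expander'' you need $|\nabla f_i|$ to be uniformly bounded on unit-scale neighborhoods of the picked points, and this is not free. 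The points satisfy $r(q_i)\to\infty$, and for each fixed soliton $|f_i'(r)|$ grows linearly in $r$; the bounds of \Cref{thm:fpbound} are uniform only over compact sets of initial conditions, which you cannot assume here. If $|\nabla f_i|(q_i)\to\infty$, the normalized potentials produce a parallel unit vector field in the limit, and since $b_i$ is trapped between two positive constants the limit is the metric product $\mathbb{R}\times S^1\times S^2$ with constant warping --- which is neither flat nor a gradient expander, so neither the splitting rigidity nor \Cref{lem:no2ends} yields a contradiction. The paper closes exactly this case by invoking the proof of Lemma 4.32(b) of \cite{BC23} for the uniform bound on $|\nabla f_i|$ on the region $\{r\leqslant r_i\}$ before extracting the limit; without that input, or an argument replacing it, your proof of (2) is incomplete even if (1) is granted.
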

	
	\begin{proof}
		\noindent Part 1 follows from the proof of Proposition 4.32 of \cite{BC23}. To summarize briefly, first note that the bound $\min \{ a'_{\infty,i}, b'_{\infty,i} \} \geqslant c_1$ in the hypothesis provides bounds on the injectivity radii and curvatures of the links in the form $|\text{Rm}|_{h_i} \leqslant Q \equiv Q(c_1)$ and $\text{inj}({h_i}) \geqslant \alpha \equiv \alpha(c_1)$. In the first several paragraphs of the proof of Proposition 4.32 in \cite{BC23}, it is shown via the maximum principle that the quantity $R_{g_i}+|\nabla f_i|^2 = -f_i$ is bounded from above in the form $-f_i \leqslant C(\alpha, Q)$. This provides the uniform scalar curvature upper bound on $(M,g_i)$. 
		
		\medskip
		
		\noindent We remark that we make no assumptions about the sign of of the scalar curvature, unlike in \cite{BC23}, in which it is used to prove a non-collapsing result. In a later section, we show that the assignment of the asymptotic cone to a cohomogeneity one soliton is a proper map irrespective of whether the solitons have nonnegative scalar curvature.
		
		\medskip
		
		\noindent 2. From \Cref{lem:symcone}, we know that we have a fixed $S_0 >0$ such that $(M,g_i,\nabla f_i)$ is $\epsilon$-conical at $d_i(S_0):=r_i$.

		\medskip

		\noindent Suppose the sequence $\{r_i\}$ is unbounded. Let $p_i \in (M, g_i)$ be a sequence of points with $r(p_i)=r_i$. Then, by \Cref{def:epsilonconical}, for sufficiently small $\epsilon$, we know by $\epsilon$-conicality that $|\text{Rm}|_{g_i}(p_i)$ is close to $|\text{Rm}|_{\gamma_i}(S_0,z)$ for all $i$. By the hypotheses that $ c_2 \geqslant b'_{\infty, i} \geqslant c_1$, $|\text{Rm}|_{\gamma_i}(S_0,z)$ is bounded from above and below for all $i$, implying the same conclusion for $|\text{Rm}|_{g_i}(p_i)$. Additionally, using \Cref{eq:a_estimate} and \Cref{eq:b_estimate}, we see that
		
		\begin{align*}
			a'_{\infty,i}(1-\epsilon)^{1/2}S_0 &\leqslant a_i(r_i) \\
			b'_{\infty,i}(1-\epsilon)^{1/2}S_0 &\leqslant b_i(r_i) \leqslant b'_{\infty,i}(1+\epsilon)^{1/2}S_0 \label{eq:b_estimate}
		\end{align*}

		\noindent From the above inequalities, and the hypotheses, we see that $a_i(r_i)$ is bounded from below while $b_i(r_i)$ is bounded from both above and below.
		
		\medskip
		
		\noindent Now, define \[
		\bar{M}_i := \left\{ x \in (M, g_i) \mid r(x) \leqslant r_i \right\}
		\]
		
		\medskip
		
		\noindent For any $D_i>0$, by \Cref{lem:pp} (applied to the manifold with boundary \( \bar{M}_i \)), we can choose a point \( q_i \) satisfying  
		\[
		d(p_i, q_i) \leqslant \frac{2D_i}{\sqrt{|\text{Rm}|_{g_i}(p_i)}}
		\quad \text{with} \quad r(q_i) \leqslant r_i,
		\]
		so that for \( Q_i := |\text{Rm}|_{g_i}(q_i) \), we have  
		\[
		Q_i \geqslant |\text{Rm}|_{g_i}(p_i)
		\quad \text{and} \quad
		|\text{Rm}|_{g_i} \leqslant 4 Q_i \quad \text{on} \quad B_{g_i}(q_i, D_i/\sqrt{Q_i}).
		\]
		
		\noindent
		Note: a priori, this bound only holds on the intersection of this ball with the complete metric space \( \bar{M}_i \); however, by \( \epsilon \)-conicality, for sufficiently small \( \epsilon > 0 \) and for \( r > r_i \), the curvature cannot be much larger than \( |\text{Rm}|(p_i) \), which is less than $Q_i$. Thus, the bound $\text{Rm}|_{g_i} \leqslant 4 Q_i$ holds on the entire ball $B_{g_i}(q_i, D_i/\sqrt{Q_i})$.
		
		\medskip
		
		\noindent
		Now choose a sequence \( \{D_i\} \to \infty \) such that \( r(q_i) \to \infty \). Note that  
		\[
		r(q_i) \geqslant r_i - \frac{2D_i}{\sqrt{\text{Rm}_{g_i}(p_i)}},
		\]
		by the result of point-picking, so it is possible to choose such a sequence \( D_i \). Then, rescale to get $\tilde{g}_i = Q_i g_i$ satisfying $|\text{Rm}|_{\tilde{g}_i} \leqslant 4$ on $B_{\tilde{g}_i}(q_i, D_i)$. Then, for any $D>0$, we have $D < D_i$ for large $i$, so $|\text{Rm}|_{\tilde{g_i}} \leqslant C(D)$ on $B_{\tilde{g}_i}(q_i, D)$. By Shi's estimates (which can be applied, since $\{Q_i\}$ is bounded from below), we have bounds on the derivatives of the curvature of $\tilde{g}_i$ as well on the ball. By \Cref{lem:bbound}, we can ensure lower bounds on the sizes of the $S^2$ orbits at $q_i$. We may also rescale $a$ by a constant if necessary to ensure that the sizes of the $S^1$ orbits remain bounded as well (note that this does not affect the soliton equations except at $r=0$).
		
		\medskip

		\noindent Now, consider the case where (a subsequence of) $Q_i$ is unbounded from above. By Part 1, we have a uniform scalar curvature bound on $(M,g_i)$ in terms of the curvature bound of the links. Thus, by the previous paragraph, we can consider (up to a subsequence) the Cheeger-Gromov limit of $(M,\tilde{g}_i, q_i)$ to get $(M_\infty, g_\infty, q_\infty)$. If the sizes of the $S^1$ and $S^2$ orbits containing $q_i$ in $(M,g_i)$ remain bounded, we have $M_\infty \equiv \mathbb{R} \times S^2 \times S^1$ topologically, which has two ends, since we assumed that $D_i$ was chosen so that $r(q_i) \to \infty$. If the $S^1$ ($S^2$) orbits become unbounded, we may replace $M_\infty$ by the quotient space $M_\infty/\mathbb{Z}$ ($M_\infty/\mathbb{Z}^2$), which has two ends. 
		
		\medskip
		
		\noindent The uniform scalar curvature bound implies that $(M_\infty, g_\infty)$ is Ricci-flat. Then, $(M_\infty, g_\infty)$ splits as a product of a line and a $3$-dimensional Ricci-flat manifold, implying that $M_\infty$ is flat. However, this is a contradiction to $|\text{Rm}|_{g_\infty}(q_\infty) \neq 0$.

		\medskip
		
		\noindent Now, consider the case where $Q_i$ is bounded. By the proof of Lemma 4.32(b) of \cite{BC23}, $|\nabla f|$ is uniformly bounded on $(M,g_i)$ for $r \leqslant r_i$. Using the soliton equation, the curvature bounds provide higher derivative bounds on $f_i$ (we may add an appropriate constant to each $f_i$ to ensure that $f_i(q_i)=0$). Thus, we can take (up to a subsequence) the Cheeger-Gromov limit of the solitons $(M,g_i, \nabla f_i, q_i)$ to get a certain cohomogeneity one gradient expanding soliton ($M_\infty, g_\infty, \nabla f_\infty, q_\infty)$. As in the previous case, we can quotient $M_\infty$ by translations if necessary to ensure that it has two ends.
		
		\medskip
		
		\noindent Thus, we have a cohomogeneity one gradient expanding soliton with two ends. Note that this gradient expanding soliton satisfies the monotonicity properties $a', b' \geqslant 0$ and $f', f'' \leqslant 0$. By \Cref{lem:no2ends}, we know that such solitons do not exist, leading us to a contradiction. Thus, $\{r_k\}$ is bounded, so choosing $r_0 = \text{max }r_k$, we see that $(M,g_k,\nabla f_k)$ is uniformly $\epsilon$-conical at $r_k \leqslant r_0$.
	\end{proof}	
	
	\noindent Thus, if we know that $a'_\infty$ and $b'_\infty$ are uniformly bounded above and below, we see that any cohomogeneity one gradient expanding solitons asymptotic to such cones must be $\epsilon$-conical within some fixed distance of the tip. Thus, given an $\epsilon>0$, we can think about each expanding soliton $(M,g, \nabla f)$ as a union of two regions; an $\epsilon$-conical region consisting of points where $r > r_0$ (these points are sufficiently far from the singular orbit at $r=0$), and a compact set of points with $r \leqslant r_0$.

	\medskip
	
	\noindent Now that we understand how the distance at which a soliton looks asymptotic to a cone depends on the geometry of the link of the cone, we define a map that essentially assigns to a soliton its asymptotic cone.
	
	\begin{defn}\label{def:F}
		Suppose $(a,b,f)$ is a solution to equations \cref{eq:feq,eq:aeq,eq:beq}, with boundary conditions either \cref{S1R3a,S1R3b} or \cref{S2R2a,S2R2b}. Let the corresponding soliton metric $g = dr^2 + a(r)^2g_{S^1}+b(r)^2g_{S^2}$ be asymptotic to the cone metric $\gamma = ds^2 + (a'_\infty s)^2g_{S^1} + (b'_\infty s)^2g_{S^2}$. Then, we define the map $F: (0, \infty) \times (0, \infty) \to (0, \infty) \times (0, \infty)$ as 
		
		\begin{center}$F(a_0, -f_0) = (a'_\infty, b'_\infty)$\end{center} in the case of $S^1 \times \mathbb{R}^3$ topology (boundary conditions \cref{S1R3a,S1R3b}), and 
		
		\begin{center}$F(b_0, -f_0) = (a'_\infty, b'_\infty)$\end{center} in the case of $S^2 \times \mathbb{R}^2$ topology (boundary conditions \cref{S2R2a,S2R2b}).
	\end{defn}
	
	\noindent The definition above makes sense, as the soliton metrics in the $2$-parameter families were verified to be asymptotic to cone metrics in Section 6.
	
	\medskip
	
	\noindent Having established certain facts about $\epsilon$-conicality of solitons so far in this section, we will analyze the map $F$ and show that it has good properties in the rest of the paper.
	
	\begin{thm}\label{thm:Fcont}
		In the case of either topology, the map $F$ as in \Cref{def:F} above is continuous.
	\end{thm}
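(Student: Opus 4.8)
The plan is to observe that the continuity of $F$ is essentially already contained in \Cref{lem:limcont}. By \Cref{def:F}, the map $F$ sends the pair of initial conditions to the pair of asymptotic slopes $(a'_\infty, b'_\infty)$. Since a map into the product space $(0,\infty) \times (0,\infty)$ is continuous if and only if each of its two coordinate functions is continuous, it suffices to show separately that $a'_\infty$ and $b'_\infty$ depend continuously on the initial conditions. This is exactly the content of the second part of \Cref{lem:limcont}, so the theorem follows at once. I would phrase the proof simply as an application of that lemma.

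For the sake of transparency I would recall the mechanism underlying \Cref{lem:limcont}. The asymptotic slope is realized as the limit $a'_\infty = \lim_{r \to \infty} \frac{a(r)}{r-K}$, where $K$ is the quantity from \Cref{lem:fpplusr}, which is itself continuous in the initial conditions. For each fixed $r$, the value $\frac{a(r)}{r-K}$ depends continuously on the initial data by the smooth dependence of solutions of \cref{eq:feq,eq:aeq,eq:beq} on their initial conditions (Appendix B). The crucial additional input is estimate \cref{eq:apestimate}, which bounds the derivative of $\frac{a(r)}{r-K}$ by $C/r^3$ with a constant $C$ that is \emph{uniform} in the initial conditions over any compact set avoiding the boundary of the parameter space. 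This uniformity upgrades the pointwise limit to a locally uniform one, and a locally uniform limit of continuous functions is continuous; hence $a'_\infty$ is continuous, and the identical argument using \cref{eq:beq} gives the continuity of $b'_\infty$.

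I expect no serious obstacle here, since the genuinely hard analytic work has already been carried out in Section 6: the uniform quadratic curvature decay of \Cref{lem:Rmbound2}, the uniform estimates of \Cref{lem:fpplusr}, \Cref{lem:coneterms}, and \Cref{lem:abbounds}, and the resulting uniform derivative bound \cref{eq:apestimate}. The only point requiring care is bookkeeping: one must confirm that every constant entering \cref{eq:apestimate} is uniform over compact subsets of the parameter space and that $K$ varies continuously, so that the convergence $\frac{a(r)}{r-K} \to a'_\infty$ is genuinely locally uniform rather than merely pointwise. Once that uniformity is verified — and it was arranged precisely for this purpose in the cited lemmas — the continuity of $F$ is immediate.
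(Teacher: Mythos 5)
Your proposal is correct and matches the paper exactly: the paper's proof of \Cref{thm:Fcont} is simply the observation that the statement is a restatement of the second part of \Cref{lem:limcont}. Your additional recap of the mechanism behind that lemma (uniform decay of the derivative of $a(r)/(r-K)$ upgrading pointwise to locally uniform convergence) is accurate but not required, since that work is already contained in the cited lemma.
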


	\begin{proof}
		This is a restatement of the second statement of \Cref{lem:limcont}.

	\end{proof}
	
	\noindent Now that we know that $F$ is a continuous function, it is natural to attempt to count, with sign, the number of expanding solitons which are asymptotic to a given cone. To do this, first we need to prove that $F$ is a proper map. In Section 8, we will study the behavior of $F$ as the initial conditions approach their extreme values, and use this to prove in Section 9 that $F$ is proper in the case of each topology.

	\section{Asymptotic Behavior Near Extreme Values}
	
	We have established in the previous sections that each of the expanding solitons in the $2$-parameter family over $S^2 \times \mathbb{R}^2$ or $S^1 \times \mathbb{R}^3$ are asymptotic to cones, with the cones defined by their limiting slopes $a'_\infty, b'_\infty$. Additionally, we know that these slopes are continuous functions of the initial conditions. Now, we will investigate what happens to the solitons as well as their asymptotic cones as the initial conditions tend to their extreme values. This will be used in the following section to show that the map $F$ defined in the previous section is proper.
	
	\medskip
	
	\noindent We will begin with the slightly simpler $S^1 \times \mathbb{R}^3$ case first. In this case, the two parameters are $a(0) \in (0, \infty)$ and $f''(0) \in (-\infty,0)$. The next lemma shows that the dependence on $a(0)$ is very simple.
	
	\begin{lem}\label{lem:a0dep}
		Suppose that the functions $a,b$ and $f$ satisfy the soliton equations with parameters $a(0)=a_0$ and $f''(0)=f_0$. Then, for any constant $c>0$ the solution to the soliton equations with parameters $a(0)=ca_0$ and $f''(0)=f_0$, the solution to the soliton equations is given by the triple $(ca,b,f)$.
	\end{lem}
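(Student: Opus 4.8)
The plan is to verify directly that the rescaled triple $(ca,b,f)$ satisfies \cref{eq:feq,eq:aeq,eq:beq} together with the boundary conditions \cref{S1R3a,S1R3b} associated to the parameters $(ca_0,f_0)$, and then to invoke uniqueness. The key structural observation is that the variable $a$ enters the soliton system only through scale-invariant combinations or homogeneously: in \cref{eq:feq} and \cref{eq:beq} it appears solely via the quotients $a''/a$ and $a'/a$, each of which is unchanged when $a$ is replaced by $ca$, while \cref{eq:aeq} is homogeneous of degree one in $a$, with every term carrying exactly one factor of $a$ or one of its derivatives. Since neither $b$ nor $f$ is rescaled, their equations are affected only through these $a$-dependent terms.

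First I would substitute $\tilde a = ca$, $\tilde b = b$, $\tilde f = f$ into each equation. For \cref{eq:feq}, the left side is $\tilde f'' = f''$ and the right side is $\tfrac{ca''}{ca}+2\tfrac{b''}{b}-1 = \tfrac{a''}{a}+2\tfrac{b''}{b}-1$, which agree because $(a,b,f)$ solves \cref{eq:feq}. For \cref{eq:aeq}, both sides acquire exactly one factor of $c$, so the identity follows after dividing by $c$. For \cref{eq:beq}, the only $a$-dependent term is $a'b'/a$, which is scale invariant, so the equation holds unchanged.

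Next I would check the boundary data. The scaling gives $\tilde a(0)=ca(0)=ca_0$ and $\tilde a'(0)=ca'(0)=0$, which are precisely the conditions \cref{S1R3a} for the parameter $ca_0$, while $\tilde b$ and $\tilde f$ inherit $b(0)=0$, $b'(0)=1$, $f(0)=f'(0)=0$, and in particular $\tilde f''(0)=f''(0)=f_0$ is unchanged. Hence $(ca,b,f)$ is a solution of the system with parameters $(ca_0,f_0)$.

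The only remaining point is to conclude that this is the solution. Since the soliton system admits a unique solution for each admissible choice of boundary data (by the local uniqueness established in Appendix B, extended to $[0,\infty)$ as in \Cref{thm:2pf}), the triple $(ca,b,f)$ must coincide with the solution associated to $(ca_0,f_0)$. I expect no genuine obstacle here beyond correctly isolating the scaling symmetry: the entire content of the lemma is that $a\mapsto ca$, with $b$ and $f$ fixed, is a symmetry of the system acting on the initial data merely by rescaling $a_0$.
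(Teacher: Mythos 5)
Your proposal is correct and is essentially the paper's own argument: the paper's proof is the one-line observation that the transformation $a \mapsto ca$ leaves \cref{eq:feq,eq:aeq,eq:beq} unchanged, which is exactly the scale-invariance you verify term by term. Your additional checks of the boundary data and the appeal to uniqueness from Appendix B are the implicit details behind that observation, so nothing differs in substance.
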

	
	\begin{proof}
		This follows from the simple observation that the transformation $a \to ca$ leaves \cref{eq:feq,eq:aeq,eq:beq} unchanged.
	\end{proof}
	
	\noindent Thus, the dependence of the slopes on the parameter $a_0$ in the $\mathbb{S}^1 \times \mathbb{R}^3$ case is essentially trivial. With $F$ as in \Cref{def:F}, in this case, we see that if $F(a_0, -f_0) = (a'_\infty, b'_\infty)$, then $F(ca_0, -f_0) = (ca'_\infty, b'_\infty)$. In Section 9, we reduce this to one variable function $F_1(-f_0) := F(1,-f_0)$
	
	\medskip

	\noindent Now, we investigate what happens when the parameter $f_0$ tends towards the extreme of $f_0=0$. 
	
	\medskip
	
	\noindent Intuitively, as $f_0$ gets closer and closer to $0$, the soliton potential $f$ grows more slowly near $0$, taking a longer region until it becomes asymptotic to $-\frac{1}{2}r^2$. By \Cref{lem:fzero}, we see that when $f''(0)=0$, the corresponding $(M, g)$ is an Einstein manifold. In the $S^1 \times \mathbb{R}^3$ case, we can describe this Einstein manifold in even more detail.

	\begin{lem}\label{lem:hyperbolic_space}
		Consider the soliton equations \cref{eq:feq,eq:aeq,eq:beq} with $f''(0)=0$ and boundary conditions \cref{S1R3a,S1R3b} so that the manifold is diffeomorphic to $S^1 \times \mathbb{R}^3$. Then, the corresponding Riemannian manifold $(M, dr^2 + a(r)^2g_{S^1}+b(r)^2g_{S^2})$ is a quotient of the hyperbolic model space with constant negative sectional curvature equal to $-1/3$.
	\end{lem}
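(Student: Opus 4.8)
The plan is to reduce the statement to an Einstein equation, exhibit the hyperbolic metric explicitly in the cohomogeneity one coordinates, and then conclude by ODE uniqueness together with the Killing--Hopf theorem. First I would invoke \Cref{lem:fzero}: since $f_0 = 0$, the potential vanishes identically, so $\nabla^2 f = 0$ and the soliton equation collapses to the Einstein equation $\text{Ric}_g = -g$. Recalling that a $4$-dimensional space form of constant sectional curvature $\kappa$ satisfies $\text{Ric} = 3\kappa\, g$, the only curvature compatible with $\text{Ric}_g = -g$ is $\kappa = -1/3$, which already pins down the target value. I would stress at this point that being Einstein does \emph{not} by itself force constant curvature in dimension $4$; the warped-product structure is essential, and this is where the genuine content of the proof lies.

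Next I would exhibit the candidate solution. Using \Cref{lem:einstein} together with the boundary conditions \cref{S1R3a,S1R3b}, one gets $b' = Ca$ with $b'(0) = C a(0) = 1$, hence $C = 1/a_0$; guided by this I would propose
\[
\bar a(r) = a_0 \cosh\!\left(r/\sqrt{3}\right), \qquad \bar b(r) = \sqrt{3}\,\sinh\!\left(r/\sqrt{3}\right).
\]
These satisfy $\bar a(0) = a_0$, $\bar a'(0) = 0$, $\bar b(0) = 0$, and $\bar b'(0) = 1$, i.e. exactly \cref{S1R3a,S1R3b}, and a short computation using $\cosh^2 - \sinh^2 = 1$ verifies that $(\bar a, \bar b, 0)$ solves \cref{eq:feq,eq:aeq,eq:beq}. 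Equivalently, I would check directly that the four sectional curvatures of the associated warped product, namely $-\bar a''/\bar a$, $-\bar b''/\bar b$, $-\bar a'\bar b'/(\bar a \bar b)$, and $(1 - (\bar b')^2)/\bar b^2$, are all equal to $-1/3$: for instance $\bar b'' = \bar b/3$ gives $-\bar b''/\bar b = -1/3$, while $1 - (\bar b')^2 = -\sinh^2(r/\sqrt{3})$ gives $(1-(\bar b')^2)/\bar b^2 = -1/3$, and similarly for the remaining two.

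Finally I would close the argument by uniqueness. By the local existence and uniqueness result of Appendix B together with standard ODE theory (as used for \Cref{thm:2pf}), the solution of \cref{eq:feq,eq:aeq,eq:beq} with $f_0 = 0$ and boundary conditions \cref{S1R3a,S1R3b} is unique, so $(a, b, f) = (\bar a, \bar b, 0)$. Consequently every sectional curvature of $(M, g)$ equals $-1/3$, so $(M, g)$ has constant sectional curvature $-1/3$. Since $\bar a, \bar b$ are defined and smooth on all of $[0, \infty)$ and radial rays have infinite length, $(M, g)$ is complete, so the Killing--Hopf theorem identifies it with a quotient $\mathbb{H}^4(-1/3)/\Gamma$ by a discrete group $\Gamma$ of isometries acting freely and properly discontinuously (here $\Gamma \cong \mathbb{Z}$, generated by the deck transformation of the $S^1$ factor). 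The main obstacle is precisely the explicit integration step: because Einstein $4$-manifolds need not be space forms, the constant-curvature conclusion cannot be reached abstractly and genuinely requires producing and verifying the closed-form warping functions $\bar a$ and $\bar b$.
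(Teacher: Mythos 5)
Your proposal is correct and follows essentially the same route as the paper: invoke \Cref{lem:fzero} to reduce the soliton equation to the Einstein equation $\mathrm{Ric}_g + g = 0$, exhibit the explicit solution $a = a_0\cosh(r/\sqrt{3})$, $b = \sqrt{3}\sinh(r/\sqrt{3})$ satisfying \cref{S1R3a,S1R3b}, and conclude by ODE uniqueness. Your extra steps --- checking that every curvature component equals $-1/3$ and invoking Killing--Hopf to identify $(M,g)$ with $\mathbb{H}^4(-1/3)/\mathbb{Z}$ --- merely make explicit the paper's closing remark that the metric is the hyperbolic model space in cylindrical coordinates, so this is a refinement of the same argument rather than a different one.
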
 
	
	\begin{proof}
		
		From \Cref{lem:fzero}, we know that $f \equiv 0$ on $[0,\infty)$. This implies that $\text{Ric}_g + g=0$, or that $(M,g)$ is an Einstein manifold with negative scalar curvature. Then, the soliton equations reduce to the following:
		
		\begin{equation*}
			\displaystyle  \frac{a''}{a}+2\frac{b''}{b}=1
		\end{equation*}
		
		\begin{equation*}
			\displaystyle a'' = -2\frac{a'b'}{b}+a
		\end{equation*}
		
		\begin{equation*}
			\displaystyle b'' = \frac{1-(b')^2}{b}-\frac{a'b'}{a}+b
		\end{equation*}
		
		\noindent Now, it is easy to verify that the functions $a(r) = a_0 \text{cosh}\left(\dfrac{r}{\sqrt{3}}\right)$ and $b(r) = \sqrt{3}\text{sinh}\left(\dfrac{r}{\sqrt{3}}\right)$ satisfy the given equations as well as boundary conditions \cref{S1R3a,S1R3b}. Thus, they must be the unique solutions to the soliton equation in the case $f''(0)=0$. Up to rescaling, this metric describes the hyperbolic model space $(\mathbb{R}^4,g_H)$ in cylindrical coordinates.\end{proof}

	\noindent Next, we will show that as $f_0 \to 0$, the slopes $a'_\infty$ and $b'_\infty$ tend to $\infty$ in both the $S^1 \times \mathbb{R}^3$ and $S^2 \times \mathbb{R}^2$ cases.

	\medskip
	
	\noindent Recall the quantities $A:=\frac{a'}{a}$ and $B:=\frac{b'}{b}$ from the proof of \Cref{lem:Rmbound2}. Using these quantities, we will rewrite the soliton equations in a slightly more convenient form for this analysis, beginning with the following lemma.
	
	\begin{lem}\label{lem:ABequations}
		Consider a gradient expanding soliton $(M,g)$ with $(a,b,f)$ satisfying equations \cref{eq:feq,eq:aeq,eq:beq} and boundary conditions either \cref{S1R3a,S1R3b} or \cref{S2R2a,S2R2b}.  For $A$ and $B$ as defined above, the following equations hold on $[1,\infty)$:
		
		\begin{equation}\label{eq:Aeq}
			A'=-A^2-2AB+Af'+1
		\end{equation}
		\begin{equation}\label{eq:Beq}
			B'=\frac{1}{b^2}-2B^2-AB+Bf'+1
		\end{equation}
		
	\end{lem}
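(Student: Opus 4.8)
The plan is to obtain both identities by direct computation: I would differentiate the logarithmic-derivative quantities $A=a'/a$ and $B=b'/b$ and then substitute the soliton equations \cref{eq:aeq,eq:beq} to eliminate the second derivatives $a''$ and $b''$. This is precisely the reformulation already carried out at the start of the proof of \Cref{lem:Rmbound2}, so no new input is needed; the argument is purely algebraic bookkeeping, valid on $[1,\infty)$ since $a,b>0$ there by \Cref{lem:abmon}.

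First I would record the elementary quotient-rule identities
\[
A' = \frac{a''}{a} - A^2, \qquad B' = \frac{b''}{b} - B^2 .
\]
Next, for the $A$ equation I would divide \cref{eq:aeq} by $a$ to get $\frac{a''}{a} = -2\frac{a'b'}{ab} + \frac{a'f'}{a} + 1 = -2AB + Af' + 1$, and substitute into the identity for $A'$, which yields $A' = -A^2 - 2AB + Af' + 1$, i.e. \cref{eq:Aeq}.

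For the $B$ equation I would divide \cref{eq:beq} by $b$ to obtain $\frac{b''}{b} = \frac{1-(b')^2}{b^2} - \frac{a'b'}{ab} + \frac{b'f'}{b} + 1 = \frac{1}{b^2} - B^2 - AB + Bf' + 1$; combining this with $B' = \frac{b''}{b} - B^2$ gives $B' = \frac{1}{b^2} - 2B^2 - AB + Bf' + 1$, which is \cref{eq:Beq}.

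There is no genuine obstacle here; the only point demanding a moment's care is the asymmetry between the two equations, namely the coefficient $-2B^2$ in \cref{eq:Beq} against $-A^2$ in \cref{eq:Aeq}. This arises because the term $\frac{1-(b')^2}{b}$ in \cref{eq:beq} contributes an extra $-(b')^2/b^2 = -B^2$ upon dividing by $b$, which adds to the $-B^2$ already produced by differentiating $B$, whereas \cref{eq:aeq} carries no analogous term.
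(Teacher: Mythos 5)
Your proposal is correct and matches the paper's proof, which simply states that the equations follow directly from the definitions of $A$ and $B$ together with \cref{eq:aeq,eq:beq}; your quotient-rule computation and the substitution of the soliton equations are exactly that argument, spelled out. The bookkeeping (including the $-2B^2$ versus $-A^2$ asymmetry coming from the $\frac{1-(b')^2}{b}$ term) checks out.
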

	\begin{proof}
		This follows directly from the definitions of $A$ and $B$ and equations \cref{eq:feq,eq:aeq,eq:beq}.
	\end{proof}
	
	\begin{lem}\label{lem:r0}
		Consider a cohomogeneity one gradient expanding Ricci soliton $(M,g)$ with $(a,b,f)$ satisfying equations \cref{eq:feq,eq:aeq,eq:beq} and boundary conditions either \cref{S1R3a,S1R3b} or \cref{S2R2a,S2R2b}. Then, there exists a unique $r_0 \equiv r_0(a_0,f_0)$ or $r_0(b_0,f_0)$ such that $f'(r_0) = -1$. Suppose that the other initial condition ($a_0$ or $b_0$) lies in a compact interval of the form $[\delta, 1/\delta]$ for some $\delta \in (0,1)$. As $f_0$ approaches $-\infty$, $r_0$ approaches $\infty$.
	\end{lem}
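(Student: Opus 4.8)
\emph{Existence and uniqueness.} First, for this part: the boundary condition gives $f'(0)=0$, and by \Cref{lem:fmon} we have $f''<0$ on $(0,\infty)$, so $f'$ is strictly decreasing there; by \Cref{thm:fpbound}, $f'(r)\leqslant-\epsilon(r-1)\to-\infty$. Hence $f'$ is a continuous strictly decreasing bijection of $[0,\infty)$ onto $(-\infty,0]$, and there is a unique $r_0$ with $f'(r_0)=-1$. Since $f''(r_0)<0$, the implicit function theorem together with the smooth dependence of the solution on the initial data (Appendix B) shows that $r_0$ is continuous in the parameters. In the $S^1\times\mathbb{R}^3$ case, \Cref{lem:a0dep} shows that $a\mapsto ca$ leaves $f$ unchanged, so $r_0=r_0(f_0)$ is in fact independent of $a_0$; this is what makes it harmless to confine the remaining parameter to a compact interval $[\delta,1/\delta]$ in the limiting statement.

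\emph{Rescaling strategy.} I would establish $r_0\to\infty$ by a blow-up argument at the tip. The decisive feature of the regime $f_0\to-\infty$ is that the curvature concentrates there: by \Cref{eq:R0}, $R(0)=-3f_0-4$ (resp.\ $-2f_0-4$)$\to+\infty$. The plan is to rescale each soliton by $Q:=R(0)$, setting $\tilde g=Qg$, so that $\mathrm{Ric}_{\tilde g}+\nabla^2 f+Q^{-1}\tilde g=0$ and the scalar curvature at the tip is normalized to $1$. This normalization is chosen so that, by \Cref{eq:R0}, the rescaled tip Hessian $f''(0)/Q=f_0/Q\to-\tfrac13$ (resp.\ $-\tfrac12$) stays bounded away from both $0$ and $-\infty$, while the soliton constant $Q^{-1}\to0$; thus the rescaled equation degenerates to the steady soliton equation, and the length scale on which $|\nabla f|$ attains a fixed value is dilated by $Q^{1/2}\sim|f_0|^{1/2}\to\infty$. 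It is this dilation of the natural unit scale that should drive $r_0\to\infty$.

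\emph{The limit soliton, conclusion, and main obstacle.} Using the uniform curvature bound of \Cref{lem:Rmbound1}, the volume lower bounds from monotonicity of $a,b$ (with \Cref{lem:bbound} bounding the shrinking orbit from below), Shi's estimates, and the warped-product convergence of Appendix C, I would extract a pointed Cheeger--Gromov limit of $(M,\tilde g_k,\nabla f_k,p_k)$ based at the tip. Under the rescaling the orbit kept in $[\delta,1/\delta]$ opens up (its diameter is multiplied by $Q^{1/2}\to\infty$) --- the $S^1$ unwrapping to a line in the $S^1\times\mathbb{R}^3$ case, the $S^2$ opening to a plane in the $S^2\times\mathbb{R}^2$ case --- so the limit splits off a flat Euclidean factor and reduces to a rotationally symmetric (Bryant-type) steady soliton; a flat or collapsed limit is excluded because the tip curvature was normalized to $1$, and the monotonicity $a',b'\geqslant0$, $f',f''\leqslant0$ passes to the limit so that the non-existence inputs of Section 7 (\Cref{lem:no2ends}) control any boundary degenerations. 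On this limit the steady identity $R+|\nabla f|^2=\mathrm{const}$ fixes the asymptotic gradient, and I would then transfer the level set $\{f'=-1\}$ back through the $Q^{1/2}$ rescaling to conclude $r_0\to\infty$, with continuity in $f_0$ giving the full statement. The main obstacle is exactly this last matching: one must control $f'$ uniformly in $f_0$ (and independently of the second, compactly-constrained parameter) as it crosses the highly curved tip region where $f''\approx f_0$ is large, and locate $\{f'=-1\}$ relative to the dilated unit scale $Q^{1/2}$. Pinning down both the divergence of $r_0$ and its correct rate is the delicate point; the compactness and non-existence machinery of Sections 6--7 are the tools I would use to make the argument rigorous.
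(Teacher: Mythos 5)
Your existence-and-uniqueness paragraph is correct and matches the paper: monotonicity of $f'$ from \Cref{lem:fmon} plus the unboundedness $f'(r)\leqslant-\epsilon(r-1)$ from \Cref{thm:fpbound} yield a unique $r_0$, and your observation via \Cref{lem:a0dep} that $r_0$ is independent of $a_0$ in the $S^1\times\mathbb{R}^3$ case is a genuine (and correct) refinement. The problem is the limiting claim. The phrase ``as $f_0$ approaches $-\infty$'' in the lemma statement is a slip in the paper: the paper's own proof, and every later use of the lemma (the choice of $f^*_0$ before \Cref{lem:collectbounds}, Step 5 of \Cref{thm:ic_approach_inf}), concern the opposite limit $f_0\to 0^-$. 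The intended proof is soft, with no compactness machinery at all: by continuous dependence of solutions of \cref{eq:feq,eq:aeq,eq:beq} on the initial data (Appendix B) together with \Cref{lem:fzero} ($f_0=0$ forces $f\equiv 0$), on any fixed compact interval $[0,R]$ one has $f'\to 0$ uniformly as $f_0\to 0^-$ with the other parameter confined to $[\delta,1/\delta]$; hence eventually $f'>-1$ on all of $[0,R]$, so $r_0>R$, i.e.\ $r_0\to\infty$.

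Taken at face value, the statement you set out to prove is false, and your rescaling heuristic points in the wrong direction. As $f_0\to-\infty$ we have $R(0)=-3f_0-4\to\infty$ by \cref{eq:R0} and $f''(0)=f_0$, so near the tip $f'(r)\approx f_0 r$ and $f'$ crosses $-1$ at $r_0\approx |f_0|^{-1}\to 0$. Your own normalization confirms this rather than contradicting it: with $\tilde g=Qg$, $Q=R(0)$, the rescaled tip Hessian is $f_0/Q\approx-\tfrac13$, so on the steady limit $|\nabla f|_{\tilde g}$ vanishes linearly at the tip with slope of order one; the level $\{f'=-1\}$ (computed in $g$) is exactly $\{|\nabla f|_{\tilde g}=Q^{-1/2}\}$, which sits at rescaled distance $\tilde r_0\sim Q^{-1/2}$, and undoing the $Q^{1/2}$ dilation gives $r_0\sim Q^{-1}\to 0$, not $\infty$. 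The step ``the dilation of the natural unit scale should drive $r_0\to\infty$'' therefore inverts the correspondence, and the obstacle you flag at the end — locating $\{f'=-1\}$ relative to the scale $Q^{1/2}$ — is not merely delicate but resolves against the claim. (Consistently, \Cref{lem:f0inf} shows $b'_{\infty}\to 0$ as $f_0\to-\infty$: the geometry concentrates at the tip rather than spreading out.) To repair your write-up, discard the blow-up argument and prove the statement in the limit $f_0\to0^-$ by the continuity argument above, where the solitons converge on compacta to the $f\equiv0$ Einstein solution (cf.\ \Cref{lem:hyperbolic_space}).
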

	
	\begin{proof}
		We know from \Cref{lem:fmon} and \Cref{thm:fpbound} that $f'$ is monotonically decreasing and unbounded. Thus, for every $f_0 < 0$ and $a_0$ or $b_0$ lying in $[\delta, 1/\delta]$, there exists a unique $r_0 \equiv r_0(a_0,f_0)$ or $r_0(b_0,f_0)$ such that $f'(r_0) = -1$. By the continuity of solutions to \cref{eq:feq,eq:aeq,eq:beq} in the initial conditions, and by the fact (\Cref{lem:fzero}) that $f_0=0$ implies that $f \equiv 0$ on $\mathbb{R}^+$, we know that as $f_0$ approaches $0$ while the other initial condition $(a_0$ or $b_0)$ lies in a compact set, $r_0$ tends to $\infty$. 
	\end{proof} 
	
	\noindent From now on, we will confine the initial condition ($a_0$ or $b_0$) to lie in a compact set, and consider values of $f_0$ sufficiently close to $0$ so that the corresponding $r_0$ as defined above is greater than $1$.
	
	\begin{lem}\label{lem:collectbounds}
		Consider a cohomogeneity one gradient expanding Ricci soliton $(M,g)$ with $(a,b,f)$ satisfying equations \cref{eq:feq,eq:aeq,eq:beq} and boundary conditions either \cref{S1R3a,S1R3b} or \cref{S2R2a,S2R2b}. Suppose the initial condition ($a_0$ or $b_0$) lies in a compact set $[\delta, 1/\delta]$ for some $0 < \delta < 1$. Fix a value $f^*_0 > 0$ so that for $f_0 \in (-f^*_0, 0)$, and for $a_0$ or $b_0$ in $[\delta, 1/\delta]$, the corresponding $r_0$ from \Cref{lem:r0} is greater than $1$. Then, the following inequalities hold: 
		\begin{equation}\label{eq:A1}
			0 < \alpha_1 \leqslant A(1) \leqslant \alpha_2
		\end{equation}
		\begin{equation}\label{eq:B1}
			0 < \beta_1 \leqslant B(1) \leqslant \beta_2
		\end{equation}
		\begin{equation}\label{eq:a1}
			0 < a_1 \leqslant a(1) \leqslant a_2
		\end{equation}
		\begin{equation}\label{eq:b1}
			0 < b_1 \leqslant b(1) \leqslant b_2
		\end{equation}
		\begin{equation*}
			0 \leqslant 3+\frac{2}{b(1)^2} \leqslant C_0
		\end{equation*}
		
		\noindent for positive constants $a_1,b_1, a_2, b_2, \alpha_1, \beta_1, \alpha_2, \beta_2, C_0$ whose values depend only on $f^*_0$ and $\delta$. 
	\end{lem}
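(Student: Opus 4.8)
The plan is to reduce the entire statement to a continuity-and-compactness argument, isolating as the only genuine subtlety the behaviour at the extreme parameter value $f_0 = 0$. First I would invoke the continuous dependence of solutions to \cref{eq:feq,eq:aeq,eq:beq} on the initial data (Appendix B) to conclude that the values $a(1)$, $b(1)$, $a'(1)$, $b'(1)$ are continuous functions of the parameters $(a_0,f_0)$ (respectively $(b_0,f_0)$). The key observation is that this continuous dependence extends all the way to the \emph{closed} parameter box
\[
K := [\delta, 1/\delta] \times [-f^*_0,\, 0],
\]
which is compact. Continuity at $f_0 = 0$ holds because the degeneracy of the system occurs only at $r=0$, whereas we evaluate at the interior point $r=1$; moreover the $f_0=0$ solution is the genuine Einstein solution identified in \Cref{lem:fzero} (and given explicitly in \Cref{lem:hyperbolic_space} in the $S^1\times\mathbb{R}^3$ case), so the flow map from $r=0^+$ to $r=1$ is well defined and continuous in the parameters throughout $K$. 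Since the given open parameter range $(-f^*_0, 0)$ sits inside $[-f^*_0, 0]$, bounds obtained over $K$ apply a fortiori.

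Next I would record strict positivity on $K$. By \Cref{lem:abmon} we have $a'>0$ and $b'>0$ on $(0,\infty)$ for every parameter choice in $K$ (the proof of that lemma does not use strict negativity of $f_0$, and indeed the explicit hyperbolic solution at $f_0=0$ is manifestly increasing), and monotonicity together with the boundary conditions gives $a(1),b(1)>0$. Hence $a(1),b(1),a'(1),b'(1)$ are strictly positive continuous functions on the compact set $K$, and therefore so are the ratios $A(1)=a'(1)/a(1)$ and $B(1)=b'(1)/b(1)$. A continuous strictly positive function on a compact set attains a positive minimum and a finite maximum, which yields \cref{eq:A1,eq:B1,eq:a1,eq:b1} with constants $\alpha_1,\alpha_2,\beta_1,\beta_2,a_1,a_2,b_1,b_2$ depending only on $\delta$ and $f^*_0$ (through the choice of $K$). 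The final inequality is then immediate: $3+2/b(1)^2 \geqslant 3 > 0$ always, while the lower bound $b(1)\geqslant b_1$ gives $3+2/b(1)^2 \leqslant 3 + 2/b_1^2 =: C_0$.

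I expect the main obstacle to be the justification of continuity up to and including $f_0=0$, precisely the degenerate extreme of the parameter range where the qualitative character of the soliton changes ($f$ becomes identically zero and $(M,g)$ becomes Einstein). I would address this carefully by appealing to Appendix B for continuous dependence on $f_0$ as a parameter and by using \Cref{lem:fzero}, \Cref{lem:hyperbolic_space}, and \Cref{lem:r0} to confirm that the limiting $f_0 = 0$ data is attained continuously away from the singular orbit. Once this is settled, the remainder is a routine extreme-value-theorem argument on the compact box $K$.
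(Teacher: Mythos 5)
Your proposal is correct and follows essentially the same route as the paper: continuity of the solution map in the parameters up to the closed box $[\delta,1/\delta]\times[-f_0^*,0]$ (using \Cref{lem:fzero} to identify the Einstein solution at $f_0=0$ and noting that the monotonicity of \Cref{lem:abmon} persists there), then strict positivity plus compactness to extract the uniform constants, with the bound on $C_0$ read off from the lower bound on $b(1)$. The only difference is that you spell out the extreme-value argument and the continuity at $f_0=0$ in more detail than the paper's terse proof.
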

	
	\begin{proof}
		As explained in Section 2, equations \cref{eq:feq,eq:aeq,eq:beq} are continuous in the initial conditions, including when $f_0 = 0$. By \Cref{lem:fzero}, $f_0 = 0$ implies that $f\equiv 0$. In this case, replicating the proof of \Cref{lem:abmon}, we see that the inequalities $a', b' > 0$ on $(0, \infty)$ continue to hold. Thus, we have $A(1), B(1) > 0$ for each $f_0 \in [-f^*_0, 0]$, proving \Cref{eq:A1} and \Cref{eq:B1} by compactness of this interval. Similarly, we have $a(1), b(1) >0$, for each $f_0 \in [-f^*_0, 0]$, which proves \Cref{eq:a1} and \Cref{eq:b1} again by compactness. The last inequality follows from (8.6). \end{proof}

	\medskip

	\noindent Now, we proceed to analyze \Cref{eq:Aeq} and \Cref{eq:Beq}. On the interval $[1,r_0]$, we have the following:
	
	\begin{equation*}
		1-A^2-2AB \geqslant A'\geqslant 1-A^2-2AB-A
	\end{equation*}
	\begin{equation*}
		1+\frac{1}{b(1)^2}-2B^2-AB \geqslant B' \geqslant 1-AB-2B^2-B
	\end{equation*}
	
	\noindent From this, we can see that the following inequalities hold: 
	
	\begin{equation}\label{eq:Aplus2B}
		C_0-(A+2B)^2 \geqslant (A+2B)' \geqslant 3-(A+2B)^2-(A+2B)
	\end{equation}
	
	\medskip
	
	\noindent Then, we can show that $(A+2B)$ is bounded in the following manner:
	
	\begin{claim}\label{claim:Aplus2B}
		Consider the setup of \Cref{lem:collectbounds}. Then, there exist constants $c, C > 0$ (depending only on $f^*_0$ and $\delta$) such that the following bounds hold on $[1,r_0]$:
		
		\begin{equation*}
			c \leqslant (A+2B) \leqslant C
		\end{equation*}
	\end{claim}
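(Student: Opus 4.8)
The plan is to treat $u := A + 2B$ as a single scalar quantity and to read \Cref{eq:Aplus2B} as a pair of Riccati-type differential inequalities,
\[
3 - u^2 - u \;\leqslant\; u' \;\leqslant\; C_0 - u^2,
\]
valid on $[1, r_0]$, and then trap $u$ between the equilibria of the two bounding Riccati equations. Two facts are used throughout: $u > 0$ on $[1,r_0]$ because $A, B > 0$ by \Cref{lem:abmon}; and the initial value $u(1) = A(1) + 2B(1)$ is pinned between the positive constants $\alpha_1 + 2\beta_1$ and $\alpha_2 + 2\beta_2$ by \Cref{eq:A1,eq:B1}, with all of $\alpha_1,\alpha_2,\beta_1,\beta_2,C_0$ depending only on $f_0^*$ and $\delta$ by \Cref{lem:collectbounds}. (It is exactly the condition $f' \geqslant -1$ on $[1,r_0]$, guaranteed by \Cref{lem:r0} and the monotonicity of $f'$, that produced the lower bound in \Cref{eq:Aplus2B}, so the comparison must be confined to this interval.)

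For the upper bound I would use the right inequality $u' \leqslant C_0 - u^2$, whose positive equilibrium is $\sqrt{C_0}$: at any point where $u > \sqrt{C_0}$ one has $u' < 0$ strictly. A standard barrier/continuity argument then shows $u \leqslant \max\{u(1), \sqrt{C_0}\}$ on $[1, r_0]$, since the solution is strictly decreasing on any sub-interval where it exceeds $\sqrt{C_0}$ and therefore cannot sustain an excursion above $\max\{u(1),\sqrt{C_0}\}$. As $C_0$ is a uniform constant by \Cref{lem:collectbounds}, this yields $C := \max\{\alpha_2 + 2\beta_2,\, \sqrt{C_0}\}$, depending only on $f_0^*$ and $\delta$.

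For the lower bound I would symmetrically use the left inequality $u' \geqslant 3 - u^2 - u$. The right-hand side is strictly positive precisely for $u$ below its positive root $u_* := \tfrac{1}{2}(\sqrt{13} - 1)$, so whenever $0 < u(r) < u_*$ we have $u'(r) > 0$; hence $u$ cannot drop below $\min\{u(1), u_*\}$, the same barrier argument (with the inequality reversed) making this rigorous. This gives $c := \min\{\alpha_1 + 2\beta_1,\, u_*\} > 0$. Both $c$ and $C$ are assembled only from $u_*$, $\sqrt{C_0}$, and the endpoint bounds of \Cref{lem:collectbounds}, all independent of $f_0 \in (-f_0^*, 0)$ and of the remaining parameter in $[\delta, 1/\delta]$, which gives the claimed uniformity.

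I do not expect a genuine obstacle: once \Cref{eq:Aplus2B} is in hand the estimate is a routine Riccati-comparison argument. The only points needing care are keeping the comparison strictly on $[1, r_0]$ (where the lower differential inequality holds) and confirming that the threshold constants $u_*$ and $\sqrt{C_0}$ together with the endpoint bounds are genuinely uniform over the admissible parameter range, so that $c$ and $C$ depend on nothing beyond $f_0^*$ and $\delta$.
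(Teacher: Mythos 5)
Your proposal is correct and takes essentially the same approach as the paper: both arguments trap $A+2B$ between the positive equilibria of the two bounding Riccati inequalities in \Cref{eq:Aplus2B}, producing the same constants (your $u_* = \tfrac{1}{2}(\sqrt{13}-1)$ is exactly the paper's $v_\infty$, and $\sqrt{C_0}$ is its $u_\infty$). The only cosmetic difference is that the paper introduces explicit comparison solutions $u$, $v$ of the associated ODEs and invokes their monotone convergence to the equilibria, whereas you run the barrier argument directly on $A+2B$ itself.
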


	\begin{proof}
		\noindent \textbf{Step 1:} By \Cref{eq:A1} and \Cref{eq:B1}, we have lower and upper bounds at $r=1$ of the form 
		
		\begin{center}
			$\alpha_1 + 2\beta_1 \leqslant (A+2B)(1) \leqslant \alpha_2 + 2\beta_2$
		\end{center}
		
		\noindent where the bounds depend only on $f^*_0$ and $\delta$.
		
		\bigskip
		
		\noindent \textbf{Step 2:} Now, consider the equalities corresponding to inequalities \Cref{eq:Aplus2B}:
		
		\begin{equation*}
			u'+u^2 =C_0 \hskip 3cm v'+v^2 + v =3
		\end{equation*}
		\begin{equation*}
			(A+2B)'+(A+2B)^2 \leqslant C_0 \hskip 2cm (A+2B)'+(A+2B)^2+(A+2B) \geqslant 3
		\end{equation*}
		
		\noindent where $u$ and $v$ have initial condition $u(1)=v(1)=(A+2B)(1)$. From these two equations, we see that if $u(s) = (A+2B)(s)$ for any $s \in [1,r_0]$, then $u'(s) \geqslant (A+2B)'(s)$, implying that $u \geqslant (A+2B)$ on $[1,r_0]$. By analyzing the equation for $v$ as well, we have $v \leqslant (A+2B) \leqslant u$ on $[1,r_0]$. 
		
		\medskip
		
		\noindent First, consider the equation for $u$. This IVP can be solved exactly and the solution $u$ is asymptotic to $\sqrt{C_0}$. If $u(1)<\sqrt{C_0}$, then $u$ increases and becomes asymptotic to $\sqrt{C_0}$, and if $u(1)>\sqrt{C_0}$, then $u$ decreases to $\sqrt{C_0}$. The equation for $v$ can be analyzed to show that $v$ exhibits similar behavior (for a different asymptotic constant value $v_\infty$). Both equations have solutions asymptotic to positive constants $u_\infty$ and $v_\infty$ respectively, and the constant $u_\infty$ depends on $C_0$, which itself depends on $f^*_0$ and $\delta$. Thus, we have $v \geqslant \text{min}\{v(1), v_\infty\}$ and $u \leqslant \text{max}\{u(1), u_\infty\}$ on $[1,r_0]$.
		
		\medskip
		
		\noindent \textbf{Conclusion:} On $[1,r_0]$, since we have
		
		\begin{equation*}
			\text{min}\{v(1), v_\infty\} \leqslant v \leqslant A+2B \leqslant u \leqslant \text{max}\{u(1), u_\infty\}
		\end{equation*}
		
		\noindent the bounds from Step 1 show that we can set $c = \text{min}\{\alpha_1 + 2\beta_1, v_\infty\}$ and $C = \text{max}\{\alpha_2 + 2\beta_2, u_\infty\}$, proving the lemma.\end{proof}

	\noindent The following lemma establishes lower bounds on $A$ and $B$ on $[1,r_0]$.
	
	\begin{lem}\label{lem:ABlowerbounds}
		Consider the setup of the \Cref{lem:collectbounds}. Then, there exist constants $\alpha, \beta > 0$ (depending on $f^*_0$ and $\delta$) such that the bounds $A \geqslant \alpha$, $B \geqslant \beta$ hold on $[1,r_0]$.
	\end{lem}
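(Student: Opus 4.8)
The plan is to read off the lower bounds directly from the evolution equations \Cref{eq:Aeq} and \Cref{eq:Beq} by a minimum-principle (barrier) argument on $[1,r_0]$. The two equations rewrite in the common form
\[
A' = 1 - A\,(A + 2B - f'), \qquad B' = 1 + \tfrac{1}{b^2} - B\,(A + 2B - f'),
\]
so that the same drift factor $A + 2B - f'$ appears in both. The essential point is that this factor is bounded above by a constant on $[1,r_0]$: by \Cref{lem:fmon} the potential derivative $f'$ is negative and decreasing, and by \Cref{lem:r0} we have $f'(r_0) = -1$, so $0 < -f' \leqslant 1$ on $[1,r_0]$; combining this with the upper bound $A + 2B \leqslant C$ from \Cref{claim:Aplus2B} yields $A + 2B - f' \leqslant C + 1$ there, with $C$ depending only on $f^*_0$ and $\delta$.

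Dropping the nonnegative term $1/b^2$ in the equation for $B$, both quantities then satisfy the single differential inequality
\[
X'(r) \geqslant 1 - (C+1)\,X(r), \qquad r \in [1, r_0],
\]
for $X \in \{A, B\}$. I would finish with a minimum-principle argument: let $X$ attain its minimum over the compact interval $[1,r_0]$ at $s^*$. If $s^* = 1$, the lower bounds $A(1)\geqslant \alpha_1$ and $B(1)\geqslant \beta_1$ from \Cref{lem:collectbounds} apply directly; if $s^*$ is an interior point, then $X'(s^*)=0$ forces $X(s^*) \geqslant \tfrac{1}{C+1}$ via the inequality above; and if $s^* = r_0$, then $X'(s^*) \leqslant 0$ again forces $X(s^*)\geqslant \tfrac{1}{C+1}$. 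In every case $X \geqslant \min\{X(1), \tfrac{1}{C+1}\}$, so setting $\alpha := \min\{\alpha_1, \tfrac{1}{C+1}\}$ and $\beta := \min\{\beta_1, \tfrac{1}{C+1}\}$ gives $A \geqslant \alpha$ and $B \geqslant \beta$ on $[1,r_0]$. Since $C, \alpha_1, \beta_1$ depend only on $f^*_0$ and $\delta$, so do $\alpha$ and $\beta$, as required.

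The only genuinely delicate step is obtaining the uniform upper bound on the drift factor $A + 2B - f'$ valid on the entire interval $[1,r_0]$; this is precisely where \Cref{claim:Aplus2B} and the normalization $f'(r_0) = -1$ are used, and it is the reason for working on $[1,r_0]$ rather than on all of $[1,\infty)$, where $-f'$ grows linearly and would destroy the constant bound. Once that estimate is in hand, the observation that both $A$ and $B$ obey the same comparison inequality $X' \geqslant 1 - (C+1)X$ makes the remainder routine, and handles the two profile functions simultaneously.
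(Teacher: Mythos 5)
Your proof is correct and follows essentially the same route as the paper: both exploit that $-f' \leqslant 1$ on $[1,r_0]$ (since $f'$ is decreasing with $f'(r_0)=-1$) together with the upper bound $A+2B \leqslant C$ from \Cref{claim:Aplus2B} to obtain the single differential inequality $X' \geqslant 1-(C+1)X$ for $X \in \{A,B\}$, and both conclude $X \geqslant \min\{X(1), \tfrac{1}{C+1}\}$ with the constants traced back to \Cref{lem:collectbounds}. The only cosmetic difference is the endgame: the paper compares against the explicit solution of $w'=1-(C+1)w$, $w(1)=X(1)$, whereas you evaluate the inequality at the minimum point, which is an equivalent and equally valid barrier argument.
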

	
	\begin{proof}
		
		\noindent Consider the following inequalities (derived from \Cref{eq:Aeq} and \Cref{eq:Beq}) on $[1,r_0]$: 
		
		\begin{equation*}
			A' \geqslant 1-A(A+2B+1)
		\end{equation*}
		\begin{equation*}
			B' \geqslant 1-B(A+2B+1)
		\end{equation*}
		
		\noindent Using the upper bound on $(A+2B)$ from \Cref{claim:Aplus2B}, we have the inequality on $[1,r_0]$:
		
		\begin{equation*}
			A' \geqslant 1 - (C+1)A
		\end{equation*}
		
		\noindent The method of analysis is similar to that of \Cref{claim:Aplus2B}. Consider the associated ODE
		
		\begin{center}$w'(r) = 1 - (C+1)w(r)$\end{center}
		
		\noindent with $w(1)=A(1)$. We see that $A \geqslant w$ on $[1,r_0]$.
		
		\medskip
		
		\noindent Solving the ODE, we see that
		
		\begin{center}
			$\displaystyle w(r) = \frac{(A(1)(C+1)-1)e^{(C+1)(1-r)}+1}{C+1}$ 
		\end{center}
		
		\noindent so $w$ is asymptotic to a constant $w_\infty=\frac{1}{C+1}$. Thus, we see that $A \geqslant \text{min}\{A(1), w_\infty\}$. Similarly, we see that $B \geqslant \text{min}\{B(1), w_\infty\}$. Note that $w_\infty$ depends only on $C$ from \Cref{claim:Aplus2B} (which depends only on $f^*_0$ and $\delta$), while the bounds $A(1) \geqslant \alpha_1$ and $B(1) \geqslant \beta_1$ from \Cref{eq:A1} and \Cref{eq:B1}, respectively, are also dependent only on $f^*_0$ and $\delta$. Thus, we can set $\alpha = \text{min}\{\alpha_1, w_\infty\}$ and $\beta = \text{min}\{\beta_1, w_\infty\}$, making the lemma true.
	\end{proof}

	\noindent Using \Cref{lem:ABlowerbounds}, for any value of $f_0$, we can show that the slopes $a'$ and $b'$ grow exponentially on the corresponding interval $[1,r_0]$.
	
	\begin{lem}\label{lem:finalbounds}
		Suppose we are in the setup of \Cref{lem:collectbounds}. Then, on the interval $[1,r_0]$, we have the following bounds:
		
		\begin{center}
			$a(r) \geqslant a_1 e^{\alpha (r-1)} \hskip 2cm a'(r) \geqslant a_1 \alpha e^{\alpha (r-1)}$
		\end{center}
		\begin{center}
			$b(r) \geqslant b_1 e^{\beta (r-1)} \hskip 2cm b'(r) \geqslant b_1 \beta e^{\beta (r-1)}$
		\end{center}
		
		\noindent where $\alpha$ and $\beta$ are as in \Cref{claim:Aplus2B} and $a_1,a_2,b_1,b_2$ are as in \Cref{eq:a1} and \Cref{eq:b1}.

	\end{lem}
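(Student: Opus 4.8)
The plan is to integrate the logarithmic-derivative bounds supplied by \Cref{lem:ABlowerbounds}. The key observation is that $A = a'/a$ and $B = b'/b$ are exactly the derivatives of $\ln a$ and $\ln b$, so the pointwise lower bounds $A \geqslant \alpha$ and $B \geqslant \beta$ on $[1, r_0]$ translate directly into growth estimates for $a$ and $b$ after integration. Since every constant appearing in \Cref{lem:ABlowerbounds} and in \cref{eq:a1,eq:b1} depends only on $f^*_0$ and $\delta$, the resulting bounds will be uniform in the same sense, as required for the later properness argument.

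Concretely, I would argue as follows for $a$. From \Cref{lem:ABlowerbounds} we have $\frac{d}{dr}\ln a(r) = A(r) \geqslant \alpha$ on $[1, r_0]$. Integrating this differential inequality from $1$ to any $r \in [1, r_0]$ gives $\ln a(r) - \ln a(1) \geqslant \alpha(r-1)$, hence
\begin{equation*}
	a(r) \geqslant a(1)\, e^{\alpha(r-1)} \geqslant a_1\, e^{\alpha(r-1)},
\end{equation*}
where the second inequality uses the uniform lower bound $a(1) \geqslant a_1$ from \Cref{eq:a1}. The derivative bound then follows immediately without further integration: since $a'(r) = A(r)\,a(r)$ and $A(r) \geqslant \alpha > 0$, positivity of $a$ gives $a'(r) \geqslant \alpha\, a(r) \geqslant a_1 \alpha\, e^{\alpha(r-1)}$ on the same interval.

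The argument for $b$ is identical after replacing $A$, $\alpha$, $a(1)$, and $a_1$ by $B$, $\beta$, $b(1)$, and $b_1$, using the lower bound $B \geqslant \beta$ from \Cref{lem:ABlowerbounds} and $b(1) \geqslant b_1$ from \Cref{eq:b1}. This yields $b(r) \geqslant b_1 e^{\beta(r-1)}$ and, via $b'(r) = B(r)\,b(r) \geqslant \beta\, b(r)$, the estimate $b'(r) \geqslant b_1 \beta\, e^{\beta(r-1)}$. I do not anticipate a genuine obstacle here: the lemma is a routine consequence of exponentiating the constant lower bounds on the logarithmic derivatives, and the only point worth stating carefully is that all constants are inherited uniformly from \Cref{lem:collectbounds,lem:ABlowerbounds}, so the exponential lower bounds hold with constants depending only on $f^*_0$ and $\delta$.
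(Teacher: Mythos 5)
Your proof is correct and follows essentially the same route as the paper: integrate the logarithmic-derivative bounds $A \geqslant \alpha$, $B \geqslant \beta$ from \Cref{lem:ABlowerbounds} to get $a(r) \geqslant a(1)e^{\alpha(r-1)} \geqslant a_1 e^{\alpha(r-1)}$ (and likewise for $b$), then obtain the derivative bounds from $a' = Aa \geqslant \alpha a$. No gaps; this matches the paper's argument.
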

	\begin{proof}
		Consider the inequality $A > \alpha$ on $[1,r_0]$. This is equivalent to the inequality
		
		\begin{center}
			$\dfrac{a'(r)}{a(r)}-\alpha > 0$
		\end{center}
		
		\noindent Integrating this inequality gives 
		
		\begin{center}
			$a(r) \geqslant a(1)e^{\alpha (r-1)}$
		\end{center}
		
		\noindent Now, since $a(1) \geqslant a_1$ from \Cref{eq:a1}, by applying the inequality $a'(r) \geqslant \alpha a(r)$, we get the result for $a$. The same procedure for $b$ gives the corresponding result.
	\end{proof}
	
	\noindent The importance of the previous lemma is in the fact that $a$ and $b$ tend to grow exponentially at least until the point $r_0$, where $f'=-1$. This shows that the soliton mimics the behavior of an Einstein manifold (corresponding to $f''(0)=0$) in a region close to the tip. As $f_0$ gets closer to $0$, $r_0$ tends to $\infty$, suggesting that the slopes $a'$ and $b'$ grow exponentially on a larger and larger interval. 
	
	\medskip
	
	\noindent From this, we will show that $a'_\infty$ and $b'_\infty$ approach $\infty$ as $f_0$ approaches $0$ (with the other initial condition, $a_0$ or $b_0$, lying in $[\delta, 1/\delta]$, as specified earlier in this section).
	
	\begin{thm}\label{thm:ic_approach_inf} Fix $\delta \in (0,1)$ and choose $f^*_0>0$ so that the corresponding $r_0$ from Lemma 8.4 is greater than $1$ for all $f_0 \in (-f^*_0, 0)$.
		\begin{enumerate}
			\item Consider cohomogeneity one gradient expanding solitons with topology $S^1 \times \mathbb{R}^3$. Suppose $a^i_0 \in [\delta, 1/\delta]$ and $f^i_0 \in (-f^*_0,0)$ such that $f^i_0$ converges to $0$. Let $F(a^i_0,f^i_0) = (a'_{\infty,i}, b'_{\infty, i})$. Then, we have
			\begin{equation*}
				a'_{\infty,i} \text{ and } b'_{\infty,i} \text{ are unbounded from below } i \to \infty.
			\end{equation*}
			
			\item Consider cohomogeneity one gradient expanding solitons with topology $S^2 \times \mathbb{R}^2$. Suppose $b^i_0 \in [\delta, 1/\delta]$ and $f^i_0 \in (-f^*_0,0)$ such that $f^i_0$ converges to $0$. Let $F(a^i_0,f^i_0) = (a'_{\infty,i}, b'_{\infty, i})$. Then, we have
			\begin{equation*}
				a'_{\infty,i} \text{ and } b'_{\infty,i} \text{ are unbounded from below } i \to \infty.
			\end{equation*}
		\end{enumerate}
		
	\end{thm}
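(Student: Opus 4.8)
The plan is to show that, as $f^i_0 \to 0$ with the remaining initial condition confined to $[\delta,1/\delta]$, both asymptotic slopes grow without bound. Since $a'_{\infty,i}$ and $b'_{\infty,i}$ are limits of the strictly positive quantities $a_i'$ and $b_i'$ (\Cref{lem:abmon}), they are automatically bounded below by $0$; the substance of the statement is therefore that they fail to be bounded \emph{above}, i.e.\ $a'_{\infty,i},b'_{\infty,i}\to+\infty$, which is what the plan establishes and what the surrounding discussion (and the properness application in the next section) requires. I would argue directly rather than by contradiction, exploiting the exponential growth of $a_i,b_i$ on the lengthening interval $[1,r_0^i]$ to force the asymptotic slope itself to be large.

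First I would invoke \Cref{lem:r0}: because $f^i_0\to 0$ while $a^i_0$ (resp.\ $b^i_0$) stays in $[\delta,1/\delta]$, the points $r_0^i$ at which $f_i'=-1$ satisfy $r_0^i\to\infty$. Next, \Cref{lem:finalbounds} supplies the exponential lower bounds $a_i(r_0^i)\geqslant a_1 e^{\alpha(r_0^i-1)}$ and $b_i(r_0^i)\geqslant b_1 e^{\beta(r_0^i-1)}$, with $\alpha,\beta,a_1,b_1$ uniform over the allowed parameter range, so $a_i(r_0^i),b_i(r_0^i)\to\infty$; by the monotonicity of $a_i,b_i$ these lower bounds persist for every $r\geqslant r_0^i$. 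The last preparatory ingredient is the location of the constant $K_i$ from \Cref{lem:fpplusr} relative to $r_0^i$: evaluating that lemma at $r_0^i$ gives $\lvert K_i-(f_i'(r_0^i)+r_0^i)\rvert\leqslant C/r_0^i$, and since $f_i'(r_0^i)+r_0^i=r_0^i-1$ this forces $r_0^i-K_i\to 1$; in particular $K_i\to\infty$ and $r_0^i+1>\max(0,K_i)+1$ for all large $i$.

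With this in hand I would convert growth of $a_i$ into a lower bound on the slope. By \Cref{lem:limcont}, inequality \Cref{eq:apestimate} controls the derivative of $r\mapsto a_i(r)/(r-K_i)$ with a constant $C$ that is uniform in the parameters; recalling that $a'_{\infty,i}=\lim_{r\to\infty}a_i(r)/(r-K_i)$ and integrating from $r=r_0^i+1$ to $\infty$, using $r-K_i\geqslant (r_0^i+1)-K_i\to 2$, yields
\[
\Bigl\lvert a'_{\infty,i}-\tfrac{a_i(r_0^i+1)}{(r_0^i+1)-K_i}\Bigr\rvert\leqslant \frac{C}{r_0^i}\xrightarrow[i\to\infty]{}0 .
\]
Since $(r_0^i+1)-K_i\to 2$ stays bounded while $a_i(r_0^i+1)\geqslant a_i(r_0^i)\to\infty$, the quotient blows up, so $a'_{\infty,i}\to\infty$; the identical argument applied to $b_i$, using \Cref{eq:apestimate} in its $b$-form, gives $b'_{\infty,i}\to\infty$. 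The $S^2\times\mathbb{R}^2$ case of part (2) is handled verbatim, with the roles of the $a$- and $b$-estimates interchanged and the constant $3f_0$ replaced by $2f_0$ wherever it occurs.

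The main obstacle will be the interplay of the two divergences $K_i\to\infty$ and $r_0^i\to\infty$: the estimate \Cref{eq:apestimate} is valid only for $r>\max(0,K_i)+1$ and its weight $1/(r-K_i)$ degenerates as $r\downarrow K_i$, so I cannot evaluate it at $r_0^i$ itself, where $r_0^i-K_i\to 1$ sits exactly at the edge of validity. The remedy is the shift to $r=r_0^i+1$, for which $r-K_i\to 2$ is bounded away from $0$ while the exponential lower bound on $a_i,b_i$ survives by monotonicity; one must additionally check that the constant $C$ in \Cref{eq:apestimate} is genuinely uniform over $f^i_0\in(-f^*_0,0)$ and the compact parameter interval $[\delta,1/\delta]$, which is precisely the uniformity recorded in part (2) of \Cref{lem:limcont}.
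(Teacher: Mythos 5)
Your overall architecture (combine \Cref{lem:r0} and \Cref{lem:finalbounds} to get $a_i(r_0^i), b_i(r_0^i) \geqslant c\,e^{\alpha(r_0^i-1)}$ with $r_0^i \to \infty$, then convert this into divergence of the slopes) matches the first half of the paper's proof, but the conversion step has a genuine gap. You convert growth into a slope bound via \Cref{eq:apestimate} and \Cref{lem:fpplusr}, and your argument stands or falls on the constant $C$ there being uniform over $f_0^i \in (-f_0^*, 0)$ — as you yourself flag in the last paragraph. That uniformity is not available, and is in fact false. The phrase ``uniform in the initial conditions'' in \Cref{lem:fpplusr}, \Cref{lem:abbounds} and \Cref{lem:limcont} means uniform over compact subsets of the \emph{open} parameter space $\mathbb{R}^+ \times \mathbb{R}^-$ (see the explicit hypothesis in \Cref{lem:ablinbelow}: the compact set ``avoids the boundary''), whereas your sequence $f_0^i \to 0$ leaves every such compact set. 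The constants in question cannot remain bounded as $f_0 \to 0$: they trace back to the constant $\epsilon$ in \Cref{thm:fpbound} (which must tend to $0$, since $f \equiv 0$ when $f_0 = 0$ by \Cref{lem:fzero}) and to the curvature-decay constant of \Cref{lem:Rmbound2}, and the $f_0 = 0$ limit is the hyperbolic Einstein metric of \Cref{lem:hyperbolic_space}, which has curvature $\equiv -1/3$ and exponentially growing $a, b$ — not asymptotically conical at all. Concretely, near the hyperbolic limit the linear upper bound of \Cref{lem:abbounds} forces $\alpha_2 \gtrsim e^{\alpha r_0^i}/r_0^i$, so the constant entering \Cref{eq:apestimate} blows up with $i$, and your key display $\bigl| a'_{\infty,i} - a_i(r_0^i+1)/((r_0^i+1)-K_i) \bigr| \leqslant C/r_0^i \to 0$ does not follow. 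The same non-uniformity undermines the intermediate claim $r_0^i - K_i \to 1$, which also integrates $|f''+1| \leqslant C/r^2$ with the same constant.

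The paper avoids this degeneration entirely: it normalizes $\bar a = a/a(r_0)$, $\bar b = b/b(r_0)$ and derives on $[r_0, \infty)$ the differential inequality $\bar a'' \geqslant -(r + \bar C)\bar a' + \bar a$, where $\bar C = C_1 + 2C'$ is \emph{re-proved} to be uniform over $f_0 \in (-f_0^*, 0)$ in Step 2 of its proof ($B \leqslant C'$ via the argument of \Cref{lem:absimplebound} run from $r_0$, and $C_1 = \sqrt{-3f_0}$ from \Cref{lem:fpbound}, both independent of how close $f_0$ is to $0$). Part 2 of the elementary comparison \Cref{lem:ODE} then gives $\bar a' \geqslant C_1(r_0)$ on $[r_0,\infty)$ with $C_1(r_0)$ decaying only like $1/r_0$, so $a'_{\infty,i} \gtrsim a_1 e^{\alpha(r_0^i - 1)}/(1 + r_0^i) \to \infty$. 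To repair your proof you would need to replace the appeal to the Section~6 asymptotic machinery by some such direct, normalized ODE comparison valid up to the boundary $f_0 = 0$ — at which point you have essentially reconstructed the paper's Steps 1--4.
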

	\begin{proof}
		The proof is essentially identical in both cases. In either case, fix a value $f_0 \in (-f^*_0, 0)$ and, depending on the topology, pick $a_0$ or $b_0$ lying in $[\delta, 1/\delta]$. Suppose $(a,b,f)$ is the solution to \cref{eq:feq,eq:aeq,eq:beq} with these initial conditions (as well as the initial conditions required to ensure the correct topology)
		
		\medskip
		
		\noindent \textbf{Step 1: }Consider the quantities $\overline{a}=\frac{a}{a(r_0)}$ and $\overline{b}=\frac{b}{b(r_0)}$, where $r_0$ is the unique point where $f'(r_0)=-1$ as defined earlier in this section. Then, the equations for these quantities become
		
		\begin{equation}\label{eq:abar}
			\overline{a}'' = -2\frac{\overline{a}'\overline{b}'}{\overline{b}}+\overline{a}'f'+\overline{a}
		\end{equation}
		
		\begin{equation}\label{eq:bbar}
			\overline{b}'' = \frac{\frac{1}{b(r_0)^2}-(\overline{b}')^2}{\overline{b}}-\frac{\overline{a}'\overline{b}'}{\overline{a}}+\overline{b}'f'+\overline{b}
		\end{equation}
		
		\noindent with initial conditions $\overline{a}(r_0)=\overline{b}(r_0)=1$ and $\overline{a}'(r_0)$ and $\overline{b}'(r_0)$ which are bounded from below by \Cref{lem:ABlowerbounds}, since 
		
		\begin{center}
			$\overline{a}'(r_0) = \frac{a'(r_0)}{a(r_0)}=A(r_0) > \alpha$ \hskip 1cm (and similarly for $\overline{b}$)
		\end{center}
		
		\noindent and from above by Claim 8.6, since 
		
		\begin{center}
			$\overline{a}'(r_0) =A(r_0) < (A+2B)(r_0) < C$ \hskip 1cm (and similarly for $\overline{b}$)
		\end{center}
		
		\noindent where all constants involved depend only on $f^*_0$ and $\delta$.
		
		\medskip
		
		\noindent \textbf{Step 2: } By \Cref{claim:Aplus2B}, we know that $0 \leqslant B \leqslant C$ on $[1,r_0]$, where $C$ only depends on $f^*_0$ and $\delta$. By an argument almost identical to that in the proof of \Cref{lem:absimplebound}, we can show that $B \leqslant C'$ on $[r_0, \infty)$ for some constant $C'$ which also only depends on $f^*_0$ and $\delta$. Additionally, by Lemma 6.3, we see that the constant $C_1$ in the  bound $f'(r) \geqslant -(r+C_1)$ can also be chosen uniformly in $f_0 \in (-f^*_0, 0)$, since $C_1 = \sqrt{-3f_0}$

		\medskip
		
		\noindent \textbf{Step 3: }Applying these strengthened inequalities to equation \Cref{eq:abar}, we see that

		\begin{equation*}
			\overline{a}'' = \overline{a}'\left(f' -2\frac{\overline{b}'}{\overline{b}} \right)+\overline{a} \geqslant -\overline{a}' \left(r+C_1+2C' \right) + \overline{a}
		\end{equation*}
		
		\noindent where the constants $C_1$ and $C'$ depend only on $f^*_0$ and $\delta$. The initial conditions are $\overline{a}(r_0)$ and $\overline{a}'(r_0)$ which are bounded from above and below by Step 1.
		
		\medskip
		
		\noindent \textbf{Step 4: } Thus, setting $\bar{C} = C_1+2C'$, we have the following inequality on $\bar{a}$ on $[r_0, \infty)$:
		
		\begin{equation*}
			\bar{a}''(r) \geqslant -(r+\bar{C})\bar{a}'(r) + \bar{a}(r)
		\end{equation*}
		
		\noindent with initial conditions $\bar{a}(r_0) = 1$ and $\bar{a}'(r_0) = A(r_0) > \alpha$. Now, by Part 2 of \Cref{lem:ODE}, we see that $\bar{a}'(r) \geqslant C_1$ on $[r_0, \infty)$ for some constant $C_1$ depending uniformly on the initial conditions of $\bar{a}(r_0)$ and $\bar{C}$. In fact, by the proof of this Lemma, we may choose 
		
		\begin{equation*}
			C_1 = \frac{\min\left\{ \dfrac{\bar{C}}{1 + r_0}, \, \alpha \right\}}{2}
		\end{equation*}
		
		\noindent Thus, we see that $\overline{a}'(r) \geqslant C_1 \equiv C_1(r_0)$ on $[r_0, \infty)$, which implies that $a'(r) \geqslant a(r_0)C_1$ on $[r_0, \infty)$, which implies that $a'_\infty \geqslant a(r_0)C_1 \geqslant  a_1 e^{\alpha (r_0-1)}C_1$ by \Cref{lem:finalbounds}. Thus, we have a lower bound on $a'_\infty$ (depending only on $f^*_0$ and $\delta$) over all values of $f_0 \in (-f^*_0,0)$ and over all values of the other initial condition in $[\delta, 1/\delta]$. 
		
		\medskip
		
		\noindent A similar argument, analyzing equation 8.9, provides the analogous lower bound on $b'_\infty$. Note that the only difference is from the term $\displaystyle \frac{1}{{b}(r_0)^2\bar{b}}$; however, this term is bounded from above by a constant $C_b$. We can apply the same procedure as for $a$ to the quantity $b+C_b$, as we did in the proof of \Cref{lem:limcont}.
		
		\bigskip

		\noindent \textbf{Step 5: } Now, we prove the final step of the theorem in the $S^1 \times \mathbb{R}^3$ case; the other case is nearly identical. Suppose $(a^i_0, f^i_0)$ is a sequence of initial conditions satisfying the hypotheses. Then, by the results of Step 4, we know that $a'_{\infty,i} \geqslant  a_1 e^{\alpha (r_{0,i}-1)}C_1(r_{0,i})$, where $r_{0,i}$ is the unique value satisfying $f'(r_{0,i})=-1$. Using Lemma 8.4, we know that $r_{0,i}$ approaches $\infty$. Thus, for sufficiently large $i$, $C_1(r_{0,i}) = \frac{\bar{C}}{1+r_{0,i}}$. Thus, we have
		
		\begin{center}
			$\displaystyle a'_{\infty,i} \geqslant \frac{a_1 \bar{C} e^{\alpha (r_{0,i}-1)}}{1+r_{0,i}}$
		\end{center}
		
		\noindent Thus, we know that $r_{0,i}$ approaches $\infty$, $a'_{\infty,i}$ becomes unbounded as well. A similar argument works for $b'_{\infty,i}$, which concludes the proof of the theorem.
	\end{proof}

	\section{Calculation of Expander Degree}
	
	\noindent In this section, we will study the relation between the initial conditions $(a_0, f_0)$ or $(b_0, f_0)$ and the slopes of the cone $(a'_\infty, b'_\infty)$. Recall from \Cref{def:F} that $F: \mathbb{R}^+ \times \mathbb{R}^+ \to \mathbb{R}^+ \times \mathbb{R}^+$ was defined as 
	
	\begin{center}
		$F(a_0, -f_0) = (a'_\infty, b'_\infty)$ \hskip 2cm (in the $S^1 \times \mathbb{R}^3$ case)

		$F(b_0, -f_0) = (a'_\infty, b'_\infty)$ \hskip 2cm (in the $S^2 \times \mathbb{R}^2$ case)
	\end{center}
	
	\noindent where $(a'_\infty, b'_\infty) = \text{lim}_{r \to \infty} (a'(r), b'(r))$, for the warping functions $a$ and $b$ in the soliton metric $g = dr^2 + a(r)^2g_{S^1}+b(r)^2g_{S^2}$ and the soliton potential $f$ satisfying \cref{eq:feq,eq:aeq,eq:beq}, with initial conditions \cref{S1R3a,S1R3b} for $S^1 \times \mathbb{R}^3$ and \cref{S2R2a,S2R2b} for $S^2 \times \mathbb{R}^2$. We will show that $F$ is proper in the case of each topology.

	\begin{thm}\label{thm:Fproper}
		\begin{enumerate}
			\item The map $F$ is proper in the case of $S^1 \times \mathbb{R}^3$ topology.
			\item The map $F$ is proper in the case of $S^2 \times \mathbb{R}^2$ topology.
		\end{enumerate}
	\end{thm}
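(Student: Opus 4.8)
The plan is to argue by contradiction in each case, using the standard fact that a continuous map between subsets of Euclidean space is proper precisely when every sequence in the domain that leaves all compact sets is carried to a sequence leaving all compact sets in the target. So suppose $F$ is not proper; then there is a sequence of initial conditions leaving every compact subset of $(0,\infty)^2$ whose images $(a'_{\infty,i}, b'_{\infty,i})$ stay in a fixed compact set, i.e. $c_1 \leqslant \min\{a'_{\infty,i}, b'_{\infty,i}\}$ and $b'_{\infty,i} \leqslant c_2$ for fixed $0 < c_1 \leqslant c_2$. These are exactly the hypotheses of \Cref{lem:uniformly_epsilon_conical}, so along the whole sequence we obtain a uniform scalar curvature bound $R_{g_i} \leqslant C(c_1)$ and, for any fixed small $\epsilon$, uniform $\epsilon$-conicality at a bounded distance $r_i \leqslant r_0$ from the tip. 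After passing to a subsequence, the escape of the initial conditions must occur through one coordinate tending to $0$ or to $\infty$; I will rule out each possibility.

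Two of the degenerations are common to both topologies and are immediate. First, evaluating the scalar curvature bound at the tip and using $R(0) = -3f_0-4$ (resp. $-2f_0-4$) from \Cref{eq:R0} gives $-f_0 \leqslant (C(c_1)+4)/3$ (resp. $(C(c_1)+4)/2$), so $-f_0^i$ is bounded above and the degeneration $-f_0 \to \infty$ cannot occur. Second, the $\epsilon$-conicality at bounded distance together with \Cref{eq:a_estimate} and \Cref{eq:b_estimate} gives $b_i(r_i) \leqslant b'_{\infty,i}(1+\epsilon)^{1/2}S_0 \leqslant c_2(1+\epsilon)^{1/2}S_0$; since $b$ is increasing by \Cref{lem:abmon}, we have $b_0^i = b_i(0) \leqslant b_i(r_i)$ (resp. $a_0^i = a_i(0) \leqslant a_i(r_i)$ in the $S^1$ case), so the size of the singular orbit is bounded above.

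For the $S^1 \times \mathbb{R}^3$ case the remaining degenerations are handled cleanly using the scaling symmetry of \Cref{lem:a0dep}: writing $F(a_0, -f_0) = (a_0\,\alpha(-f_0), \beta(-f_0))$, the $b$-slope $\beta$ depends only on $-f_0$. If $-f_0^i \to 0$, then \Cref{thm:ic_approach_inf}(1), applied with the normalized value $a_0 = 1$, forces $\beta(-f_0^i) = b'_{\infty,i} \to \infty$, contradicting $b'_{\infty,i} \leqslant c_2$; hence $-f_0^i$ is confined to a compact subinterval of $(0,\infty)$. On that subinterval $\alpha$ is continuous and positive by \Cref{thm:Fcont}, so $\alpha(-f_0^i) \in [p,P]$ with $p>0$, and then $a'_{\infty,i} = a_0^i\,\alpha(-f_0^i) \in [c_1,c_2]$ pins $a_0^i$ to a compact subset of $(0,\infty)$. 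Thus the sequence of initial conditions stays in a compact set, the desired contradiction, proving Part 1.

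For the $S^2 \times \mathbb{R}^2$ case the same scalar-curvature and $\epsilon$-conicality arguments bound $-f_0^i$ and $b_0^i$ above, but there is no scaling symmetry, so the two remaining degenerations need more care. Once $b_0^i$ is known to be bounded below away from $0$, the case $-f_0^i \to 0$ is excluded exactly as above: with $b_0^i$ confined to a compact $[\delta, 1/\delta]$, \Cref{thm:ic_approach_inf}(2) forces both slopes to $\infty$, contradicting $b'_{\infty,i} \leqslant c_2$. The main obstacle is therefore ruling out $b_0^i \to 0$. Here I would use that the $S^2$-sectional curvature at the tip equals $1/(b_0^i)^2 \to \infty$ (via the $\mathrm{Rm}_{3443} = (1-(b')^2)/b^2$ computation underlying \Cref{lem:bbound}), so curvature concentrates near the tip. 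Applying the point-picking \Cref{lem:pp} there and rescaling by the blowing-up curvature scale $Q_i$, the expanding constant $Q_i^{-1} \to 0$, and I extract a Cheeger--Gromov limit which, by Appendix C, is a warped-product \emph{steady} soliton carrying a round $S^2$ factor and inheriting the monotonicity $a', b' \geqslant 0$, $f', f'' \leqslant 0$. The crux is to show this limit cannot exist: tracking the warped-product steady equations, the monotonicity should force $f' \equiv 0$ (so the limit is Ricci-flat), while in the degenerate sub-cases where the $S^1$ or $S^2$ orbits blow up, the limit splits off a line with compact Ricci-flat, hence flat, cross-section by \cite{MW11}; either outcome contradicts $|\mathrm{Rm}|(q_\infty) \neq 0$, in the spirit of \Cref{lem:no2ends} and \Cref{lem:noeinstein}. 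This blow-up analysis at the tip is the step I expect to be the most delicate, since, unlike the limits taken in Section 7, the relevant model is one-ended and one must identify precisely which forbidden soliton it is.
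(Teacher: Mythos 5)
Your Part~1 is essentially correct and follows the paper's route: the scalar curvature bound at the tip (Part~1 of \Cref{lem:uniformly_epsilon_conical} together with \cref{eq:R0}) bounds $-f_0^i$ above, uniform $\epsilon$-conicality at bounded distance bounds the singular orbit size, and \Cref{thm:ic_approach_inf} keeps $-f_0^i$ away from $0$. Your use of the scaling symmetry of \Cref{lem:a0dep} to write $F(a_0,-f_0) = (a_0\,\alpha(-f_0), \beta(-f_0))$ and then pin down $a_0^i$ from $a'_{\infty,i} = a_0^i\,\alpha(-f_0^i)$ is a clean variant of the paper's argument, which instead obtains the lower bound on $a_0^i$ from $\epsilon$-conicality combined with the exponential growth bound $a_i(r) \leqslant a_0^i e^{r}$ coming from \cref{eq:apgrowth}; both are fine.

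Part~2, however, has a genuine gap at exactly the step you flagged as delicate: ruling out $b_0^i \to 0$. Your plan is to blow up at the tip at the curvature scale $Q_i \sim (b_0^i)^{-2}$ and derive a contradiction from the \emph{non-existence} of the limit --- arguing that the limit must be Ricci-flat and then, via splitting and \cite{MW11}, flat, contradicting $|\mathrm{Rm}|(q_\infty) \neq 0$. This cannot work, because the blow-up limit does exist: rescaling by $\lambda_i = b_0^i$ (which is the same length scale), the paper shows the limit is the Ricci-flat doubly warped product on $\mathbb{R}^2 \times S^2$ with $a(0)=0$, $a'(0)=1$, $b(0)=1$, $b'(0)=0$, i.e.\ the Euclidean Schwarzschild metric. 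This manifold is one-ended, Ricci-flat, and \emph{not} flat. The splitting result of \cite{MW11} invoked in \Cref{lem:no2ends} and Section~7 requires two ends, which is precisely what fails here, and ``Ricci-flat implies flat'' is false in dimension four. So no contradiction can be extracted from the mere existence or curvature of the limit. The paper's proof therefore continues past the identification of the limit: since in Euclidean Schwarzschild $a(r)$ tends to a constant while $b(r) \sim r$, for any $L$ there is a point where the rescaled ratio $b/a \geqslant L$ with positive derivative; pulling this back to the original solitons, \Cref{lem:Pgrowth} shows $P_i = b_i/a_i \geqslant L-1$ persists for all larger $r$, so $b'_{\infty,i}/a'_{\infty,i} = \lim_{r \to \infty} P_i(r)$ becomes arbitrarily large, contradicting the assumed convergence of $(a'_{\infty,i}, b'_{\infty,i})$. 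In short, the forbidden feature of the blow-up model is not its existence but its asymptotic geometry, and you need the $P$-monotonicity (or some substitute) to transfer that feature back to the sequence; without this your argument for the lower bound on $b_0^i$ does not close.
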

	
	\begin{proof}
		1. In the $S^1 \times \mathbb{R}^3$ case, consider a sequence $(a'_{\infty,i}, b'_{\infty,i})$ in the range of $F$ that converges to the pair of positive real numbers $(a'_\infty, b'_\infty)$. Suppose that $(a'_{\infty, i}, b'_{\infty, i})=F(a^i_0, -f^i_0)$ for initial conditions $a^i_0$ and $f^i_0$. Suppose that the corresponding expanding solitons $(M, g_i = dr^2 + a_i(r)^2g_{S^1}+b_i(r)^2g_{S^2}, \nabla f_i)$ are asymptotic to the cones $\gamma_i$. Recall from \Cref{def:cone} and \Cref{thm:GHC} that this implies that for a point $p$ with $r(p)=0$ and any sequence $\lambda_i \to 0$, the sequence of pointed manifolds $(M, \lambda_i^2 g_i, p)$ converges in the Gromov-Hausdorff sense to a cone over the link $(S^2 \times S^1, h)$, where $h$ admits an isometric action of $\text{SO}(3) \times \text{SO}(2)$. As a consequence, the functions $a_i$ and $b_i$ are asymptotically linear with $(a'_{\infty, i}, b'_{\infty, i}) = \text{lim}_{r \to \infty} (a_i'(r), b_i'(r))$.

		\medskip

		\noindent Fix a small $\epsilon >0$. As $(a'_{\infty, i}, b'_{\infty, i})$ converges, we have uniform lower bounds $a'_\infty, b'_\infty \geqslant c$ for some $c>0$, and $b'_{\infty, i} \leqslant C$ for some $C>0$. Thus, by \Cref{lem:symcone}, we know that there is a uniform $S_0 > 0$ so that each $(M, g_i, \nabla f_i)$ is $\epsilon$-conical at distance $d_i(S_0)$ from the tip, providing the following inequality from \Cref{eq:a_estimate}:
		
		\begin{center}
			$a'_{\infty,i}S_0 (1-\epsilon)^{1/2} \leqslant a_i(d_i(S_0)) \leqslant a'_{\infty,i}S_0 (1+\epsilon)^{1/2}$

		\end{center}
		
		\noindent By Part 2 of \Cref{lem:uniformly_epsilon_conical}, we know that the sequence $d_i(S_0)$ is bounded from above by an $r_0 >0$. Additionally, by the monotonicity of $a$ from \Cref{lem:abmon}, we know that $a_i(r_0) \geqslant a_i(d_i(S_0)) \geqslant a_i(0)$.

		\medskip
		
		\noindent From equation \Cref{eq:apgrowth}, we know that $a_i' \leqslant a_i$ on $\mathbb{R}^+$. Integrating this inequality, we see that $a_i(r) \leqslant a^i_0e^r$, which gives us the inequality $a_i(r_0) \leqslant a^i_0e^{r_0}$. Combining this with the previous inequalities, we get the following for all $i$:
		
		\begin{center}
			$c S_0 (1-\epsilon)^{1/2} \leqslant a'_{\infty,i} S_0 (1-\epsilon)^{1/2} \leqslant a_i(d_i(S_0)) \leqslant a_i(r_0) \leqslant a^i_0e^{r_0}$
		\end{center}
		
		\noindent which shows that $a^i_0 \geqslant c S_0 (1-\epsilon)^{1/2}e^{-r_0}$, providing a lower bound on $a^i_0$.
		
		\medskip 
		
		\noindent For the upper bound on $a^i_0$, we know that for all $i$, we have
		
		\begin{center}
			$a^i_0 \leqslant a_i(d_i(S_0)) \leqslant a'_{\infty,i}S_0 (1+\epsilon)^{1/2} \leqslant C S_0 (1+\epsilon)^{1/2}$
		\end{center}
		
		\noindent where the last inequality follows as convergent sequences are bounded. Thus, $a^i_0$ is bounded both from above and below, so it lies in an interval of the form $[\delta, 1/\delta]$ for $\delta <1$.
		
		\medskip
		
		\noindent By Part 1 of \Cref{lem:uniformly_epsilon_conical}, we have a uniform scalar curvature upper bound on $(M,g_i, \nabla f_i)$, and by equation \cref{eq:R0}, we know that $R_i(0) = -4-3f^i_0$, so we must have that $f^i_0$ is bounded from below. By the continuity of $F$ (\Cref{thm:Fcont}) and \Cref{thm:ic_approach_inf}, and the fact that $a_0$ lies in a compact set by the previous paragraph, we know that since $a'_{\infty,i}$ and $b'_{\infty,i}$ are bounded from above, that $f^i_0$ must be bounded from above by a constant $C<0$. Thus, in this case, we know that the initial conditions $(a^i_0, f^i_0)$ lie in a compact set, and thus a subsequence of the initial conditions converges. This shows that $F$ is proper in this case.
		
		\bigskip

		\noindent 2. In the $S^2 \times \mathbb{R}^2$ case, suppose that $(a'_{\infty, i}, b'_{\infty, i})=F(b^i_0, -f^i_0)$ for initial conditions $b^i_0$ and $f^i_0$, with $(a'_{\infty, i}, b'_{\infty, i})$ converging to $(a'_{\infty}, b'_{\infty}) \in \mathbb{R}^+ \times \mathbb{R}^+$, and denote the corresponding expanding solitons by $(M, g_i = dr^2 + a_i(r)^2g_{S^1}+b_i(r)^2g_{S^2}, \nabla f_i)$. First, the proof of the lower bound on $f^i_0$ from the $S^1 \times \mathbb{R}^3$ case carries over (with the only change being that $R_i(0) =  -4 - 2f^i_0$ in the $S^2 \times \mathbb{R}^2$ case). Additionally, as in the proof of the $S^1 \times \mathbb{R}^3$ case, we can prove the upper bound on $b^i_0$ using uniform $\epsilon$-conicality. 
		
		\medskip
		
		\noindent Now, we claim that $b^i_0$ is bounded from below. Assume that this is false, and up to a subsequence, $b^i_0 \equiv \lambda_i \to 0$. Then, consider the new quantities
		
		\begin{center}
			$\displaystyle \tilde{a}_i(r) = \frac{1}{\lambda_i}a_i(\lambda_ir) \hskip 1cm \tilde{b}_i(r) = \frac{1}{\lambda_i}b_i(\lambda_ir) \hskip 1cm \tilde{f}_i(r) = f_i(\lambda_ir)$
		\end{center}
		
		\noindent which satisfy the following equations on $[0, \infty)$
		
		\begin{equation*}
			\displaystyle \tilde{f_i}'' = \frac{\tilde{a}_i''}{\tilde{a}_i}+2\frac{\tilde{b}_i''}{\tilde{b}_i}-\lambda^2_i
		\end{equation*}
		
		\begin{equation*}
			\displaystyle \tilde{a}_i'' = -2\frac{\tilde{a}_i'\tilde{b}_i'}{\tilde{b}_i}+\tilde{a}_i'\tilde{f}_i'+\lambda^2_i\tilde{a}_i
		\end{equation*}
		
		\begin{equation*}
			\displaystyle \tilde{b}_i'' = \frac{1-(\tilde{b}_i')^2}{\tilde{b}_i}-\frac{\tilde{a}_i'\tilde{b}_i'}{\tilde{a}_i}+\tilde{b}_i'\tilde{f}_i'+\lambda^2_i\tilde{b}_i
		\end{equation*}
		
		\noindent with the initial conditions
		
		\begin{align*}
			\tilde{a}_i(0)&=0 \hskip 1cm  \tilde{a}'_i(0)=1\\
			\tilde{b}_i(0)&=1 \hskip 1cm  \tilde{b}'_i(0)=0\\
			\tilde{f}_i(0)&=0 \hskip 1cm  \tilde{f}'_i(0)=0 \hskip 1cm  \tilde{f}''_i(0)=(\lambda_i)^2f^i_0\\
		\end{align*}
		
		\noindent Then, in the limit as $i \to \infty$, we have the equations
		
		\begin{equation*}
			\displaystyle f'' = \frac{a''}{a}+2\frac{b''}{b}
		\end{equation*}
		
		\begin{equation*}
			\displaystyle a'' = -2\frac{a'b'}{b}+a'f'
		\end{equation*}
		
		\begin{equation*}
			\displaystyle b'' = \frac{1-({b}')^2}{b}-\frac{{a}'{b}'}{{a}}+{b}'{f}'
		\end{equation*}
		
		\noindent with initial conditions 
		
		\begin{align*}
			{a}(0)&=0 \hskip 1cm  {a}'(0)=1\\
			{b}(0)&=1 \hskip 1cm  {b}'(0)=0\\
			f(0)&=0 \hskip 1cm    {f}'(0)=0 \hskip 1cm  f''(0)=0\\
		\end{align*}
		
		\noindent since $f^i_0$ is negative and bounded from below. By an argument nearly identical to Part 2 of Lemma 4.2 of \cite{A17}, we see that this implies that $f \equiv 0$ on $\mathbb{R}^+$, thus implying that $g = dr^2 + a(r)^2g_{S^1}+b(r)^2g_{S^2}$ is a Ricci-flat metric on $\mathbb{R}^2 \times S^2$. From Chapter 2 of \cite{Pet}, we know that this metric is asymptotic to the flat metric on $S^1 \times \mathbb{R}^3$ and that $a(r) \approx C$ and $b(r) \approx r$ for large $r$, where $C>0$ is a constant.
		
		\medskip
		
		\noindent Then, for any large $L > 0$, we can choose $r_0 > 0$ so that the following hold:
		
		\begin{center}
			$\displaystyle \frac{b(r_0)}{a(r_0)} \geqslant L \hskip 1cm \frac{d}{dr}\left(\frac{b(r)}{a(r)}\right)(r_0) > 0$
		\end{center}
		
		\noindent Then, for sufficiently large $i$ (depending on $\delta$), we have
		
		\begin{center}
			$\displaystyle \frac{b_i(r_0)}{a_i(r_0)} \geqslant L-1 \hskip 1cm \frac{d}{dr}\left(\frac{b_i(r)}{a_i(r)}\right)(r_0) > 0$
		\end{center}
		
		\noindent This implies that for $P_i=\frac{b_i}{a_i}$ as defined in Section 3, we have that $P_i(\lambda_i r_0) \geqslant L-1$ and $P'_i(\lambda_i r_0) \geqslant 0$. By \Cref{lem:Pgrowth}, we know that $P_i(r) \geqslant L-1$ for all $r \geqslant \lambda_i r_0$.

		\medskip
		
		\noindent We also know that $\lim_{r \to \infty} P_i(r) = b'_{\infty,i}/a'_{\infty,i} < C$ for some constant $C>0$ independent of $i$, by the convergence of $(a'_{\infty,i}, b'_{\infty,i})$ by hypothesis. However, this contradicts the conclusion of the previous paragraph by choosing $L$ to be arbitrarily large. Thus, our assumption was false, and it must be the case that $b^i_0$ is bounded from below.
		
		\medskip
		
		\noindent Then, by the continuity of $F$ and the fact that $b_0$ lies in a compact set and \Cref{thm:ic_approach_inf}, we get that $f^i_0$ must be bounded from above as well, just as in the $S^1 \times \mathbb{R}^3$ case.

		\medskip
		
		\noindent Thus, we have shown that the initial conditions lie within a compact set in this case as well, so a subsequence of the initial conditions must converge, implying that the map $F$ is proper in the $S^2 \times \mathbb{R}^2$ case.
	\end{proof}

	\noindent As a consequence of the previous theorem, we have the following:
	
	\begin{cor}\label{cor:degree}
		The degrees of the maps $F$ in the $S^1 \times \mathbb{R}^3$ case and in the $S^2 \times \mathbb{R}^2$ case are well defined.
	\end{cor}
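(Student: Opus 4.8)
The plan is to deduce the well-definedness of the degree directly from the two structural properties of $F$ that have already been established: continuity (\Cref{thm:Fcont}) and properness (\Cref{thm:Fproper}). These are precisely the hypotheses under which the classical Brouwer--Kronecker topological degree of a map is defined, so the corollary should follow by citing the standard construction rather than by any further geometric analysis.

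First I would record that in both topological cases the source and target of $F$ are the open quadrant $(0,\infty)\times(0,\infty)$, which is a connected, oriented, smooth $2$-manifold without boundary; indeed it is diffeomorphic to $\mathbb{R}^2$ via the coordinatewise logarithm. I fix on it the orientation induced from the standard orientation of $\mathbb{R}^2$. This places us exactly in the setting where a degree is available: a proper continuous map between connected oriented manifolds of the same dimension.

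The essential step is that properness allows passage to the one-point compactification: since $F^{-1}(K)$ is compact for every compact $K$, the map $F$ extends to a continuous self-map $\widehat{F}\colon S^2\to S^2$ of the one-point compactifications by setting $\infty\mapsto\infty$. The topological degree $\deg F := \deg\widehat{F}\in\mathbb{Z}$ is then well-defined by the standard degree theory for self-maps of $S^2$. Equivalently, and without compactifying, one may define $\deg F$ as the integer by which the induced homomorphism $F_{*}$ acts on the top compactly supported cohomology $H^2_c\big((0,\infty)\times(0,\infty)\big)\cong\mathbb{R}$, or by smoothing $F$ within its proper homotopy class and counting, with signs, the preimages of a regular value; all three constructions agree and are independent of the auxiliary choices by the proper-homotopy invariance of the degree.

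The only nontrivial ingredient is the properness supplied by \Cref{thm:Fproper}, which was the main obstacle and is already in hand; granting it, the existence and well-definedness of $\deg F$ in each case is immediate, and I would simply cite a reference for the construction of the degree of a proper map. No additional estimates or limiting arguments are required at this point, so the corollary follows at once in both the $S^1\times\mathbb{R}^3$ and $S^2\times\mathbb{R}^2$ cases.
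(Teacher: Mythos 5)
Your proposal is correct and follows essentially the same route as the paper: the paper states the corollary as an immediate consequence of the properness result (\Cref{thm:Fproper}), together with continuity (\Cref{thm:Fcont}), invoking standard degree theory for proper continuous maps between oriented manifolds of the same dimension. Your write-up simply makes explicit the standard construction (one-point compactification to a self-map of $S^2$, or equivalently action on $H^2_c$, or signed preimage counting of a regular value) that the paper leaves implicit.
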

	
	\noindent The importance of the properness of $F$ is in concluding the corollary above; in \cite{BC23}, the expander degree of an orbifold was defined on the space of gradient expanding solitons on the interior of the orbifold with positive scalar curvature, albeit in a more general and non-symmetric setting. In our cohomogeneity one setting, we do not need this assumption. 
	
	\medskip
	
	\noindent Analogously, we can define a cohomogeneity one version of this quantity.
	
	\begin{defn}\label{def:degexp}
		The \textbf{cohomogeneity one expander degree}, denoted $\textnormal{deg}^{\textnormal{sym}}_\textnormal{exp}$, of the orbifolds $S^1 \times \mathbb{D}^3$ and $S^2 \times \mathbb{D}^2$ are defined as the topological degree of the corresponding maps $F$ in the cases of the topologies $S^1 \times \mathbb{R}^3$ and $S^2 \times \mathbb{R}^2$, respectively.
	\end{defn}
	
	\noindent Now, we can calculate the cohomogeneity one expander degree in the case of each topology.
	
	\medskip
	
	\noindent First, we consider the $S^1 \times \mathbb{R}^3$ case. We will calculate the limit of $b'_\infty$ as $f_0 \to -\infty$.

	\begin{lem}\label{lem:f0inf}
		In the case of $S^1 \times \mathbb{R}^3$ topology, consider a sequence of initial conditions $(a^i_0, f^i_0)$. Let $(M, g_i, \nabla f_i)$ be the corresponding cohomogeneity one gradient expanding solitons asymptotic to cones $\gamma_i = dr^2 + r^2h_i$ over the link $(S^2 \times S^1, h_i = (a'_{\infty,i})^2 g_{S^1}+(b'_{\infty,i})^2 g_{S^2})$. Suppose $f^i_0 \to -\infty$, and set
		
		\begin{center}
			$(a'_{\infty, i}, b'_{\infty, i}) = F(a^i_0, f^i_0)$
		\end{center}
		\noindent Then, $b'_{\infty, i}$ converges to $0$.
	\end{lem}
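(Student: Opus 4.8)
The plan is to argue by contradiction, exploiting the fact that the scalar curvature at the tip, $R(0)=-4-3f_0$, blows up as $f_0\to-\infty$, whereas \Cref{lem:uniformly_epsilon_conical} forbids precisely such blow-up once the asymptotic slopes are bounded below. The key preliminary reduction is to remove the dependence on $a_0$. By \Cref{lem:a0dep}, replacing $a_0$ by $ca_0$ sends the solution $(a,b,f)$ to $(ca,b,f)$, so $b'_\infty$ is independent of $a_0$ and depends only on $f_0$, while $a'_\infty$ scales by $c$ and therefore ranges over all of $(0,\infty)$ as $a_0$ does. In particular, for each $i$ I may adjust the circle parameter freely without altering $b'_{\infty,i}$ or $f_0^i$, and hence without altering $R_{g_i}(0)$, which by \cref{eq:R0} depends only on $f_0^i$.

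Suppose, for contradiction, that $b'_{\infty,i}\not\to 0$. Passing to a subsequence, there is an $\epsilon_0>0$ with $b'_{\infty,i}\geq\epsilon_0$ for all $i$. Using \Cref{lem:a0dep}, I rescale each $a_0^i$ so that the resulting soliton has $a'_{\infty,i}=\epsilon_0$; this leaves both $b'_{\infty,i}$ and $f_0^i$ unchanged. Then $\min\{a'_{\infty,i},b'_{\infty,i}\}\geq\epsilon_0$ for every $i$, so the hypothesis of part (1) of \Cref{lem:uniformly_epsilon_conical} is satisfied with $c_1=\epsilon_0$, yielding a uniform bound $R_{g_i}\leq C(\epsilon_0)$ on all of $M$. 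But \cref{eq:R0} gives $R_{g_i}(0)=-4-3f_0^i\to+\infty$ since $f_0^i\to-\infty$, contradicting the uniform upper bound. Hence $b'_{\infty,i}\to 0$.

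The step I would treat most carefully—and which I regard as the crux of the argument—is the observation that part (1) of \Cref{lem:uniformly_epsilon_conical} requires only the lower slope bound $c_1$, and not the companion upper bound $b'_{\infty,i}\leq c_2$ that also appears in its statement. The curvature and injectivity-radius control of the links $(S^2\times S^1,h_i)$ invoked in its proof, namely $|\mathrm{Rm}|_{h_i}\leq Q(c_1)$ and $\mathrm{inj}(h_i)\geq\alpha(c_1)$, follow from $\min\{a'_{\infty,i},b'_{\infty,i}\}\geq c_1$ alone: the only nonzero link curvatures are controlled by $1/(b'_{\infty,i})^2\leq 1/c_1^2$, and the injectivity radius is bounded below by $\pi\min\{a'_{\infty,i},b'_{\infty,i}\}$. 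The upper bound $c_2$ enters only in the two-sided curvature estimate needed for part (2). This is exactly what allows me to avoid assuming any a priori upper bound on $b'_{\infty,i}$.

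It is worth recording why I prefer this route over the more geometric alternative of rescaling by $\lambda_i=(-f_0^i)^{-1/2}$ (normalizing the tip curvature) and extracting a Cheeger–Gromov limit that is a complete steady soliton on $S^1\times\mathbb{R}^3$ of Bryant type, whose sphere warping function grows sublinearly so that its analogue of $b'$ tends to $0$. That approach runs into the genuine difficulty of interchanging $\lim_{i\to\infty}$ with $\lim_{r\to\infty}$: local convergence of the rescaled profiles controls $b'_i$ only on compact sets, which in the original coordinates corresponds to the near-tip region rather than the asymptotic cone, so it does not by itself pin down $\lim_{r\to\infty}b'_i=b'_{\infty,i}$. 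The scalar-curvature contradiction sidesteps this interchange entirely, since it compares a single pointwise value $R_{g_i}(0)$ against a bound that holds uniformly over the whole manifold.
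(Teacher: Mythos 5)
Your proposal is correct and is essentially the paper's own argument: both proceed by contradiction, use \Cref{lem:a0dep} to rescale $a^i_0$ so that $\min\{a'_{\infty,i},b'_{\infty,i}\}$ is uniformly bounded below (the paper normalizes by the link injectivity radius $\alpha_i$, you normalize $a'_{\infty,i}$ to $\epsilon_0$, which is the same move), and then apply Part 1 of \Cref{lem:uniformly_epsilon_conical} to obtain a uniform scalar curvature bound contradicting $R_{g_i}(0)=-3f^i_0-4\to+\infty$ from \cref{eq:R0}. Your explicit remark that Part 1 of \Cref{lem:uniformly_epsilon_conical} needs only the lower slope bound $c_1$ (and not the upper bound $b'_{\infty,i}\leqslant c_2$ appearing in its hypotheses) is apt, since the paper applies that lemma in exactly the same situation without any upper bound on $b'_{\infty,i}$, so this observation is in fact needed to make the paper's own proof literally correct.
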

	
	\begin{proof}
		Suppose the conclusion of the lemma is not true. Then, there would exist a sequence of expanding solitons $(M, g_i, \nabla f_i)$ with initial conditions $(a^i_0, f^i_0)$ and asymptotic cone metrics $\gamma_i$ such that $f^i_0 \to -\infty$ but $b'_{\infty, i} \geqslant C$ for some $C>0$. 
		
		\medskip
		
		\noindent Denote $\text{inj}_{h_i}=\alpha_i$. Note that since $b'_{\infty, i}$ is bounded below, the sequence $\{\alpha_i\}$ is bounded from below iff $a'_{\infty, i}$ is bounded below. Suppose that $\alpha_i \to 0$. Then, we can consider the new sequence of expanding solitons $(M, \tilde{g}_i, \nabla f_i)$ with initial conditions $(a^i_0/\alpha_i, f^i_0)$. Using \Cref{lem:a0dep}, we see that $(\tilde{a}_{\infty,i}, \tilde{b}_{\infty,i}) := F(a^i_0/\alpha_i, f^i_0) = (a'_{\infty, i}/\alpha_i, b'_{\infty, i})$. These solitons are respectively asymptotic to the cone metrics $\tilde{\gamma}_i = dr^2 + r^2\tilde{h}_i$, in which the injectivity radii of $\tilde{h}_i$ are uniformly bounded from below. Thus, we have $\text{min}\{\tilde{a}_{\infty,i}, \tilde{b}_{\infty,i}\} \geqslant c$, for some constant $c>0$.

		\medskip 
		
		\noindent Then, by Part 1 of \Cref{lem:uniformly_epsilon_conical}, the sequence $(M, \tilde{g}_i, \nabla f_i)$ would have uniformly bounded scalar curvature. However, we calculated in equation \cref{eq:R0} that $\tilde{R}_i(0) = -3f^i_0 -4$, which is clearly unbounded as $f^i_0 \to -\infty$, which is a contradiction. Thus, we must have that $b'_{\infty,i} \to 0$ as $f^i_0 \to 0$.
	\end{proof}
	
	\begin{thm}\label{thm:S1R3_degexp}
		$\textnormal{deg}^{\textnormal{sym}}_\textnormal{exp}(S^1 \times \mathbb{D}^3) = 1$ (up to sign)
	\end{thm}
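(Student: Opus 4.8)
The plan is to exploit the scaling symmetry recorded in \Cref{lem:a0dep} to reduce the two–dimensional degree computation to a one–dimensional signed count, and then to read off that count from the boundary behavior of $b'_\infty$ established in \Cref{thm:ic_approach_inf} and \Cref{lem:f0inf}. First I would use \Cref{lem:a0dep} to write $F$ in the separated form
\begin{equation*}
F(a_0, -f_0) = \bigl(a_0\, A(-f_0),\; B(-f_0)\bigr),
\end{equation*}
where $A(-f_0)$ and $B(-f_0)$ denote the two components of $F(1,-f_0) = (a'_\infty, b'_\infty)$; the identity $F(ca_0,-f_0) = (c\,a'_\infty, b'_\infty)$ from \Cref{lem:a0dep} is precisely the statement that the first slope is linear in $a_0$ while the second is independent of it. Both $A$ and $B$ are positive (they are asymptotic cone slopes), continuous by \Cref{lem:limcont}, and differentiable by smooth dependence of ODE solutions on parameters.

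Next, since $F$ is proper by \Cref{thm:Fproper}, its topological degree is well defined by \Cref{cor:degree}, and I would compute it by counting preimages of a regular value with signs. Writing $x = a_0$ and $y = -f_0$, the Jacobian of $F$ is upper triangular,
\begin{equation*}
DF_{(x,y)} = \begin{pmatrix} A(y) & x\,A'(y) \\ 0 & B'(y) \end{pmatrix},
\qquad \det DF_{(x,y)} = A(y)\,B'(y).
\end{equation*}
Because $A(y) > 0$ everywhere, $\operatorname{sign}\det DF = \operatorname{sign} B'(y)$, so $(u,v)$ is a regular value of $F$ exactly when $v$ is a regular value of $B$; such $v$ exist by Sard's theorem. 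For such a $v$ and any $u > 0$, the fiber $F^{-1}(u,v)$ consists of the points $\bigl(u/A(y),\, y\bigr)$ with $B(y) = v$, and it is finite by properness. Hence
\begin{equation*}
\deg F = \sum_{y\,:\,B(y)=v} \operatorname{sign} B'(y),
\end{equation*}
which is exactly the one–dimensional Brouwer degree of $B \colon (0,\infty) \to (0,\infty)$.

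Finally I would evaluate this signed count using the asymptotics of $B$. By \Cref{thm:ic_approach_inf}, $B(y) = b'_\infty \to \infty$ as $y = -f_0 \to 0^+$, while by \Cref{lem:f0inf}, $B(y) \to 0$ as $y = -f_0 \to \infty$. Thus $B$ exceeds $v$ near $y = 0$ and lies below $v$ near $y = \infty$, so along the ordered preimages of $v$ the downward crossings (where $B' < 0$) outnumber the upward crossings (where $B' > 0$) by exactly one, and the signed count equals $-1$. Consequently $\deg F = -1$, and in particular $\lvert \deg F \rvert = 1$, which is $\textnormal{deg}^{\textnormal{sym}}_\textnormal{exp}(S^1 \times \mathbb{D}^3)$ up to sign. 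The main obstacle is not any single step but the bookkeeping: confirming finiteness of the fiber (properness), that a regular value of $B$ yields a regular value of $F$ (which follows from $A > 0$), and that the two-sided boundary behavior is strong enough to force net crossing number one. All of the genuinely hard analytic input — the divergence of the slopes as $f_0 \to 0$ and the decay of $b'_\infty$ as $f_0 \to -\infty$ — is already supplied by \Cref{thm:ic_approach_inf} and \Cref{lem:f0inf}, so the remaining work is exactly the degree-theoretic reduction enabled by the scaling symmetry.
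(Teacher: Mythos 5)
Your overall strategy matches the paper's: use \Cref{lem:a0dep} to split off the trivial $a_0$-scaling, reduce the computation to the one-variable map $-f_0 \mapsto b'_\infty$, and read the answer off from the opposite boundary behaviors supplied by \Cref{thm:ic_approach_inf} (slopes blow up as $-f_0 \to 0$) and \Cref{lem:f0inf} ($b'_\infty \to 0$ as $-f_0 \to \infty$), arriving at degree $-1$. However, your implementation has a genuine gap: you assert that $A$ and $B$ are \emph{differentiable} ``by smooth dependence of ODE solutions on parameters,'' and your entire computation (upper-triangular Jacobian, Sard's theorem, signed counting of regular preimages) rests on this. Smooth dependence on parameters gives smoothness of $b'(r;f_0)$ in $f_0$ for each \emph{fixed} $r$, but $B(-f_0) = \lim_{r\to\infty} b'(r;f_0)$ is a limit at infinity, and differentiating a pointwise limit requires uniform control of $\partial_{f_0} b'(r;f_0)$ as $r \to \infty$ — an estimate nowhere established. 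Indeed, the paper invests substantial effort (the uniform estimates of Sections 6--7, culminating in \Cref{lem:limcont} and \Cref{thm:Fcont}) just to prove \emph{continuity} of the slopes in the initial conditions; differentiability is strictly stronger and is not known. Moreover, the degree in \Cref{cor:degree} is the topological degree of a continuous proper map, so even granting smoothness you would want to note that the smooth and topological degrees agree.

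The gap is fixable without new analysis by replacing the Sard/Jacobian machinery with homotopy invariance of the topological degree for proper continuous maps, which is exactly the paper's route: writing $F(a_0,-f_0) = (a_0 F_0(-f_0), F_1(-f_0))$, the paper first shows $F_1$ is proper (using the same two boundary-behavior inputs you cite), then exhibits the proper homotopy $H(t,x,y) = ((1-t)x + t\,x F_0(y),\, F_1(y))$ from $F$ to the product map $(x,y)\mapsto (x,F_1(y))$, so $\deg F = \deg F_1$ (\Cref{lem:deg_lemma}); finally the proper homotopy $(1-s)F_1(x) + s/x$ identifies $\deg F_1$ with $\deg(x \mapsto 1/x) = -1$. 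Your ``signed crossing count'' is morally the same end-behavior argument, but phrased in a framework (regular values of a $C^1$ map) whose hypotheses you have not verified, whereas the homotopy formulation needs only the continuity and properness that the paper actually proves.
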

	
	\begin{proof}
		First, by \Cref{lem:a0dep}, we see that changing $a_0$ simply scales $a'_\infty$ and leaves $b'_\infty$ invariant. Thus, we can consider the maps
		
		\begin{equation*}
			F_1 : \mathbb{R}^+ \to \mathbb{R}^+
		\end{equation*}
		
		\noindent defined in the following way: suppose $F(1, -f_0) = (a'_\infty, b'_\infty)$. Then, set $F_1(-f_0) := p_2(F(1,-f_0)) = b'_\infty$, where $p_2$ is the projection onto the second component. We have the following lemma:

		\begin{lem}\label{lem:deg_lemma}
			The degree of $F_1$ coincides with the degree of $F$.
		\end{lem}
		
		\begin{proof}[Proof of \Cref{lem:deg_lemma}]
			Since $a_0$ merely scales $a'_\infty$ and does not affect $b'_\infty$, we can write $F(a_0, -f_0) \equiv (a_0 F_0(-f_0), F_1(-f_0))$ where $F_1$ is as above and $F_0 : \mathbb{R}^+ \to \mathbb{R}^+$ is a continuous function. By \Cref{lem:f0inf}, we have that $F_1(-f_0) \to 0$ as $-f_0 \to \infty$, and by \Cref{thm:ic_approach_inf}, we know that $F_1(-f_0) \to \infty$ as $-f_0 \to 0$. Thus, $F_1$ is a proper map.
			
			\medskip
			
			\noindent Now, consider the map $H : [0,1] \times \mathbb{R}^+ \times \mathbb{R}^+ \to \mathbb{R}^+ \times \mathbb{R}^+$ given by $H(t,x,y) = ((1-t)x+txF_0(y), F_1(y))$. We will show that $H$ is a proper homotopy between $F$ and the map $(x,y) \mapsto (x, F_1(y))$. Suppose that $(t_n, x_n, y_n)$ is a sequence in $[0,1] \times \mathbb{R}^+ \times \mathbb{R}^+$ such that $H(t_n, x_n, y_n)$ converges. Then, since $F_1$ is a proper map and $F_1(y_n)$ converges, we may assume that $y_n$ is contained in a compact set $[\delta, \frac{1}{\delta}]$ of $\mathbb{R}^+$ for some $\delta \in (0,1]$. Then, as $F_0$ is continuous, we see that $\{F_0(y_n)\} \subseteq F_0([\delta, \frac{1}{\delta}]) \subseteq \mathbb{R}^+$, so $F_0(y_n)$ is bounded from above and below independently of $n$. From this, it is easy to see that $(1-t_n)+t_nF_0(y_n)$ lies in a compact subset $[c,C]$ of $\mathbb{R}^+$. Then, since $(1-t_n)x_n+t_nx_nF_0(y_n)$ converges, we see that $x_n$ also lies in a compact subset of $\mathbb{R}^+$. Finally, $t_n$ lies in a compact set by compactness of $[0,1]$. This shows that $H$ is a proper homotopy.
			
			\medskip
			
			\noindent The map $(x,y) \mapsto (x, F_1(y))$ is a product map, so its degree is $\text{deg}(x \mapsto x) \text{deg}(F_1) = \text{deg}(F_1)$. As proper homotopies preserve the degrees of continuous proper maps, we have that $\text{deg}(F) = \text{deg}(F_1)$.
		\end{proof}

		\noindent \textit{Proof of \Cref{thm:S1R3_degexp} cont.} By \Cref{lem:deg_lemma}, it is enough to compute $\text{deg}(F_1)$. Consider the map $H: [0,1] \times \mathbb{R^+} \to \mathbb{R}^+$, given by $H(s,x) = (1-s)F_1(x)+\frac{s}{x}$. It is straightforward to check that $H$ is a proper homotopy between $F_1$ and the map $x \to \frac{1}{x}$ on $\mathbb{R}^+$. Thus, as proper homotopies preserve degree, it is clear that $\text{deg}(F_1) = \text{deg}(x \to \frac{1}{x}) = -1$, which is the same as $1$ up to sign.
	\end{proof}

	\noindent \textbf{Remark:} Similar methods are employed in \cite{NW24} in their construction of expanders on $\mathbb{R}^3 \times S^1$. In the notation of that paper, a proof that $\sigma_2$ is continuous along with a properness result would constitute a proof of the previous theorem.
	
	\medskip
	
	\noindent Next, we consider the $S^2 \times \mathbb{R}^2$ case.
	
	\begin{thm}\label{thm:S2R2_degexp}
		$\textnormal{deg}^{\textnormal{sym}}_\textnormal{exp}(S^2 \times \mathbb{D}^2) = 0$
	\end{thm}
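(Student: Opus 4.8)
The plan is to compute the topological degree of the proper map $F$ directly, exploiting the fact (\Cref{thm:Fproper}, \Cref{cor:degree}) that $F \colon (0,\infty)^2 \to (0,\infty)^2$ is a continuous proper self-map of the open first quadrant, which we identify with $\mathbb{R}^2$. For such a map the degree equals the winding number of $F$ restricted to the boundary of a large compact exhausting region $K$ of the parameter quadrant (equivalently, the degree of the extension $\hat F \colon S^2 \to S^2$ sending the single end of the quadrant to the end of the target). The essential difference from the $S^1 \times \mathbb{R}^3$ case is that there is no analogue of the scaling \Cref{lem:a0dep}: the parameter $b_0$ does not act by simply rescaling one slope, so we cannot reduce $F$ to a one-variable map and must genuinely track the image of the two-dimensional boundary.

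First I would determine the behaviour of $F$ as $(b_0,-f_0)$ approaches each portion of the ideal boundary of the quadrant, reusing the machinery already assembled. As $-f_0 \to 0$ with $b_0$ in a compact interval, \Cref{thm:ic_approach_inf} gives $a'_\infty, b'_\infty \to \infty$, so this edge is sent to the end of the target. As $-f_0 \to \infty$ I would prove the analogue of \Cref{lem:f0inf}: since \cref{eq:R0} gives $R(0) = -2f_0 - 4 \to \infty$, the uniform scalar-curvature bound of \Cref{lem:uniformly_epsilon_conical}(1), which depends only on a lower bound for $\min\{a'_\infty,b'_\infty\}$, must fail, forcing $\min\{a'_\infty, b'_\infty\} \to 0$, and a short argument as in \Cref{lem:f0inf} then shows that it is $b'_\infty$ that degenerates. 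For $b_0 \to 0$ I would quote the rescaling computation inside the proof of \Cref{thm:Fproper}(2): the blow-ups converge to a Ricci-flat metric on $S^2 \times \mathbb{R}^2$ with $a \to \mathrm{const}$ and $b \sim r$, whence $b'_\infty / a'_\infty \to \infty$ (consistent with \Cref{lem:Pgrowth}, as $P = b/a$ starts at $+\infty$). The remaining regime $b_0 \to \infty$ requires a new large-$S^2$ analysis: here the curvature terms $1/b$ in \cref{eq:beq} are uniformly small, so on compact $r$-intervals the system limits to the doubly warped equations with the $S^2$ replaced by a flat factor, and I would extract the corresponding slope asymptotics from this limit.

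With these four limits in hand, I would assemble the image of $\partial K$ into a single loop in the target and show that its winding number about a generic interior point vanishes, giving $\deg F = 0$. The cleaner and, I expect, decisive route is to use the limits to show that $F$ is \emph{not} surjective: the extreme-value analysis confines the image to a proper subset of $(0,\infty)^2$ (for instance a sector $\{\, b'_\infty \geq c\,a'_\infty \,\}$ dictated by a uniform lower bound on $P_\infty = b'_\infty/a'_\infty$, or a neighbourhood of an explicitly unrealizable cone). Since $F$ is proper it is a closed map, so an omitted value possesses an omitted open neighbourhood consisting of regular values with empty preimage; hence $\deg F = 0$ and $\deg^{\mathrm{sym}}_{\exp}(S^2 \times \mathbb{D}^2) = 0$ by \Cref{def:degexp}. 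The main obstacle is the $b_0 \to \infty$ regime together with the corner where $b_0$ and $-f_0$ are simultaneously extreme: controlling the solitons uniformly there, and thereby pinning down the image precisely enough to rule out winding (or to exhibit the omitted open set), is where the real work lies, with the orientation bookkeeping needed to convert the boundary data into a signed degree as the secondary difficulty.
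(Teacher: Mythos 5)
Your top-level strategy --- use properness (\Cref{thm:Fproper}, \Cref{cor:degree}) and then show $F$ is \emph{not surjective}, so that an omitted value has an omitted open neighbourhood and $\deg F = 0$ --- is exactly the paper's strategy. But the proposal never actually produces the omitted set, and this is the entire content of the theorem. Both of your candidate mechanisms are problematic. The sector containment $\{\,b'_\infty \geqslant c\,a'_\infty\,\}$ is asserted, not proved, and nothing in the paper's estimates gives a uniform lower bound on $P_\infty = b'_\infty/a'_\infty$ over the whole family. Your plan for the regime $-f_0 \to \infty$ (``a short argument as in \Cref{lem:f0inf} shows that it is $b'_\infty$ that degenerates'') does not transfer to this topology: the proof of \Cref{lem:f0inf} hinges on the scaling freedom of \Cref{lem:a0dep}, i.e.\ on replacing $a_0$ by $a_0/\alpha_i$ to renormalize the injectivity radius of the link, and in the $S^2 \times \mathbb{R}^2$ case there is no such parameter because smoothness forces $a(0)=0$, $a'(0)=1$ (\cref{S2R2a}). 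And the $b_0 \to \infty$ regime, which you flag as ``where the real work lies,'' is left entirely open --- so the proposal, by its own account, stops short of a proof.

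What the paper does instead is much simpler and bypasses all four boundary regimes. The boundary conditions \cref{S2R2a,S2R2b} fix $a(0)=0$, $a'(0)=1$ for \emph{every} soliton in the family, and \cref{eq:aeq} together with the monotonicity properties gives $a'' \leqslant a$; integrating (as in \Cref{eq:apgrowth}) yields the universal bound $a_i(r) \leqslant \sinh(r)$. Now take any sequence with $b'_{\infty,i}=1$ and $a'_{\infty,i}\to\infty$. These slopes satisfy the hypotheses of \Cref{lem:symcone} and \Cref{lem:uniformly_epsilon_conical}, so there are uniform $S_0$ and $r_0$ with each soliton $\epsilon$-conical at $d_i(S_0)\leqslant r_0$, whence
\begin{equation*}
a'_{\infty,i}\,S_0\,(1-\epsilon)^{1/2} \;\leqslant\; a_i\bigl(d_i(S_0)\bigr) \;\leqslant\; a_i(r_0) \;\leqslant\; \sinh(r_0),
\end{equation*}
a contradiction for large $i$. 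Hence the points $(A,1)$ with $A$ large are omitted from the image, and non-surjectivity follows. The key observation you are missing is precisely that the rigid initial conditions in this topology (no free $a$-scaling) give an a priori \emph{upper} bound on $a$, which caps $a'_\infty$ once $b'_\infty$ is pinned; without this, neither your winding-number computation nor your omitted-neighbourhood argument can be closed.
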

	
	\begin{proof}
		We will show that $F$ is not surjective; this is sufficient to prove that the degree is $0$. Consider the set $S$ defined as
		\[
		S = \left\{ (b_0, -f_0) \subseteq \mathbb{R}^+ \times \mathbb{R}^+ \mid b'_\infty = 1,  \text{ where } (a'_\infty, b'_\infty) = F(b_0, -f_0) \right\}.
		\]
		
		\noindent We claim that over all initial conditions in $S$, the value of $a'_\infty$ is bounded from above. Suppose that this is not true. Then, there would exist a sequence of initial conditions $(b^i_0, f^i_0)$ with $F(b^i_0, -f^i_0) = (a'_{\infty, i}, b'_{\infty, i})$ satisfying $b'_{\infty,i} = 1$ and $a'_{\infty,i} \to \infty$. Then, from equation \cref{eq:aeq}, we have the inequality $a_i'' \leqslant a_i$, with the initial conditions $a_i(0) = 0, a_i'(0) = 1$. Integrating this inequality (using equation \Cref{eq:apgrowth} and the monotonicity of $a$), we see that $a_i(r) \leqslant \text{sinh}(r)$ for all $i$.

		\medskip
		
		\noindent Now, we clearly have bounds of the form 
		
		\begin{equation*}
			\min \{ a'_{\infty,i}, b'_{\infty,i} \} \geqslant c_1 \quad \quad b'_{\infty,i} \leqslant c_2
		\end{equation*}
		
		\noindent for some constants $c_1, c_2 > 0$. Thus, from \Cref{lem:symcone}, we know that there exists an $S_0 >0$ so that each soliton $(M, g_i)$ is uniformly $\epsilon$-conical at a distance $\{d_i(S_0)\}$ from the tip. Additionally, by Part 2 of \Cref{lem:uniformly_epsilon_conical}, the sequence $d_i(S_0)$ is bounded above by a positive $r_0$. By inequality \Cref{eq:a_estimate} in the statement of \Cref{lem:symcone} and the monotonicity of $a$ in \Cref{lem:abmon}, this implies that

		\begin{center}
			$a'_{\infty,i} S_0(1-\epsilon)^{1/2} \leqslant a_i(d_i(S_0)) \leqslant a_i(r_0)$
		\end{center}
		
		\noindent Then, as $i \to \infty$, we have that $a'_{\infty,i}$ becomes unbounded from above by hypothesis, implying the same conclusion for $a_i(r_0)$ as well. However, the bound $a_i(r_0) \leqslant \text{sinh}(r_0)$ indicates that $a_i(r_0)$ is bounded above, which is a contradiction. Thus, no such sequence of solitons $(M, g_i)$ can exist, which implies that the value of $a'_\infty$ is bounded over all solitons in $S$. Thus, there exist pairs $(a'_\infty, b'_\infty)$ which are not in the image of $F$, so it is not surjective, and thus has degree $0$.
	\end{proof}

	\appendix
	\renewcommand{\thesection}{\Alph{section}}

	\setcounter{section}{0}
	\section*{Appendix A: Derivation of Soliton Equations}
	\refstepcounter{section}

	Consider a metric $g$ on a $4$-manifold $M$ of the form
	
	\begin{center}
		$g=dr^2+a(r)^2g_{S^1}+b(r)^2g_{S^2}$
	\end{center}
	
	\noindent as in Section 2.
	
	\medskip
	
	\noindent Choose a local orthonormal frame $e^1 = dr$, $e^2= ad\theta$, and $e^i = b\hat{e}^i$ where the $\hat{e}^i$ form an orthonormal basis for $S^2$ for $i=3,4$. Denote the dual vector fields by $E_i$. In this frame, one can compute the nonzero components of the curvature to be:
	
	\begin{center}
		$\displaystyle \text{Rm}_{1221} = -\frac{a''}{a} \hskip 1cm \text{Rm}_{1331} = -\frac{b''}{b} \hskip 1cm \text{Rm}_{1441} = -\frac{b''}{b}$ 
		
		\smallskip
		
		$\displaystyle \text{Rm}_{2332} = -\frac{a'b'}{ab} \hskip 1cm \text{Rm}_{2442} = -\frac{a'b'}{ab} \hskip 1cm \text{Rm}_{3443} = \frac{1-(b')^2}{b^2}$ 
	\end{center}
	
	\noindent Thus, the  nonzero components of the Ricci tensor are
	
	\begin{center}
		$\displaystyle \text{Ric}_{11} = -\frac{a''}{a}-2\frac{b''}{b}$ 
		
		\medskip
		
		$\displaystyle \text{Ric}_{22} = -\frac{a''}{a}-2\frac{a'b'}{ab}$
		
		\medskip
		
		$\displaystyle \text{Ric}_{33} = -\frac{b''}{b}-\frac{a'b'}{ab}+\frac{1-(b')^2}{b^2}$
	\end{center}
	
	\medskip
	
	\noindent Consider a smooth function $f : M \to \mathbb{R}$ which is constant along the $S^1$ and $S^2$ directions -- in other words, $f$ depends only on $r$. The nonzero components of the Hessian $\nabla^2 f$ are
	
	\begin{center}
		$\displaystyle \nabla^2f(E_1,E_1) = f''$ 
		
		\medskip
		
		$\displaystyle \nabla^2f(E_2,E_2) = \frac{a'f'}{a}$ 
		
		\medskip
		
		$\displaystyle \nabla^2f(E_3,E_3) = \nabla^2f(E_4,E_4) = \frac{b'f'}{b}$ 
	\end{center}
	
	\noindent Now, suppose that $(M,g)$ is an expanding Ricci soliton with soliton potential $f$ satisfying the equation
	
	\begin{center}
		$\displaystyle \text{Ric}_g + \nabla^2 f + g=0$
	\end{center}
	
	\noindent In the frame chosen above, the soliton equations take the form
	
	\begin{center}
		$\displaystyle -\frac{a''}{a}-2\frac{b''}{b} + f'' + 1 = 0$ 
		
		\medskip
		
		$\displaystyle -\frac{a''}{a}-2\frac{a'b'}{ab}+\frac{a'f'}{a}+1=0$ 
		
		\medskip
		
		$\displaystyle -\frac{b''}{b}-\frac{a'b'}{ab}+\frac{1-(b')^2}{b^2} + \frac{b'f'}{b} + 1  = 0$ 
	\end{center}
	
	\noindent Rearranging the three equations above gives us the soliton equations \cref{eq:feq,eq:aeq,eq:beq}
	
	\medskip
	
	\noindent From the above, we also notice a relation between $\Delta f$ and $f''$ as follows
	
	\begin{equation}\label{eq:Laplacian}
		\displaystyle \Delta f = f'' + \left(\frac{a'}{a}+2\frac{b'}{b} \right)f'
	\end{equation}
	
	\setcounter{section}{1}
	\section*{Appendix B: Existence of Local Solutions}
	\refstepcounter{section}
	
	\noindent It is not immediately clear why equations \cref{eq:feq,eq:aeq,eq:beq} have a unique solution given a value of $f''(0) \leqslant 0$. The following theorem, proven in \cite{A17} explains why this is the case:
	
	\begin{thm}\label{thm:existence}
		Let $n \in \mathbb{N}$, $c \in \mathbb{R}$ and $U$ an open subset of $\mathbb{R}^n$ containing the origin. Let
		
		\begin{center}
			$P: U \times \mathbb{R} \times \mathbb{R} \to \mathbb{R}^n$ \hskip 1cm $(u,r,\lambda) \to P(u,r,\lambda)$
		\end{center}
		
		\noindent be a vector valued analytic function around $(\vec{0},0,c)$ such that $P(0,0,\lambda)=0$ for all $\lambda \in \mathbb{R}$. If there is an open interval $I$ containing $c$ such that for all $\lambda \in I$, the matrix $\frac{\partial P}{\partial u}(\vec{0},0,\lambda)$ has no positive integer eigenvalues and
		
		\begin{center}
			$\displaystyle \text{sup}_{\lambda \in I, m \in \mathbb{N}} \left|\left| \left(mI_n - \frac{\partial P}{\partial u} \right)^{-1} \right|\right| =  B < \infty $.
		\end{center}
		
		\noindent then there exists an $\epsilon > 0$ and a one-parameter family of analytic vector-valued functions $u(\cdot, \lambda) : (-\epsilon, \epsilon) \to \mathbb{R}^n$ solving the ODE system
		
		\begin{equation}
			\displaystyle r \frac{du(r,\lambda)}{dr} = P(u(r,\lambda),r,\lambda)
		\end{equation}
		
		\begin{equation*}
			u(0,\lambda)=0
		\end{equation*}	
		
		\noindent for $\lambda \in (c-\epsilon, c+\epsilon)$. Furthermore, $u$ depends analytically on $\lambda$.
	\end{thm}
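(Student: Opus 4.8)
The plan is to build the solution as a power series in $r$ whose coefficients are analytic functions of $\lambda$, then to force convergence by the method of majorants, with the uniform bound $B$ guaranteeing that the radius of convergence does not degenerate as $\lambda$ varies. Throughout I may replace $I$ by a closed subinterval $I' \ni c$ on which $P$ is jointly analytic with uniformly bounded Taylor coefficients; since the conclusion is asserted only on a small neighborhood $(c-\epsilon,c+\epsilon)$ of $c$, this loses nothing.

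First I would expand $P$ in its convergent Taylor series about $(\vec 0,0,\lambda)$. Writing $L(\lambda) := \frac{\partial P}{\partial u}(\vec 0,0,\lambda)$ and using $P(\vec 0,0,\lambda)=0$, we have $P(u,r,\lambda)=L(\lambda)u + r\,p_1(\lambda) + R(u,r,\lambda)$, where $R$ collects all terms of combined degree $\geqslant 2$ in $(u,r)$. Seeking $u(r,\lambda)=\sum_{k\geqslant 1}u_k(\lambda)r^k$ and substituting into $r\,u' = P$, note that $r\,u' = \sum_{k\geqslant 1} k\,u_k r^k$, so matching coefficients of $r^m$ gives the recursion
\[
(m I_n - L(\lambda))\, u_m(\lambda) = Q_m\big(u_1(\lambda),\dots,u_{m-1}(\lambda)\big),
\]
where $Q_m$ is a universal polynomial in $u_1,\dots,u_{m-1}$ and in the Taylor coefficients of $P$ (at order $m=1$ this reads $(I_n-L)u_1 = p_1$).

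Second I would solve this recursion. The hypothesis that $L(\lambda)$ has no positive integer eigenvalues makes $m I_n - L(\lambda)$ invertible for every positive integer $m$ and every $\lambda \in I$, so $u_m(\lambda) = (m I_n - L(\lambda))^{-1}Q_m$ is uniquely determined. Since it is the image, under the inverse of an analytic matrix-valued function of non-vanishing determinant, of an expression that is analytic in $\lambda$, each $u_m$ is itself analytic in $\lambda$; this yields a unique formal power series solution with analytic coefficients. Third, and this is the crux, I would establish convergence by majorants: bounding the Taylor coefficients of $P$ uniformly on $I'$ by those of a scalar comparison function, and then using $\|(m I_n - L(\lambda))^{-1}\| \leqslant B$ to prove $\|u_m(\lambda)\| \leqslant \bar u_m$, where $\bar u_m$ are the coefficients of the solution of a scalar majorant equation of the same structure. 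The scalar problem can be solved explicitly (or handled via the implicit function theorem applied to its indicial equation), furnishing a radius of convergence $\epsilon>0$ independent of $\lambda$; uniform convergence of $\sum_m u_m(\lambda)r^m$ on $(-\epsilon,\epsilon)\times I'$ then produces a genuine analytic solution depending analytically on $\lambda$.

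The main obstacle is precisely this convergence step. The formal recursion and the analytic dependence on $\lambda$ are essentially bookkeeping once the eigenvalue condition is in hand, but controlling the growth of the $u_m$ \emph{uniformly} in $\lambda$ is delicate: if one used the individual norms $\|(m I_n - L(\lambda))^{-1}\|$, these could deteriorate as $\lambda$ ranges over $I$, and the majorant radius could collapse to zero. The uniform bound hypothesis is exactly what lets the single constant $B$ govern every step of the comparison, and the care required is in arranging the majorant equation so that its radius of convergence stays bounded away from zero across all of $I'$.
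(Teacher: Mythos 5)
Your proposal is correct and takes essentially the same approach as the paper: the paper does not prove this theorem itself but defers to Appleton \cite{A17}, whose argument is precisely your formal power-series construction with the recursion $(mI_n - L(\lambda))u_m = Q_m$, solvable by the no-positive-integer-eigenvalue hypothesis, followed by a majorant argument in which the uniform bound $B$ yields a radius of convergence independent of $\lambda$ and analytic dependence on the parameter.
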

	
	\noindent The proof, given in \cite{A17}, involves the construction of a formal power series for $u$ which satisfies the system. It is shown that the series has a positive radius of convergence which establishes the existence of a local solution. Following Appleton's methods, we will transform the soliton equations into a form suitable to apply this theorem. The following is very similar to the proof of Theorem 2.1 in \cite{A17}.
	
	\medskip
	
	\noindent First, we consider the $S^2 \times \mathbb{R}^2$ case. Let $s$ denote the independent variable of the soliton equations. Note that $a'(0)=1 \neq 0$ and $a'>0$ for  $s \in (0,\infty)$, so $a$ can be chosen as the independent variable of the soliton equations under the coordinate change corresponding to
	
	\begin{center}
		$\displaystyle g = \frac{da^2}{h(a^2)}+g_{a,b(a)}$
	\end{center}
	
	\noindent Setting $r=a^2$, we see as in the proof of Theorem 2.1 in \cite{A17} that
	
	\begin{center}
		$\displaystyle \frac{dr}{ds} = 2\sqrt{rh(r)}$
	\end{center}
	
	\noindent Thus, in this case, \cref{eq:feq,eq:aeq,eq:beq} are transformed into the following (where we use $\dot{f}$ to denote $\frac{\partial f}{\partial r}$, etc.)
	
	\begin{equation}\label{eq:newf}
		\displaystyle \ddot{f} = \frac{1}{4r}\frac{\dot{h}}{h}+2\frac{\ddot{b}}{b}+\frac{1}{r}\frac{\dot{b}}{b}+\frac{\dot{b}\dot{h}}{bh}-\frac{1}{4rh}-\frac{1}{2r}\dot{f}-\frac{1}{2}\frac{\dot{h}}{h}\dot{f}
	\end{equation}
	
	\begin{equation}\label{eq:newa}
		\displaystyle \dot{h} = - 4h \frac{\dot{b}}{b} + 2h \dot{f}+1\\
	\end{equation}
	
	\begin{equation}\label{eq:newb}
		\displaystyle \ddot{b} = \frac{1}{4rhb}  - \frac{\dot{b}}{r} - \frac{1}{2} \frac{\dot{h}}{h}\dot{b} -  \frac{(\dot{b})^2}{b} + \dot{f}\dot{b}+\frac{b}{4rh}
	\end{equation}
	
	\noindent with boundary conditions
	\begin{align*}
		b(0)&=b_0 \\
		\dot{b}(0) &= \frac{1}{4}\left(b_0+\frac{1}{b_0}\right)\\
		h(0) &= 1\\
		f(0) &=0\\
		\dot{f}(0) &= \frac{f''(0)}{2} \equiv c
	\end{align*}
	
	\noindent The boundary condition $\dot{f}(0)$ was derived by applying l'H\^opital's rule to the quantity $\dot{f}(r) = \frac{f'(r)}{2a(r)a'(r)}$ and noting that $a'(0)=1$. $\dot{b}(0)$ was derived similarly, noting that $b''(0) = \frac{1}{2}(b_0+\frac{1}{b_0})$. Note that $\dot{f}(0)$ can take on any real number value.
	
	\smallskip
	
	\noindent As in \cite{A17}, we reduce \Cref{eq:newf,eq:newa,eq:newb} to a system of first-order ODEs. As $f$ does not appear in the equations, the system can be considered first-order in $\dot{f}$. Setting $F = \dot{f}$ and $B=\dot{b}$, we can rewrite the equations as a first-order system in $(F,h,b,B)$ as: 
	
	\begin{align*}
		r\dot{F} &= 
		\frac{1 + b^2 - 4 B^2 h r - b^2 F (1 + 2 F h) r + 4 b B h (-1 + 2 F r)}{2 b^2 h}
		\\
		r\dot{h} &=  - 4hr \frac{B}{b} + 2hrF+r \\
		r\dot{b}&= Br \\
		r\dot{B} &= \frac{1}{4hb}  - B + \frac{rB^2}{b} -\frac{rB}{2h} + \frac{b}{4h}
	\end{align*}
	
	\noindent Defining $u(r,c) \equiv (u_1(r,c), u_2(r,c), u_3(r,c), u_4(r,c)) = (F(r)-c,h(r)-h(0),b(r)-b(0),B(r)-B(0))$, we have an ODE system of the following form with $c$ as a real parameter:
	
	\begin{align}
		\label{scheme}
		r \frac{d u_i}{d r} &= P_i(u,r,c) \\ \nonumber
		u_i(0,c) & = 0 \quad \text{for }i = 1, 2, 3, 4,
	\end{align}
	where $P$ is an analytic function in the neighborhood of the point $(\vec{0},0, c)$ in $\mathbb{C}^6$ and $P(\vec{0},0,c) = 0$. We compute $\frac{\partial P_i} {\partial u_j}$ at $(\vec{0},0,c)$ and obtain
	\begin{equation*}
		\begin{bmatrix}
			0 & -\frac{1+b_0^2}{2b_0^2} & \frac{-1+b_0^2}{2b^3_0}  & -\frac{2}{b_0} \\
			0 & 0&0&0 \\
			0 & 0&0&0 \\
			0 & -\frac{1}{4}(b_0+\frac{1}{b_0}) & \frac{1}{4}(1-\frac{1}{b_0^2}) & -1 \\
		\end{bmatrix}.
	\end{equation*}
	To apply the theorem, we calculate
	\begin{equation*}
		\text{det}\left(m I - \frac{\partial P}{\partial u}\right) = m^3(m+1),
	\end{equation*}
	and this matrix has no positive integer roots, so its inverse exists for all $m \in \mathbb{N}$. We can check that there exists $B \in \mathbb{R}$ such that
	\begin{equation*}
		{\left(m I - \frac{\partial P}{\partial u}\right)^{-1}} < B
	\end{equation*}
	for all $m\in \mathbb{N}$. Thus, by \Cref{thm:existence}, there exists a local solution to the system with the given boundary conditions. This proves the local existence in the case of boundary conditions corresponding to $S^2 \times \mathbb{R}^2$.
	
	\bigskip
	
	\noindent For the $S^1 \times \mathbb{R}^3$ case, we cannot use a similar procedure, as considering $b$ as the independent variable of the soliton equations leads to a system of equations in which the differential equation for the corresponding quantity $r\dot{F}$ retains a singularity at $r=0$. Instead, note that any $\text{SO}(3) \times \text{SO}(2)$-symmetric solitons on $S^1 \times \mathbb{R}^3$ are cohomogeneity one with a singular orbit at $r=0$, falling into the framework of \cite{NW24} in which expanding Ricci solitons of warped product type generalizing the case of $S^1 \times \mathbb{R}^3$ topology are constructed, with local existence of following from results in \cite{Buz11}. Although the soliton equations in \cite{Buz11} are written with respect to different quantities, the results imply the required local existence and uniqueness to \cref{eq:feq,eq:aeq,eq:beq} with initial conditions \cref{S1R3a,S1R3b}, as is also mentioned in Section 1 of \cite{NW24}.

	\setcounter{section}{2}
	\section*{Appendix C: Smooth Cheeger-Gromov Convergence}
	\refstepcounter{section}
	
	\noindent In this section, we prove certain facts about Cheeger-Gromov convergence of cohomogeneity one solitons that have been used extensively in this paper. The results have parallels to Lemma 3.5 of \cite{BHZ22}; in our setup, we have $2$ warping functions instead of one, which introduces slight differences.
	
	\begin{lem}\label{lem:CG}
		Suppose $(M,g, p_i)$ is a sequence of pointed Riemannian manifolds with topology either $S^1 \times \mathbb{R}^3$ or $S^2 \times \mathbb{R}^2$, where the metrics are of the warped product form 
		\begin{center}
			$g_i = dr^2 + a_i(r)^2g_{S^1} + b_i(r)^2g_{S^2}$ 
		\end{center}
		
		\noindent with $r(p_i) = 0$ for each $i$ where the functions $a_i$ and $b_i$ have domain $[-L_i, \infty)$, where $L_i \to \infty$. Suppose the monotonicity bounds $a'_i, b'_i \geqslant 0$ hold as well. Consider bounds of the following form for all $i$:
		
		\begin{center}

			$|\nabla^k \textnormal{Rm}_{g_i}| \leqslant C_k(D)$ on the interval $[-D,D]$ for all $i$, for any $D >0$, $k \geqslant 0$
			
			\medskip
			
			$\alpha_1 \leqslant a_i(0) \leqslant \alpha_2 \hskip 2cm \beta_1 \leqslant b_i(0) \leqslant \beta_2$
		\end{center}
		
		\begin{enumerate}
			\item Suppose we have the above bounds on $g_i$. Then, $(M,g_i,p_i)$ converges in the Cheeger-Gromov sense to a smooth Riemannian manifold $(M_\infty, g_\infty, p_\infty)$ with topology $\mathbb{R} \times S^2 \times S^1$ and a cohomogeneity one metric $g_\infty$.
			\item Suppose we have the curvature bounds, but that either $a_i(0)$ or $b_i(0)$ or both approach infinity. Then $(M,g_i,p_i)$ converges in the Cheeger-Gromov sense to a smooth Riemannian manifold $(M_\infty, g_\infty, p_\infty)$ with topology $\mathbb{R} \times S^2 \times \mathbb{R}^1$ or $\mathbb{R} \times \mathbb{R}^2 \times S^1$ or $\mathbb{R} \times \mathbb{R}^2 \times \mathbb{R}^1$ respectively, and a warped product metric $g_\infty$ which is invariant under the appropriate isometry group (depending on the topology).
		\end{enumerate}
	\end{lem}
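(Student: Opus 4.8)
The plan is to reduce the (smooth, pointed) Cheeger--Gromov convergence to $C^\infty_{\mathrm{loc}}$ convergence of the two warping functions, exploiting that the curvature tensor and all of its covariant derivatives are, by the computations of Appendix A, explicit algebraic expressions in $a_i,b_i$ and their $r$-derivatives. Concretely, $\text{Rm}_{1221}=-a_i''/a_i$, $\text{Rm}_{1331}=-b_i''/b_i$, $\text{Rm}_{2332}=-a_i'b_i'/(a_ib_i)$ and $\text{Rm}_{3443}=(1-(b_i')^2)/b_i^2$, and $\nabla^k\text{Rm}$ controls the corresponding $r$-derivatives. First I would fix $D>0$ and establish uniform two-sided bounds $0<c(D)\le a_i,b_i\le C(D)$ together with bounds on all derivatives of $a_i,b_i$ on $[-D,D]$. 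Granting these, Arzel\`a--Ascoli and a diagonal argument over $D\to\infty$ extract a subsequence with $a_i\to a_\infty$ and $b_i\to b_\infty$ in $C^\infty_{\mathrm{loc}}(\mathbb{R})$; one then assembles $g_\infty=dr^2+a_\infty^2 g_{S^1}+b_\infty^2 g_{S^2}$ and checks, using a uniform lower bound on $\mathrm{inj}_{g_i}(p_i)$ (coming from the orbit-size bounds $a_i(0)\in[\alpha_1,\alpha_2]$, $b_i(0)\in[\beta_1,\beta_2]$ and the curvature bounds) together with the curvature bounds themselves, that this warped product is exactly the pointed smooth Cheeger--Gromov limit.

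For the derivative bounds I would argue as follows. On $[-D,0]$ monotonicity gives $a_i\le a_i(0)\le\alpha_2$ and $b_i\le b_i(0)\le\beta_2$, while on $[0,D]$ it gives $a_i\ge\alpha_1$ and $b_i\ge\beta_1$; the component $\text{Rm}_{3443}$ yields $(b_i')^2=1-b_i^2\text{Rm}_{3443}$, hence a bound on $b_i'$, and a short mean-value argument using $a_i'\ge0$, $a_i(0)\le\alpha_2$ and $|a_i''|\le \|\text{Rm}_{1221}\|\,a_i$ bounds $a_i'(0)$; Gr\"onwall applied to $a_i''=-\text{Rm}_{1221}a_i$ then gives two-sided control of $a_i$ on all of $[-D,D]$, after which the bounds on $\nabla^k\text{Rm}$ propagate to all higher derivatives. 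The genuinely delicate point --- and the main obstacle --- is the \emph{lower} bound on the warping functions on the negative side $[-D,0]$, i.e.\ ruling out orbit collapse. For the $S^2$ factor this is supplied by (the argument of) \Cref{lem:bbound}: boundedness of $\text{Rm}_{3443}=(1-(b_i')^2)/b_i^2$ forces $(b_i')^2\ge 1-C b_i^2$, so if $b_i$ were too small then $b_i'\ge\sqrt3/2$, and monotonicity would drive $b_i$ to $0$ within bounded distance --- impossible on $[-D,D]$ once the singular orbit has receded, since $L_i\to\infty$ guarantees $b_i>0$ on a fixed neighborhood of $[-D,D]$. The $S^1$ factor carries no analogous curvature term, so here I would instead use the control of the orbit size furnished by the setup (the base-point bound $a_i(0)\ge\alpha_1$ propagated by monotonicity and, where needed, the receding singular orbit), which is precisely what the hypotheses are arranged to provide.

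With the warping functions in hand, the topology is read off from their limiting behaviour. In the situation of part (1) the two-sided bounds survive the diagonal limit, so $a_\infty,b_\infty$ are finite and strictly positive on all of $\mathbb{R}$; the orbit $S^2\times S^1$ therefore persists with positive size and $M_\infty\cong\mathbb{R}\times S^2\times S^1$ with a cohomogeneity one metric. For part (2), when an orbit size diverges I would re-coordinatize that factor by its own scale: a round sphere of radius $b_i(0)\to\infty$ (resp.\ a circle of radius $a_i(0)\to\infty$) converges, in geodesic normal coordinates rescaled by $b_i(0)$ (resp.\ $a_i(0)$), to flat $\mathbb{R}^2$ (resp.\ $\mathbb{R}$), and the normalized warping function $b_i/b_i(0)$ (resp.\ $a_i/a_i(0)$) converges in $C^\infty_{\mathrm{loc}}$ to a profile $\hat b$ (resp.\ $\hat a$) satisfying the analogous warped-product-over-a-flat-factor structure. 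The limit is then of the form $dr^2+\hat a^2 g_{\mathbb{R}}+b_\infty^2 g_{S^2}$ and its variants, giving topology $\mathbb{R}\times S^2\times\mathbb{R}^1$, $\mathbb{R}\times\mathbb{R}^2\times S^1$, or $\mathbb{R}\times\mathbb{R}^2\times\mathbb{R}^1$ according to which sizes diverge. In every case the $SO(3)\times SO(2)$-invariance passes to the limit because the group is compact, each opened-up factor enlarges the limiting isometry group by the corresponding translations, and completeness of $g_\infty$ is automatic since $r$ now ranges over all of $\mathbb{R}$ with compact or complete-flat orbits.
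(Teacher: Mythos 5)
Your proposal follows the same route as the paper's proof in nearly every respect: the reduction of pointed Cheeger--Gromov convergence to $C^\infty_{\mathrm{loc}}$ convergence of the warping functions followed by an explicit construction of the comparison diffeomorphisms, the backward lower bound on $b_i$ via the component $\mathrm{Rm}_{3443}=(1-(b_i')^2)/b_i^2$ together with monotonicity and $L_i\to\infty$ (this is the paper's Step 1, i.e.\ the argument of \Cref{lem:bbound}), the bound on $a_i'(0)$ by a mean-value argument on $[-1,0]$, the propagation of the $\nabla^k\mathrm{Rm}$ bounds to all derivatives of $a_i,b_i$ via Arzel\`a--Ascoli and a diagonal argument, and, in part (2), the normalization by $a_i(0)$, $b_i(0)$ with the circle/sphere factors opening up into Euclidean factors.

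However, there is a genuine gap at exactly the point you identify as the main obstacle: the uniform positive lower bound for $a_i$ on $[-D,0]$. Both mechanisms you name for it are incorrect. (i) ``Gr\"onwall applied to $a_i''=-\mathrm{Rm}_{1221}a_i$ gives two-sided control'' fails on the lower side: the inequality $a_i''\geqslant -Ca_i$ together with positivity, monotonicity and $a_i(0)\geqslant\alpha_1$ does \emph{not} imply a lower bound on $[-D,0]$. Indeed, $a_i(r)=A_i\sinh\bigl(\sqrt{C}\,(r-r_{0,i})\bigr)$ with $r_{0,i}<-D$, $r_{0,i}\to-D$, and $A_i$ normalized so that $a_i(0)=\alpha_1$, is positive and increasing on $[-D,D]$ and satisfies $a_i''=Ca_i$, yet $a_i(-D)\to0$. (ii) ``The base-point bound $a_i(0)\geqslant\alpha_1$ propagated by monotonicity'' goes the wrong way: $a_i'\geqslant0$ propagates lower bounds only forward in $r$, so on $[-D,0]$ monotonicity yields only the upper bound $a_i\leqslant\alpha_2$. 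Your remaining appeal to ``the receding singular orbit'' is the right instinct but is not an argument: $L_i\to\infty$ gives positivity of $a_i$ on $[-D,D]$, not uniform positivity, and example (i) shows that purely local ODE data cannot exclude collapse of the $S^1$ factor. The paper supplies the missing non-local input geometrically (Step 2 of its proof): the bounds $a_i(0)\geqslant\alpha_1$, $b_i(0)\geqslant\beta_1$ give volume lower bounds for small balls centred on the orbit $\{r=0\}$; Bishop--Gromov and the curvature bound transport these to unit balls centred at points with $r_*\in[-D,0]$; and since such a ball has volume at most $C\,a_i(r_*+1)\,\beta_2^2$ by monotonicity, this forces $a_i\geqslant c(D,\alpha_1,\beta_1)$ on $[-D,D]$. (Alternatively, one can convert $L_i\to\infty$ into the needed estimate by a Riccati argument: $u=\log a_i$ satisfies $u''\leqslant C-(u')^2$ on $[-(D+1),D+1]$, so if $u'>\sqrt{C}\coth\sqrt{C}$ anywhere on $[-D,0]$ then $u'$ blows up within distance $1$, contradicting smoothness and positivity of $a_i$ on the larger interval; integrating the resulting bound $u'\leqslant\sqrt{C}\coth\sqrt{C}$ gives $a_i(-D)\geqslant\alpha_1e^{-D\sqrt{C}\coth\sqrt{C}}$.) Without an argument of one of these kinds, your non-collapse step for the $S^1$ factor, and hence the construction of the limit, does not go through.
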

	
	\begin{proof}
		1. The sectional curvature bounds give us the following inequalities for all $i$:
		
		\begin{equation}\label{eq:curvature_bounds}
			\displaystyle \left|\frac{a_i''}{a_i}\right| \leqslant C(D) \hskip 1cm \left|\frac{b_i''}{b_i}\right| \leqslant C(D) \hskip 1cm \left|\frac{a_i'b_i'}{a_ib_i}\right| \leqslant C(D) \hskip 1 cm \left|\frac{1-(b'_i)^2}{b^2_i}\right| \leqslant C(D)
		\end{equation}
		
		\noindent \textbf{Step 1: } We derive bounds on $b$. This step is similar to Step 1 in Lemma 3.5 of \cite{BHZ22}.
		
		\medskip
		
		\noindent Consider the fourth equation of \Cref{eq:curvature_bounds}. On the interval $[-(D+1), D+1]$, we have that $(b_i')^2 \geqslant 1 - b_i^2C(D+1)$. From this, we claim that $b_i(-D) \geqslant \text{min}\{ \frac{1}{2C(D+1)^{1/2}}, \frac{1}{2}\}$. If this were false, then we would have the inequality $b_i'(-D) \geqslant \frac{\sqrt{3}}{2}$. Then, we know that for $r \in -[(D+1), -D]$,
		
		\begin{center}
			$b_i'(r)^2 \geqslant 1 - C(D+1)b_i(r)^2 \geqslant 1-C(D+1)b_i(-D)^2 \geqslant \frac{3}{4}$
		\end{center}
		
		\noindent where we used the hypothesis of the monotonicity of $b_i$. Thus, $b_i'(r) \geqslant \frac{\sqrt{3}}{2}$ on $[-(D+1),-D]$. But then, $b_i(-(D+1))$ would become negative since $b_i(-D)$ is also bounded from above by $\frac{1}{2}$. Thus, we have a contradiction, so we must have the bound $b_i(-D) \geqslant \text{min}\{ \frac{1}{2C(D+1)^{1/2}}, \frac{1}{2}\}$. By the monotonicity of $b_i$, this bound holds on $[-D,D]$. Thus, we have
		
		\begin{equation*}
			C(D) \leqslant b_i(r)
		\end{equation*}
		
		\noindent on $[-D,D]$ for all $i$.
		
		\medskip
		
		\noindent Then, again by the fourth equation of \Cref{eq:curvature_bounds}, we know that on $[-D,D]$
		
		\begin{equation*}
			\displaystyle 0 \leqslant \frac{(b_i')^2}{b_i^2} \leqslant C(D) + \frac{1}{b_i^2} \leqslant C(D)+C(D) \equiv C(D)
		\end{equation*}
		
		\medskip 
		
		\noindent by using the lower bound on $b_i$ in the above inequality. Thus, we have the following
		
		\begin{equation}\label{eq:bpb}
			\displaystyle 0 \leqslant \frac{d}{dr} \log(b_i) \leqslant C(D) \hskip 2cm \displaystyle 0 \leqslant \frac{b'_i(0)}{b_i(0)} \leqslant C
		\end{equation}

		\medskip
		
		\noindent By integrating the inequality above, we get a lower bound $c(D, \beta_1) \leqslant b_i(-D)$, which extends to a lower bound on $[-D,D]$ by monotonicity.

		\medskip
		
		\noindent By \Cref{eq:bpb} and the hypothesis that $b_i(0) \leqslant \beta_2$, we have a bound of the form $b_i'(0) \leqslant C(\beta_2)$. Then, using the second equation of \Cref{eq:curvature_bounds}, we know that since $b_i(0)$ and $b'_i(0)$ are bounded from above, we can integrate the bound $b_i'' \leqslant C(D)b_i$ to get an exponential growth upper bound for $b_i(D)$. By the monotonicity of $b_i$, this bound holds on $[-D,D]$. Thus, to summarize, we have the following bounds on $[-D,D]$ for $b_i$ for all $i$:
		
		\begin{equation}\label{eq:bb}
			c(D, \beta_1) \leqslant b_i(r) \leqslant C(D, \beta_2)
		\end{equation}
		
		\medskip
		
		\noindent \textbf{Step 2: } By the first equation of \Cref{eq:curvature_bounds}, we know that $|a_i''| \leqslant C(1)a_i$ on $[-1,1]$. Applying this to the subinterval $[-1,0]$, we know that by the monotonicity of $a_i$ that $a_i \leqslant \alpha_2$ on $[-1,0]$. Thus, we have $|a_i''| \leqslant C(1)\alpha_2$ on $[-1,0]$. This implies that $a'_i(0)$ is bounded from above by a constant $C(\alpha_2)$; otherwise, $a'_i$ would be very large on the interval $[-1,0]$ and $a_i(-1)$ would be negative. Then, as in Step 1, we can get an exponential growth upper bound for $a_i(D)$ of the form $a_i(D) \leqslant C(D, \alpha_2)$ by integrating the inequality $|a''_i| \leqslant C(D)a_i$.
		
		\medskip
		
		\noindent By monotonicity and the lower bounds $a_i(0) \geqslant \alpha_1$, and $b_i(0) \geqslant \beta_1$, we have lower volume bounds on small $s$-balls for any point $q$ with $r=0$. By the curvature bound, by the Bishop-Gromov inequality, we have lower volume bounds of $s$-balls (whose centers are at distance at most $D$ to $q$) by constants $C(D, \alpha_1, \beta_1)$. From this, we have a lower bound of the form $a(-D) \geqslant C(D, \alpha_1, \beta_1)$. By monotonicity, this bound holds on $[-D,D]$. Thus, to summarize, we have the following bounds on $[-D,D]$ for $a$:
		
		\begin{equation}\label{eq:ab}
			c(D, \alpha_1, \beta_1) \leqslant a_i(r) \leqslant C(D, \alpha_2)
		\end{equation}
		
		\medskip
		
		\noindent \textbf{Step 3: } By \Cref{eq:bb} and \Cref{eq:ab}, we have upper bounds on $a_i$ and $b_i$ by constants of the form $a_i \leqslant C(D, \alpha_2)$ and $b_i \leqslant C(D, \beta_2)$, respectively. Using the first and second equations of \Cref{eq:curvature_bounds}, on $[-D,D]$ we have bounds of the form (for all $i$)
		
		\begin{center}
			$|a_i'| \leqslant C(D, \alpha_2)$ \hskip 1cm $|b_i'| \leqslant C(D, \beta_2)$
		\end{center}
		
		\noindent In addition, using the lower bounds on $a$ and $b$ on $[-D,D]$, we have for all $i$
		
		\begin{equation}
			\displaystyle \left|\frac{a_i'}{a_i}\right| \leqslant C(D, \alpha_1, \alpha_2, \beta_1) \hskip 1cm \left|\frac{b_i'}{b_i}\right|\leqslant C(D, \beta_1, \beta_2)
		\end{equation}

		\noindent \textbf{Step 4: } Now, the curvature derivative bounds imply the following inequalities for all $i$:
		
		\begin{equation}\label{eq:curvature_derivative_bounds}
			\displaystyle \left|\frac{d^k}{dr^k} \left(\frac{a_i''}{a_i} \right)\right| \leqslant C_k(D) \hskip 2cm \left|\frac{d^k}{dr^k} \left( \frac{b_i''}{b_i}\right)\right| \leqslant C_k(D)
		\end{equation}
		
		\noindent We use these bounds to prove bounds of the form on $[-D,D]$ for all $i$
		
		\begin{equation}\label{eq:aballbounds}
			\displaystyle |a_i^{(k)}| \leqslant C(C_0, \cdots, C_k, D, \alpha_1, \alpha_2, \beta_1) \hskip 2cm|b_i^{(k)}| \leqslant C(C_0, \cdots, C_k, D, \beta_1, \beta_2)
		\end{equation}
		
		\noindent The $k=0$ and $k=1$ cases are taken care of by Steps 1 to 3. For $k \geqslant 2$, we use \Cref{eq:curvature_derivative_bounds} and the bounds on $a'_i/a_i$ and $b'_i/b_i$ along with the bounds on $a_i$ and $b_i$ derived above to prove \Cref{eq:aballbounds} by induction. Then, by Arzela-Ascoli, we have subsequential convergence in $C^{\infty}_{loc}(\mathbb{R})$ of $a_i$ and $b_i$ to smooth positive functions $a_\infty, b_\infty: \mathbb{R} \to \mathbb{R}^+$. Thus, we have a smooth metric $g_\infty = dr^2 + a_\infty(r)^2g_{S^1}+b_\infty(r)^2g_{S^2}$ on $\mathbb{R} \times S^2 \times S^1$.
		
		\medskip
		
		\noindent By hypothesis, our manifolds have topology $S^1 \times \mathbb{R}^3$ or $S^2 \times \mathbb{R}^2$, with a singular orbit at $r = -L_i$. Now, with $U_i:=(-L_i/2, L_i/2) \times S^2 \times S^1$, consider the maps $\phi_i : U_i \to M$, where $\phi_i$ maps the points with coordinates $(r,z_2, z_1)$ to the point in $M$ in the orbit at distance $r+L_i$ from the singular orbit and whose coordinates on $S^2$ and $S^1$ are $z_2$ and $z_1$, respectively. Note that we have chosen $p_i \in (M,g_i)$ to have $r(p_i)=0$ for all $i$. Then, we have $\phi^*_ig_i-g_\infty \to 0$ in $C^{\infty}_{loc}(\mathbb{R}\times S^2 \times S^1)$. Passing to a further subsequence, we get the convergence $\phi^{-1}_i(p_i) \to p_\infty$.

		\bigskip
		
		\noindent For 2, for ease of notation we assume that both orbit sizes at $0$ blow up; the proof is similar if only one of them does. We consider the functions $\tilde{a}_i := \frac{a_i}{a_i(0)}$, $\tilde{b}_i := \frac{b_i}{b_i(0)}$. Note that the same bounds as in \Cref{eq:bpb} to \Cref{eq:aballbounds} can be proven for these functions, except without any reference to the $\alpha_i$ and $\beta_i$ constants. Thus, by Arzela-Ascoli, we have convergence in $C^{\infty}_{loc}(\mathbb{R})$ of $\tilde{a}_i$ and $\tilde{b}_i$ to smooth positive functions $a_\infty, b_\infty: \mathbb{R} \to \mathbb{R}^+$.
		
		\medskip
		
		\noindent Now, consider the smooth metric $g_\infty = dr^2 + a_\infty(r)^2g_{\mathbb{R}^1}+b_\infty(r)^2g_{\mathbb{R}^2}$ on $\mathbb{R} \times \mathbb{R}^2 \times \mathbb{R}^1$. Since $a_i(0)$ and $b_i(0)$ converge to $\infty$, it is easy to see that there are diffeomorphisms $\phi_{1,i} : (U_{1,i}, g_{E_1}) \to (S^1, g_{a_i(0)})$ and $\phi_{2,i} : (U_{2,i}, g_{E_2}) \to (S^2, g_{b_i(0)})$ (where $(U_{k,i}, g_{E_k})$ is a subset of $\mathbb{R}^k$ with the Euclidean metric and $g_{a_i(0)}$ and $g_{b_i(0)}$ are respectively the metrics on $S^2$ and $S^1$ of the sizes in the subscripts and the $U_{k,i}$ cover $\mathbb{R}^k$) such that $\phi_{1,i}^*g_{a_i(0)} \to g_{E_1}$ in $C^\infty_{loc}(\mathbb{R})$ and $\phi_{2,i}^*g_{b_i(0)} \to g_{E_2}$ in $C^\infty_{loc}(\mathbb{R}^2)$. Now, consider the diffeomorphisms $\phi_i: (-L_i/2, L_i/2) \times U_{1,i} \times U_{2,i} \to (M,g_i)$ which map $(r,z_1,z_2)$ to the point in $M$ at distance $r+L_i$ from the singular orbit and whose coordinates on $S^2$ and $S^1$ are respectively given by $\phi_{2,i}(z_2)$ and $\phi_{1,i}(z_1)$. Then, we have $\phi^*_ig_i-g_\infty \to 0$ in $C^{\infty}_{loc}(\mathbb{R}\times \mathbb{R}^2 \times \mathbb{R}^1)$. Convergence of $\phi^{-1}_i(p_i)$ follows from the fact that $r(p_i)=0$ and by the fact that the convergence of the spheres to Euclidean spaces can always be chosen to be pointed (by adjusting by translations if necessary).
	\end{proof}

\end{document}